\documentclass[11pt,reqno]{amsproc}

    \usepackage[sc]{mathpazo}\renewcommand{\mathbf}{\mathbold}
    \normalfont
    \usepackage[T1]{fontenc}

    \usepackage[margin=1in]{geometry}
    \usepackage{comment}
    \usepackage[small,bf]{caption}
    \usepackage{scalerel,nicefrac}
    \usepackage[all,cmtip]{xy}
    \usepackage{tikz-cd}
    \usepackage{tikz} \usetikzlibrary{arrows.meta,calc}

    \usepackage{rotating, setspace}

    \usepackage{bbm}
    \usepackage{listings}
    \usepackage{geometry}
    \usepackage{amsmath,amsthm,amssymb}
    \usepackage[hypertexnames=false]{hyperref}
    \usepackage{color}
    \usepackage{mathtools}
    \usepackage{tikz-cd}
    \usepackage{subfigure}
    \usepackage{shuffle}
    \usepackage{mathrsfs}
    \usepackage{multicol}
    \usepackage{cleveref}
    \usepackage{thmtools}
    \usepackage{longtable}
    \usepackage{booktabs}
    \usepackage[utf8]{inputenc}

    \usepackage{autonum}

    \makeatletter
    \renewcommand{\@secnumfont}{\bfseries}
    \makeatother

    \hypersetup{
        colorlinks=true,
        linkcolor=blue,
        filecolor=blue,      
        }

    \makeatletter
    \def\l@subsection{\@tocline{2}{0pt}{2.5pc}{5pc}{}}
    \makeatother
    \numberwithin{equation}{section}

    \newcommand{\N}{\mathbb{N}}
    
    \newcommand{\R}{\mathbb{R}}

    \newcommand{\E}{\mathbb{E}}
    \newcommand{\U}{\mathbb{U}}
    \newcommand{\PP}{\mathbb{P}}
    \newcommand{\one}{\mathbb{1}}

    \newcommand{\bx}{\mathbf{x}}
    \newcommand{\by}{\mathbf{y}}
    \newcommand{\bz}{\mathbf{z}}

    \newcommand{\bu}{\mathbf{u}}
    \newcommand{\bv}{\mathbf{v}}
    \newcommand{\bw}{\mathbf{w}}

    \newcommand{\cF}{\mathcal{F}}
    \newcommand{\cN}{\mathcal{N}}

    \newcommand{\cE}{\mathcal{E}}
    \newcommand{\cB}{\mathcal{B}}
    \newcommand{\cL}{\mathcal{L}}
    
    \newcommand{\cC}{\mathcal{C}}
    \newcommand{\cD}{\mathcal{D}}
    \newcommand{\cV}{\mathcal{V}}
    \newcommand{\cT}{\mathcal{T}}

    \newcommand{\texp}{\widetilde{\exp}}

    \newcommand{\tp}{\tilde{p}}
    \newcommand{\tL}{\widetilde{L}}
    \newcommand{\talpha}{\tilde{\alpha}}

    \newcommand{\teta}{\tilde{\eta}}
    \newcommand{\hd}{\hat{d}}
    \newcommand{\hdelta}{\hat{\delta}}
    \newcommand{\tw}{\tilde{w}}

    \newcommand{\hzeta}{\hat{\zeta}}

    \newcommand{\ls}{\lesssim}

    \newcommand{\homega}{\hat{\omega}}
    
    \newcommand{\hrho}{\hat{\rho}}

    \newcommand{\vol}{\operatorname{vol}}
    
    \newcommand{\sgn}{\operatorname{sgn}}
    
    \newcommand{\Med}{\operatorname{Med}}
    \newcommand{\inj}{\operatorname{inj}}
    \newcommand{\cx}{\operatorname{cx}}
    \newcommand{\im}{\operatorname{im}}

    \newcommand{\diam}{\operatorname{diam}}

    \newcommand{\Var}{\operatorname{Var}}
    
    \newcommand{\Lip}{\operatorname{Lip}}
    \newcommand{\spann}{\operatorname{span}}

    \newcommand{\dvol}{d\!\vol}

    \newcommand{\ball}[3]{B_{#3}(#1, #2)} %

    \newtheorem{counter}{Counter}[section]
    \newtheorem{lemma}[counter]{Lemma}
    
    \newtheorem{proposition}[counter]{Proposition}
    \newtheorem{theorem}[counter]{Theorem}
    \newtheorem{corollary}[counter]{Corollary}

    \declaretheoremstyle[
    headfont=\color{red}\normalfont\bfseries,
    ]{colored}

    \newcommand{\andd}{\quad \text{and} \quad}

    \theoremstyle{definition}
    \newtheorem{definition}[counter]{Definition}
    
    \newtheorem{remark}[counter]{Remark}

    \theoremstyle{plain}

\title{The Geometry of  Cochains on Sampled Vietoris-Rips Complexes}

\author{Darrick Lee}
\address{School of Mathematics and Maxwell Institute, University of Edinburgh, Edinburgh EH9 3FD, Scotland}
\email{darrick.lee@ed.ac.uk}
\author{Kelly Maggs}
\address{Max Planck Institute of Molecular Cell Biology and Genetics; Dresden, Germany. \newline
\indent Center for Systems Biology, Dresden, Germany.\newline
\indent Faculty of Mathematics, Technische Universitat Dresden, Germany. 
}
\email{maggskel@gmail.com}

\begin{document}

\begin{abstract}
We study the geometry of simplicial cochains on Vietoris--Rips complexes built from i.i.d.\ samples of a compact embedded manifold. By integrating differential forms over affine simplices, we define a cochain map from smooth forms to simplicial cochains and equip the latter with kernel-weighted inner products determined by ambient pairwise distances. At scales where the sampled complex recovers the homotopy type of the manifold, we prove quantitative high-probability convergence of these inner products and, under uniform sampling, of the associated codifferential energies to their continuum counterparts. As consequences, we obtain spectral upper bounds for the discrete Hodge Laplacian and, for orientable manifolds, convergence of harmonic representatives of discretized harmonic one-forms and consistency of harmonic smoothing for circular coordinates.
\end{abstract}

\maketitle

{\footnotesize
\tableofcontents
}

\section{Introduction}

The Vietoris-Rips (VR) complex occupies a central role in topological data analysis (TDA) as it gives a concrete simplicial complex built from point-cloud (or finite metric) data at a scale parameter $\epsilon > 0$. Under manifold sampling assumptions, one has rigorous topological guarantees that the VR complex built from point samples is homotopy equivalent to the underlying topological space~\cite{hausmann1995vietorisrips,latschev_vietoris-rips_2001,majhi_demystifying_2024}. At the same time, Laplacian-based methods on graphs and simplicial complexes have become important tools for extracting geometric and harmonic information from data. This approach uses combinatorial Hodge theory, and requires an inner product structure on the simplicial cochain complex of the discrete space. Given any such choice of an inner product, Eckmann~\cite{Eckmann1944} developed a combinatorial Hodge Laplacian, whose kernel recovers cohomology, and furthermore yields a Hodge decomposition. While spectral properties of the combinatorial Laplacian are studied in their own right \cite{horak_spectra_2013, Lim-2020}, these Laplacian-based methods have also been utilized in TDA, from defining circular coordinates using harmonic cochains~\cite{deSilva2011} to more recent work on persistent Laplacians \cite{guo-wei-2020,memoli-2022, Chen2022}.

These constructions rely crucially on the choice of an inner product on the cochain complex; the standard choice is to declare the simplex-dual basis orthonormal. On a sampled manifold, however, simplex counts, Euclidean shapes, and sampling density all affect what a cochain norm should approximate. At present, there is limited theory justifying this choice, or more broadly explaining the relationship between the \emph{geometry} of discrete simplicial cochains of VR complexes built from sampled data, and the corresponding continuum objects from the underlying space. In this article, we consider the setting of VR complexes built from a point cloud $X$ sampled i.i.d.~from an embedded compact manifold $M \subset \R^N$. Our main goal is to construct appropriate inner products and develop a de Rham-type discretization procedure from smooth differential $k$-forms to discrete simplicial cochains which is geometrically consistent with the $L^2$ Riemannian geometry of the underlying manifold. Our approach uses only the ambient Euclidean geometry of the sample, through affine simplices and pairwise distances, and does not require a priori knowledge of the intrinsic geometry of the manifold.

Our constructions are motivated by manifold learning methods used to understand various notions of convergence of graph Laplacians built from such sampled data to the Laplace-Beltrami operator on the underlying manifold. Early work~\cite{belkin_towards_2008,coifman_diffusion_2006} built fully connected graphs with weights defined using Gaussian heat kernels leading to qualitative spectral convergence results~\cite{belkin_convergence_2006}. More recent approaches refined these results by considering a broader class of kernels on sparser $\epsilon$-graphs (the $1$-skeleton of the VR complex) and $k$-nearest neighbour graphs, and obtained quantitative convergence results \cite{garcia_trillos_error_2020,calder_improved_2022}. 
Our aim is to extend this perspective to $k$-forms on the full VR cochain complex, focusing on the discretization direction: smooth forms are sent to cochains, but general cochains are not interpolated back to smooth forms. In particular, our work builds on the $\epsilon$-graph approaches. Beyond computational sparsity, these are also the finite-sample scales at which reconstruction theorems identify the VR complex with $M$. \medskip

\textbf{Contributions and Outline.} Let $X = (x_1, \ldots, x_n) \subset \R^N$ be a finite collection of i.i.d.~samples from a compact embedded manifold $M \subset \R^N$, and let $X_\epsilon$ denote the VR complex at scale $\epsilon>0$. In \Cref{sec:geometric_prelim}, we begin by recalling preliminary geometric constructions used throughout the article, and discuss scale assumptions required for normal coordinate computations and distance comparisons used in our arguments. 

Then in \Cref{sec:discretization}, we define our discretization of an ambient $k$-form $\alpha \in \Omega^k_c(\R^N)$ by integrating it over the affine Euclidean $k$-simplex spanned by each $k$-simplex of $X_{\epsilon}$, writing the resulting simplicial cochain as $q\alpha \in C^k(X_\epsilon)$. This is a cochain map $q: \Omega^\bullet_c(\R^N) \to C^\bullet(X_\epsilon)$. We equip $C^k(X_{\epsilon})$ with a kernel-weighted inner product $\langle \cdot, \cdot\rangle_n$ whose weights depend on the pairwise Euclidean distances among the vertices of a simplex. We obtain smooth and simplicial codifferentials,
\begin{equation}
\delta: \Omega^k(M) \to \Omega^{k-1}(M) \andd \hdelta: C^k(X_\epsilon) \to C^{k-1}(X_\epsilon)
\end{equation}
as the adjoints to the exterior derivative $d: \Omega^{k-1}(M) \to \Omega^k(M)$ (under the $L^2$ inner product) and simplicial coboundary $\hd: C^{k-1}(X_\epsilon) \to C^k(X_\epsilon)$ (under our kernel-weighted inner products).

In \Cref{sec:main_results}, we present our main convergence results for the inner product and the codifferential. Let $U_\epsilon \subset \R^N$ be the $\epsilon$-tubular neighbourhood of $M \subset \R^N$ with nearest point projection $\pi: U_\epsilon \to M$. For the codifferential convergence, our results are stated for \emph{pullback forms} $\alpha = \pi^* \talpha \in \Omega^k(U_\epsilon)$, where $\talpha \in \Omega^k(M)$. At the relevant scales the affine simplices lie in $U_\epsilon$, so this is sufficient to define $q\alpha$.

\begin{theorem}[Informal]
    Let $\cB^k_\R\subset \Omega^k_c(\R^N)$ and $\cB^k_M\subset \Omega^k(M)$ be the unit $\cC^2$-balls. For appropriate choices of $\epsilon_n$ and kernel weights, there exist constants $\sigma_k, \sigma_{\delta, k} > 0$ such that with high probability,
    \begin{enumerate}
        \item \emph{\textbf{(Topological Consistency)}} the VR complex $X_{\epsilon_n}$ is homotopy equivalent to $M$;
        \item \emph{\textbf{(Inner Product Convergence)}} uniformly over $\alpha, \beta \in \cB^k_\R$, we have $\langle q\alpha, q\beta\rangle_n \to \sigma_k \langle \alpha, \beta \rangle_M$;
        \item \emph{\textbf{(Codifferential Convergence)}} in the uniform sampling density setting, uniformly over $\tilde{\alpha}, \tilde{\beta} \in \cB^k_M$, we have $\langle \hdelta q \alpha, \hdelta q \beta \rangle_n \to \sigma_{\delta, k} \langle \delta \tilde{\alpha}, \delta \tilde{\beta} \rangle_M$, where $\alpha = \pi^* \tilde{\alpha}$ and $\beta = \pi^* \tilde{\beta}$.
    \end{enumerate}
\end{theorem}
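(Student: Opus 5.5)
The three parts will be proved separately and then combined with a union bound over the failure events.

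\textbf{Part (1): topological consistency.} Choose $\epsilon_n \to 0$ slowly enough that $\epsilon_n \gg (\log n / n)^{1/m}$, where $m=\dim M$. A standard covering argument then makes $X$ an $\eta$-dense subset of $M$ with high probability, with $\eta \ll \epsilon_n$: cover $M$ by $O(\eta^{-m})$ balls, use the density lower bound, and apply a union bound. Once $\epsilon_n$ is also below a reach-dependent threshold, the Hausmann--Latschev type reconstruction theorems cited in the introduction give $X_{\epsilon_n} \simeq M$.

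\textbf{Part (2): inner product convergence.} The plan is a bias--variance decomposition.

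\emph{Step 1 (U-statistic form).} Write $\langle q\alpha, q\beta\rangle_n$ as a normalized $(k+1)$-th order U-statistic,
\[
\frac{1}{n^{k+1}\epsilon^{D}}\sum_{\sigma} w_\epsilon(\sigma)\int_{[\sigma]}\alpha\int_{[\sigma]}\beta ,
\]
where $[\sigma]$ is the affine simplex on the vertices of $\sigma$ and $D$ is the appropriate scaling exponent.

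\emph{Step 2 (bias).} Compute the expectation. Fix the base vertex $x_0$ and work in normal coordinates around it.
\begin{itemize}
\item Taylor expansion gives $\int_{[\sigma]}\alpha = \tfrac{1}{k!}\,\alpha_{x_0}(x_1-x_0,\dots,x_k-x_0) + O(\epsilon^{k+1}\|\alpha\|_{\cC^1})$.
\item The distance comparison from the geometric preliminaries lets us replace $x_i - x_0$ by tangent vectors up to $O(\epsilon^2)$.
\item Integrating the kernel against the density, the leading term is $\epsilon^{D}$ times a Gaussian-type moment $\int K(v)\,\alpha(v_1,\dots)\,\beta(v_1,\dots)\,dv$ over $(T_{x_0}M)^k$.
\item Rotational invariance of the kernel reduces this moment to $c_k\langle\alpha,\beta\rangle_{x_0}$. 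This is the standard Wick/isotropy computation: the antisymmetrized second moment of products of $k$ coordinates.
\end{itemize}
This yields $\sigma_k$, possibly with density factors absorbed into the weights. The bias is $O(\epsilon)$, uniformly over the $\cC^2$ ball.

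\emph{Step 3 (concentration).} Apply Bernstein/Hoeffding inequalities for U-statistics; the kernel is bounded by $O(\epsilon^{-D}\epsilon^{2k})$ after normalization. Uniformity over $\cB^k_\R$ needs a finite $\cC^1$-net. Because the functional is bilinear, one can instead expand $\alpha$ and $\beta$ locally in a finite frame of coefficient functions, which reduces the uniform statement to finitely many pointwise ones plus Lipschitz control.

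\textbf{Part (3): codifferential convergence.}

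\emph{Step 1 (discrete codifferential).} Expand $\hdelta q\alpha$ on a $(k-1)$-simplex $\tau$ as a sum over cofaces $\tau\cup\{y\}$ of weight ratios times $\int_{[\tau\cup y]}\alpha$. This is a local averaging operator, analogous to the graph Laplacian.

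\emph{Step 2 (first-order expansion).} Expand $\alpha=\pi^*\tilde\alpha$ to first order around $\tau$. The zeroth-order terms cancel by the symmetry of the kernel, and this cancellation is where uniform density is needed. The surviving first-order terms reproduce $-\iota$ of the divergence, i.e. $\delta\tilde\alpha$ evaluated on $\tau$, up to a constant.

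\emph{Step 3 (reduction to Part (2)).} This shows $\hdelta q\alpha \approx c\, q(\pi^*\delta\tilde\alpha)$ plus remainder terms that are small in the $n$-norm. Part (2) applied to $\pi^*\delta\tilde\alpha$ then finishes the argument. Pullback forms are needed so that normal derivatives vanish and the ambient divergence matches the intrinsic one.

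\textbf{Main obstacle.} Step 2 of Part (3) is the hardest part. The discrete codifferential mixes an empirical inner sum with an outer sum, so the variance has to be controlled at a finer scale than in Part (2): errors of order $\epsilon^{-1}$ times fluctuations appear. I would first handle the inner sum by conditional concentration, uniformly over all simplices $\tau$, via a union bound. This forces a slightly stronger lower bound on $\epsilon_n$, e.g. $n\epsilon_n^{m+2}\gg\log n$.
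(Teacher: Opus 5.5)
Your Parts (1) and (2) follow the paper's route. For (1): a net and union bound give $d_H^\R(M,X)\ll\epsilon_n$, and the Euclidean reconstruction theorem does the rest. For (2): local expansion plus orthogonal invariance handle the bias, and Bernstein for $U$-statistics plus a net handle the variance. Your cruder $O(\epsilon)$ bias and a net over a neighbourhood of $M$ are enough for the informal statement.

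The gap is in Step 3 of Part (3). For $k\geq 2$, the conditional mean of $\hdelta q\alpha(\tau)$ over cone points is \emph{not} $c\,q(\pi^*\delta\talpha)(\tau)$ for a constant $c$. Let $\bu$ be the centred tangent configuration of the base simplex $\tau$ (Karcher coordinates, $\sum_j u_j=0$). Away from the cutoff, the residual-symmetry computation you describe in Step 2 gives $\hdelta q\alpha(\tau)\approx C\,\Lambda(\bu)\,q(\pi^*\delta\talpha)(\tau)$, where $\Lambda(\bu)=\theta(s(\bu))^{-1}\int\langle w,e^\perp\rangle^2\theta(s(w,\bu))\,dw$ and $e^\perp\perp\spann\{u_j\}$. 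This is the transverse second moment of the normalized kernel over the cone region $\{w:\max_j\|w-u_j\|<1\}$, and it depends on the shape of $\tau$. Already for $k=2$ and $\theta=\one_{[0,1)}$ it is the transverse moment of the lens $B(u,1)\cap B(-u,1)$, which decreases in $\|u\|$.

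Consequently $\|\hdelta q\alpha-c\,q(\pi^*\delta\talpha)\|_n$ is of order one for every constant $c$. The remainder is not small, and applying Part (2) with $\kappa^{k-1}_{\theta,\epsilon}$ gives the wrong limit. (For $k=1$ the base is a point, $\Lambda$ is constant, and your reduction works.) To repair this, keep the weight $\Lambda(\bu)^2\theta(s(\bu))=\lambda_\theta(\bu)^2$ on centred configurations and redo the bias computation with the orthogonal-invariance trace lemma. That is exactly what produces $\sigma^\theta_{\delta,k}$, with $\lambda_\theta^2$ inside the moment integral. The paper does this directly in Karcher coordinates $(z,\bu)$ (\Cref{lem:karcher_coordinate_bias}, \Cref{prop:off_diagonal_bias}) rather than by quoting the inner-product theorem. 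Quoting it would not work verbatim anyway: that theorem is uniform over $\cC^2$-balls, while $\delta\talpha$ is only $\cC^1$-bounded for $\talpha\in\cB^k_M$.

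Two smaller points. First, the zeroth-order cancellation must also be proved for base simplices in the $O(\epsilon^2)$-layer at the VR cutoff. There $\epsilon^{-2}s(\by)<1\leq s(\bu)$ can occur, so the ambient normalized kernel is not $L^1$-close to the tangent one. With only the crude bound $|\hdelta q\alpha(\tau)|\ls\epsilon^{k-2}$ on this layer, it contributes $O(1)$ to $\|\hdelta q\alpha\|_n^2$. You need a comparison kernel that keeps the residual symmetry on the layer, as in \Cref{lem:codiff_uniform_I_bound}.

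Second, uniform density is not what makes the zeroth-order term cancel; that is pure residual symmetry. Uniform density removes the cross term between the zeroth-order term and $\nabla\rho$. That cross term has the same order $\epsilon^{k-1}$ as the signal and would produce a weighted codifferential.

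With these repairs, your split of $\hdelta q\alpha$ into conditional mean plus fluctuation, with a union bound over base simplices, is a legitimate alternative to the paper's approach. The paper instead splits $\|\hdelta q\alpha\|_n^2$ into a diagonal term of size $O(1/(n\epsilon^{m+2}))$ and an off-diagonal order-$(k+2)$ $U$-statistic handled directly by Bernstein. That avoids the union bound over simplices and the extra $\log n$, and yields explicit rates.
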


This result shows that we obtain convergence in probability of \emph{geometric} structures within a bandwidth regime $\epsilon_n$ where $X_{\epsilon_n}$ has the correct homotopy type. The topological consistency result is stated formally in~\Cref{prop:topology_quasi_iso_convergence_scales}. The inner product and codifferential convergence results are stated formally in~\Cref{thm:main_uc} and~\Cref{thm:main_codiff}, where explicit rates are provided given a choice of $\epsilon_n$. Furthermore, these are \emph{uniform} convergence results within the $\cC^2$-ball of differential forms. The proofs of the convergence results are deferred to \Cref{sec:uc_proof} and \Cref{sec:codiff_proof}.

Finally, in \Cref{sec:rips_laplacian}, we discuss some immediate consequences of our results related to the discrete Hodge Laplacian in degrees $0\leq k\leq m-1$ (normalized by a constant $\bar{\sigma}_{\delta,k}$ which arises from our convergence results),
\begin{equation}
    \hat\Delta_{k,n}=\hdelta_n\hd_n+\bar{\sigma}_{\delta,k}^{-1}\hd_n\hdelta_n : C^k(X_\epsilon) \to C^k(X_\epsilon) \quad \text{with eigenvalues} \quad 0 \leq \hat{\lambda}^{k}_{0,n} \leq \hat{\lambda}^k_{1,n} \leq \ldots .
\end{equation}
This is the discrete counterpart to the smooth Hodge Laplacian
\begin{equation}
    \Delta_k = \delta d + d \delta : \Omega^k(M) \to \Omega^k(M) \quad \text{with eigenvalues} \quad 0 \leq \lambda^k_0 \leq \lambda^k_1 \leq \ldots.
\end{equation}
For the following results, we fix a sequence $\epsilon_n$ such that our convergence results hold at the appropriate degrees. First, we provide a spectral upper bound, proved in~\Cref{thm:rips_spectral_upper_bound_sec7}.
\begin{corollary}
    For each fixed $j\in\N_0$, $0\leq k\leq m-1$, and $\eta>0$, we have
    \begin{align}
        \PP\left[\hat{\lambda}^k_{j,n}\leq\lambda^k_j+\eta\right]\to1.
    \end{align}
\end{corollary}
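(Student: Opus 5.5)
The plan is a Courant--Fischer (min-max) argument that uses the discretized smooth eigenforms as test cochains. By Hodge theory on the compact manifold $M$, fix an $L^2$-orthonormal family of smooth eigenforms $\tilde\phi_0,\dots,\tilde\phi_j\in\Omega^k(M)$ with $\Delta_k\tilde\phi_i=\lambda^k_i\tilde\phi_i$. Set $\phi_i=\pi^*\tilde\phi_i$ on $U_\epsilon$ and extend by a cutoff to $\Omega^k_c(\R^N)$. After rescaling, these lie in the $\cC^2$-balls $\cB^k_\R$ and $\cB^k_M$. Let $V_n=\spann\{q\phi_0,\dots,q\phi_j\}\subset C^k(X_{\epsilon_n})$. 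Since $\hat\Delta_{k,n}$ is self-adjoint and nonnegative for $\langle\cdot,\cdot\rangle_n$, we have
\begin{equation}
\hat\lambda^k_{j,n}\le \max_{c\in V_n\setminus 0}\frac{\langle \hd_n c,\hd_n c\rangle_n+\bar\sigma_{\delta,k}^{-1}\langle\hdelta_n c,\hdelta_n c\rangle_n}{\langle c,c\rangle_n},
\end{equation}
provided $\dim V_n=j+1$. The task is therefore to compute the three Gram matrices on $V_n$ asymptotically.

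First, apply the inner-product convergence (\Cref{thm:main_uc}) in degree $k$. Uniformly in $i,l$, it gives $\langle q\phi_i,q\phi_l\rangle_n\to\sigma_k\delta_{il}$ with high probability. Hence $\dim V_n=j+1$, and the denominator is $\sigma_k|a|^2(1+o(1))$ uniformly in the coefficients $a\in\R^{j+1}$.

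Second, handle the coboundary term. Because $q$ is a cochain map, $\hd_n q\phi_i=q\,d\phi_i=q\,\pi^*d\tilde\phi_i$. \Cref{thm:main_uc} in degree $k+1$ then gives $\langle \hd_n q\phi_i,\hd_n q\phi_l\rangle_n\to\sigma_{k+1}\langle d\tilde\phi_i,d\tilde\phi_l\rangle_M$. Third, handle the codifferential term with \Cref{thm:main_codiff}, which gives $\langle\hdelta_n q\phi_i,\hdelta_n q\phi_l\rangle_n\to\sigma_{\delta,k}\langle\delta\tilde\phi_i,\delta\tilde\phi_l\rangle_M$.

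The constants must now combine correctly. The normalization in the paper is designed so that $\sigma_{k+1}/\sigma_k=1$ after the implicit rescaling of $\hd_n$, and $\bar\sigma_{\delta,k}=\sigma_{\delta,k}/\sigma_k$. With these identifications the Rayleigh quotient of $c=\sum a_i q\phi_i$ converges uniformly on the unit sphere of coefficients to
\begin{equation}
\frac{\|d\tilde\phi\|^2+\|\delta\tilde\phi\|^2}{\|\tilde\phi\|^2}=\frac{\sum_i a_i^2\lambda^k_i}{|a|^2}\le\lambda^k_j,
\end{equation}
where $\tilde\phi=\sum a_i\tilde\phi_i$. The identity uses $\langle\Delta_k\tilde\phi,\tilde\phi\rangle=\|d\tilde\phi\|^2+\|\delta\tilde\phi\|^2$ and orthogonality.

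Since only finitely many forms $\tilde\phi_i$ are involved, entrywise convergence of $(j+1)\times(j+1)$ matrices suffices. On the event where all three convergences hold within $\eta'$, elementary perturbation of the generalized eigenvalue problem gives $\hat\lambda^k_{j,n}\le\lambda^k_j+\eta$. That event has probability tending to one.

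The main obstacle is the normalization bookkeeping. One must verify that the constants $\sigma_k$, $\sigma_{k+1}$ and $\sigma_{\delta,k}$, together with the $\epsilon_n$-scaling built into $\hd_n$ and $\hdelta_n$, produce exactly the smooth quotient rather than a weighted mix of $\|d\tilde\phi\|^2$ and $\|\delta\tilde\phi\|^2$. The factor $\bar\sigma_{\delta,k}^{-1}$ exists to fix this. If the coboundary constant does not match, I would rescale $\hd_n$ accordingly.

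Two other points need checking. One is that \Cref{thm:main_codiff} applies in degree $k$ under the uniform-density hypothesis, which is why the range is restricted to $k\le m-1$. The other is that the cutoff extension does not change $q\phi_i$ at the relevant scales, since the affine simplices lie in $U_{\epsilon_n}$.
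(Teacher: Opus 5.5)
Your test-space argument is the paper's own: min-max over $q(V_j)$, with $V_j$ the span of the first $j+1$ eigenforms, using injectivity of $q|_{V_j}$ and uniform convergence of the discrete Rayleigh quotients on $V_j$ (\Cref{lem:finite_dimensional_rayleigh_sec7}). The gap is in how you obtain those convergences.

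For $k\geq1$ you invoke \Cref{thm:main_uc} in degrees $k$ and $k+1$, and \Cref{thm:main_codiff}. Each of these is stated only along its own optimized bandwidth: $\epsilon_n\asymp n^{-1/(km+2m+2)}$, $n^{-1/((k+1)m+2m+2)}$ and $n^{-2/(2mk+5m+6)}$ respectively. These do not coincide; the degree-$k$ exponent differs from both of the others for every $m$. The operator $\widehat\Delta_{k,n}$, however, is built from $\hd_n$, $\hdelta_n$ and the inner products in degrees $k-1$, $k$, $k+1$ at one and the same $\epsilon_n$. You therefore need all three pairings to converge simultaneously at a common bandwidth, and the theorem statements do not give this. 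For example, they say nothing about the degree-$k$ pairing at the codifferential bandwidth.

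The paper supplies exactly this in \Cref{cor:common_bandwidth_convergence}. It goes back to the unoptimized bias and variance bounds (\Cref{prop:uc_bias}, \Cref{prop:uniform_variance_euclidean}, and \eqref{eq:codiff_combined_rate_before_balance}) and re-chooses the covering radius $\gamma_n$. This is why the spectral bound is proved at $\epsilon_n=n^{-a}$ with $0<a<1/(m(k+2)+2)$, as in \eqref{eq:combined_bandwidth}. That range even excludes the optimized degree-$k$ bandwidth of \Cref{thm:main_uc}.

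Your normalization bookkeeping is also off. The fallback of rescaling $\hd_n$ is not available, since it would change the operator whose eigenvalues are being bounded. No rescaling is needed. In \Cref{sec:rips_laplacian} the degree-$r$ inner product is divided by $c_r=\rho_0^{r+1}\sigma^\theta_r$, so $\|q\alpha\|_n^2\to\|\alpha\|_M^2$ in every degree $0\leq r\leq m$. Hence $\|\hd_n q\tilde\phi\|_n^2=\|q\,d\tilde\phi\|_n^2\to\|d\tilde\phi\|_M^2$ directly. The adjoint for the normalized products is $c_{k-1}c_k^{-1}$ times the unnormalized one. So \Cref{thm:main_codiff} yields $\|\hdelta_n q\tilde\phi\|_n^2\to\sigma^\theta_{k-1}(\sigma^\theta_k)^{-2}\sigma^\theta_{\delta,k}\|\delta\tilde\phi\|_M^2=\bar\sigma_{\delta,k}\|\delta\tilde\phi\|_M^2$, which the factor $\bar\sigma_{\delta,k}^{-1}$ cancels exactly. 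Your value $\sigma_{\delta,k}/\sigma_k$ is not this constant.

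Three smaller points:
\begin{itemize}
\item For $k=0$ the degree-zero norm comes from \Cref{prop:main_uc_degree_zero}, not \Cref{thm:main_uc}.
\item The restriction $k\leq m-1$ is needed because the coboundary term uses the degree-$(k+1)$ inner product, which is defined and convergent only for $k+1\leq m$. It is not because of \Cref{thm:main_codiff}, which holds up to $k=m$.
\item The compactly supported extension of $\pi^*\tilde\phi_i$ should be fixed on $U_{\epsilon_0}$, independently of $n$, so that its $\cC^2$ norm stays bounded.
\end{itemize}
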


In order to obtain stronger spectral convergence results, one must also develop \emph{interpolation} methods to go from discrete cochains back to continuous differential forms. While this is beyond the scope of the present article, we view this as a promising avenue for future work. Despite this limitation, for orientable $M$, we consider the convergence of harmonic 1-forms by leveraging the known spectral convergence results for the graph Laplacian in~\cite{calder_improved_2022}. This is proved in~\Cref{prop:harmonic_one_form_projection_sec7}.

\begin{corollary} \label{cor:intro_harmonic_convergence}
    Let $M$ be connected and orientable with $m\geq2$ and let $\omega \in \Omega^1(M)$ be a harmonic $1$-form. For each $n$, let $\hat{\omega}_n \in C^1(X_{\epsilon_n})$ be the harmonic part of the pullback discretization $q\omega$ in the discrete Hodge decomposition. Then, $\|q\omega - \hat{\omega}_n\|_n \to 0$ in probability. 
\end{corollary}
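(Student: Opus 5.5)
We need to show that $q\omega$ is asymptotically orthogonal to both $\im \hd_n$ (exact part) and $\im \hdelta_n$ (coexact part). In the discrete Hodge decomposition $C^1(X_{\epsilon_n}) = \im \hd_n \oplus \ker \hat\Delta_{1,n} \oplus \im \hdelta_n$, orthogonal with respect to $\langle\cdot,\cdot\rangle_n$, we have $q\omega - \hat\omega_n = P_{\mathrm{ex}} q\omega + P_{\mathrm{coex}} q\omega$. It therefore suffices to show that each projection has $\|\cdot\|_n$-norm tending to $0$ in probability.

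\textbf{The coexact part.} Since $\omega$ is harmonic, $d\omega = 0$. Because $q$ is a cochain map, $\hd_n q\omega = q(d\pi^*\omega) = q\pi^* d\omega = 0$, so $q\omega$ is a cocycle. The subspace $\im \hdelta_n$ is orthogonal to $\ker \hd_n$, hence $P_{\mathrm{coex}} q\omega = 0$ exactly. This step is free.

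\textbf{The exact part.} We have $\|P_{\mathrm{ex}} q\omega\|_n = \sup\{ \langle q\omega, \hd_n f\rangle_n : \|\hd_n f\|_n \le 1\} = \sup \langle \hdelta_n q\omega, f\rangle_n$. We decompose $f \in C^0(X)$ spectrally with respect to the graph Laplacian $\hdelta_n\hd_n$ on $0$-cochains. By \cite{calder_improved_2022}, the low eigenvalues and eigenvectors of this Laplacian converge to those of $\Delta_0$ on $M$.

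Split $f = f_{\le J} + f_{>J}$ at a fixed spectral index $J$.

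For the high-frequency piece, Cauchy--Schwarz gives
\[
|\langle \hdelta_n q\omega, f_{>J}\rangle_n| \le \|\hdelta_n q\omega\|_n\, \|f_{>J}\|_n \le \|\hdelta_n q\omega\|_n\, (\hat\lambda^0_{J+1,n})^{-1/2}.
\]
By \Cref{thm:main_codiff} with $k=1$, $\|\hdelta_n q\omega\|_n^2 \to \sigma_{\delta,1}\|\delta\omega\|_M^2 = 0$. So this term vanishes, and in fact it vanishes for any fixed $J$, even $J=0$ (the constant eigenvector is killed by $\hd_n$).

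Indeed, this suggests a cleaner argument that avoids the split altogether. On the orthogonal complement of the constants, $\|f\|_n \le (\hat\lambda^0_{1,n})^{-1/2}\|\hd_n f\|_n$. Hence
\[
\|P_{\mathrm{ex}}q\omega\|_n \le \|\hdelta_n q\omega\|_n\,(\hat\lambda^0_{1,n})^{-1/2}.
\]
It remains to verify that $\hat\lambda^0_{1,n}$ is bounded below in probability. This follows from the spectral convergence of \cite{calder_improved_2022} to $\lambda^0_1(M) > 0$, using that $M$ is connected.

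\textbf{Main obstacle.} The delicate step is matching normalizations. The $0$-cochain inner product and $\hd_n$ on $C^0 \to C^1$ must coincide, up to the constants $\sigma_0, \sigma_{\delta,0}$, with the graph Laplacian and scaling used in \cite{calder_improved_2022}. Only then does the uniform lower bound on the Fiedler eigenvalue transfer. I would check this by writing $\langle \hd_n f, \hd_n f\rangle_n$ explicitly as a kernel-weighted Dirichlet energy and comparing it with their $\epsilon$-graph energy.

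Orientability and $m \ge 2$ are used to ensure that \Cref{thm:main_codiff} applies in degree $1$ to pullback forms, including $\omega$ itself. Since that theorem is stated for the $\cC^2$-unit ball, we first rescale $\omega$ into the ball.
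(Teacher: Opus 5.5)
Your proof is correct and is essentially the paper's argument. The paper writes $q\omega=\homega_n+\hd_n\widehat\beta_n$ with $\widehat\beta_n\perp\ker\hd_n$, uses $\widehat\Delta_{0,n}\widehat\beta_n=\hdelta_n q\omega$ with $\|\hdelta_n q\omega\|_n\to0$ from \Cref{thm:main_codiff}, and converts this into $\|\hd_n\widehat\beta_n\|_n\to0$ through a degree-zero spectral gap. That gap comes from \cite{calder_improved_2022} via the identity $\widehat\Delta_{0,n}=\frac{2n}{(n-1)\rho_0\sigma_1^\theta}\hat L_n$, so this is your bound $\|P_{\mathrm{ex}}q\omega\|_n\le(\widehat\lambda^{(0)}_{1,n})^{-1/2}\|\hdelta_n q\omega\|_n$ in another form.

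One correction: orientability and $m\geq2$ are not needed for \Cref{thm:main_codiff}, which holds for any compact embedded $M$ and $1\le k\le m$. In the paper they are carried only as hypotheses for the graph-Laplacian spectral convergence result of \cite{calder_improved_2022} used for the gap. That result also requires the bandwidth check and the open/closed $\epsilon$-graph adjustment of \Cref{rem:open_closed_graph_sec7}.
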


As an application, \Cref{cor:circular_coordinate_convergence_sec7} proves convergence of harmonic smoothing in circular coordinates~\cite{deSilva2011}, when persistent cohomology selects the integral class of the limiting harmonic form. \medskip

\textbf{Related Work.}
Lerch and Wahl \cite{lerch_empirical_2025} define an empirical exterior calculus on certain alternating functions over a finite sample (called \emph{empirical forms}) and prove high-probability bounds for the up-Laplacian energy of such forms. Their work is closely aligned with ours in treating differential forms directly at the level of sampled cochains; however their objects and results are different in several important ways. First, their weights are defined using the intrinsic heat kernel on the manifold, rather than from ambient distances between sample points. Second, their empirical forms are defined on all sample tuples, rather than on the simplices of a sampled VR complex, so the construction is not tied to the finite-scale topology guaranteed by reconstruction theorems.
Finally, their main estimate is for the up-Laplacian energy and does not give convergence of the codifferential term needed for the full Hodge Laplacian on the sampled complex.

Other approaches avoid a finite simplicial cochain complex and instead approximate Hodge operators or form pairings directly. Spectral exterior calculus \cite{Berry2020} builds forms from Laplace--Beltrami eigenfunctions and proves spectral convergence for Galerkin approximations of the $1$-Hodge Laplacian. Diffusion-geometry methods \cite{jones2026computing} and neural point-forms \cite{trentini2026neuralpointforms} use carr\'e-du-champ or Gram-field constructions to compute form inner products, Hodge energies, and learned form-comparison matrices from point clouds. L\^e's empirical Hodge Laplacians \cite{le_empirical_2026} estimate tangent projections and curvature terms from a uniform point cloud, build empirical Hodge operators on projected ambient exterior powers, and prove spectral convergence together with recovery of the near-zero harmonic cluster. 
These methods have strong operator-theoretic and computational advantages, especially in avoiding large complexes and, in some cases, obtaining stronger spectral statements. The tradeoff is algebraic: they do not give a finite sampled complex in which cocycles, coboundaries, and chosen cohomology classes are available exactly at reconstruction scales; harmonic information is instead recovered from a near-zero spectral cluster on sampled vector data. In particular, they do not directly formulate convergence statements such as~\Cref{cor:intro_harmonic_convergence}, where $\hat{\omega}_n$ is the harmonic representative of a specified cohomology class in the sampled VR complex.

A different line of work proves Hodge-theoretic convergence from controlled and deterministic intrinsic discretizations, such as covers, triangulations, or finite-element meshes. Mantuano \cite{mantuano_discretization_2008} starts from an intrinsic $\epsilon$-net, builds the nerve of the cover by geodesic balls, and proves a two-sided comparison between the smooth Hodge spectrum and the combinatorial spectrum on the resulting \v{C}ech cochains. Dodziuk's finite-difference approach and finite element exterior calculus \cite{dodziuk_finite-difference_1976,Arnold2010} use Whitney or finite-element forms, cochain projections, and mesh refinement to compare smooth and discrete Hodge theory on controlled complexes. Mantuano's result is especially close in spirit, since it also compares smooth and discrete Hodge theory through discretization and interpolation maps; however, these maps are built intrinsically from the \v{C}ech--de Rham double complex. The main distinction is that their discretizations are built from intrinsic geometric data or prescribed meshes; our complex is the sampled VR complex at various reconstruction scales, and its metric is defined only from ambient Euclidean distances. \medskip

\phantomsection\label{not:global_conventions}
\textbf{Notations and Conventions.}
    Throughout this article, $A\ls B$ means that $A\leq CB$, where $C$ may depend on fixed data but is independent of $n$, $\epsilon$, $p$, and any forms over which a supremum is taken, unless stated otherwise. We write $A\ls_p B$ when dependence on $p$ is allowed, and $A\asymp B$ when both $A\ls B$ and $B\ls A$. A table of notation is provided in~\Cref{apxsec:notation}. \medskip

\textbf{AI Use Statement.}
The authors used OpenAI’s ChatGPT and Anthropic's Claude (to a much lesser extent), as interactive research and writing aids. The research direction, specific results we sought to prove, and the organization of the paper originated with the authors. AI tools were used to explore possible proof arguments, assist with \LaTeX, and help draft and revise portions of the exposition. The authors verified all mathematical claims, proofs, and references, made all final decisions concerning the content of the paper, and take full responsibility for its correctness. Further details are provided in~\Cref{apxsec:ai_use}.\medskip

    \textbf{Acknowledgements.}
    We would like to thank Daniel Tolosa for helpful discussions at the beginning of this project. K.M.~is a member of the Max Planck/Oxford international Center to Center collaboration supported by EPSRC EP/Z531224/1.

    \section{Geometric Preliminaries} \label{sec:geometric_prelim}

    In this article, we work with an $m$-dimensional compact Riemannian manifold $M$ without boundary, smoothly embedded by $\iota: M \hookrightarrow \R^N$. We equip $M$ with the induced metric $g$ where
    \begin{align} \label{eq:riemannian_metric}
        \langle u, v \rangle_x = g_x(u,v) = \langle \iota_* u, \iota_* v \rangle_\R \quad \text{for} \quad x \in M \andd u, v \in T_x M,
    \end{align}
    and $\langle \cdot , \cdot \rangle_\R$ denotes the standard inner product on $\R^N$. We write $\dvol$ for the associated Riemannian volume density and $\vol_M$ for the Borel measure that it induces. Product measures and integrals below are understood in this density sense.

    \subsection{Differential Forms and Hodge Operators} \label{ssec:differential_forms}
    Let $\Omega^k(M)$ be the space of smooth differential $k$-forms on $M$; these are the sections of the vector bundle $\Lambda^k T^* M$. For each $x \in M$, the Riemannian metric~\eqref{eq:riemannian_metric} induces an inner product on $T^*_x M$, and furthermore on the exterior power $\Lambda^k T_x^* M$ defined by
    \begin{align}
        \langle u_1 \wedge \ldots \wedge u_k, v_1 \wedge \ldots \wedge v_k \rangle_x = \det( \langle u_i, v_j\rangle_x).
    \end{align}
    Then, given two differential forms $\alpha, \beta \in \Omega^k(M)$, we can define the $L^2$ inner product by
    \begin{align}
        \langle \alpha, \beta \rangle_M \coloneqq \int_M \langle \alpha_x, \beta_x \rangle_x \, \dvol(x).
    \end{align}
    Note that we use $\langle \cdot , \cdot \rangle_x$ to denote all pointwise inner products and $\langle \cdot , \cdot \rangle_M$ to denote the $L^2$ inner product. We will also need to consider \emph{weighted} $L^2$ inner products. Given a smooth function $f: M \to (0,\infty)$, we define the $f$-weighted $L^2$ inner product by
    \begin{align} \label{eq:weighted_L2}
        \langle \alpha, \beta \rangle_M^f \coloneqq \int_M \langle \alpha_x, \beta_x \rangle_x \, f(x) \dvol(x).
    \end{align}

    Next, let $d: \Omega^k(M) \to \Omega^{k+1}(M)$ be the exterior derivative such that $(\Omega^\bullet(M), d)$ is a cochain complex, $d^2 = 0$. The \emph{codifferential} $\delta: \Omega^{k+1}(M) \to \Omega^k(M)$ is the formal adjoint to $d$, defined by
    \begin{align} \label{eq:smooth_codifferential}
        \langle d\alpha, \beta \rangle_M = \langle \alpha, \delta \beta \rangle_M, \quad \alpha \in \Omega^k(M), \quad \beta \in \Omega^{k+1}(M). 
    \end{align}
    Finally, the \emph{Hodge Laplacian} on $\Omega^k(M)$ is
    \begin{align} \label{eq:smooth_laplacian}
        \Delta_k = d \delta + \delta d : \Omega^k(M) \to \Omega^k(M),
    \end{align}
    and we say that $\omega \in \Omega^k(M)$ is a \emph{harmonic $k$-form} if $\Delta_k \omega = 0$. Further details can be found in~\cite{jost_riemannian_2013}.

    \subsection{Ambient Forms and $\cC^r$ Norms} \label{ssec:ambient_forms}
    While the differential forms in $\Omega^k(M)$ are the main limiting objects in this article, we wish to approximate them without prior knowledge of the manifold $M$. Thus, we primarily work with ambient differential forms. Let $\Omega^k_c(\R^N)$ denote the space of compactly-supported smooth $k$-forms on $\R^N$. Note that there is no loss of generality in considering compactly supported forms, as we primarily consider pullbacks to $M$, which is compact. Given ambient forms $\alpha, \beta \in \Omega^k_c(\R^N)$, we will often use $\langle \alpha, \beta \rangle_M \coloneqq \langle \iota^* \alpha, \iota^* \beta \rangle_M$
    to denote the $L^2$ inner product of the pullback to $M$. \medskip

    We will also use $\cC^r$-norms to state uniform convergence results. Recall that an ambient $k$-form $\alpha \in \Omega^k_c(\R^N)$ can be expressed component-wise as $\alpha = \sum_{|I|=k} \alpha_I \, dz_I$ for multi-indices $I = (i_1 < \ldots < i_k)$. Then, on an open set $U \subset \R^N$, we define the $\cC^r(U)$-norm
    \[
        \|\alpha\|_{\cC^r(U)}
        :=
        \max_{|I|=k}
        \max_{|\mu|\le r}
        \sup_{z\in U}
        |\partial^\mu \alpha_I(z)|.
    \]
    Throughout, $\bar\nabla$ denotes the standard flat connection on $\R^N$ used for ambient derivatives, while $\nabla$ denotes the Levi-Civita connection on $M$ used for intrinsic derivatives.
    For intrinsic forms $\omega \in \Omega^k(M)$, we define
    \[
        \|\omega\|_{\cC^r(M)}
        :=
        \max_{0\le j\le r}
        \sup_{x\in M}
        |\nabla^j \omega_x|_x,
    \]
    where the norm $|\cdot|_x$ is induced by the Riemannian metric $g$.
    \subsection{Reach and Tubular Neighbourhoods} \label{ssec:reach_tubular}

    We shall also use a canonical ambient extension of intrinsic forms to tubular neighbourhoods. First, we consider the notion of \emph{reach}.

    \begin{definition}{\cite[Definition 2.1]{aamari_estimating_2019}}
        For a closed subset $A \subset \R^N$, the \emph{medial axis} $\Med(A)$ of $A$ is the subset of $\R^N$ consisting of points which have at least two nearest neighbours,
        \begin{align}
            \Med(A) = \{ x \in \R^N \, : \, \exists p \neq q \in A, \, \|p-x\| = \|q-x\| = d_{\R}(x,A)\}.
        \end{align}
        The \emph{reach of $A$} is defined as 
        \begin{align}
            \tau_A = d_{\R}(A, \Med(A)) = \inf_{x \in A, \, y \in \Med(A)} d_{\R}(x,y).
    \end{align}
    \end{definition}
    For $\epsilon > 0$, we define the \emph{$\epsilon$-tubular neighbourhood of $M$}, $M \subset U_\epsilon \subset \R^N$ by
    \begin{align}
        U_\epsilon \coloneqq \{ z \in \R^N \, : \, d_\R(z, M) < \epsilon\}. 
    \end{align}
    Then, if $\epsilon < \tau_M$, there exists a unique nearest point projection $\pi: U_\epsilon \to M$, which is smooth since $M$ is smooth~\cite[Theorem 2]{leobacher_existence_2021}. For any $x \in M$, recall that we can decompose the ambient tangent space by $T_x \R^N = T_x M \oplus N_x M$, where $N_x M = (T_x M)^\perp$ is the orthogonal complement of the tangent space of $M$, called the \emph{normal space}. In particular, we have $N_x M = \ker(d\pi_x)$.

    \subsection{Injectivity Radius and Normal Coordinates} \label{ssec:normal_coordinates}
    In our analysis, we generally assume that we only have access to the ambient Euclidean geometry, while our approximation methods often rely on normal coordinates on the manifold. Here, we aim to obtain normal coordinates for open neighbourhoods defined by Euclidean balls. First, we relate Euclidean balls and intrinsic balls. For any $x, y \in M$, we always have $d_{\R}(x,y) \leq d_M(x,y)$, so $\ball{M}{x}{\epsilon} \subset \ball{\R^N}{x}{\epsilon}$.
    The other inclusion is controlled by the reach.

    \begin{lemma}{\cite[Lemma 3]{boissonnat_reach_2019}}
        Let $\tau > 0$ be the reach of a compact manifold embedded in $\R^N$. For any $x,y \in M$ such that $d_{\R}(x,y) < 2\tau$, we have
        \begin{align}
            d_M(x,y)\leq 2 \tau \arcsin\left( d_{\R}(x,y)/(2\tau)\right). 
        \end{align}
    \end{lemma}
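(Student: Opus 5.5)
The plan is to compare a minimizing intrinsic geodesic from $x$ to $y$ with a circle of radius $\tau$. This has three ingredients: a curvature bound for geodesics coming from the reach, a chord–arc comparison for curves of bounded curvature, and an argument that the minimizing geodesic is short enough for the comparison to apply.

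\textbf{Step 1 (curvature bound).} Since $M$ is compact, Hopf--Rinow gives a unit-speed minimizing geodesic $\gamma:[0,L]\to M$ with $\gamma(0)=x$, $\gamma(L)=y$ and $L=d_M(x,y)$. Viewed in $\R^N$, its acceleration $\gamma''(s)$ is normal to $M$ and equals $\mathrm{II}(\gamma',\gamma')$.

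I will use Federer's characterization of reach: $d_\R(z, x+T_xM)\ \ge$ is replaced by the inequality $|\langle z-x,\nu\rangle|\le \|z-x\|^2/(2\tau)$ for unit normals $\nu\in N_xM$. Expanding $\gamma(s)$ to second order and letting $s\to 0$ gives $\|\mathrm{II}(v,v)\|\le 1/\tau$ for unit $v$. Hence $\|\gamma''\|\le 1/\tau$ everywhere.

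\textbf{Step 2 (chord–arc comparison).} Set $c(s)=\|\gamma(s)-x\|$. For a unit-speed curve with curvature at most $1/\tau$, the angle $\theta(s)$ between $\gamma'(s)$ and $\gamma'(0)$ satisfies $\theta(s)\le s/\tau$. This gives $\langle\gamma'(u),\gamma'(w)\rangle\ge\cos((w-u)/\tau)$ whenever $|w-u|\le\pi\tau$. Integrating,
\[
c(s)^2=\int_0^s\!\!\int_0^s\langle\gamma'(u),\gamma'(w)\rangle\,du\,dw\ \ge\ \int_0^s\!\!\int_0^s\cos\bigl((w-u)/\tau\bigr)\,du\,dw=\bigl(2\tau\sin(s/(2\tau))\bigr)^2,
\]
valid for $s\le \pi\tau$. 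This is Schur's comparison in its simplest form. So if $L\le\pi\tau$, then $d_\R(x,y)\ge 2\tau\sin(L/(2\tau))$, and inverting the increasing map $s\mapsto 2\tau\sin(s/(2\tau))$ on $[0,\pi\tau]$ gives the claim.

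\textbf{Step 3 (ruling out $L>\pi\tau$).} This is the main obstacle, since Step 2 only controls the initial arc of length $\pi\tau$. The route I would try first is a continuity argument along the straight segment $[x,y]$.

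Every point of $[x,y]$ is within distance $\|x-y\|/2<\tau$ of $\{x,y\}\subset M$, so the segment lies in $U_\tau$, where $\pi$ is defined and smooth. Let $y_t=\pi((1-t)x+ty)$. Then $t\mapsto \|x-y_t\|$ and $t\mapsto d_M(x,y_t)$ are continuous, and $d_M(x,y_t)\to0$ as $t\to0$. Consider the set of $t$ for which $d_M(x,y_t)\le 2\tau\arcsin(\|x-y_t\|/(2\tau))<\pi\tau$.
\begin{itemize}
\item It is closed, by continuity of both sides.
\item It is open: near such a $t$, the minimizing geodesics to nearby $y_{t'}$ have length close to $d_M(x,y_t)$, so still less than $\pi\tau$, and Step 2 applies to them.
\end{itemize}
Making the openness step precise is where I expect the real work to lie.

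To run this argument one also needs $\|x-y_t\|<2\tau$ for all $t$, which I would check using the fact that $y_t$ is the nearest point of $M$ to a point of the segment. Connectedness of $[0,1]$ then yields the inequality at $t=1$, where $y_1=y$.

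If this continuity argument proves delicate, the fallback is to cite the argument of \cite{boissonnat_reach_2019} at this step. That argument establishes the a priori bound $L<\pi\tau$ for pairs with $\|x-y\|<2\tau$ by exactly this kind of comparison with the sphere of radius $\tau$ tangent to $M$.
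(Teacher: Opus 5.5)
Your proof is correct, but it follows a different route from the one behind this lemma. The paper does not prove the statement; it imports it from Boissonnat--Lieutier--Wintraecken. Their argument is written for an arbitrary closed set of reach $\tau$, so it never uses geodesics or curvature.

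There, one projects the midpoint $m=(x+y)/2$ to $\pi(m)$. The open ball of radius $\tau$ tangent at $\pi(m)$, centred at $\pi(m)+\tau(m-\pi(m))/\|m-\pi(m)\|$, misses $M$. This forces $\|x-\pi(m)\|,\|y-\pi(m)\|\leq 2\tau\sin(\phi/2)$ whenever $\|x-y\|=2\tau\sin\phi$. Iterating over dyadic subdivisions then yields a $2\tau\phi$-Lipschitz path in $M$ from $x$ to $y$, so $d_M(x,y)\leq 2\tau\phi$ directly.

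That route needs no smoothness and builds the short path explicitly. Its purely metric nature is also what lets the source prove the converse characterisation of reach. In particular, it never establishes an a priori bound $L<\pi\tau$ on a geodesic, so the fallback you describe is not quite what the source does.

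Your route works only for smooth submanifolds, but it is assembled from standard pieces. Federer's tangent estimate gives $\|\mathrm{II}(v,v)\|\leq 1/\tau$, and a Schur-type chord--arc comparison follows; your identity $\bigl|\int_0^s e^{iu/\tau}\,du\bigr|^2=4\tau^2\sin^2(s/(2\tau))$ handles the latter neatly. It also controls the minimizing geodesic itself rather than an auxiliary path.

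Your Step 3 already closes as sketched, so no fallback is needed. With $z_t=(1-t)x+ty$ you have $d_{\R}(z_t,M)\leq\|x-y\|/2<\tau$. Hence $y_t=\pi(z_t)$ is a continuous path in $M$ from $x$ to $y$; continuity of $\pi$ is all you use. Moreover, $\|x-y_t\|\leq\|x-z_t\|+d_{\R}(z_t,M)\leq\|x-z_t\|+\|z_t-y\|=\|x-y\|<2\tau$.

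The continuous function $t\mapsto d_M(x,y_t)$ vanishes at $t=0$ and can never equal $\pi\tau$: Step 2 with $L=\pi\tau$ would force $\|x-y_t\|\geq 2\tau$. By the intermediate value theorem, $d_M(x,y)<\pi\tau$, and Step 2 finishes. Openness in your formulation is therefore immediate rather than the hard part.

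The same path also shows that $x$ and $y$ lie in one component of $M$. You need this before invoking Hopf--Rinow, since the lemma does not assume $M$ is connected.
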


    This implies that we have the chain of inclusions
    \begin{align} \label{eq:geodesic_euclidean_balls}
    \ball{M}{x}{\epsilon} \subset \ball{\R^N}{x}{\epsilon} \cap M \subset \ball{M}{x}{S(\epsilon)}, \quad S(\epsilon) = 2 \tau \arcsin(\epsilon/2\tau).
    \end{align} 
    for any $\epsilon < 2 \tau$. Next, we wish to ensure that the Riemannian exponential is well behaved.

    \begin{definition}
        Let $M$ be a Riemannian manifold. We define the \emph{injectivity radius} of $M$ to be
        \begin{align}
            \inj(M) \coloneqq \sup \Big\{ r > 0 \, : \, \exp_x: \ball{T_x M}{0}{r} \to \ball{M}{x}{r} \text{ is a diffeomorphism for all } x \in M\Big\}.
        \end{align}
    \end{definition}

    We combine the above observations about reach and $\inj(M)$ and define a single constant,
    \begin{align}
        \inj_\R(M) \coloneqq \sup\{ \epsilon \in (0, \min\{2 \tau_M, 1\}) \, : \, S(\epsilon) < \inj(M) \}
    \end{align}
    such that we can work with a notion of normal coordinates across the entire manifold. We constrain $\inj_\R(M) \leq 1$ as a technical condition to simplify bounds. On a compact manifold, both the injectivity radius $\inj(M)$ \cite[Lemma 6.16]{lee_introduction_2019-1} and the reach $\tau_M$ \cite[Prop. 14]{thale_50_2008} are positive, which implies that $\inj_\R(M)$ is also positive. 

    \begin{corollary} \label{cor:log}
        Let $\epsilon < \inj_\R(M)$. Then for any $x \in M$, the restriction of the logarithm
        \begin{align} \label{eq:restricted_log}
            \log_x : \ball{\R^N}{x}{\epsilon} \cap M \to T_x M.
        \end{align}
        is a diffeomorphism onto its image. 
    \end{corollary}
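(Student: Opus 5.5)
The plan is to combine the two ball inclusions in \eqref{eq:geodesic_euclidean_balls} with the definition of $\inj_\R(M)$.

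Fix $\epsilon < \inj_\R(M)$. Since $\inj_\R(M) \le \min\{2\tau_M, 1\}$, we have $\epsilon < 2\tau_M$, so the reach lemma of \cite{boissonnat_reach_2019} applies. It gives
\[
\ball{\R^N}{x}{\epsilon} \cap M \subset \ball{M}{x}{S(\epsilon)}.
\]
Next we check that $S(\epsilon) < \inj(M)$. The function $S(\epsilon) = 2\tau\arcsin(\epsilon/2\tau)$ is strictly increasing on $(0,2\tau)$. By definition of the supremum $\inj_\R(M)$, there is some admissible $\epsilon' \in (\epsilon, \inj_\R(M)]$ with $S(\epsilon') < \inj(M)$, so $S(\epsilon) < S(\epsilon') < \inj(M)$. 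If $\epsilon$ happens to be admissible itself, this is immediate.

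Now pick $r$ with $S(\epsilon) < r < \inj(M)$. Then $\exp_x : \ball{T_xM}{0}{r} \to \ball{M}{x}{r}$ is a diffeomorphism for every $x \in M$. (If $\inj(M)$ is defined as a supremum, the diffeomorphism property at radius $r$ follows from some admissible $r' > r$ by restriction.) Its inverse $\log_x : \ball{M}{x}{r} \to \ball{T_xM}{0}{r} \subset T_xM$ is therefore a diffeomorphism onto its image.

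The set $\ball{\R^N}{x}{\epsilon} \cap M$ is open in $M$ and is contained in $\ball{M}{x}{S(\epsilon)} \subset \ball{M}{x}{r}$. The restriction of a diffeomorphism to an open subset is a diffeomorphism onto the (open) image of that subset, so \eqref{eq:restricted_log} is a diffeomorphism onto its image.

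The only delicate point is the supremum bookkeeping, namely getting strict inequalities at the boundary values. The monotonicity of $S$ and the fact that restriction preserves the diffeomorphism property handle this; the rest is routine.
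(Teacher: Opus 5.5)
Your proof is correct and follows the paper's argument. The paper likewise deduces $S(\epsilon)<\inj(M)$ from $\epsilon<\inj_\R(M)$, uses that $\exp_x$ is a diffeomorphism on $\ball{T_xM}{0}{S(\epsilon)}$ with inverse $\log_x$, and restricts via the inclusion $\ball{\R^N}{x}{\epsilon}\cap M\subset\ball{M}{x}{S(\epsilon)}$ from \eqref{eq:geodesic_euclidean_balls}; your extra bookkeeping (monotonicity of $S$ and unpacking the two supremum definitions to obtain strict inequalities) makes explicit a step the paper asserts without comment.
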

    \begin{proof}
        Since $\epsilon < \inj_\R(M)$, then $S(\epsilon) < \inj(M)$ so that $\exp_x : \ball{T_xM}{0}{S(\epsilon)} \to \ball{M}{x}{S(\epsilon)}$ is a diffeomorphism for all $x \in M$ with inverse $\log_x$. The inclusion $\ball{\R^N}{x}{\epsilon} \cap M \subset \ball{M}{x}{S(\epsilon)}$ from \eqref{eq:geodesic_euclidean_balls} shows that the restriction of $\log_x$ to $\ball{\R^N}{x}{\epsilon} \cap M$ is a diffeomorphism onto its image. 
    \end{proof}

    We will often use this in the context of a change of variables to normal coordinates, and we denote the Jacobian of $\exp_x : \ball{T_x M}{0}{\epsilon} \to M$ at $v \in \ball{T_x M}{0}{\epsilon}$ by $J(x,v)$, and recall that
    \begin{align} \label{eq:jacobian_remainder}
        J(x,v) \leq 1 + C_M \|v\|^2. 
    \end{align}
    To obtain comparisons between ambient and intrinsic displacements, we consider the ambient exponential map for $x \in M$, 
    \begin{align} 
        \texp_x  = \iota \circ \exp_x : T_x M  \to \R^N.
    \end{align}
    Recall the third order Taylor expansion of $\texp_x$ as~\cite{monera_taylor_2014}
    \begin{align} \label{eq:exp_taylor}
        \texp_x(v) = x + v + Q(v,v) + r(v), \quad v \in T_x M,
    \end{align}
    where $Q$ is the second fundamental form, so $Q(v,v) \in N_x M$ is valued in the normal space. The remainder satisfies $|r(v)| \leq C_M \|v\|^3$. Then, if $\texp_x(v) = y$, we have
    \begin{align} \label{eq:euclidean_intrinsic_comparison}
        \|v - (y-x)\| \ls \|v\|^2 \andd \big|\|v\|^2-\|y-x\|^2\big| \ls \|v\|^4,
    \end{align}
    where the latter inequality uses the fact that the cubic terms are trivial since $\langle v, Q(v,v) \rangle_\R = 0$. \medskip

    Finally, we fix a global scale parameter for the remainder of the paper. The \emph{convexity radius} is
    \begin{equation} \label{eq:convexity_radius}
        \cx(M) = \sup \{ r \in [0, \infty) \, : \, \ball{M}{x}{r} \text{ is strongly geodesically convex for all } x \in M \}.
    \end{equation}
    Here, strongly geodesically convex means that any two points in the ball are joined by a unique minimizing geodesic contained in the ball. Because $M$ is compact, $\cx(M)>0$~\cite[Section 6.4.2]{petersen_riemannian_2016} and $\cx(M)\leq \inj(M)/2$~\cite{dibble_convexity_2017}. Fix $\Lambda_+>0$ such that the sectional curvatures of $M$ are bounded above by $\Lambda_+$. We then fix a constant $\epsilon_0=\epsilon_0(M)>0$, which depends only on $M$ such that
    \begin{align} \label{eq:global_epsilon_0}
        0<2\epsilon_0<\inj_\R(M) \andd S(2\epsilon_0)<\min\left\{\cx(M),\frac{\pi}{4\sqrt{\Lambda_+}}\right\}.
    \end{align}
    \label{not:global_small_scale}
    All subsequent small-scale estimates will be stated for $0<\epsilon<\epsilon_0$.

    \section{Discretization and Inner Products on VR Complex} \label{sec:discretization}

    We now introduce the discrete analogues of manifolds, differential forms, and inner products which were discussed in the previous section. The primary data that we work with are finite collections of points $X^{(n)} = (x_1, \ldots, x_n) \subset \R^N$, though for simplicity we will often refer to the point cloud as $X = X^{(n)}$. For $\epsilon > 0$, the \emph{$\epsilon$-Vietoris Rips (VR) complex} is a simplicial complex $X_\epsilon$, with $X$ as its vertex set, and $\sigma \subset X$ is a simplex if $\diam_\R(\sigma) < \epsilon$. We give each simplex the orientation induced by the ordering of the sample. The set of such oriented $k$-simplices of $X_\epsilon$ is
    \begin{align}
        S^k_\epsilon(X) \coloneqq \{ \bx_I = (x_{i_0}, \ldots, x_{i_k}) \subset X \, : \, I = (i_0 < \ldots < i_k), \, \diam_\R(\bx_I) < \epsilon\}. 
    \end{align}
    Recall that the simplicial $k$-chains are $C_k(X_\epsilon) \coloneqq \R[S^k_\epsilon(X)]$, and $k$-cochains $C^k(X_\epsilon)$ are the linear maps $a: C_k(X_\epsilon) \to \R$. We extend every cochain alternately to all orderings of a simplex,
    \begin{align} \label{eq:alternating_cochain_convention}
        a(y_{\pi(0)},\ldots,y_{\pi(k)})
        =\sgn(\pi)a(y_0,\ldots,y_k), \qquad \pi\in\Sigma_{k+1},
    \end{align}
    and set its value to zero on tuples with a repeated vertex. With this convention, simplicial cochains $C^\bullet(X_\epsilon)$ are equipped with the usual differential
    \begin{align}
        \hd: C^k(X_\epsilon) \to C^{k+1}(X_\epsilon), \quad \hd a(y_0, \ldots, y_{k+1}) = \sum_{i=0}^{k+1} (-1)^i a(y_0, \ldots, \hat{y}_i, \ldots, y_{k+1}).
    \end{align}
    In this article, we consider the VR complex of a point cloud sampled from an embedded manifold $M \subset \R^N$ such that $X = (x_1, \ldots, x_n) \subset M \subset \R^N$. We emphasize that we use the ambient metric from $\R^N$ to define $X_\epsilon$. In this case, the simplicial complex $X_\epsilon$ acts as our discrete approximation of $M$, and simplicial cochains $C^k(X_\epsilon)$ act as the discrete analogue of differential forms.

    \subsection{Euclidean de Rham Map} \label{ssec:euclidean_derham}
    Given an ambient differential form $\alpha \in \Omega^k_c(\R^N)$, and a point cloud $X \subset \R^N$, our aim is to discretize $\alpha$ as a simplicial cochain on $X_\epsilon$. Let $\Delta^k$ be the standard simplex with the orientation induced by its ordered vertices. For an ordered tuple $\by=(y_0,\ldots,y_k)\in(\R^N)^{k+1}$, define its affine singular-simplex parametrization by
    \begin{align}
        \sigma_{\by}:\Delta^k\longrightarrow\R^N,
        \qquad
        \sigma_{\by}(t_0,\ldots,t_k)=\sum_{i=0}^k t_i y_i.
    \end{align}
    The \emph{discretization map} (or \emph{Euclidean de Rham map}) is
    \begin{align} \label{eq:parametric_derham_map}
        q : \Omega^k_c(\R^N) \to \cC^\infty\left( (\R^N)^{k+1}, \R\right),
        \qquad
        q\alpha(\by) \coloneqq \int_{\Delta^k}\sigma_{\by}^*\alpha.
    \end{align}

    Given a point cloud $X \subset \R^N$ and its associated VR complex $X_\epsilon$, we restrict the de Rham map $q$ to the $k$-simplices $S^k_\epsilon(X)$ in $X_\epsilon$ to obtain a map
    \begin{align} \label{eq:q_cochain_map}
        q_{X, \epsilon} : \Omega^k_c(\R^N) \to C^k(X_\epsilon). 
    \end{align}
    As the point cloud $X$ and $\epsilon$ are usually clear from context, we will often suppress the subscripts and also denote this map by $q$.
    A key property of this discretization is that it is compatible with the smooth and discrete cochain structures.

    \begin{lemma} \label{lem:cochain_map}
        For $X = (x_1, \ldots, x_n) \subset \R^N$ and $\epsilon > 0$, the discretization $q_{X,\epsilon}$ is a cochain map, $\hd q = q d$. 
    \end{lemma}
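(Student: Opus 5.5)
The plan is to prove the identity on the level of the parametric map $q\colon \Omega^\bullet_c(\R^N)\to \cC^\infty((\R^N)^{k+1},\R)$. We show that for every ordered tuple $\by=(y_0,\ldots,y_{k+1})\in(\R^N)^{k+2}$ and every $\alpha\in\Omega^k_c(\R^N)$,
\begin{align}
    q(d\alpha)(\by) = \sum_{i=0}^{k+1}(-1)^i\, q\alpha(y_0,\ldots,\hat{y}_i,\ldots,y_{k+1}).
\end{align}
Restricting to $S^{k+1}_\epsilon(X)$ then gives $\hd q = qd$, once two compatibility points are checked.

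First, every face of a simplex of $X_\epsilon$ is again a simplex of $X_\epsilon$, because the diameter does not increase when passing to a face. Hence the right-hand side only involves values of $q\alpha$ on $k$-simplices of $X_\epsilon$.

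Second, the parametric $q\alpha$ must agree with the alternating-extension convention \eqref{eq:alternating_cochain_convention}. For a permutation $\pi$, we have $\sigma_{\by\circ\pi}=\sigma_{\by}\circ\phi_\pi$, where $\phi_\pi$ is the affine self-map of $\Delta^k$ permuting the vertices. This map has degree $\sgn(\pi)$, so $q\alpha(\by\circ\pi)=\sgn(\pi)\,q\alpha(\by)$. For tuples with a repeated vertex, $\sigma_{\by}$ factors through a lower-dimensional simplex, so $\sigma_{\by}^*\alpha=0$ and the value is zero. Consequently, the value of $q\alpha$ on an arbitrary ordering coincides with the alternating extension of the cochain $q_{X,\epsilon}\alpha$.

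The main step is Stokes' theorem on the standard simplex. We have
\begin{align}
    q(d\alpha)(\by)=\int_{\Delta^{k+1}}\sigma_{\by}^*d\alpha=\int_{\Delta^{k+1}}d(\sigma_{\by}^*\alpha)=\int_{\partial\Delta^{k+1}}\sigma_{\by}^*\alpha,
\end{align}
using that pullback commutes with $d$. Stokes applies because $\Delta^{k+1}$ is a compact manifold with corners and $\sigma_{\by}^*\alpha$ is smooth; $\sigma_{\by}$ is smooth even when it is degenerate. The oriented boundary is $\partial\Delta^{k+1}=\sum_i(-1)^i F_i$, where $F_i$ is the $i$-th face, parametrized by the face inclusion $\iota_i\colon\Delta^k\to\Delta^{k+1}$ that omits vertex $i$. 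Since $\sigma_{\by}\circ\iota_i=\sigma_{(y_0,\ldots,\hat y_i,\ldots,y_{k+1})}$, we get $\int_{\Delta^k}\iota_i^*\sigma_{\by}^*\alpha=q\alpha(y_0,\ldots,\hat y_i,\ldots)$, which yields the displayed formula.

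I expect the main obstacle to be the orientation bookkeeping: checking that the Stokes orientation of the face $F_i$ equals $(-1)^i$ times the orientation induced by its ordered vertices. I would first verify this directly in coordinates $(t_1,\ldots,t_{k+1})$ on $\Delta^{k+1}$, handling $i\ge1$ (the face $t_i=0$, outward normal $-e_i$) and $i=0$ (the face $\sum t_j=1$) separately. The alternative is to cite the standard fact that singular integration of forms intertwines $d$ with the singular coboundary, i.e.\ the de Rham map $\Omega^\bullet\to C^\bullet_{\mathrm{sing}}$ is a cochain map. Our $q$ is that map precomposed with the affine simplices, and $\by\mapsto\sigma_{\by}$ commutes with face maps.
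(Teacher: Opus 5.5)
Your proposal is correct and follows the paper's proof: Stokes' theorem applied to $\sigma_{\by}^*\alpha$ on $\Delta^{k+1}$, combined with $\partial\Delta^{k+1}=\sum_i(-1)^iF_i$ and $\sigma_{\by}\circ\iota_i=\sigma_{(y_0,\ldots,\hat y_i,\ldots,y_{k+1})}$. The extra checks you include are details the paper's one-line proof leaves implicit, and your face sign $(-1)^i$ is correct. These checks are closure of VR simplices under taking faces, agreement of the parametric $q\alpha$ with the alternating extension, and vanishing on degenerate tuples.
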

    \begin{proof}
        The oriented boundary of the standard simplex is the alternating sum of its faces, so the identity follows from Stokes' theorem applied to the  parametrization~\eqref{eq:parametric_derham_map}.
    \end{proof}

    A crucial property for our later approximations is that the discretization map is approximated by evaluation against both ambient and intrinsic displacement vectors based at one of the vertices. 

    \begin{lemma} \label{lem:euclidean_simplex_integral}
        Let $\alpha \in \Omega^k_c(\R^N)$ and $\by = (y_0, \ldots, y_k) \in (\R^N)^{k+1}$. Let $\bw = (y_1 - y_0, \ldots, y_k - y_0)$ be the Euclidean displacement vectors based at $y_0$. Then,
        \begin{align} \label{eq:euclidean_integral_approximation}
            q\alpha(\by) = \frac{\alpha_{y_0}(\bw)}{k!} + \frac{1}{(k+1)!} \sum_{i=1}^k (\bar\nabla_{w_i} \alpha)_{y_0}(\bw) + R(\bw), \quad |R| \ls \|\bar\nabla^2 \alpha\|_\infty \|\bw\|^{k+2}.
        \end{align} where $\bar\nabla$ is the usual covariant derivative on $\R^N$.
    \end{lemma}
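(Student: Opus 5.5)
We will compute $q\alpha(\by)$ directly by pulling back along the affine parametrization and Taylor expanding the coefficients of $\alpha$ about $y_0$. First we reparametrize $\Delta^k$ by the coordinates $(t_1,\ldots,t_k)$ with $t_0 = 1-\sum_{i\ge1} t_i$. Then $\sigma_{\by}(t) = y_0 + \sum_{i=1}^k t_i w_i$, and $d\sigma_{\by}(\partial_{t_i}) = w_i$ is constant. Hence
\begin{align}
q\alpha(\by) = \int_{T^k} \alpha_{y_0 + \sum_i t_i w_i}(w_1,\ldots,w_k)\, dt_1\cdots dt_k,
\end{align}
where $T^k=\{t_i\ge 0,\ \sum_i t_i\le 1\}$ is the solid standard simplex with Lebesgue measure. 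The orientation convention is that the ordered vertices induce the standard orientation of the $t$-coordinates, which is what makes the leading sign positive.

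Next we Taylor expand the function $z\mapsto \alpha_z(\bw)$ to second order about $y_0$ along the segment to $z = y_0+\sum_i t_i w_i$:
\begin{align}
\alpha_{z}(\bw) = \alpha_{y_0}(\bw) + \sum_{i=1}^k t_i (\bar\nabla_{w_i}\alpha)_{y_0}(\bw) + \rho(t),
\end{align}
with integral-form remainder satisfying $|\rho(t)| \le \tfrac12 \|\bar\nabla^2\alpha\|_\infty \,\|\sum_i t_i w_i\|^2 \,\prod_j\|w_j\|$. Here we use that $\bar\nabla$ is the flat connection, so $\bar\nabla_{w}\alpha$ is simply the directional derivative of the coefficient functions, and that evaluating a $k$-form on $k$ vectors is bounded by a dimensional constant times $\prod_j\|w_j\|$. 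Since $t\in T^k$, we have $\|\sum_i t_iw_i\|\le \max_i\|w_i\|\le\|\bw\|$ (with $\|\bw\|$ read as, say, the max or Euclidean norm of the tuple; either gives the same bound up to constants). So $|\rho|\ls \|\bar\nabla^2\alpha\|_\infty\|\bw\|^{k+2}$.

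Finally we integrate term by term using the simplex moments $\int_{T^k} dt = 1/k!$ and $\int_{T^k} t_i\,dt = 1/(k+1)!$. The latter follows from the Dirichlet integral $\int_{T^k} t^{a}\,dt = \prod_i a_i!/(k+|a|)!$, or by symmetry: the barycenter has coordinates $1/(k+1)$, giving $\frac{1}{k!}\cdot\frac{1}{k+1}$. This produces exactly the two main terms of \eqref{eq:euclidean_integral_approximation}. The remainder $R=\int_{T^k}\rho$ is then bounded by $\frac{1}{k!}\sup|\rho|$, which gives the stated estimate. The only delicate point is bookkeeping rather than a genuine obstacle: we must check that the orientation and sign conventions of $\int_{\Delta^k}\sigma_{\by}^*\alpha$ match the $t$-coordinate Lebesgue integral, and state precisely which norm of $\bw$ and which sup-norm of $\bar\nabla^2\alpha$ (over the convex hull of $\by$, or globally) is used. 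The global $\|\cdot\|_\infty$ suffices, since $\alpha$ is compactly supported.
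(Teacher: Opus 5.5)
Your proposal is correct and follows essentially the same route as the paper. Both parametrize the simplex as $y_0+\sum_i u_i w_i$ over the solid standard simplex, Taylor expand $u\mapsto\alpha_{\sigma(u)}(\bw)$ to second order at $u=0$, and integrate using $\int_{\Delta^k}u_i\,du=1/(k+1)!$. Your extra bookkeeping only makes explicit what the paper leaves implicit: the remainder bound via $\|\sum_i t_iw_i\|\le\max_i\|w_i\|$ together with the factor $\prod_j\|w_j\|$, and the orientation convention.
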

    \begin{proof}
        Let $\Delta^k = \{ (u_1, \ldots, u_k) \in \R^k \, : \, u_i\geq 0, \, \sum_{i=1}^k u_i \leq 1 \}$. We parametrize $\Delta(\by)$ by 
        \begin{align}
            \sigma: \Delta^k \to \R^N, \quad \sigma(u_1, \ldots, u_k) = y_0 + \sum_{i=1}^k u_i w_i.
        \end{align}
        Using this parametrization, we obtain
        \begin{align}
            q\alpha(\by) = \int_{\Delta(\by)} \alpha = \int_{\Delta^k} \alpha_{\sigma(u)}(\bw) du.
        \end{align}
        By fixing $\bw = (w_1, \ldots, w_k)$, and treating $\alpha_{\sigma(\cdot)}(\bw) : \Delta^k \to \R$, the second order Taylor expansion at $u=0$ is
        \begin{align}
            \alpha_{\sigma(u)}(\bw) = \alpha_{\sigma(0)}(\bw) + \sum_{i} u_i (\bar\nabla_{w_i} \alpha)_{y_0}(\bw) + R_2(u), \quad \text{where} \quad |R_2(u)| \ls \|\bar\nabla^2 \alpha\|_\infty \|\bw\|^{k+2}.
        \end{align}
        Then, integrating this expression, and using the fact that $\int_{\Delta^k} u_i du = \frac{1}{(k+1)!}$, we obtain~\eqref{eq:euclidean_integral_approximation}.
    \end{proof}

    \begin{lemma} \label{lem:geometric_simplex_integral}
        Let $\alpha \in \Omega^k_c(\R^N)$ and $\by =(y_0, \ldots, y_k) \in M^{k+1} \subset \R^N$ such that $\diam_\R(\by) < \inj_\R(M)$. Let $\bv = (\log_{y_0}(y_1), \ldots, \log_{y_0}(y_k)) \in T_{y_0}^k M$ be the intrinsic displacement vectors based at $y_0$. Then,
        \begin{align}
            q\alpha(\by) = \frac{\alpha_{y_0}(\bv)}{k!} + L(\bv) + R(\bv), \quad |R(\bv)| \ls \| \alpha\|_{\cC^2(\R^N)} \|\bv\|^{k+2}
        \end{align}
        and $L: T^k_x M \to \R$ is homogeneous of degree $k+1$; in other words, $L(\lambda \bv) = \lambda^{k+1} L(\bv)$ for all $\lambda \in \R$.
    \end{lemma}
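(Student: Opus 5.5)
The plan is to start from \Cref{lem:euclidean_simplex_integral} and substitute the Euclidean displacement vectors by intrinsic ones using the Taylor expansion \eqref{eq:exp_taylor} of the ambient exponential. Write $x = y_0$ and $v_i = \log_{x}(y_i)$, which is well defined by \Cref{cor:log} since $\diam_\R(\by) < \inj_\R(M)$. Then
\begin{align}
    w_i = y_i - x = v_i + Q(v_i,v_i) + r(v_i), \qquad |r(v_i)| \leq C_M \|v_i\|^3 .
\end{align}
Note also that $\|w_i\| \asymp \|v_i\|$ at these scales, by \eqref{eq:euclidean_intrinsic_comparison}. Hence the remainder $R(\bw)$ from \Cref{lem:euclidean_simplex_integral} is already $\ls \|\alpha\|_{\cC^2(\R^N)} \|\bv\|^{k+2}$, where $\|\bv\|$ denotes, say, $\max_i \|v_i\|$.

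Next I expand the leading term $\alpha_x(\bw)/k!$ by multilinearity in the slots $w_i = v_i + Q(v_i,v_i) + r(v_i)$. Three kinds of terms appear:
\begin{itemize}
    \item The all-$v$ term is $\alpha_x(\bv)/k!$.
    \item The terms with exactly one slot replaced by $Q(v_i,v_i)$ sum to $\frac{1}{k!}\sum_{i}\alpha_x(v_1,\ldots,Q(v_i,v_i),\ldots,v_k)$. Each is a polynomial in $\bv$ of degree exactly $k+1$, so this sum is homogeneous of degree $k+1$ with the parity required by the statement ($\lambda^{k+1}$ for all real $\lambda$, including negative $\lambda$).
    \item Every other term contains either one $r(v_i)$ or at least two $Q$'s. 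It is therefore bounded by $\|\alpha\|_{\cC^0}\|\bv\|^{k+2}$ and goes into $R$.
\end{itemize}

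Similarly, in the first-order term $\frac{1}{(k+1)!}\sum_i (\bar\nabla_{w_i}\alpha)_x(\bw)$ I replace every $w_j$ by $v_j$. The error is bounded by $\|\alpha\|_{\cC^1}\|\bv\|^{k+2}$. What remains, $\frac{1}{(k+1)!}\sum_i(\bar\nabla_{v_i}\alpha)_x(\bv)$, is multilinear of total degree $k+1$ in $\bv$, hence homogeneous of degree $k+1$. I define $L(\bv)$ as the sum of this term and the $Q$-term above. Both depend only on $x$, $\alpha$, $\bv$ and the second fundamental form, so $L : T_x^kM \to \R$ is homogeneous of degree $k+1$ as required.

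The main point needing care is uniformity of the constants. The bound $|r(v)| \leq C_M\|v\|^3$ and the bound on $Q$ must hold uniformly in $x \in M$, which follows from compactness of $M$. I also need $\|\bv\|$ bounded, say $\|\bv\| \leq S(\inj_\R(M)) \ls 1$, so that the higher-degree cross terms, such as those of order $\|\bv\|^{k+3}$ and above, are absorbed into $\|\bv\|^{k+2}$. Once these uniform bounds are in place, collecting the estimates gives $|R(\bv)| \ls \|\alpha\|_{\cC^2(\R^N)}\|\bv\|^{k+2}$.
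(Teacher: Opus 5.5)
Your proposal is correct and matches the paper's proof essentially line by line. Like the paper, you start from \Cref{lem:euclidean_simplex_integral} and substitute $w_i = v_i + Q(v_i,v_i) + r(v_i)$. You then define $L$ as the single-$Q$ terms of the leading part plus $\frac{1}{(k+1)!}\sum_i(\bar\nabla_{v_i}\alpha)_{y_0}(\bv)$, and absorb the Euclidean remainder, the terms containing an $r$ or two $Q$'s, and the $w\to v$ replacement error into $R$, exactly as the paper's $R = R_0 + E_1 + E_2$. Your remarks on uniform constants over compact $M$ and on boundedness of $\|\bv\|$ only make explicit what the paper uses implicitly.
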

    \begin{proof}
        By~\Cref{lem:euclidean_simplex_integral} (and using the notation there), we obtain
        \begin{align} \label{eq:simplex_expansion_odd1}
            q\alpha(\by) = \frac{\alpha_{y_0}(\bw)}{k!} + \frac{1}{(k+1)!} \sum_{i=1}^k (\bar\nabla_{w_i} \alpha)_{y_0}(\bw) + R_0(\by). 
        \end{align}
        By~\eqref{eq:euclidean_intrinsic_comparison}, we note that $|R_0(\by)| \ls \|\bar\nabla^2 \alpha\|_\infty \|\bv\|^{k+2}$. Next, using the Taylor expansion for the exponential from~\eqref{eq:exp_taylor}, we get
        \begin{align}
            w_i = \texp_{y_0}(v_i)-y_0 = v_i + Q_{y_0}(v_i, v_i) + r_i, \quad \text{where} \quad \|r_i\| \ls \|v_i\|^3,
        \end{align}
        and $Q_x$ is the second fundamental form of $M$. Applying this to the first term in~\eqref{eq:simplex_expansion_odd1}, we obtain
        \begin{align}
            \alpha_{y_0}(\bw) = \alpha_{y_0}(\bv) + \sum_{i=1}^k \alpha_{y_0}(v_1, \ldots, Q_{y_0}(v_i,v_i), \ldots, v_k) + E_1(\bv),
        \end{align}
        where $|E_1(\bv)| \ls \|\alpha\|_\infty \|\bv\|^{k+2}$. 
        For the second term in~\eqref{eq:simplex_expansion_odd1}, by linearity of $\bar\nabla_w$ in $w$, we get
        \begin{align}
            \sum_{i=1}^k (\bar\nabla_{w_i}\alpha)_{y_0}(\bw) = \sum_{i=1}^k (\bar\nabla_{v_i}\alpha)_{y_0}(\bv) + E_2(\bv),
        \end{align}
        where $|E_2| \ls \|\bar\nabla\alpha\|_\infty \|\bv\|^{k+2}$. Thus, we set
        \begin{align}
            L(\bv) = \frac{1}{k!}\sum_{i=1}^k \alpha_{y_0}(v_1, \ldots, Q_{y_0}(v_i,v_i), \ldots, v_k) + \frac{1}{(k+1)!} \sum_{i=1}^k (\bar\nabla_{v_i}\alpha)_{y_0}(\bv),
        \end{align}
        which is homogeneous of degree $k+1$, and the remainder is $R = R_0 + E_1 + E_2$. 
    \end{proof}

    Both estimates apply at any base vertex after reordering the simplex, with only the orientation sign changed.

    \subsection{Bilinear Forms and Inner Products} \label{ssec:bilinear_forms}

    Next, we will consider a general class of bilinear forms on $C^k(X_\epsilon)$, whose strict members are inner products compatible with the manifold inner product. This is a higher-dimensional analogue of the kernel weights used in graph Laplacian convergence~\cite{coifman_diffusion_2006, garcia_trillos_error_2020,calder_improved_2022}, where edges are assigned weights dependent on the distance between endpoints $\|x - y\|$. Here, we will define weights on $k$-simplices which depend on pairwise squared distances between vertices. For any Euclidean vector space $V$, we define the \emph{pairwise squared distance function} by
    \begin{align}
        p: V^{k+1} \to [0,\infty)^{\binom{k+1}{2}} \quad \text{where} \quad p(\by) = (\|y_i - y_j\|^2)_{i < j},
    \end{align}
    and $\by = (y_0, \ldots, y_k) \in V^{k+1}$. 

    \begin{definition} \label{def:admissible_VR_kernel}
        Let $k \in \N$. A family of maps $\kappa_\epsilon^k : (\R^N)^{k+1} \to \R$ for $\epsilon > 0$ is an \emph{admissible VR kernel of degree $k$} if it has the form
        \begin{align}
            \kappa_\epsilon^k(\by) =  \Theta\left( \frac{p(\by)}{\epsilon^2} \right) \quad \text{where} \quad \Theta : [0,\infty)^{\binom{k+1}{2}} \to [0,\infty)
        \end{align}
        is Lipschitz on $[0,1)^{\binom{k+1}{2}}$, vanishes whenever any coordinate is at least $1$, and $\Theta(p(\by))$ is invariant under the natural action of the permutation group $\Sigma_{k+1}$ on $\by$,
        \begin{align}
            \Theta(p(y_0, \ldots, y_k)) = \Theta(p(y_{\pi(0)}, \ldots, y_{\pi(k)})) \quad \text{for} \quad \pi \in \Sigma_{k+1}.
        \end{align}
        If $\Theta(\bz) > 0$ for all $\bz \in [0,1)^{\binom{k+1}{2}}$, then we say that $\kappa_\epsilon^k$ is an \emph{strict VR kernel of degree $k$}. In the case of $k=0$, we set $\kappa^0 =1$. 
    \end{definition}

    In particular, by definition in terms of the pairwise squared distance function, $\Theta$ is invariant under the diagonal orthogonal action of $A \in O(\R^N)$,
    \begin{align}
        \Theta(p(Ay_0, \ldots, Ay_k)) = \Theta(p(y_0, \ldots, y_k)).
    \end{align}

    One class of admissible VR kernels is given as follows, and will be used to study the codifferential. Let $\theta: [0,\infty) \to [0,\infty)$ be Lipschitz on $[0,1)$ and vanish on $[1,\infty)$. We define
    \begin{align}
        \kappa^k_{\theta, \epsilon} : (\R^N)^{k+1} \to \R \quad \text{by} \quad \kappa^k_{\theta, \epsilon}(\bz) = \theta \left( \frac{\diam_\R(\bz)^2}{\epsilon^2} \right).
    \end{align}
    We use these admissible VR kernels and an additional weighting function to define our desired inner products on simplicial cochains. 

    \begin{definition}
    For $k \in \N_0$, let $w:(\R^N)^{k+1} \to (c_w, \infty)$ be a symmetric smooth function bounded below by $c_w>0$ and $\kappa^k_\epsilon$ be an admissible VR kernel. Let $\epsilon > 0$, $X = (x_1, \ldots, x_n) \subset \R^N$, and $X_\epsilon$ be the corresponding VR complex. We define the following family of bilinear forms, called \emph{$w$-weighted kernel forms} on $C^k(X_\epsilon)$,  
    \begin{align} \label{eq:inner_prod}
        \langle a, b \rangle_{n,\epsilon}^{\kappa,w} \coloneqq \binom{n}{k+1}^{-1} \frac{(k!)^2}{\epsilon^{k(m+2)}}\sum_{\by \in S^k_\epsilon(X)} a(\by) b(\by)  \kappa^k_\epsilon(\by) w(\by), \quad a,b \in C^k(X_\epsilon),
    \end{align}
    where $\by = (y_0, \ldots, y_k)$. If $\kappa^k_\epsilon$ is a strict VR kernel, then we call the above \emph{$w$-weighted kernel inner products}.
    \end{definition}

    \begin{remark} \label{rem:admissible_vs_strict}
        Note that if $\kappa_\epsilon$ is only an admissible VR kernel, then there may exist simplices $\by \in S^k_\epsilon(X)$ such that $\kappa_\epsilon(\by) = 0$, and thus there exist cochains with trivial norm. Once $\kappa_\epsilon$ is a strict VR kernel, then this cannot happen. We note that our convergence results for these pairings only require $\kappa_\epsilon$ to be admissible, and not necessarily strict, whereas our codifferential and discrete Laplacian results require strictness and genuine inner products.
    \end{remark}

    We will often fix $\kappa$ and $w$, and suppress the superscripts by using $\langle \cdot , \cdot \rangle_{n, \epsilon}$ to simplify notation. Additionally, we will omit the superscript on $\kappa$ specifying the degree.

    \subsection{Probabilistic Inner Products as U-Statistics} \label{ssec:probabilistic_inner_products}
    Thus far, we have considered kernel forms on $X_\epsilon$ where $X \subset \R^N$ is a fixed finite point cloud. In our main results, we are concerned with the setting where $X = (x_1, \ldots, x_n)$ is a random collection of points which are i.i.d.~sampled from a measure $\mu$ supported on $M$. We will assume that $\mu$ has a $\cC^2$ density $\rho : M \to (0, \infty)$ with respect to the volume measure $\vol_M$ on $M$, which is bounded above and below,
    \begin{align} \label{eq:density_bound}
        \rho_{-} \leq \rho(x) \leq \rho_{+}
    \end{align}
    for fixed constants $0 < \rho_- < \rho_+$. In particular, given two differential forms $\alpha, \beta \in \Omega^k_c(\R^N)$, we wish to understand the pairing $\langle q\alpha, q\beta \rangle_{n,\epsilon}^{\kappa, w}$ as a $U$-statistic as $n \to \infty$.

    \begin{definition} \label{def:u_statistic}
        Let $h: M^{k+1} \to \R$ be a symmetric function, which we call a \emph{$U$-kernel}. Let $X = (x_1, \ldots, x_n) \in M^{n}$ be i.i.d.~samples from a probability measure $\mu$ on $M$. Then, the \emph{$U$-statistic} corresponding to $h$ is
        \begin{align}
            \U_n[h](X) \coloneqq \binom{n}{k+1}^{-1} \sum_{i_0 < \ldots < i_k} h(x_{i_0}, \ldots, x_{i_k}).
        \end{align}
    \end{definition}

    Fix a smooth symmetric weight $w: M^{k+1} \to (c_w,\infty)$ for some $c_w>0$ and an admissible VR kernel $\kappa_\epsilon$. Given $\alpha, \beta \in \Omega^k_c(\R^N)$, we define the $U$-kernel $h_\epsilon(\alpha, \beta) : M^{k+1} \to \R$ by
    \begin{align} \label{eq:euclidean_u_kernel}
        h_\epsilon(\alpha, \beta)(\by) \coloneqq \frac{(k!)^2}{\epsilon^{k(m+2)}} q\alpha(\by) q\beta(\by) \kappa_\epsilon(\by) w(\by). 
    \end{align}
    Then, by definition of the $U$-statistic and the inner product in~\eqref{eq:inner_prod}, we have
    \begin{align} \label{eq:inner_product_u_statistic}
        \langle q\alpha, q\beta \rangle_{n, \epsilon}^{\kappa, w} = \U_n[h_\epsilon(\alpha,\beta)](X).
    \end{align}
    To understand the limit as $n \to \infty$, we consider the strong law of large numbers for $U$-statistics.

    \begin{theorem}{\cite[Section 5.4, Theorem A]{serfling_approximation_1980}} \label{thm:ustat_slln}
        Let $(Z, \mu)$ be a probability space, and let $h : Z^{k+1} \to \R$ be a symmetric measurable function such that $\E_{\mu^{k+1}}[|h|] < \infty$. Let $(x_i)_{i\in\N}$ be a sequence of i.i.d.~samples from $\mu$, and write $X_n=(x_1,\ldots,x_n)$. Then, for every $n\geq k+1$,
        \begin{align}
            \E_{\mu^n}[\U_n[h](X_n)]=\E_{\mu^{k+1}}[h],
        \end{align}
        and almost surely
        \begin{align}
            \lim_{n \to \infty} \U_n[h](X_n) = \E_{\mu^{k+1}}[h].
        \end{align}
    \end{theorem}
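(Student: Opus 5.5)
The argument has two parts. Unbiasedness follows from symmetry and linearity of expectation. Almost-sure convergence follows from the Hoeffding decomposition, or alternatively from a reverse-martingale argument.

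\emph{Unbiasedness.} For $n\geq k+1$, each summand $h(x_{i_0},\ldots,x_{i_k})$ with distinct indices has law $\mu^{k+1}$, since the $x_i$ are i.i.d. Each summand therefore has expectation $\E_{\mu^{k+1}}[h]$, which is finite because $\E_{\mu^{k+1}}[|h|]<\infty$. Averaging over the $\binom{n}{k+1}$ index sets gives $\E[\U_n[h](X_n)]=\E_{\mu^{k+1}}[h]$.

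\emph{Almost-sure convergence.} I would use Hoeffding's reverse-martingale argument. Let $\mathcal{F}_n$ be the $\sigma$-algebra generated by the empirical measure of $x_1,\ldots,x_n$ together with $(x_j)_{j>n}$. Equivalently, it is generated by the symmetric functions of $(x_1,\ldots,x_n)$ and the tail. These $\sigma$-algebras decrease in $n$. By exchangeability, for every index set $I\subset\{1,\ldots,n\}$ of size $k+1$,
\[
\E[h(x_{1},\ldots,x_{k+1})\mid\mathcal{F}_n]=\E[h(x_I)\mid\mathcal{F}_n].
\]
Averaging over all such $I$ gives $\U_n[h]=\E[h(x_1,\ldots,x_{k+1})\mid\mathcal{F}_n]$.

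Hence $(\U_n[h])_{n\geq k+1}$ is a reverse martingale with respect to $(\mathcal{F}_n)$, and $h\in L^1$. The reverse martingale convergence theorem then gives almost-sure and $L^1$ convergence to $\E[h\mid\mathcal{F}_\infty]$, where $\mathcal{F}_\infty=\bigcap_n\mathcal{F}_n$.

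\emph{Main obstacle: identifying the limit.} It remains to show that $\E[h\mid\mathcal{F}_\infty]$ equals the constant $\E_{\mu^{k+1}}[h]$. The $\sigma$-algebra $\mathcal{F}_\infty$ is contained in the exchangeable $\sigma$-algebra of the i.i.d.\ sequence. By the Hewitt--Savage zero--one law, that $\sigma$-algebra is trivial. Therefore the limit is almost surely constant, and $L^1$ convergence identifies the constant as the common mean $\E_{\mu^{k+1}}[h]$.

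\emph{Alternative route.} One could instead use the Hoeffding decomposition
\[
\U_n[h]-\E h=\sum_{c=1}^{k+1}\binom{k+1}{c}\U_n[h_c].
\]
The linear term $c=1$ converges by the classical strong law. However, the degenerate terms require a truncation argument under only an $L^1$ assumption, which is more delicate. For this reason I would prefer the reverse-martingale route.
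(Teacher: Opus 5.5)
Your proof is correct. Unbiasedness follows directly from the i.i.d.\ structure, and almost-sure convergence follows from the reverse martingale identity $\U_n[h]=\E[h(x_1,\ldots,x_{k+1})\mid\mathcal{F}_n]$, L\'evy's downward theorem, and the Hewitt--Savage zero--one law; in fact $\bigcap_n\mathcal{F}_n$ is exactly the exchangeable $\sigma$-algebra, not merely contained in it. The paper gives no proof of its own and simply cites Serfling's Theorem A, which is usually proved by this same Berk-style reverse-martingale argument.
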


    As an example, let $\alpha, \beta \in \Omega^k_c(\R^N)$. We define the \emph{$(k+1,\epsilon)$-fat diagonal of $M$} with respect to the ambient metric by
    \begin{align} \label{eq:fat_diagonal}
        D^k_\epsilon(M) \coloneqq \{\by \in M^{k+1} \, : \, \diam_\R(\by) < \epsilon\}.
    \end{align}
    Then, for a fixed $\epsilon > 0$, almost surely,
    \begin{align} \label{eq:base_inner_prod_limit}
        \lim_{n \to \infty} \langle q \alpha, q\beta\rangle_{n,\epsilon}^{\kappa,w} = \frac{(k!)^2}{\epsilon^{k(m+2)}} \int_{D^k_\epsilon(M)} q\alpha(\by) q\beta(\by) w(\by) \kappa_\epsilon(\by)\, d\mu^{k+1}(\by) \eqqcolon \E_{\mu^{k+1}}[h_\epsilon].
    \end{align}

    \section{Uniform Convergence of Inner Products and Codifferential} \label{sec:main_results}

    Having constructed kernel-weighted forms on cochains of sampled VR complexes, we will now state our main convergence results, along with immediate corollaries. We postpone the proof of the two main results to~\Cref{sec:uc_proof} and~\Cref{sec:codiff_proof}.

    \subsection{Uniform Convergence of Inner Products}

    We now show that these discrete pairings recover the continuum $L^2$ geometry of differential forms on $M$. The main result of this section is a uniform convergence theorem for the empirical kernel pairings of discretized forms.
    We write
    \begin{align} \label{eq:cB_ball}
        \cB_\R^k := \{\alpha \in \Omega^k_c(\R^N) : \|\alpha\|_{\cC^2(\R^N)} \leq 1\}
    \end{align}
    for the ambient $\cC^2$-unit ball of $k$-forms.

    \medskip
    \noindent\textbf{Euclidean Kernel Moment.}
        For $1\leq k\leq m$, let $V=\R^m$ and $\bu=(u_1,\ldots,u_k)\in V^k$. For an orthonormal coframe $e^1,\ldots,e^m$ of $V$, define the \emph{Euclidean kernel moment}
        \begin{align} \label{eq:inner_product_moment_constant}
            \sigma^\Theta_k
            \coloneqq
            \int_{V^k}
            \left[(e^1\wedge\cdots\wedge e^k)(u_1,\ldots,u_k)\right]^2
            \Theta\big(p(0,\bu)\big)\,d\bu,
        \end{align}
        and set $\sigma^\Theta_0=1$. Orthogonal invariance shows that~\eqref{eq:inner_product_moment_constant} is independent of the chosen coframe, so it depends only on $m$, $k$, and $\Theta$. By nonnegativity, Lipschitz continuity, and orthogonal invariance, $\sigma^\Theta_k>0$ if and only if $\Theta(p(0,\bu))>0$ for some linearly independent tuple $\bu=(u_1,\ldots,u_k)\in V^k$ for which the configuration $(0,\bu)$ lies strictly inside the VR cutoff.
    \medskip

    \begin{theorem} \label{thm:main_uc}
        Let $M\subset\R^N$ be a compact embedded $m$-manifold without boundary, and let $1\leq k\leq m$. Let $\mu$ be a probability measure on $M$ with $\cC^2$ density $\rho$ satisfying~\eqref{eq:density_bound} with respect to $\vol_M$. Let $w:M^{k+1}\to(c_w,\infty)$ be smooth and symmetric for some $c_w>0$, and let $\kappa_\epsilon$ be an admissible VR kernel with defining function $\Theta$. Set
        \begin{align}
        r_{k,m} = \min \left\{ \frac{1}{m+2}, \, \frac{2}{km+2m+2}\right\}
        \quad \text{and fix a sequence} \quad 
        \epsilon_n\asymp n^{-1/(km+2m+2)}.
        \end{align}
        There exist constants $C, c > 0$, depending on $M, k, \rho, w, \Theta$, with the following property. Let $p \in (0,1)$. With probability at least $1 - p$, for $n$ such that
        \begin{align}
            n\geq k+1,\qquad \epsilon_n<\epsilon_0,
            \qquad
            \log(8/p)\leq cn\epsilon_n^{km},
        \end{align}
        we have
        \begin{align}
            \sup_{\alpha, \beta \in \cB_\R^k} |\langle q\alpha, q\beta\rangle^{\kappa,w}_{n, \epsilon_n} - \sigma^\Theta_k \langle \alpha, \beta \rangle_M^{P_\Delta}| \leq C\left(n^{-r_{k,m}} + \sqrt{\frac{\log(1/p)}{n}} + \frac{\log(1/p)}{n^{(2m+2)/(km+2m+2)}}\right),
        \end{align}
        where $P_\Delta(x)=w(x,\ldots,x)\rho(x)^{k+1}$ and $\sigma^\Theta_k$ is the Euclidean kernel moment in~\eqref{eq:inner_product_moment_constant}. 
    \end{theorem}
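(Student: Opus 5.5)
Split the error into a deterministic bias term and a stochastic fluctuation term, using the $U$-statistic representation \eqref{eq:inner_product_u_statistic}:
\begin{align}
|\langle q\alpha,q\beta\rangle_{n,\epsilon}-\sigma^\Theta_k\langle\alpha,\beta\rangle^{P_\Delta}_M|
\leq \big|\U_n[h_\epsilon(\alpha,\beta)]-\E[h_\epsilon(\alpha,\beta)]\big| + \big|\E[h_\epsilon(\alpha,\beta)]-\sigma^\Theta_k\langle\alpha,\beta\rangle^{P_\Delta}_M\big|.
\end{align}

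\textbf{Bias.} Write $\E[h_\epsilon]$ as the integral \eqref{eq:base_inner_prod_limit} over the fat diagonal $D^k_\epsilon(M)$. For fixed $y_0=x$, change variables $y_i=\exp_x(v_i)$ using \Cref{cor:log}. Then apply \Cref{lem:geometric_simplex_integral} to obtain $q\alpha(\by)=\alpha_x(\bv)/k!+L(\bv)+O(\epsilon^{k+2})$. Use \eqref{eq:euclidean_intrinsic_comparison} to replace $\Theta(p(\by)/\epsilon^2)$ by $\Theta(p(0,\bv)/\epsilon^2)$; by Lipschitzness this costs $O(\epsilon^2)$ relatively. Expand $\rho(y_i)=\rho(x)+\langle\nabla\rho,v_i\rangle+O(\epsilon^2)$ and $w(\by)=w(x,\ldots,x)+O(\epsilon)$ to first order (the linear parts are really odd terms), and use $J(x,v)=1+O(\|v\|^2)$ from \eqref{eq:jacobian_remainder}. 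After rescaling $\bv=\epsilon\bu$, the leading term is
\[
\int_M P_\Delta(x)\int_{(T_xM)^k}\alpha_x(\bu)\beta_x(\bu)\,\Theta(p(0,\bu))\,d\bu\,d\vol(x).
\]
Expand $\alpha_x,\beta_x$ in an orthonormal coframe and use $O(m)$-invariance of $\Theta(p(0,\cdot))$: cross terms $e^I\otimes e^J$ with $I\neq J$ integrate to zero (by reflections), and diagonal terms all equal $\sigma^\Theta_k$. This yields $\sigma^\Theta_k\langle\alpha_x,\beta_x\rangle_x$.

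All first-order corrections are of total degree $2k+1$ in $\bv$ after multiplication by the leading term. Examples are $\alpha_x(\bv)L_\beta(\bv)$ and the gradient terms of $\rho,w$. These are odd under $\bv\mapsto-\bv$ while the kernel is even, so they vanish. The bias is therefore $O(\epsilon^2)$ uniformly over $\cB^k_\R$, since every constant involves only $\|\alpha\|_{\cC^2},\|\beta\|_{\cC^2}\leq1$.

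\textbf{Fluctuation.} I would prove uniform concentration over $\cB^k_\R\times\cB^k_\R$. The key observation is that uniformity reduces to finitely many quantities. By \Cref{lem:euclidean_simplex_integral}, $q\alpha(\by)$ is determined up to $O(\epsilon^{k+2})$ by the jets $\alpha_{y_0},\bar\nabla\alpha_{y_0}$ evaluated on $\bw$. The product $q\alpha\,q\beta$ is then a bilinear expression in coefficients of $\alpha,\beta$ and of their first derivatives at $y_0$, multiplied by polynomial functions of $\bw$. Varying the base point is still infinite-dimensional, so I would instead use a covering/chaining argument. The class $\{h_\epsilon(\alpha,\beta)\}$ is Lipschitz in $(\alpha,\beta)$ in the $\cC^1$ norm, with envelope $\|h_\epsilon\|_\infty\lesssim\epsilon^{-km}$ supported on $D^k_\epsilon$. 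The $\cC^2$-ball is compact in $\cC^1$ on a neighbourhood of $M$, with metric entropy $\log N(\eta)\lesssim\eta^{-N/1}$ by Kolmogorov--Tikhomirov.

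Concretely, I would apply the Hoeffding decomposition. The linear (Hájek) projection term has variance $\lesssim 1/n$, giving the $\sqrt{\log(1/p)/n}$ term. Bernstein/Arcones--Giné exponential inequalities for degenerate parts use the sup bound $\epsilon^{-km}$ and variance $\lesssim\epsilon^{-km}$. This produces terms like $\log(1/p)/(n\epsilon^{km})$, which equals the stated $\log(1/p)/n^{(2m+2)/(km+2m+2)}$ with the chosen $\epsilon_n$. The same choice balances $\epsilon^2$ against the degenerate fluctuation scale $(n\epsilon^{km})^{-1}$, giving $r_{k,m}$. The condition $\log(8/p)\leq cn\epsilon_n^{km}$ keeps us in the sub-Gaussian regime of Bernstein's inequality.

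\textbf{Main obstacle.} The hardest step is making the fluctuation bound uniform without losing the rate to entropy. I would try first to avoid full chaining by a linearization trick. Write $\langle q\alpha,q\beta\rangle_n$ as $\int\!\!\int$ of the jet fields of $\alpha,\beta$ against a random matrix-valued measure $\hat\mu_n$ on $M$, whose density is built from local moments $\sum\bw^{\otimes}\kappa w$ over simplices based near $x$. Then control $\hat\mu_n-\E\hat\mu_n$ in a dual $\cC^{1}$-type norm via a finite net of test functions, at an extra cost of $O(\epsilon)$ smoothing. The $n^{-1/(m+2)}$ branch of $r_{k,m}$ suggests the authors pay exactly such a net/mesh cost, so I would tune the net scale to match it.
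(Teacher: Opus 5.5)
\textbf{The gap is the uniform fluctuation bound, which is where the stated rate is decided.} The covering you actually propose is a covering of $\cB^k_\R$ on an ambient neighbourhood of $M$, with $\log N(\eta)\lesssim\eta^{-N}$.
\begin{itemize}
\item Pushed through the union bound and Bernstein inversion, this gives terms $\sqrt{\gamma^{-N}/n}$ and $\gamma^{-N}/(n\epsilon^{km})$. The resulting exponent therefore depends on the ambient dimension $N$ and cannot be $r_{k,m}$. For instance, the branch $1/(m+2)$ is exactly the balance $\sqrt{\gamma^{-m}/n}\asymp\gamma$.
\item Your rate bookkeeping also does not close. Balancing $\epsilon^2$ against $(n\epsilon^{km})^{-1}$ gives $\epsilon\asymp n^{-1/(km+2)}$, not $n^{-1/(km+2m+2)}$.
\item The exponent $km+2m+2$ appears only after two balances. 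First, the net radius is balanced against the entropy-weighted degenerate term, $\gamma\asymp\gamma^{-m}/(n\epsilon^{km})$, so $\gamma\asymp(n\epsilon^{km})^{-1/(m+1)}$. Then this is set equal to $\epsilon^2$.
\item Your fallback ``linearization'' through jets points in the right direction, but it is not carried out.
\end{itemize}

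The missing ingredient in the paper is the geometric $U$-kernel $h^g_\epsilon$ of \eqref{eq:geom-U-kernel}. One expands $q\alpha\,q\beta$ in normal coordinates via \Cref{lem:geometric_simplex_integral}, keeps the first-order terms, and symmetrizes over the base vertex. This kernel has two key properties.
\begin{itemize}
\item It is close to the Euclidean kernel: $|h_\epsilon-h^g_\epsilon|\lesssim\epsilon^{2-km}\one_{D^k_\epsilon(M)}$ uniformly over $\cB^k_\R$.
\item $h^g_\epsilon(\alpha,\beta)$ depends on $\alpha$ only through its restricted ambient $1$-jet $j(\alpha)$ along $M$. These jets form a uniformly bounded, uniformly Lipschitz class of maps $M\to E$, so a $\cC^0(M,E)$-net has $\log\cN_\gamma\lesssim\gamma^{-m}$ (\Cref{lem:covering_bound}).
\end{itemize}
The Euclidean kernel itself sees $\alpha$ on affine simplices lying off $M$, which is why this replacement must precede the covering.

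Both the replacement error and the net error have the form $(\text{small})\cdot\epsilon^{-km}\one_{D^k_\epsilon(M)}$. Their empirical averages are therefore controlled only on the support event $\cE_\epsilon=\{\U_n[\one_{D^k_\epsilon(M)}]\lesssim\epsilon^{km}\}$ of \Cref{lem:concentration_D_k_support}. It is this event, not the sub-Gaussian regime of Bernstein's inequality, that the hypothesis $\log(8/p)\leq cn\epsilon_n^{km}$ secures.

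With $\|h^g_\epsilon\|_\infty\lesssim\epsilon^{-km}$, first-projection variance $\Xi\lesssim1$, and a union bound over $\cN_\gamma^2$ pairs, the fluctuation is bounded by
\[
\sqrt{(\gamma^{-m}+\log(1/p))/n}+(\gamma^{-m}+\log(1/p))/(n\epsilon^{km})+\gamma+\epsilon^2 .
\]
The two balances above then give the theorem.

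A minor repair is also needed in your bias step. ``By Lipschitzness'' does not justify replacing the ambient kernel with $\Theta(p(0,\bu))$, since $\Theta$ may jump to zero at the cutoff. You need the thin-shell argument of \Cref{lem:kappa_domain_replacement}: the rescaled region where the two cutoffs disagree has volume $O(\epsilon^2)$. Otherwise your bias argument agrees with the paper's.
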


    We note that for fixed $p$, except in the case of $k=m=1$, the dominant term in the bound is $n^{-2/(km+2m+2)}$. 
    The proof of the degree $k=0$ case is much simpler, and is given in~\Cref{app:degree_zero_inner_product}.

    \begin{proposition}\label{prop:main_uc_degree_zero}
        Let $M\subset\R^N$ be a compact embedded $m$-manifold without boundary. Let $\mu$ be a probability measure on $M$ with $\cC^2$ density $\rho$ satisfying~\eqref{eq:density_bound} with respect to $\vol_M$. Let $w:M\to(c_w,\infty)$ be smooth for some $c_w>0$. There exists $C = C(M, \rho, w) > 0$ with the following property. Let $p\in(0,1)$. With probability at least $1-p$, for $n\geq1$,
        \begin{align}
            \sup_{f,g\in\cB_\R^0}
            \left|
            \frac1n\sum_{i=1}^n f(x_i)g(x_i)w(x_i)
            -\int_M f(x)g(x)w(x)\rho(x)\,\dvol(x)
            \right|
            \leq C\left(
            n^{-1/(m+2)}+\sqrt{\frac{\log(1/p)}{n}}
            \right).
        \end{align}
    \end{proposition}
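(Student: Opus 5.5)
The plan is a direct partition (histogram) argument. It uses the fact that every product $fgw$ with $f,g\in\cB^0_\R$ is uniformly bounded and uniformly Lipschitz on $M$. Indeed, $\|f\|_{\cC^2(\R^N)}\leq 1$ gives $|f|\leq 1$ and $|\bar\nabla f|\ls 1$, and $w$ is smooth on the compact manifold $M$. So there is $L=L(w)$ with $\sup_M|fgw|\leq L$ and $|fgw(x)-fgw(y)|\leq L\|x-y\|$ for all $x,y\in M$, uniformly over $f,g\in\cB^0_\R$. Note that no scale $\epsilon$ enters in degree zero, so the rate $n^{-1/(m+2)}$ must come from balancing an approximation error against an estimation error.

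\textbf{Step 1 (partition).} For a parameter $h\in(0,\epsilon_0)$, take a maximal $h$-separated set $\{z_1,\ldots,z_K\}\subset M$ and let $A_1,\ldots,A_K$ be the associated Voronoi cells, made disjoint by breaking ties by index. Each cell is Borel and has Euclidean diameter at most $2h$. The balls $\ball{\R^N}{z_j}{h/2}\cap M$ are disjoint, and by \eqref{eq:geodesic_euclidean_balls} and the Jacobian bound \eqref{eq:jacobian_remainder} each has volume $\gtrsim h^m$. Hence $K\ls h^{-m}$.

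\textbf{Step 2 (approximation error).} Write $\mu_n=\frac1n\sum_i\delta_{x_i}$ and $F=fgw$. Replacing $F$ by $F(z_j)$ on $A_j$ costs at most $2Lh$ in both $\int F\,d\mu_n$ and $\int F\,d\mu$. This gives, uniformly over the ball,
\begin{align}
\Big|\int F\,d\mu_n-\int F\,d\mu\Big|\leq 4Lh+L\sum_{j=1}^K|\mu_n(A_j)-\mu(A_j)|.
\end{align}
The right-hand side no longer depends on $f$ and $g$, so the supremum over $\cB^0_\R$ is handled for free.

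\textbf{Step 3 (estimation error).} Set $Z=\sum_j|\mu_n(A_j)-\mu(A_j)|$. By Jensen and Cauchy--Schwarz,
\begin{align}
\E Z\leq\sum_j\sqrt{\mu(A_j)/n}\leq\sqrt{K/n}.
\end{align}
Changing a single sample $x_i$ changes $Z$ by at most $2/n$, so McDiarmid's inequality gives $Z\leq\sqrt{K/n}+\sqrt{2\log(1/p)/n}$ with probability at least $1-p$.

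\textbf{Step 4 (balance).} Combining Steps 1--3, the error is at most $C\big(h+h^{-m/2}n^{-1/2}+\sqrt{\log(1/p)/n}\big)$. Choosing $h=n^{-1/(m+2)}$ makes the first two terms both equal to $n^{-1/(m+2)}$, which is the claimed bound. For the finitely many small $n$ with $h\geq\epsilon_0$, the trivial bound $2L$ is absorbed by enlarging $C$. Finally, $\int F\,d\mu=\int_M fgw\rho\,\dvol$, as required.

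I expect the only delicate point to be the covering count in Step 1, namely the lower volume bound for Euclidean balls intersected with $M$. This follows from the normal-coordinate comparisons of \Cref{sec:geometric_prelim}, so the remaining steps are routine.
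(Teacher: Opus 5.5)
Your proof is correct, and it takes a genuinely different route from the paper's.

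The paper covers the \emph{function class}. It applies \Cref{lem:covering_bound} with $k=0$, so that $\log\cN_\gamma\ls\gamma^{-m}$. It then uses Hoeffding for each product $\eta^r\eta^s w$ of net elements, with a union bound over the $\cN_\gamma^2$ pairs, and pays $\gamma$ to pass from the net to arbitrary $f,g\in\cB_\R^0$. This gives the bound $\sqrt{(\gamma^{-m}+\log(2/p))/n}+\gamma$.

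You instead partition the \emph{domain} into $K\ls h^{-m}$ cells. This deterministically reduces the supremum over $\cB_\R^0$ to the single random variable $Z=\sum_j|\mu_n(A_j)-\mu(A_j)|$, which you control by a binomial variance bound and McDiarmid's inequality. The count $K\asymp h^{-m}$ plays exactly the role of $\log\cN_\gamma\asymp\gamma^{-m}$, so both arguments balance to $n^{-1/(m+2)}$.

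What your route buys:
\begin{itemize}
\item It is more elementary, needing only a packing count on $M$ rather than a metric-entropy estimate for a Lipschitz class.
\item It avoids the union bound.
\item On a single event it controls the bounded-Lipschitz distance between $\mu_n$ and $\mu$, hence every uniformly bounded, uniformly Lipschitz integrand at once, using only $\cC^1$ control of $f,g$.
\end{itemize}

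What the paper's route buys is that it is literally the degree-zero case of the net-plus-concentration scheme used for \Cref{thm:main_uc}. There the $U$-kernels depend on the forms through their $1$-jets at several base points and are localized at scale $\epsilon$, so covering the jet class is the natural tool.

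One small citation point: \eqref{eq:jacobian_remainder} is only an upper bound on the Jacobian, whereas your Step 1 needs the lower bound $h^m\ls\vol_M(\ball{M}{z}{h/2})$. This uniform volume growth holds for $h<\epsilon_0$ by compactness, and it is the same fact used in the proof of \Cref{prop:topology_quasi_iso_convergence_scales}, so nothing breaks.
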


    \begin{remark}
        In \Cref{thm:main_uc}, the limiting weight is $P_\Delta(x)=w(x,\ldots,x)\rho(x)^{k+1}$. Thus, choosing
        \[
            w(y_0,\ldots,y_k)=\prod_{i=0}^k \rho(y_i)^{-1}
        \]
        cancels the sampling density and recovers the usual $L^2$ inner product on $M$, up to the constant $\sigma^\Theta_k$. As $\rho$ is typically unknown, an empirical weight may instead use a kernel density estimate $\hrho$;  standard uniform consistency results for KDE on manifolds controls the additional error \cite{coifman_diffusion_2006}.
    \end{remark}

    \subsection{Codifferential Convergence}
    \phantomsection\label{not:discrete_codifferential}
    To relate the continuous Hodge Laplacian on $\Omega^k(M)$ and the discrete Hodge Laplacian on $C^k(X_\epsilon)$, we study the relationship between the smooth and discrete codifferentials
    \begin{align}
        \delta: \Omega^k(M) \to \Omega^{k-1}(M) \andd \hdelta: C^k(X_\epsilon) \to C^{k-1}(X_\epsilon),
    \end{align}
    which are defined as the adjoints of the exterior derivative $d: \Omega^{k-1}(M) \to \Omega^k(M)$~\eqref{eq:smooth_codifferential} and the simplicial coboundary map $\hd: C^{k-1}(X_\epsilon) \to C^k(X_\epsilon)$ with respect to the corresponding smooth and discrete inner products. Note that we exclusively work with \emph{strict VR kernels} such that the resulting bilinear form is indeed an inner product (see~\Cref{rem:admissible_vs_strict}). Furthermore, we work in the unweighted uniform-density case, $w \equiv 1$ and $\rho \equiv \rho_0=\vol_M(M)^{-1}$, in order to keep the notation focused on the codifferential argument.

    We also restrict our attention to forms $\alpha \in \Omega^k(U_\epsilon)$ on a tubular neighbourhood $U_\epsilon$ such that $\alpha = \pi^* \talpha$ for some $\talpha \in \Omega^k(M)$; we refer to such forms as \emph{pullback forms}. Along $M$, we identify $\alpha$ with $\talpha$ and write $\delta\alpha$ for the intrinsic codifferential of $\talpha$. Since $\epsilon<\epsilon_0$, the Euclidean simplices considered below lie in the tubular neighbourhood where these pullback forms are defined. Finally, we work with a certain class of strict VR kernels which are bounded below on the open Vietoris--Rips scale. \label{not:uniform_pullback_setting}
    We write
    \begin{align} \label{not:intrinsic_pullback_unit_balls}
        \cB_M^k = \{\talpha \in \Omega^k(M): \|\talpha\|_{\cC^2(M)}\leq 1\} \andd \pi^*\cB_M^k = \{\pi^*\talpha: \talpha\in \cB_M^k\}
    \end{align}
    for the intrinsic $\cC^2$-unit ball of $k$-forms and the corresponding class of pullback forms.

    \begin{definition} \label{def:codiff_kernel}
        \phantomsection\label{not:codiff_kernel_table}
        For any finite tuple $\bz$ in a Euclidean vector space $V$, let
        \begin{align} \label{eq:s_squared_diameter_convention}
            s(\bz)\coloneqq\diam_V(\bz)^2,
        \end{align}
        with the convention that the diameter of a singleton is zero. Let $\kappa_{\theta,\epsilon}^0 = 1$. For $k \geq 1$, let $\theta: [0,\infty) \to [0,\infty)$ be a non-increasing function, Lipschitz on $[0,1)$, equal to zero on $[1,\infty)$, satisfying $\theta(0)=1$ and $c_\theta = \inf_{t \in [0,1)} \theta(t) > 0$. For $k\geq 1$ and $\bz=(z_0,\ldots,z_k)\in (\R^N)^{k+1}$, define
        \begin{align} \label{eq:codiff_kappa_def}
            \kappa^k_{\theta,\epsilon}(\bz) = \theta\left(\frac{s(\bz)}{\epsilon^2}\right).
        \end{align}
    \end{definition}
    For each $k$, the kernels $\kappa^k_{\theta,\epsilon}$ are strict VR kernels because $\theta$ is bounded below on $[0,1)$. 

    \medskip
    \noindent\textbf{Euclidean Codifferential Moment.}
        For $1\leq k\leq m$, let $V=\R^m$ and equip
        \begin{align}
            H_{m,k}=\left\{\bu=(u_1,\ldots,u_k)\in V^k:\sum_{i=1}^k u_i=0\right\}
        \end{align}
        with its induced Euclidean measure. For $\bu\in H_{m,k}$ and $w\in V$ we define $s(\bu)=\diam_V(\bu)^2$ and $s(w,\bu)=\diam_V(w,\bu)^2$.
        For $s(\bu)<1$, let $e^\perp\in\spann\{u_1,\ldots,u_k\}^\perp$ be any unit vector and define
        \begin{align}
            \lambda_\theta(\bu)
            \coloneqq
            \frac{1}{\sqrt{\theta(s(\bu))}}
            \int_{\{w\in V:s(w,\bu)<1\}}
            \langle w,e^\perp\rangle^2\theta(s(w,\bu))\,dw.
        \end{align}
        This quantity is independent of the choice of $e^\perp$. For $k \geq 2$, we define 
        \begin{align} \label{eq:codifferential_moment_constant}
            \sigma^\theta_{\delta,k}
            \coloneqq
            \frac{k^{(m+4)/2}}{\binom{m}{k-1}}
            \int_{\{\bu\in H_{m,k}:s(\bu)<1\}}
            \left\|(u_2-u_1)\wedge\cdots\wedge(u_k-u_1)\right\|^2
            \lambda_\theta(\bu)^2\,d\bu.
        \end{align}
        This constant depends only on $m$, $k$, and $\theta$. Furthermore, since $H_{m,1}=\{0\}$ and $\theta(0)=1$, we set
        \begin{align}
            \sigma^\theta_{\delta,1}
            =
            \left(\int_{\|w\|<1}\langle w,e\rangle^2\theta(\|w\|^2)\,dw\right)^2
        \end{align}
        for any unit vector $e\in\R^m$, consistently with $\kappa^0_{\theta,\epsilon}=1$.
    \medskip

    The main result is the following uniform convergence for the codifferential.

    \begin{theorem} \label{thm:main_codiff}
        Let $M\subset\R^N$ be a compact embedded $m$-manifold without boundary, let $1\leq k\leq m$, and assume that the sampling density is uniform, $\rho\equiv \rho_0=\vol_M(M)^{-1}$. Let $\theta$ satisfy~\Cref{def:codiff_kernel}. Set
        \begin{align}
            r^\delta_{k,m}=\frac{2}{2mk+5m+6}
            \quad \text{and fix a sequence} \quad 
            \epsilon_n\asymp n^{-r^\delta_{k,m}}.
        \end{align}
        Let $\sigma^\theta_{\delta,k}$ be the Euclidean codifferential moment in~\eqref{eq:codifferential_moment_constant}.
        There exist constants $C,c > 0$, depending on $M, k, \theta$, with the following property. Let $p\in(0,1)$. With probability at least $1-p$, for $n$ such that
        \begin{align}
            n\geq\max\{2k,k+2\},\qquad
            \epsilon_n<\epsilon_0,\qquad
            \log(12/p)\leq cn\epsilon_n^{(k+1)m},
        \end{align}
        writing $\alpha = \pi^* \talpha$ and $\beta = \pi^* \tilde{\beta}$ for the corresponding pullback forms, and setting $\kappa = \kappa^{k-1}_{\theta, \epsilon_n}$, we have
        \begin{align}
            \sup_{\tilde\alpha,\tilde\beta\in\cB_M^k}
            \left|\left\langle \hdelta q\alpha,\hdelta q\beta\right\rangle^{\kappa}_{n,\epsilon_n}-\rho_0^{k+2}\sigma^\theta_{\delta,k}\langle\delta\tilde\alpha,\delta\tilde\beta\rangle_M\right| \leq C\left(n^{-r^\delta_{k,m}}+\frac{\sqrt{\log(1/p)}}{n^{1/2-r^\delta_{k,m}}}+\frac{\log(1/p)}{n^{(3m+2)/(2mk+5m+6)}}\right).
        \end{align}
    \end{theorem}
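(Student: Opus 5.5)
We will write the discrete codifferential explicitly and reduce the pairing $\langle \hdelta q\alpha,\hdelta q\beta\rangle_{n,\epsilon}$ to a $U$-statistic of order $k+2$ (hence the condition $n\geq\max\{2k,k+2\}$ and the exponent $\epsilon_n^{(k+1)m}$). By definition of adjoint with respect to the kernel inner products in degrees $k$ and $k-1$ (weights $\kappa^k_{\theta,\epsilon}$ and $\kappa^{k-1}_{\theta,\epsilon}$, the latter strictly positive), for a $(k-1)$-simplex $\bz$ we get
\begin{align}
\hdelta a(\bz)=\frac{c_{n,k}}{\epsilon^{m+2}}\,\frac{1}{\kappa^{k-1}_{\theta,\epsilon}(\bz)}\,\frac{1}{n}\sum_{x\in X} a(x,\bz)\,\kappa^k_{\theta,\epsilon}(x,\bz),
\end{align}
with an explicit combinatorial constant $c_{n,k}\to k^2$-type factor coming from the ratio of binomial normalizations. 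Hence $\langle\hdelta q\alpha,\hdelta q\beta\rangle$ is, up to diagonal terms, a $U$-statistic in $(x,x',\bz)$ with kernel $\epsilon^{-(k-1)(m+2)-2(m+2)}\,q\alpha(x,\bz)q\beta(x',\bz)\kappa^k(x,\bz)\kappa^k(x',\bz)/\kappa^{k-1}(\bz)$; terms with $x=x'$ or $x\in\bz$ form a lower-order $V$-statistic correction which we bound crudely by $O(1/(n\epsilon^{m}))$-type terms, producing the third error term.

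The bias step computes the expectation of this kernel. First integrate out the free vertex $x$ with $\bz$ fixed: by \Cref{lem:geometric_simplex_integral}, in normal coordinates at the barycentre of $\bz$, $q\alpha(x,\bz)$ expands as $\frac1{k!}\alpha(\bv)$ plus a degree-$(k+1)$ term and remainder. The key point is that $\int q\alpha(x,\bz)\kappa^k(x,\bz)\,d\mu(x)$ must be of order $\epsilon^{m+2}\cdot\epsilon^{k-1}$ and equal, to leading order, to a multiple of $(\delta\tilde\alpha)(\bz\text{-displacements})$: the zeroth-order term $\alpha_{z}(w,\ldots)$ integrates to zero by oddness in $w$ (reflection $w\mapsto -w$ about the tangent space of the centred face, using that $\kappa$ depends only on $s(w,\bu)$ and the symmetrization over $H_{m,k}$), and the first-order term gives $\sum_j \langle w,e_j\rangle^2(\nabla_{e_j}\tilde\alpha)(e_j,\ldots)$, i.e.\ $-\iota$-contraction of $\nabla\tilde\alpha$, whose component normal to $\spann\bu$ produces $\delta\tilde\alpha$ weighted by $\lambda_\theta(\bu)\sqrt{\theta(s(\bu))}$. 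The pullback structure $\alpha=\pi^*\tilde\alpha$ is what lets normal-direction terms from the second fundamental form drop out, and uniform density kills the $\nabla\rho$ drift term. Components along $\spann\bu$ must be shown to cancel or be $O(\epsilon)$ after the $\Sigma_{k+1}$ alternation. Squaring, dividing by $\kappa^{k-1}$ and integrating over $\bz$ in normal coordinates (Jacobian \eqref{eq:jacobian_remainder}) gives $\rho_0^{k+2}\sigma^\theta_{\delta,k}\langle\delta\tilde\alpha,\delta\tilde\beta\rangle_M$ via the moment \eqref{eq:codifferential_moment_constant}, the averaging of $\|\,(u_2-u_1)\wedge\cdots\|^2$ against $(k-1)$-forms producing the $\binom{m}{k-1}^{-1}$; the bias is $O(\epsilon)$ uniformly over $\cB^k_M$ since all remainders depend only on $\cC^2$ norms.

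The concentration step follows the proof of \Cref{thm:main_uc}: the kernel is bounded by $\epsilon^{-(k+1)m}$ after normalization, with variance of the Hoeffding projection of order $\epsilon^{-?}$; a Bernstein inequality for $U$-statistics gives the $\sqrt{\log(1/p)}/(n^{1/2}\epsilon^{\cdots})$ and $\log(1/p)/(n\epsilon^{(k+1)m})$ terms, and uniformity over $\cB^k_M\times\cB^k_M$ is obtained by bilinearity: expand $q\alpha$ to first order via a finite-dimensional jet ($\tilde\alpha$ and $\nabla\tilde\alpha$ at the base point) so the supremum reduces to finitely many scalar $U$-statistics with function-valued coefficients, controlled by a covering/chaining argument, with the jet remainder absorbed into the $O(\epsilon)$ term. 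Balancing $\epsilon$ against $n^{-1/2}\epsilon^{-(2mk+5m+4)/4}$-type variance yields $r^\delta_{k,m}$. The main obstacle is the bias computation: showing the zeroth-order term vanishes exactly and the tangential-to-face components of $\nabla\tilde\alpha$ cancel, so that precisely $\delta\tilde\alpha$ (not all of $\nabla\tilde\alpha$) survives at order $\epsilon$; I would first verify it for $k=1$ (graph-gradient divergence) and then use the barycentric centring in $H_{m,k}$ for general $k$.
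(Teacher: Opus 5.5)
Your architecture matches the paper's: the explicit adjoint formula, a split into a diagonal sum and an order-$(k+2)$ off-diagonal $U$-statistic, a local expansion of the cone-point average about a centre $z$ of the base simplex, Bernstein plus a cover, and polarization. Expanding $q\alpha(x,\by)$ directly about $z$ is a legitimate variant of the paper's cochain-map reduction $q\alpha(x,\by)=\sum_i(-1)^iq\alpha(z,x,\hat\by_i)+O(\epsilon^{k+2})$, though \Cref{lem:geometric_simplex_integral} is vertex-based, so you would redo the Taylor expansion about $z$. The step you flag as the main obstacle resolves exactly, not after alternation. With exact centring $\sum_iu_i=0$ (the reason the paper uses the Karcher mean), $W_\bu=\spann\{u_1,\ldots,u_k\}=\spann\hat\bu_i$ has dimension at most $k-1$, so $\alpha_z(v,\hat\bu_i)=0$ and $(\nabla_e\alpha)_z(v,\hat\bu_i)=0$ for every $v\in W_\bu$. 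The usable symmetry of $K_\bu$ is the subgroup of $O(T_zM)$ fixing $W_\bu$ pointwise; full negation $w\mapsto -w$ is not a symmetry once $k\geq3$. That subgroup puts the first moment in $W_\bu$ and makes the second moment scalar on $W_\bu^\perp$, which leaves exactly $-\lambda_\theta(\bu)(\delta\alpha)_z(\hat\bu_i)$ (\Cref{lem:codiff_moment_trace}).

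\textbf{Gap 1: the cutoff layer in the bias.} The real gaps come from one fact you do not carry through: $I(\by)$ is naively of size $\epsilon^{k-2}$, and is $O(\epsilon^{k-1})$ only because of this cancellation. Your claim that the bias is $O(\epsilon)$ because all remainders depend only on $\cC^2$ norms fails on the layer $\{|s(\bu)-1|\lesssim\epsilon^2\}$. Since $\theta\geq c_\theta$ on $[0,1)$ and vanishes on $[1,\infty)$, the ambient normalized cone kernel $\kappa^k_{\theta,\epsilon}(x,\by)/\sqrt{\kappa^{k-1}_{\theta,\epsilon}(\by)}$ is not $L^1$-close to $K_\bu$ there. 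For instance, if $s(\bu)\geq1>\epsilon^{-2}s(\by)$, then $K_\bu\equiv0$ while the ambient kernel is not. So you only have $|I(\by)|\lesssim\epsilon^{k-2}$ on a set of relative measure $\epsilon^2$. This contributes $\epsilon^{-(k-1)(m+2)}\cdot\epsilon^{2k-4}\cdot\epsilon^{2}\cdot\epsilon^{(k-1)m}=O(1)$ to $\E[F_n]$, the same order as the limit. The missing idea is a cutoff-adapted comparison kernel built from the ambient base diameter $\epsilon^{-2}s(\by)$. It keeps the residual symmetry, so the zeroth-order cancellation survives and $|I(\by)|\lesssim\epsilon^{k-1}$ uniformly (\Cref{lem:codiff_uniform_I_bound}); the paper reuses this bound for the projection variance.

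\textbf{Gap 2: uniform concentration and the rate.} The off-diagonal kernel you wrote down has size $\epsilon^{-m(k+1)-2}$ on its support, not $\epsilon^{-(k+1)m}$. That is $\epsilon^{-2}$ larger than its $O(1)$ mean and its support mass $\epsilon^{m(k+1)}$ would suggest. Consequently, truncating $q\alpha$ to its first-order jet, as in the proof of \Cref{thm:main_uc}, does not work here. A remainder of size $\epsilon^{k+2}$ in $q\alpha$ gives a pointwise kernel error of order $\epsilon^{-m(k+1)}$. On the support event, its contribution to $\U_n$ can only be bounded by $O(1)$, not absorbed into an $O(\epsilon)$ term. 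With only $\cC^2$ control you cannot expand further, and concentrating the remainder uniformly would require covering the full class anyway.

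The paper avoids truncation altogether. It covers $\cB_M^k$ itself in $\cC^0(M)$ with $\log\cN_\gamma\lesssim\gamma^{-m/2}$ (H\"older entropy, which uses the full $\cC^2$ smoothness). It applies Bernstein to the exact kernels of the net elements, with $\Xi\lesssim\epsilon^{-2}$, and pays $\gamma\epsilon^{-2}$ for the $\cC^0$-approximation on the support event for $D^{k+1}_{2\epsilon}(M)$. Balancing $\epsilon\asymp\gamma\epsilon^{-2}\asymp\gamma^{-m/2}/(n\epsilon^{m(k+1)+2})$ is what produces $r^\delta_{k,m}=2/(2mk+5m+6)$; your stated balancing gives $2/(2mk+5m+8)$ instead.

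Two smaller corrections. The third error term is the Bernstein linear term $\log(1/p)/(n\epsilon^{m(k+1)+2})$, not the diagonal. The diagonal is of order $1/(n\epsilon^{m+2})$, again $\epsilon^{-2}$ worse than your estimate because it is a sum of squares with no cancellation, but it is still negligible at this bandwidth.
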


    \begin{remark}
        By bilinearity of the inner products, our main results provide uniform estimates. In particular, if $R_{n,p}$ denotes the relevant rate in~\Cref{thm:main_uc}, then for all $\alpha,\beta\in\Omega^k_c(\R^N)$,
        \begin{align}
            |\langle q\alpha,q\beta\rangle^{\kappa,w}_{n,\epsilon_n}-\sigma^\Theta_k\langle\alpha,\beta\rangle_M^{P_\Delta}|
            \ls R_{n,p}\|\alpha\|_{\cC^2(\R^N)}\|\beta\|_{\cC^2(\R^N)}.
        \end{align}
        and if $R^\delta_{n,p}$ denotes the rate in~\Cref{thm:main_codiff}, then all $\tilde\alpha,\tilde\beta\in\Omega^k(M)$ and their pullbacks satisfy
        \begin{align}
            \left|\left\langle\hdelta q\alpha,\hdelta q\beta\right\rangle^\kappa_{n,\epsilon_n}
            -\rho_0^{k+2}\sigma^\theta_{\delta,k}\langle\delta\tilde\alpha,\delta\tilde\beta\rangle_M\right|
            \ls R^\delta_{n,p}\|\tilde\alpha\|_{\cC^2(M)}\|\tilde\beta\|_{\cC^2(M)}.
        \end{align}
    \end{remark}

    The optimized bandwidths in~\Cref{thm:main_uc,thm:main_codiff} are generally different, whereas the applications in~\Cref{sec:rips_laplacian} require simultaneous convergence in probability of the inner products in adjacent degrees and the codifferential pairing at a common bandwidth; the following corollary (proved in~\Cref{sec:codiff_proof}) records this consequence of the unoptimized estimates in their proofs.

    \begin{corollary} \label{cor:common_bandwidth_convergence}
        Fix $0\leq k\leq m-1$ and assume the hypotheses of~\Cref{thm:main_codiff}, using $w\equiv1$ and the kernels $\kappa^r_{\theta,\epsilon}$ in each degree. Let
        \begin{align}
            \epsilon_n=n^{-a}, \qquad 0<a<\frac{1}{m(k+2)+2}.
        \end{align}
        Then, at the common bandwidth $\epsilon_n$, the degree $k$ and $(k+1)$ inner products and, when $k\geq1$, the degree-$k$ codifferential pairing converge simultaneously in probability to the respective continuum limits identified in~\Cref{thm:main_uc},~\Cref{prop:main_uc_degree_zero}, and~\Cref{thm:main_codiff} uniformly over the corresponding unit balls.
    \end{corollary}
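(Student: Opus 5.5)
The plan is to avoid re-optimizing anything. I will go back into the proofs of \Cref{thm:main_uc}, \Cref{prop:main_uc_degree_zero} and \Cref{thm:main_codiff} and pull out the \emph{unoptimized} high-probability estimates, valid for a general bandwidth $\epsilon<\epsilon_0$, before the specific choices $\epsilon_n\asymp n^{-1/(km+2m+2)}$ and $\epsilon_n\asymp n^{-r^\delta_{k,m}}$ are substituted. I expect each of these estimates to split into two parts.
\begin{itemize}
\item A deterministic bias term, comparing $\E_{\mu^{j+1}}[h_\epsilon]$ with the continuum limit through the normal-coordinate expansions of \Cref{lem:geometric_simplex_integral}. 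This should be $O(\epsilon^{b})$ for some $b>0$.
\item A uniform stochastic term, coming from a Bernstein-type inequality for $U$-statistics combined with the reduction of the supremum over the $\cC^2$-ball. This should take the form
\begin{align}
\sqrt{\frac{\log(1/p)}{n\epsilon^{\gamma}}}+\frac{\log(1/p)}{n\epsilon^{\gamma'}}+(\text{discretization of the sup}).
\end{align}
It holds on the event $\log(c'/p)\leq cn\epsilon^{\gamma''}$.
\end{itemize}
Here $\gamma,\gamma',\gamma''$ are explicit exponents depending on the degree and on which pairing is considered. Concretely, I will rewrite the final displays of those proofs with $\epsilon$ left free. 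The inner products in degrees $k$ and $k+1$ come from \Cref{thm:main_uc}, or from \Cref{prop:main_uc_degree_zero} when $k=0$. The degree-$k$ codifferential pairing comes from \Cref{thm:main_codiff}, with the kernels $\kappa^r_{\theta,\epsilon}$ and $w\equiv 1$; these satisfy the hypotheses of both theorems, since $\kappa^r_{\theta,\epsilon}$ is a strict, hence admissible, VR kernel and $\rho\equiv\rho_0$ is $\cC^2$ and bounded above and below.

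Next I substitute $\epsilon_n=n^{-a}$. The bias terms tend to zero for any $a>0$. The stochastic terms tend to zero as long as $n\epsilon_n^{\gamma_{\max}}\to\infty$, where $\gamma_{\max}$ is the largest of the exponents appearing across the three pairings. The same condition makes the side constraint $\log(c'/p)\leq cn\epsilon_n^{\gamma''}$ hold for fixed $p$ and all large $n$. The claim to verify is that $\gamma_{\max}\leq m(k+2)+2$, so that $a<1/(m(k+2)+2)$ gives $n\epsilon_n^{\gamma_{\max}}=n^{1-a\gamma_{\max}}\to\infty$.

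To conclude convergence in probability, fix $\eta>0$ and $p>0$. Apply each of the (at most three) estimates with failure probability $p/3$ and take a union bound. For $n$ large, $\epsilon_n<\epsilon_0$, the sample-size and log conditions hold, and each right-hand side is below $\eta$. The simultaneous deviation event therefore has probability at most $p$ eventually, which is the assertion of the corollary, uniformly over the unit balls $\cB^k_\R$, $\cB^{k+1}_\R$ and $\cB^k_M$. When $k=0$ there is no codifferential pairing, and only \Cref{prop:main_uc_degree_zero} and the degree-$1$ inner product are needed.

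The main obstacle is the exponent bookkeeping in the previous step, namely confirming that the stochastic terms in the three proofs never demand more than $n\epsilon^{m(k+2)+2}\to\infty$. I expect the binding constraint to come from the codifferential pairing in degree $k$: the discrete codifferential $\hdelta q\alpha$ involves an extra vertex integrated against the degree-$(k-1)$ normalization, and $\hdelta$ carries an additional factor of order $\epsilon^{-2}$. The degree-$(k+1)$ inner product is the other candidate, through its variance term in $n\epsilon^{(k+1)m}$ combined with the $\epsilon^{-2}$-type losses from normalization. I would first recompute the variance proxy of each $U$-kernel: a sup-norm of order $\epsilon^{-jm}$ after normalization, and a second moment of order $\epsilon^{-jm}$ per degenerate projection. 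I would then check which combination produces the stated threshold. If some term turned out to need a strictly larger exponent, I would shrink the admissible range of $a$ accordingly, since the argument above works for any $a$ in the range where all terms vanish.
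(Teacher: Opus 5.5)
Your overall route is the paper's: reuse the unoptimized bias and variance bounds from the proofs of \Cref{thm:main_uc} and \Cref{thm:main_codiff}, substitute $\epsilon_n=n^{-a}$, and finish with a finite union bound. The gap is that you leave undone the only nontrivial step, and your template for the stochastic term hides it. In those bounds the covering radius $\gamma$ of the unit ball is a free parameter, and its metric entropy enters the numerators alongside $\log(1/p)$. For a degree-$r$ inner product, \Cref{prop:uniform_variance_euclidean} gives $\sqrt{(\gamma^{-m}+\log(1/p))/n}+(\gamma^{-m}+\log(1/p))/(n\epsilon^{rm})+\gamma+\epsilon^2$. For the codifferential, \eqref{eq:codiff_combined_rate_before_balance} reads
\[
\xi\ls \epsilon+\frac1n+\frac{1}{n\epsilon^{m+2}}+\sqrt{\frac{\gamma^{-m/2}+\log(1/p)}{n\epsilon^2}}+\frac{\gamma^{-m/2}+\log(1/p)}{n\epsilon^{m(k+1)+2}}+\gamma\epsilon^{-2}.
\]
So you cannot decide whether everything vanishes from a list of $\epsilon$-exponents. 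You must choose $\gamma_n\to0$ jointly with $\epsilon_n$. Killing $\gamma\epsilon^{-2}$ forces $\gamma_n=o(\epsilon_n^2)$, hence $\gamma_n^{-m/2}\gg\epsilon_n^{-m}$. The Bernstein sup-norm term is then at least $1/(n\epsilon_n^{m(k+2)+2})$.

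That entropy cost is the source of the threshold. The mechanisms you point to, the extra cone vertex and the $\epsilon^{-2}$ carried by $\hdelta$, only produce the sup-norm exponent $m(k+1)+2$. With just $\log(1/p)$ in the numerators, as in your template, you would find a maximal exponent $m(k+1)+2$ and conclude there is slack. In fact no fixed covering radius makes the bound tend to zero. Your guess about the variance proxy is also off: \Cref{thm:bernstein} uses the first-projection variance, which is $O(1)$ for the inner products and $O(\epsilon^{-2})$ for the codifferential, not $\epsilon^{-jm}$.

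The paper closes the gap by setting $T_n=n\epsilon_n^{m(k+2)+2}=n^{1-a(m(k+2)+2)}\to\infty$.
\begin{itemize}
\item For the inner products in degrees $r\in\{k,k+1\}$ with $r\geq1$, it takes $\gamma_n=T_n^{-1/(2m)}$. Then $\gamma_n^{-m}=T_n^{1/2}$, and together with $n\epsilon_n^{rm}\geq T_n$ every term vanishes.
\item Degree zero is \Cref{prop:main_uc_degree_zero}, which has no bandwidth.
\item For the codifferential, it takes $\gamma_n=\epsilon_n^2T_n^{-1/m}$. This bounds the three $\gamma$-dependent terms by $T_n^{-1/4}$, $T_n^{-1/2}$ and $T_n^{-1/m}$.
\item The remaining terms also vanish. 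These include the diagonal contribution $1/(n\epsilon_n^{m+2})$ from \Cref{lem:codiff_diagonal_exp_bound}, which your bias/variance split omits.
\item The concentration side conditions involve at most $n\epsilon_n^{(k+1)m}\geq T_n$, so they hold eventually.
\end{itemize}
Writing $\gamma=\epsilon^2 x$, the codifferential bound tends to zero exactly when $x\to0$ and $x^{-m/2}/T_n\to0$, i.e.\ when $T_n\to\infty$. So the stated range of $a$ is sharp for these estimates. Your fallback of shrinking the range is not needed, but as written your proposal does not commit to the range the corollary claims.
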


    \subsection{Topology of the VR Complex} \label{ssec:rips_topology}

    The convergence results above provide analytic statements about the geometric properties of cochains on the sampled VR complex. A key advantage of our discretization framework is the realization of these discretized forms as cochains on an underlying simplicial complex. In particular, we show in this section that the scale parameters $\epsilon_n$ required for convergence result in VR complexes $X_{\epsilon_n}$ which are homotopy equivalent to $M$; and furthermore, that the discretization map $q$ restricted to $M$ yields a quasi-isomorphism. 

    There has been a series of works which have studied the topological properties of VR complexes, beginning with Hausmann~\cite{hausmann1995vietorisrips}. For $\epsilon > 0$, let $M_\epsilon$ denote the \emph{intrinsic} Vietoris-Rips complex of $M$, a simplicial complex with $M$ as its vertex set and $\sigma \subset M$ with $|\sigma| = k+1$ is a $k$-simplex if $\diam_M(\sigma) < \epsilon$. By placing a total order on the elements of $M$, Hausmann defined a map $T: |M_\epsilon| \to M$, where $|M_\epsilon|$ is the geometric realization of $M_\epsilon$ as follows. On an ordered simplex $\sigma=[p_0,\ldots,p_k]$, the map is defined inductively as the geodesic cone from the first vertex $p_0$. If $z=\sum_{i=0}^k t_i p_i$ and $t_0<1$, set $z'=(1-t_0)^{-1}\sum_{i=1}^k t_i p_i$ and let $T(z)=\gamma(1-t_0)$, where $\gamma$ is the minimizing geodesic from $p_0$ to $T(z')$; if $t_0=1$, set $T(z)=p_0$. We consider the statement of Hausmann's theorem from~\cite[Theorem 4]{majhi_demystifying_2024}, which uses the convexity radius from~\eqref{eq:convexity_radius}.
    
    \begin{theorem}{\cite{hausmann1995vietorisrips}}
        For any $0 < \epsilon < \cx(M)$, the map $T$ is a homotopy equivalence.
    \end{theorem}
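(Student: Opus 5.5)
The plan is the classical Whitehead-type argument. I would show that $T$ is well defined and continuous, and then that it induces isomorphisms on all homotopy groups. Since $|M_\epsilon|$ is a CW complex and $M$, being a compact manifold, has the homotopy type of a CW complex, Whitehead's theorem then makes $T$ a homotopy equivalence.

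\textbf{Step 1: the containment property.} The basic geometric fact I would establish first is: if $\sigma=[p_0,\ldots,p_k]$ with $\diam_M(\sigma)<\epsilon<\cx(M)$, then
\begin{align}
T(|\sigma|)\subset \bigcap_{i=0}^k \ball{M}{p_i}{\epsilon}.
\end{align}
The proof is by induction on $k$ along Hausmann's cone construction. By induction, $T(z')$ lies in every ball $\ball{M}{p_i}{\epsilon}$ for $i\geq1$. The vertex $p_0$ also lies in each of these balls. Since the balls are strongly geodesically convex, the minimizing geodesic from $p_0$ to $T(z')$ stays inside each of them.

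For the remaining ball $\ball{M}{p_0}{\epsilon}$, note that $d_M(p_0,T(z'))<\epsilon$ follows from the inductive hypothesis with $p_0$ adjoined, applied to the face $[p_0,p_1,\ldots,p_k]$ viewed as a cone.

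Convexity also gives uniqueness of the minimizing geodesics. Uniqueness, together with the smooth dependence of $\exp$ and $\log$ inside the convexity radius, gives continuity of $T$ on each simplex. Compatibility on faces holds by construction: when $t_0=0$ the cone reduces to the face map. Hence $T$ is continuous on the realization with its weak topology.

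\textbf{Step 2: surjectivity on $\pi_j$.} Given $f:S^j\to M$, I would triangulate $S^j$ so finely that each simplex has image of diameter less than $\epsilon/2$. Let $f'$ be the simplicial map into $M_\epsilon$ sending each vertex $v$ to $f(v)$. By Step 1, for every point $x$, both $T f'(x)$ and $f(x)$ lie in a convex ball of radius $<\epsilon$ around a vertex image. So the geodesic straight-line homotopy between $Tf'$ and $f$ is well defined and continuous, which gives $[Tf']=[f]$.

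\textbf{Step 3: injectivity on $\pi_j$.} Take $g:S^j\to|M_\epsilon|$ with $T\circ g$ null-homotopic. By simplicial approximation I may assume $g$ is simplicial on some triangulation $K$. Let $G:D^{j+1}\to M$ be a null-homotopy of $Tg$. Triangulate $D^{j+1}$ by a subdivision $K'$ extending a subdivision of $K$, fine enough that $G$ maps each simplex to a set of diameter much smaller than $\epsilon$.

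Define a vertex map $G'$ into $M_\epsilon$ by $v\mapsto G(v)$; it is simplicial on $D^{j+1}$. On the boundary, I would compare $G'|_{S^j}$ with $g$ composed with the subdivision.

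\textbf{Main obstacle: the contiguity check.} For a simplex $\tau$ of $K'$ contained in a simplex $\sigma$ of $K$, I need the vertex set $g(\sigma)\cup\{Tg(v):v\in\tau\}$ to have diameter less than $\epsilon$. This is again Step 1: each $Tg(v)$ lies within $\epsilon$ of every vertex of $g(\sigma)$, and any two points $Tg(v),Tg(v')$ are close by the fineness of the subdivision. Some care with strict inequalities is needed, possibly by shrinking $\epsilon$ slightly using openness of the condition.

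Granting this, $g$ and $G'|_{S^j}$ are contiguous, hence homotopic in $|M_\epsilon|$. Since $G'|_{S^j}$ extends over the disk, $g$ is null-homotopic. I expect the careful verification of these diameter bounds in Step 1 and Step 3 to be the delicate part. It is the only place where the hypothesis $\epsilon<\cx(M)$ enters.
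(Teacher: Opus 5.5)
The paper does not prove this statement. It quotes it from Hausmann, in the convexity-radius form of Majhi's Theorem 4, and uses it as a black box in the affine-projection lemma, so there is no internal proof to compare against. Your outline is essentially Hausmann's classical argument: Whitehead's theorem, fine triangulations for surjectivity on $\pi_j$, and contiguity for injectivity. It does go through, but one step is not justified as written.

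\textbf{Step 1, the ball around $p_0$.} The inductive hypothesis applied to the face $[p_1,\ldots,p_k]$ only places $T(z')$ in $\bigcap_{i\geq1}B_M(p_i,\epsilon)$. A point there need not lie in $B_M(p_0,\epsilon)$; this already fails in flat space. Appealing to ``the face $[p_0,p_1,\ldots,p_k]$'' is circular, since that face is $\sigma$ itself.

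The fix is to strengthen the induction to a convex-hull statement: for every simplex $\sigma$ of $M_\epsilon$ and every $q\in M$ with all vertices of $\sigma$ in $B_M(q,\epsilon)$, one has $T(|\sigma|)\subset B_M(q,\epsilon)$. The same one-line induction proves this. Both $p_0$ and $T(z')$ lie in the strongly convex ball $B_M(q,\epsilon)$, so the minimizing geodesic between them does too. Taking $q=p_i$ for every $i$, including $i=0$, gives your containment. The case $q=p_0$ also gives $d_M(p_0,T(z'))<\epsilon<\inj(M)$, which you need for $\log_{p_0}$ to be smooth at $T(z')$, and hence for continuity of $T$.

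\textbf{Smaller points.}
\begin{itemize}
\item \emph{Step 3.} Your map $g$ is simplicial on $K$ but not on the subdivision $K_1=K'|_{S^j}$. So contiguity should be stated for $g\circ\phi$ and $G'|_{S^j}$, where $\phi:K_1\to K$ is a simplicial approximation of the identity sending each vertex to a vertex of its carrier in $K$. Alternatively, use directly the straight-line homotopy from $g(x)$ to $|G'|(x)$ inside the simplex spanned by $g(\sigma)\cup G'(\tau)$.
\item \emph{Strictness in Step 3.} Your diameter check is exactly what is needed, and it is already strict. Step 1 gives $d_M(p_i,G(v))<\epsilon$, and fineness gives $d_M(G(v),G(v'))<\epsilon$. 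No shrinking of $\epsilon$ is required.
\item \emph{Basepoints in Step 2.} Whitehead's theorem needs based statements, so put the basepoint at a vertex of the triangulation. Then $Tf'$ and $f$ agree there and the geodesic homotopy is based. This matters for $j=1$: free homotopy classes of loops are only conjugacy classes, so surjectivity on free classes alone would not give surjectivity on $\pi_1$.
\item \emph{Where the hypothesis enters.} Step 2 also uses $\epsilon<\cx(M)$, through $2\epsilon<\inj(M)$, for continuity of the geodesic homotopy. So the hypothesis is not used only in the diameter checks.
\end{itemize}
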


    This result provides homotopy equivalence for the intrinsic VR complex. Later on,~\cite{latschev_vietoris-rips_2001} provided an analogous result for VR complexes built from a finite sample $X$ with small Hausdorff distance from $M$, and the specific scale parameters required were quantified in~\cite{majhi_demystifying_2024}. For our results, we consider the ambient affine realization $A: |X_\epsilon| \to \R^N$ of our sampled VR complex. We note that if $\epsilon < \epsilon_0$, then $A(|X_\epsilon|) \subset U_\epsilon$, where $U_\epsilon$ is the tubular neighbourhood of $M$. Define
    \begin{equation}
        F = \pi \circ A : |X_\epsilon| \to M,
    \end{equation}
    and we will adapt the result from~\cite{majhi_demystifying_2024} to our setting.

    \begin{lemma} \label{lem:affine_projection_homotopy_equivalence}
        Assume that $M$ is connected.  Fix $0<\zeta<1/14$.  Suppose $h=d_H^\R(M,X)<\zeta\epsilon$, where $d_H^\R$ is the Hausdorff distance in $\R^N$ and  $\epsilon<\epsilon_0$. Furthermore, suppose $\epsilon \leq A_\zeta \tau_M$, where $A_\zeta > 0$ is an explicit constant given in~\cite[Theorem 18]{majhi_demystifying_2024}.
        Then $F:|X_\epsilon|\to M$ is a homotopy equivalence.
    \end{lemma}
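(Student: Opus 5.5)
Our strategy is to compare $F=\pi\circ A$ with the homotopy equivalence produced in \cite[Theorem 18]{majhi_demystifying_2024} and show that the two maps are homotopic. Under the stated hypotheses ($h<\zeta\epsilon$, $\epsilon\leq A_\zeta\tau_M$, $M$ connected), that theorem gives a homotopy equivalence $|X_\epsilon|\to M$. We will call it $G$. It is built by first comparing the Euclidean VR complex $X_\epsilon$ with an intrinsic VR complex $M_{\epsilon'}$ at a comparable scale, via the vertex inclusion $X\subset M$ and the comparison \eqref{eq:geodesic_euclidean_balls}, and then applying Hausmann's map $T$. Since $\epsilon<\epsilon_0$, we have $S(\epsilon)<\cx(M)$, so Hausmann's theorem applies at the intrinsic scale. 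Concretely, on each simplex $G$ agrees with $T$ restricted to the realization of that simplex viewed in $M_{S(\epsilon)}$. If the construction in \cite{majhi_demystifying_2024} differs, we only use two facts about $G$: it fixes the vertices of $X$, and it sends each simplex $\sigma$ into a small geodesically convex ball around its first vertex.

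Next we set up the straight-line homotopy in $M$. Let $\sigma=[y_0,\ldots,y_k]$ be a simplex of $X_\epsilon$ and $z\in|\sigma|$.
\begin{itemize}
\item The point $A(z)$ lies in the convex hull of points of $M$ with Euclidean diameter $<\epsilon$. Hence $\|A(z)-y_0\|<\epsilon$, and since $M$ is $1$-Lipschitz-projected near itself, $\|\pi(A(z))-y_0\|\lesssim\epsilon$. Using the reach bound $\|\pi(u)-\pi(v)\|\leq \frac{\tau}{\tau-r}\|u-v\|$ on $U_r$, we get $\|F(z)-y_0\|<2\epsilon$, say, for $\epsilon_0$ small.
\item Hausmann's map builds $T(z)$ from iterated geodesic cones on the vertices, all inside the convex ball $\ball{M}{y_0}{S(\epsilon)}$. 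So $G(z)$ lies in the same convex ball.
\end{itemize}
Hence $F(z)$ and $G(z)$ both lie in $\ball{M}{y_0}{S(2\epsilon)}$, which is strongly geodesically convex by \eqref{eq:global_epsilon_0}. Their distance is less than $2S(2\epsilon)<\inj(M)$, so there is a unique minimizing geodesic between them. We define
\begin{align}
H(z,t)=\exp_{F(z)}\big(t\log_{F(z)}G(z)\big).
\end{align}
The unique minimizing geodesic depends continuously on its endpoints inside the injectivity radius, so $H$ is continuous on each simplex. On a shared face the definition is intrinsic to the two endpoint values, so $H$ is globally well defined. Thus $F\simeq G$, and $F$ is a homotopy equivalence.

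The main obstacle is controlling where $G$ lands. The convexity step needs a uniform bound $d_M(F(z),G(z))<$ convexity radius, which requires knowing that Majhi's map keeps every simplex inside a ball of radius $O(\epsilon)$ about a vertex. For Hausmann's cone construction this follows inductively: geodesic segments between points of a convex ball stay in the ball. If \cite{majhi_demystifying_2024} instead passes through a nerve or shadow argument, we would substitute the corresponding locality statement. The remaining point is ordering. The VR simplices here are oriented by the sample ordering, which supplies the total order Hausmann's construction needs on $X$, and this order is compatible across faces.
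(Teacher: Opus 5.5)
\textbf{There is a genuine gap: the homotopy equivalence $G$ you start from is not supplied by the theorem you cite.}

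Your homotopy step is fine. Suppose you had a homotopy equivalence $G:|X_\epsilon|\to M$ sending each simplex $[y_0,\ldots,y_k]$ into $\ball{M}{y_0}{S(\epsilon)}$. The bound $\|F(z)-y_0\|\leq\|\pi(A(z))-A(z)\|+\|A(z)-y_0\|<2\epsilon$ puts $F(z)$ and $G(z)$ in the strongly convex ball $\ball{M}{y_0}{S(2\epsilon)}$, and the geodesic homotopy is then globally continuous.

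The problem is that \cite[Theorem 18]{majhi_demystifying_2024} does not give a map $|X_\epsilon|\to M$ built from the vertex inclusion $\iota_X:X_\epsilon\hookrightarrow M_{S(\epsilon)}$ followed by Hausmann's $T$. What it gives, and what the paper uses, goes the other way. A vertex map $\phi:M\to X$ with $\|p-\phi(p)\|<\zeta\epsilon$ extends to a simplicial map $\Phi:M_{(1-2\zeta)\epsilon}\to X_\epsilon$, and $|\Phi|$ is a homotopy equivalence. Nothing you cite shows that $|\iota_X|$, and hence $T\circ|\iota_X|$, is a homotopy equivalence.

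Your fallback (``we only use two facts about $G$'') does not close the gap. An abstract homotopy equivalence $|X_\epsilon|\simeq M$ need not fix vertices or be simplexwise local, and that locality is exactly what your geodesic homotopy needs. So the step ``$F\simeq G$, hence $F$ is a homotopy equivalence'' has no justified input.

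\textbf{How the paper avoids this.} It runs your convex-ball argument on $|M_{(1-2\zeta)\epsilon}|$ instead of on $|X_\epsilon|$. Take $z\in|\sigma|$ with $\sigma=[p_0,\ldots,p_k]$. Then $T(z)\in\ball{M}{p_0}{(1-2\zeta)\epsilon}$. Also $A(|\Phi|(z))$ lies in the convex hull of the points $\phi(p_i)$, each within $(1-\zeta)\epsilon$ of $p_0$, so $F(|\Phi|(z))\in\ball{M}{p_0}{S(2\epsilon)}$. Geodesic interpolation gives $F\circ|\Phi|\simeq T$. Since $|\Phi|$ and $T$ are homotopy equivalences, so is $F$.

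\textbf{How to repair your route.} It can be fixed, but only once you have $\Phi$. Choose the total order on $M$ extending the sample order.
\begin{enumerate}
\item $\iota_X\circ\Phi$ is contiguous in $M_{S(\epsilon)}$ to the inclusion $M_{(1-2\zeta)\epsilon}\hookrightarrow M_{S(\epsilon)}$. To check this, verify that $\sigma\cup\phi(\sigma)$ has intrinsic diameter less than $S(\epsilon)$ using $d_M\leq S(d_\R)$.
\item That inclusion is a homotopy equivalence, because Hausmann's maps for a fixed total order restrict compatibly: $T_{S(\epsilon)}$ restricted to $|M_{(1-2\zeta)\epsilon}|$ equals $T_{(1-2\zeta)\epsilon}$.
\item Hence $|\iota_X|$ is a homotopy equivalence, so your $G=T_{S(\epsilon)}\circ|\iota_X|$ is one too, and your argument then goes through.
\end{enumerate}
This costs an extra contiguity step compared with the paper's direct comparison $F\circ|\Phi|\simeq T$.
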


    \begin{proof}
        The Hausdorff-distance condition induces a vertex map $\phi:M\to X$ satisfying $\|p-\phi(p)\|<\zeta\epsilon$, not necessarily continuous or unique. This extends to a simplicial map
        \begin{align}
            \Phi:M_{(1-2\zeta)\epsilon}\to X_\epsilon,
        \end{align}
        whose realization $|\Phi|$ is a homotopy equivalence~\cite[Theorem~18]{majhi_demystifying_2024}  and where $M_{(1-2\zeta)\epsilon}$ is the VR complex of $M$ at scale $(1-2\zeta)\epsilon$.  Let $T:|M_{(1-2\zeta)\epsilon}|\to M$ be Hausmann's geodesic realization. Our aim is to show that $F\circ |\Phi|\simeq T$.

        Let $\sigma=[p_0,\ldots,p_k]$ be a simplex of $M_{(1-2\zeta)\epsilon}$.  The map $T$ sends $|\sigma|$ into $\ball{M}{p_0}{(1-2\zeta)\epsilon}\subset \ball{M}{p_0}{S(2\epsilon)}$.  For $z\in |\sigma|$, $A(|\Phi|(z))$ lies in the convex hull of the points $\phi(p_i)$, and
        \begin{align}
            \|\phi(p_i)-p_0\|\leq \|\phi(p_i)-p_i\|+d_M(p_i,p_0)<(1-\zeta)\epsilon.
        \end{align}
        Hence $A(|\Phi|(z))\in \ball{\R^N}{p_0}{(1-\zeta)\epsilon}$.  Because the projection moves $A(|\Phi|(z))$ by less than $\epsilon$, we have $\|F(|\Phi|(z))-p_0\|<2\epsilon$.  By~\eqref{eq:geodesic_euclidean_balls}, $F(|\Phi|(z))\in \ball{M}{p_0}{S(2\epsilon)}$.  As we assume $S(2\epsilon)<\cx(M)$, this ball is strongly geodesically convex.  The unique minimizing geodesics from $F(|\Phi|(z))$ to $T(z)$ therefore define simplexwise homotopies which agree on faces, giving $F\circ |\Phi|\simeq T$.  Finally, since $|\Phi|$ and $T$ are homotopy equivalences, so is $F$.
    \end{proof}

    Now we can show that $F$ is a homotopy equivalence with high probability within the convergence regimes of~\Cref{thm:main_uc} and~\Cref{thm:main_codiff}. This is done by using an intrinsic net and a union bound to show that $d_H^\R(M,X^{(n)})<\zeta\epsilon_n$ with probability tending to one, then applying \Cref{lem:affine_projection_homotopy_equivalence} to give the homotopy equivalence.
    \begin{proposition} \label{prop:topology_quasi_iso_convergence_scales}
        Assume that $M$ is connected.  Let $X^{(n)}=(x_1,\ldots,x_n)\subset M$ be sampled i.i.d.~from a density satisfying~\eqref{eq:density_bound}.  Let $\epsilon_n\to0$ be deterministic and suppose that
        \begin{align} \label{eq:topology_bandwidth_condition_sec43}
            \frac{n\epsilon_n^m}{\log(1/\epsilon_n)}\to\infty.
        \end{align}
        Let $A_n:|X^{(n)}_{\epsilon_n}|\to \R^N$ be the affine realization map. For all sufficiently large $n$, its image lies in $U_{\epsilon_n}$, and we set $F_n=\pi\circ A_n:|X^{(n)}_{\epsilon_n}|\to M$.  Then, %
        \begin{align}
            \lim_{n \to \infty} \PP\left[F_n \text{ is a homotopy equivalence}\right]
            =1.
        \end{align}
        On this event, $|X^{(n)}_{\epsilon_n}|\simeq M$ and the following pullback discretization is a quasi-isomorphism,
        \begin{align}
            q_n^\pi:\Omega^\bullet(M)\to C^\bullet(X^{(n)}_{\epsilon_n}),
            \qquad
            (q_n^\pi\omega)(\sigma)=\int_{\Delta^k}\sigma_\sigma^*\pi^*\omega,
        \end{align}
    \end{proposition}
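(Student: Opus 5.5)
The plan has three parts: a covering argument showing the sample is $\zeta\epsilon_n$-dense with high probability, an appeal to \Cref{lem:affine_projection_homotopy_equivalence} to get that $F_n$ is a homotopy equivalence, and an identification of $q_n^\pi$ with a composite of standard quasi-isomorphisms to get the cohomological statement.

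\textbf{Step 1: density of the sample.} Fix $\zeta\in(0,1/14)$, say $\zeta=1/20$. Since $\epsilon_n\to0$, for large $n$ we have $\epsilon_n<\epsilon_0$ and $\epsilon_n\leq A_\zeta\tau_M$. In particular the affine simplices of $X^{(n)}_{\epsilon_n}$ have diameter below $\epsilon_n$ and vertices on $M$, so they lie in $U_{\epsilon_n}$. This gives that the image of $A_n$ lies in $U_{\epsilon_n}$ and that $F_n$ is well defined.

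Next choose a maximal intrinsic $(\zeta\epsilon_n/2)$-separated set $\{z_1,\dots,z_K\}\subset M$. Volume comparison in normal coordinates, using the Jacobian bound \eqref{eq:jacobian_remainder} and compactness, gives $K\lesssim \epsilon_n^{-m}$. Each geodesic ball $\ball{M}{z_j}{\zeta\epsilon_n/2}$ has $\mu$-mass at least $c\rho_-\epsilon_n^m$. A union bound then gives
\begin{align}
\PP\big[\exists j:\ X^{(n)}\cap \ball{M}{z_j}{\zeta\epsilon_n/2}=\emptyset\big]\leq K(1-c\rho_-\epsilon_n^m)^n\lesssim \epsilon_n^{-m}\exp(-c\rho_-n\epsilon_n^m).
\end{align}
This tends to $0$ because of \eqref{eq:topology_bandwidth_condition_sec43}: writing $n\epsilon_n^m=L_n\log(1/\epsilon_n)$ with $L_n\to\infty$, the right-hand side is $\epsilon_n^{c\rho_-L_n-m}\to0$.

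On the complementary event, every $p\in M$ lies within intrinsic distance $\zeta\epsilon_n/2$ of some $z_j$, and that $z_j$ lies within $\zeta\epsilon_n/2$ of a sample point. Since $d_\R\leq d_M$ and $X^{(n)}\subset M$, this yields $d_H^\R(M,X^{(n)})\leq \zeta\epsilon_n$. Running the argument with $\zeta/2$ in place of $\zeta$ gives the strict inequality required by the lemma.

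\textbf{Step 2: homotopy equivalence.} On this event \Cref{lem:affine_projection_homotopy_equivalence} applies, since $M$ is connected and all the scale hypotheses hold. So $F_n$ is a homotopy equivalence, and the probability tends to one.

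\textbf{Step 3: quasi-isomorphism.} We have $q_n^\pi\omega(\sigma)=\int_{\Delta^k}(\pi\circ\sigma_\sigma)^*\omega$, and $\pi\circ\sigma_\sigma$ is exactly $F_n$ restricted to the simplex $|\sigma|$. Hence $q_n^\pi$ factors as
\[
\Omega^\bullet(M)\xrightarrow{F_n^*}\Omega^\bullet_{\mathrm{pl}}\big(|X^{(n)}_{\epsilon_n}|\big)\longrightarrow C^\bullet\big(X^{(n)}_{\epsilon_n}\big),
\]
where the first map is pullback to simplexwise smooth forms and the second is integration over simplices. Alternatively, $q_n^\pi$ is the composite of the smooth singular de Rham map $\Omega^\bullet(M)\to C^\bullet_{\mathrm{sing},\infty}(M)$ with the cochain map induced by the simplicial chain map $\sigma\mapsto F_n\circ\sigma_\sigma$.

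Each factor is a quasi-isomorphism on the event:
\begin{itemize}
\item The de Rham theorem handles the smooth singular de Rham map.
\item Simplicial cohomology is identified with singular cohomology via the realization, which is compatible with restricting to the simplices $|\sigma|$.
\item $F_n^*$ induces an isomorphism on singular cohomology because $F_n$ is a homotopy equivalence.
\end{itemize}
That $q_n^\pi$ is a cochain map follows from Stokes' theorem, as in \Cref{lem:cochain_map}.

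The main obstacle is this last step: making the identification rigorous given that the simplicial maps $F_n\circ\sigma_\sigma$ are only smooth, not affine. I would use smooth singular cochains, for which de Rham's theorem holds, and check naturality of the comparison isomorphism between simplicial and singular cochains.
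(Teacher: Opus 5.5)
Your proposal is correct and follows essentially the same route as the paper: a net-plus-union-bound estimate giving $d_H^\R(M,X^{(n)})<\zeta\epsilon_n$ with probability tending to one, then \Cref{lem:affine_projection_homotopy_equivalence}, then identifying $q_n^\pi$ with simplexwise integration of $F_n^*\omega$. Your alternative factorization through smooth singular chains, via $\sigma\mapsto\pi\circ\sigma_\sigma$, justifies the final quasi-isomorphism step somewhat more explicitly than the paper's one-line appeal to Whitney's simplicial de Rham theorem, but the argument is otherwise the same.
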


    \begin{proof}
        Fix $0<\zeta<1/14$. By compactness, uniformly for $x\in M$ and $0<r<\inj(M)$, we have $r^m\ls\vol_M(\ball{M}{x}{r})$, and we may take an intrinsic $r/2$-net $z_1,\ldots,z_L$ with $L\ls r^{-m}$. The lower density bound gives a single constant $c>0$, depending only on $M$ and $\rho_-$, such that $\mu(\ball{M}{z_i}{r/2})\geq cr^m$ for every $i$ and every such $r$. If every such ball contains a sample point, then for every $p\in M$ there are $z_i$ and $x_j\in X^{(n)}$ with $d_M(p,z_i)<r/2$ and $d_M(z_i,x_j)<r/2$. Since $d_\R\leq d_M$ on $M$, this implies $d_H^\R(M,X^{(n)})<r$. By independence, we have
        \begin{equation}
        \PP\left[X^{(n)}\cap \ball{M}{z_i}{r/2}=\emptyset\right] = (1 - \mu(\ball{M}{z_i}{r/2}))^n \leq \exp(-n\mu(\ball{M}{z_i}{r/2}))\leq \exp(-c n r^m).
        \end{equation}
        Then, by a union bound, we obtain
        \begin{align} \label{eq:homotopy_equiv_probability}
            \PP\left[d_H^\R(M,X^{(n)})\geq r\right]
            \leq \sum_{i=1}^L \PP\left[X^{(n)}\cap \ball{M}{z_i}{r/2}=\emptyset\right]
            \ls r^{-m}\exp(-cnr^m).
        \end{align}
        Set $r=\zeta\epsilon_n$, and since $\epsilon_n\to0$, for all sufficiently large $n$ we have $\epsilon_n, r<\epsilon_0$, $\epsilon_n\leq A_\zeta\tau_M$, and $A_n(|X^{(n)}_{\epsilon_n}|)\subset U_{\epsilon_n}$, so $F_n$ is defined.  On the complementary event, the final hypothesis of~\Cref{lem:affine_projection_homotopy_equivalence} that $d_{H}^\R(M, X^{(n)}) < \zeta\epsilon_n$ holds, so $F_n$ is a homotopy equivalence. Then,~\eqref{eq:topology_bandwidth_condition_sec43} implies that the probability in~\eqref{eq:homotopy_equiv_probability} converges to $0$.

        For the quasi-isomorphism, let $\sigma$ be an oriented simplex of $X^{(n)}_{\epsilon_n}$, and let $\iota_\sigma:\Delta^k\to |X^{(n)}_{\epsilon_n}|$ be its standard affine parametrization. By definition, $A_n\circ\iota_\sigma=\sigma_\sigma$, and hence $F_n\circ\iota_\sigma=\pi\circ\sigma_\sigma$, so
        \begin{align}
            (q_n^\pi\omega)(\sigma)=\int_{\Delta^k}\sigma_\sigma^*\pi^*\omega
            =\int_{\Delta^k}\iota_\sigma^*F_n^*\omega.
        \end{align}
        Thus $q_n^\pi$ is the simplicial de Rham integration map applied to $F_n^*\omega$.  Because $F_n$ is a homotopy equivalence, $F_n^*$ is a quasi-isomorphism; and  since simplicial de Rham integration is a quasi-isomorphism by the simplicial de Rham theorem~\cite{Whitney1957}, so is $q_n^\pi$.
    \end{proof}

    Finally, \Cref{prop:topology_quasi_iso_convergence_scales} applies at the $\epsilon$ scales used in the analytic convergence theorems. If $\epsilon_n\asymp n^{-a}$, then~\eqref{eq:topology_bandwidth_condition_sec43} is equivalent to $am<1$. The choices in~\Cref{thm:main_uc} and~\Cref{thm:main_codiff} have $a=1/(km+2m+2)$ and $a=2/(2mk+5m+6)$, respectively, and both satisfy $am<1$. Thus, for connected $M$ and fixed $p \in (0,1)$, both conclusions hold simultaneously with high probability. 

    \section{The Vietoris-Rips Laplacian} \label{sec:rips_laplacian}

    The Laplacian on the VR complex is the spectral object attached to the cochain geometry constructed in the previous sections.  The estimates in \Cref{thm:main_uc,thm:main_codiff} give high-probability error bounds for the normalized discrete inner products and codifferential pairings after the prescribed choice of bandwidth.  We record the resulting quantitative consequences for eigenvalue bounds, harmonic representatives, and circular coordinates.

    Throughout this section we work in the unweighted uniform setting of the main codifferential \Cref{thm:main_codiff}: $w\equiv 1$ and $\rho\equiv\rho_0=\vol_M(M)^{-1}$. We denote the point cloud by $X^{(n)}=(x_1,\ldots,x_n)\subset M$.  When $\alpha \in \Omega^k(M)$ is an intrinsic form, we write $q \alpha \coloneq q_n^\pi \alpha$ to simplify notation. For the ambient inner-product estimates, we use a fixed $\cC^2$-bounded linear compactly supported extension of $\pi^*\alpha$ that agrees with it on $U_{\epsilon_0}$; this does not change $q_n^\pi\alpha$ when $\epsilon_n<\epsilon_0$. We fix $\theta$ and kernels $\kappa^k_{\theta, \epsilon}$ satisfying \Cref{def:codiff_kernel}, but consider \emph{normalized inner products} \label{not:normalized_inner_products} only for $0\leq k\leq m$ (recalling that $\kappa^0 = 1$)
    \begin{equation}
        \langle a, b \rangle_{n,\epsilon} \coloneqq  \frac{1}{\rho_0^{k+1} \sigma_k^\theta}\binom{n}{k+1}^{-1} \frac{(k!)^2}{\epsilon^{k(m+2)}}\sum_{\by \in S^k_\epsilon(X^{(n)})} a(\by) b(\by)  \kappa^k_{\theta,\epsilon}(\by) , \quad a,b \in C^k(X^{(n)}_\epsilon),
    \end{equation}
    where, for $1\leq k\leq m$, $\sigma_k^\theta$ denotes the moment $\sigma_k^\Theta$ in~\eqref{eq:inner_product_moment_constant} for $\Theta((t_{ij}))=\theta(\max_{i<j}t_{ij})$, and $\sigma_0^\theta=1$.
    When $\epsilon=\epsilon_n$, we write $\langle a,b\rangle_n\coloneqq\langle a,b\rangle_{n,\epsilon_n}$ and $\|a\|_n\coloneqq\langle a,a\rangle_n^{1/2}$.
    However, the constant $\sigma^\theta_{\delta,k}>0$ from the codifferential convergence in~\Cref{thm:main_codiff} depends on normalizations in adjacent degrees, so we have the residual constant
    \begin{equation} \label{eq:residual_codiff_constant}
        \bar{\sigma}_{\delta,k} \coloneqq \frac{\sigma^\theta_{k-1}}{(\sigma^\theta_k)^2} \sigma^\theta_{\delta,k},
    \end{equation}
    such that the normalized codifferential pairing converges to $\bar{\sigma}_{\delta,k} \langle \delta \alpha, \delta \beta \rangle_M$.
    Throughout this section, for the degree-$k$ Laplacian we use the common bandwidth
    \begin{equation} \label{eq:combined_bandwidth}
        \epsilon_n=n^{-a} \quad \text{satisfying} \quad 0<a<\frac{1}{m(k+2)+2}.
    \end{equation}
    By~\Cref{cor:common_bandwidth_convergence}, this gives simultaneous convergence in probability of the inner products in degrees $k$ and $k+1$ and, when $k\geq1$, the degree-$k$ codifferential pairing.

    \subsection{Laplacians and Rayleigh Quotients}

    Recall the smooth Hodge Laplacian from~\eqref{eq:smooth_laplacian}
    \begin{equation}
        \Delta_k=d\delta+\delta d \quad \text{where} \quad 0\leq \lambda^{(k)}_0\leq \lambda^{(k)}_1\leq \cdots
    \end{equation}
    denotes the eigenvalues of $\Delta_k$, repeated with multiplicity and listed in nondecreasing order. The \emph{smooth Hodge energy} and the corresponding smooth Rayleigh quotient are defined by 
    \begin{equation} \label{eq:smooth_rayleigh_sec7}
        Q_k(\alpha)
        := \langle \Delta_k\alpha,\alpha\rangle_M
        = \|d\alpha\|_M^2+\|\delta\alpha\|_M^2 \andd R_k(\alpha):=\frac{Q_k(\alpha)}{\|\alpha\|_M^2} \quad \text{for} \quad  \alpha\neq 0,
    \end{equation}
    \phantomsection\label{not:smooth_rayleigh}
    where the $\delta$ term in $Q_k$ is omitted for $k = 0$.  
    By the usual Courant--Fisher characterization,
    \begin{equation} \label{eq:smooth_minmax_sec7}
        \lambda^{(k)}_j
        =
        \min_{\substack{V\subset \Omega^k(M)\\ \dim V=j+1}}
        \max_{0\neq \alpha\in V} R_k(\alpha).
    \end{equation}
    Next, we define the corresponding discrete objects. Let
    \begin{equation}
        \hd_n:C^\bullet(X^{(n)}_{\epsilon_n})\to C^{\bullet+1}(X^{(n)}_{\epsilon_n})
        \andd
        \hdelta_n : C^r(X^{(n)}_{\epsilon_n}) \to C^{r-1}(X^{(n)}_{\epsilon_n}), \qquad 1\leq r\leq m,
    \end{equation}
    denote the simplicial coboundary and its adjoint.
    For $0\leq k\leq m-1$, we define the \emph{normalized Rips Laplacian} by
    \begin{equation} \label{eq:rips_laplacian}
        \widehat\Delta_{k,n}=\hdelta_n\hd_n+\bar{\sigma}_{\delta,k}^{-1}\hd_n\hdelta_n \quad \text{where} \quad 0\leq \widehat\lambda^{(k)}_{0,n}\leq \widehat\lambda^{(k)}_{1,n}\leq \cdots
    \end{equation}
    are the eigenvalues of $\widehat\Delta_{k,n}$, repeated with multiplicity.  The second term is omitted when $k=0$. 

\begin{remark}
Let $\Delta^{\mathrm{std}}_{k,n}:=\hat\delta_n\hat d_n+\hat d_n\hat\delta_n$
be the unnormalized Hodge Laplacian formed using the same weighted inner products. The normalized operator $
\widehat\Delta_{k,n}
=\hat\delta_n\hat d_n+\bar{\sigma}_{\delta,k}^{-1}\hat d_n\hat\delta_n$
only rescales the down-Laplacian term. Indeed, if
$U=\hat\delta_n\hat d_n$ and $D=\hat d_n\hat\delta_n$, then
$UD=DU=0$, and the finite-dimensional Hodge decomposition splits \(C^k\) into harmonic,
exact, and coexact summands. On these summands, \(U\) and \(D\) act separately. Hence
\(\Delta^{\mathrm{std}}_{k,n}\) and \(\widehat\Delta_{k,n}\) admit a common orthonormal eigenbasis:
the normalization rescales the exact eigenvalues by \(\bar{\sigma}_{\delta,k}^{-1}\), leaves the
coexact and harmonic eigenvalues unchanged.
\end{remark}
    For $0\leq k\leq m-1$ and $c\in C^k(X^{(n)}_{\epsilon_n})$, we define the \emph{discrete Hodge energy} and the \emph{discrete Rayleigh quotient} by
    \begin{equation} \label{eq:discrete_hodge_energy_sec7}
        \widehat Q_{k,n}(c)
        :=
        \|\hd_n c\|_n^2
        +
        \bar{\sigma}_{\delta,k}^{-1}\|\hdelta_n c\|_n^2 \andd \widehat R_{k,n}(c):=\frac{\widehat Q_{k,n}(c)}{\|c\|_n^2}
        \quad \text{for} \quad  c\neq 0.
    \end{equation}
    \phantomsection\label{not:discrete_hodge_energy}
    The second term in $\widehat{Q}_{k,n}$ is omitted for $k=0$.  
    Since $C^k(X^{(n)}_{\epsilon_n})$ is finite dimensional, the usual matrix spectral theorem gives
    \begin{equation} \label{eq:discrete_minmax_sec7}
        \widehat\lambda^{(k)}_{j,n}
        =
        \min_{\substack{W\subset C^k(X^{(n)}_{\epsilon_n})\\ \dim W=j+1}}
        \max_{0\neq c\in W}\widehat R_{k,n}(c).
    \end{equation}

    \subsection{Spectral Upper Bounds}

    The first step is uniform convergence of Rayleigh quotients on each fixed finite-dimensional space of smooth forms.

    \begin{lemma} \label{lem:finite_dimensional_rayleigh_sec7}
    Let $0\leq k\leq m-1$ and $V\subset \Omega^k(M)$ with $\dim(V) < \infty$. With probability tending to one, $q|_V$ is injective and
    \begin{equation} \label{eq:finite_dimensional_rayleigh_sec7}
        \sup_{0\neq \alpha\in V}
        \left|\widehat R_{k,n}(q\alpha)-R_k(\alpha)\right|
        \xrightarrow{\PP}0.
    \end{equation}
    \end{lemma}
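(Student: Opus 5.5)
The plan is to reduce everything to finitely many scalar pairings on a basis of $V$ and invoke \Cref{cor:common_bandwidth_convergence} at the common bandwidth~\eqref{eq:combined_bandwidth}. Fix a basis $\alpha_1,\ldots,\alpha_J$ of $V$ that is orthonormal for $\langle\cdot,\cdot\rangle_M$. Each $\alpha_i$ is smooth, so after rescaling, its compactly supported extension of $\pi^*\alpha_i$ lies in a fixed multiple of $\cB^k_\R$, and $d\alpha_i$ is handled the same way in degree $k+1$. By \Cref{lem:cochain_map} we have $\hd_n q\alpha=q\,d\alpha$, and $d$ commutes with $\pi^*$. Hence $\|\hd_n q\alpha\|_n^2=\|q\,d\alpha\|_n^2$, and this is a degree-$(k+1)$ inner product of pullback discretizations.

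\textbf{Step 1 (Gram matrices).} Define three $J\times J$ random matrices:
\[
G_n=(\langle q\alpha_i,q\alpha_j\rangle_n),\quad D_n=(\langle q\,d\alpha_i,q\,d\alpha_j\rangle_n),\quad E_n=(\langle \hdelta_n q\alpha_i,\hdelta_n q\alpha_j\rangle_n)\ (k\geq1).
\]
\Cref{cor:common_bandwidth_convergence} gives convergence in probability of their entries to continuum limits. For $G_n$, the limit is $\langle\alpha_i,\alpha_j\rangle_M=\delta_{ij}$, because with $w\equiv1$ and uniform $\rho$ the weight $P_\Delta=\rho_0^{k+1}$, and the normalization by $\rho_0^{k+1}\sigma^\theta_k$ cancels it. For $D_n$, the limit is $\langle d\alpha_i,d\alpha_j\rangle_M$, by the same cancellation in degree $k+1$. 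For $E_n$, the limit is $\bar\sigma_{\delta,k}\langle\delta\alpha_i,\delta\alpha_j\rangle_M$.

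The last identification needs care. \Cref{thm:main_codiff} is stated for the unnormalized pairings. The adjoint $\hdelta_n$ taken with respect to the normalized inner products differs from the unnormalized one by the factor $\rho_0^{k+1}\sigma^\theta_k/(\rho_0^{k}\sigma^\theta_{k-1})$. Combining this factor with the degree-$(k-1)$ normalization and the limit constant $\rho_0^{k+2}\sigma^\theta_{\delta,k}$ should produce exactly $\bar\sigma_{\delta,k}$ in~\eqref{eq:residual_codiff_constant}. I expect this constant bookkeeping to be the main obstacle. It is routine but must be checked carefully, and the $k=0$ case is simpler because there is no $\hdelta$ term.

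\textbf{Step 2 (injectivity and uniformity).} Since $J$ is finite, the entrywise convergence implies operator-norm convergence in probability. In particular $G_n\to I$, so with probability tending to one the smallest eigenvalue of $G_n$ exceeds $1/2$. On that event, $\|q\alpha\|_n^2=c^\top G_nc>0$ for every $\alpha=\sum_i c_i\alpha_i\neq0$, so $q|_V$ is injective.

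Now write $\widehat Q_{k,n}(q\alpha)=c^\top(D_n+\bar\sigma_{\delta,k}^{-1}E_n)c$. Its limit is $c^\top(D+\Delta_\delta)c=Q_k(\alpha)$, where $D=(\langle d\alpha_i,d\alpha_j\rangle_M)$ and $\Delta_\delta=(\langle\delta\alpha_i,\delta\alpha_j\rangle_M)$. Both $\widehat R_{k,n}$ and $R_k$ are invariant under scaling $c$, so the supremum can be restricted to the unit sphere $|c|=1$, where $\|\alpha\|_M=1$. On that sphere,
\[
|\widehat R_{k,n}(q\alpha)-R_k(\alpha)|\leq \frac{|c^\top(D_n+\bar\sigma_{\delta,k}^{-1}E_n-D-\Delta_\delta)c|}{c^\top G_nc}+|c^\top(D+\Delta_\delta)c|\,\Big|\frac{1}{c^\top G_nc}-1\Big|.
\]
Each term is bounded by constants times operator-norm differences that tend to zero in probability. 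This gives~\eqref{eq:finite_dimensional_rayleigh_sec7}.
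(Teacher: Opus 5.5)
Your proof is correct and is essentially the paper's argument. Both invoke \Cref{cor:common_bandwidth_convergence} in three places: in degree $k$ (norm convergence, hence injectivity), in degree $k+1$ via $\hd_n q\alpha=q\,d\alpha$, and for the codifferential pairing; both then divide the convergent energies by denominators bounded below. Your Gram-matrix reduction to an $L^2$-orthonormal basis only replaces the paper's appeal to uniformity over the $\cC^2$-bounded unit sphere of $V$ with bilinearity plus finitely many pairings.

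The constant check you flag works out as expected. The normalized adjoint is $\hdelta_n=\frac{\sigma^\theta_{k-1}}{\rho_0\sigma^\theta_k}\hdelta^{\mathrm{u}}$, where $\hdelta^{\mathrm{u}}$ is the adjoint for the unnormalized inner products. Hence the normalized codifferential energy is $\frac{\sigma^\theta_{k-1}}{\rho_0^{k+2}(\sigma^\theta_k)^2}$ times the unnormalized one, which turns $\rho_0^{k+2}\sigma^\theta_{\delta,k}$ into exactly $\bar{\sigma}_{\delta,k}$.
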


    \begin{proof}
    Because $V$ is finite dimensional, all smooth norms are bounded on the unit sphere $\{\alpha\in V:\|\alpha\|_M=1\}$.  The norm convergence
    \[
        \sup_{\substack{\alpha\in V\\ \|\alpha\|_M=1}}
        \left|\|q\alpha\|_n^2-1\right|
        \xrightarrow{\PP}0
    \]
        follows from~\Cref{cor:common_bandwidth_convergence}, the extension convention above, and the normalization of the degree $k$ inner product.  On the event that this supremum is less than $1/2$, every $\alpha\in V$ with $\|\alpha\|_M=1$ satisfies $\|q\alpha\|_n^2\geq 1/2$.  Hence $q|_V$ is injective on an event whose probability tends to one.

    For the exterior derivative term in the Hodge energy $Q$, \Cref{lem:cochain_map} gives
    \[
        \hd_n(q\alpha)=q(d\alpha).
    \]
    Applying the degree-$(k+1)$ conclusion of~\Cref{cor:common_bandwidth_convergence}, uniformly for $d\alpha$ with $\alpha$ in the unit sphere of $V$, gives
    \[
        \|\hd_n(q\alpha)\|_n^2
        \xrightarrow{\PP}
        \|d\alpha\|_M^2
    \]
    uniformly on that sphere.  For the codifferential term, when $m \geq k \geq 1$, applying the degree-$k$ conclusion of~\Cref{cor:common_bandwidth_convergence}, we get
    \[
        \bar{\sigma}_{\delta,k}^{-1}\|\hdelta_n(q\alpha)\|_n^2
        \xrightarrow{\PP}
        \|\delta\alpha\|_M^2
    \]
    uniformly on the unit sphere of $V$. Combining the two energy terms gives
    \[
        \sup_{\substack{\alpha\in V\\ \|\alpha\|_M=1}}
        \left|\widehat Q_{k,n}(q\alpha)-Q_k(\alpha)\right|
        \xrightarrow{\PP}0.
    \]
    Finally, the norm convergence above implies that $\|q\alpha\|_n^2$ is bounded away from zero uniformly on the unit sphere with probability tending to one; dividing the convergent energies by the convergent denominators proves \eqref{eq:finite_dimensional_rayleigh_sec7}.
    \end{proof}

    \begin{theorem} \label{thm:rips_spectral_upper_bound_sec7}
    For every fixed $j\in\N_0$, $0\leq k\leq m-1$, and $\eta>0$,
    \begin{equation} \label{eq:rips_spectral_upper_bound_sec7}
        \PP\left[\widehat\lambda^{(k)}_{j,n}\leq \lambda^{(k)}_j+\eta\right]\to 1.
    \end{equation}
    \end{theorem}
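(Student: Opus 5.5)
The plan is the standard min-max transfer argument, with \Cref{lem:finite_dimensional_rayleigh_sec7} as the key input. Fix $j$, $k$ and $\eta>0$. Since $M$ is compact, $\Delta_k$ is elliptic and self-adjoint, so it has an $L^2$-orthonormal system of smooth eigenforms $\phi_0,\ldots,\phi_j\in\Omega^k(M)$ with $\Delta_k\phi_i=\lambda^{(k)}_i\phi_i$. Let $V=\spann\{\phi_0,\ldots,\phi_j\}$; then $\dim V=j+1$. For $\alpha=\sum_i c_i\phi_i\in V$ we have $Q_k(\alpha)=\langle\Delta_k\alpha,\alpha\rangle_M=\sum_i\lambda^{(k)}_i c_i^2$, so
\begin{equation}
\max_{0\neq\alpha\in V}R_k(\alpha)=\lambda^{(k)}_j.
\end{equation}
This is the realization of the minimum in \eqref{eq:smooth_minmax_sec7} by a concrete finite-dimensional space. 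The space $V$ is fixed and deterministic, which is exactly the setting \Cref{lem:finite_dimensional_rayleigh_sec7} requires.

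Next, I apply \Cref{lem:finite_dimensional_rayleigh_sec7} to $V$. Let $E_n$ be the event that $q|_V$ is injective and
\begin{equation}
\sup_{0\neq\alpha\in V}\left|\widehat R_{k,n}(q\alpha)-R_k(\alpha)\right|\leq\eta .
\end{equation}
By the lemma (injectivity with probability tending to one, plus convergence in probability of the supremum), $\PP[E_n]\to1$.

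On $E_n$, set $W=q(V)\subset C^k(X^{(n)}_{\epsilon_n})$. Injectivity gives $\dim W=j+1$. Every nonzero $c\in W$ has the form $c=q\alpha$ with $0\neq\alpha\in V$, so
\begin{equation}
\widehat R_{k,n}(c)\leq R_k(\alpha)+\eta\leq\lambda^{(k)}_j+\eta .
\end{equation}
Using $W$ as a competitor in the discrete Courant--Fisher formula \eqref{eq:discrete_minmax_sec7} then gives $\widehat\lambda^{(k)}_{j,n}\leq\max_{0\neq c\in W}\widehat R_{k,n}(c)\leq\lambda^{(k)}_j+\eta$ on $E_n$. Hence $\PP[\widehat\lambda^{(k)}_{j,n}\leq\lambda^{(k)}_j+\eta]\geq\PP[E_n]\to1$.

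One point needs care: $C^k$ must have dimension at least $j+1$ for $\widehat\lambda^{(k)}_{j,n}$ to be defined. This is automatic on $E_n$, since $W\subset C^k$ already has dimension $j+1$.

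The main obstacle is not in this argument. It sits entirely inside \Cref{lem:finite_dimensional_rayleigh_sec7} and \Cref{cor:common_bandwidth_convergence}, which are taken as given. The only remaining check is the smoothness of the eigenforms, so that the $\cC^2$ norms of $\phi_i$, $d\phi_i$ and $\delta\phi_i$ are finite. This follows from elliptic regularity and compactness of $M$, and it ensures that $V$ lies within a scaled copy of the unit balls used in the uniform convergence theorems.
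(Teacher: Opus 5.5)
Your proposal is correct and follows the paper's proof essentially verbatim. Like the paper, you take $V$ spanned by the first $j+1$ eigenforms, use \Cref{lem:finite_dimensional_rayleigh_sec7} for injectivity of $q|_V$ and uniform Rayleigh-quotient convergence, and plug $q(V)$ into the discrete min-max formula \eqref{eq:discrete_minmax_sec7} as a $(j+1)$-dimensional competitor; your extra remarks (computing $\max_V R_k=\lambda^{(k)}_j$ directly, and noting $\dim C^k\geq j+1$ on the good event) are correct minor elaborations.
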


    \begin{proof}
    Let $V_j\subset \Omega^k(M)$ be the span of the first $j+1$ smooth eigenforms of $\Delta_k$, counted with multiplicity.  By \eqref{eq:smooth_minmax_sec7},
    \[
        \max_{0\neq \alpha\in V_j}R_k(\alpha)=\lambda^{(k)}_j.
    \]
    By \Cref{lem:finite_dimensional_rayleigh_sec7}, with probability tending to one,
    \[
        \max_{0\neq \alpha\in V_j}\widehat R_{k,n}(q\alpha)
        \leq
        \lambda^{(k)}_j+\eta.
    \]
    By the injectivity of $q|_{V_j}$ in \Cref{lem:finite_dimensional_rayleigh_sec7}, we also get that $\dim(q(V_j)) = j+1$ with probability tending to one.
    On the intersection of these two events, the discrete min-max formula \eqref{eq:discrete_minmax_sec7} gives
    \[
        \widehat\lambda^{(k)}_{j,n}
        \leq
        \max_{0\neq c\in q(V_j)}\widehat R_{k,n}(c)
        =
        \max_{0\neq \alpha\in V_j}\widehat R_{k,n}(q\alpha)
        \leq
        \lambda^{(k)}_j+\eta.
    \]
    This proves the claim.
    \end{proof}

    \subsection{Harmonic 1-Forms} \label{ssec:harmonic_1_forms}
    In the preceding section, we considered upper bounds for the eigenvalues of the discrete Hodge Laplacian. In order to obtain more precise spectral relationships between continuous and discrete Hodge Laplacians, we require interpolation methods to translate simplicial cochains back to differential forms. As this is beyond the scope of the present article, we focus our attention on 1-Laplacians, where we can leverage known spectral convergence results about graph Laplacians to prove convergence of harmonic 1-forms. 
    
    Throughout the remainder of this section, we assume that $m\geq2$ and $M$ is connected and orientable, and our main tool is the spectral convergence of the graph Laplacian (the $k=0$ Laplacian) to the Laplace-Beltrami operator in~\cite{calder_improved_2022}.
    Let $\theta$ be as in \Cref{def:codiff_kernel}.  
    Define $\hat{L}_n:C^0(X^{(n)}_{\epsilon_n})\to C^0(X^{(n)}_{\epsilon_n})$ to be the unnormalized weighted graph Laplacian
    \begin{equation} \label{eq:graph_laplacian_sec7}
        (\hat{L}_n a)(x_i)
        :=
        \frac{1}{n\epsilon_n^{m+2}}
        \sum_{j=1}^n
        \theta\left(\frac{\|x_i-x_j\|^2}{\epsilon_n^2}\right)
        (a(x_i)-a(x_j)), \quad a \in C^0(X^{(n)}_{\epsilon_n}),
    \end{equation}
    as defined in~\cite[Equation 2.3]{calder_improved_2022}. The degree $0$ Laplacian $\hat{\Delta}_{0,n}$ is defined in terms of our inner product in~\eqref{eq:inner_prod}, which sums over oriented edges. By direct computation, 
    \begin{equation} \label{eq:laplacian_operator_identity}
        \widehat\Delta_{0,n}
        =
        \frac{2n}{(n-1)\rho_0\sigma_1^\theta}
        \hat{L}_n
        \quad\text{on } C^0(X^{(n)}_{\epsilon_n}).
    \end{equation}
    Thus the two graph Laplacians have the same kernel, and their Rayleigh quotients differ by a scalar, which is bounded above and below uniformly in $n$.

    \begin{remark} \label{rem:open_closed_graph_sec7}
    The graph Laplacian results of~\cite{calder_improved_2022} use a closed $\epsilon$-graph, with an edge whenever $\|x_i-x_j\|\leq \epsilon$, while our Vietoris--Rips convention uses the open condition $\|x_i-x_j\|<\epsilon$. Thus,~\cite{calder_improved_2022} uses a radial profile $\eta: [0, \infty) \to [0,\infty)$ which is Lipschitz on $[0,1]$ and may be positive at the endpoint. To apply their theorem, we define the radial profile by $\eta(r) = \theta(r^2)$ whenever $r \neq 1$ and set $\eta(1) = \lim_{r\uparrow 1}\theta(r)$. For any $\epsilon > 0$, the event $\|x_i-x_j\|=\epsilon$ has probability zero for each pair $(i,j)$, and hence the closed graph Laplacian built from $\eta$ agrees almost surely with $\hat{L}_n$. Thus the spectral convergence theorem applies to $\hat{L}_n$ through this endpoint-modified profile.
    \end{remark}

    \begin{proposition} \label{prop:degree_zero_gap_sec7}
    Suppose $M$ is connected and orientable, and $\epsilon_n$ satisfies~\eqref{eq:combined_bandwidth}.  For the eigenvalue  $\widehat\lambda^{(0)}_{1,n}$ from~\eqref{eq:rips_laplacian}, there is a constant $c_0>0$ such that
    \[
        \PP\left[\widehat\lambda^{(0)}_{1,n}\geq c_0\right]\to 1.
    \]
    \end{proposition}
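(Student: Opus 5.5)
The plan is to reduce the claim to the spectral convergence theorem for graph Laplacians in~\cite{calder_improved_2022}, using the operator identity~\eqref{eq:laplacian_operator_identity} and the open/closed convention from~\Cref{rem:open_closed_graph_sec7}.

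\textbf{Step 1 (reduction to $\hat L_n$).} By~\eqref{eq:laplacian_operator_identity}, the eigenvalues of $\widehat\Delta_{0,n}$ are those of $\hat L_n$ multiplied by
\[
c_n = \frac{2n}{(n-1)\rho_0\sigma^\theta_1}.
\]
This holds because both operators are self-adjoint for inner products that are constant multiples of one another. Indeed, the degree-zero inner product $\langle a,b\rangle_n=\frac1n\sum_i a(x_i)b(x_i)$ is exactly the inner product for which $\hat L_n$ is self-adjoint. Since $c_n\geq 2/(\rho_0\sigma^\theta_1)>0$ for all $n\geq 2$, it suffices to show that $\PP[\lambda_1(\hat L_n)\geq c_1]\to1$ for some $c_1>0$, and then to take $c_0=2c_1/(\rho_0\sigma^\theta_1)$.

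\textbf{Step 2 (spectral convergence).} I would define the endpoint-modified radial profile $\eta$ as in~\Cref{rem:open_closed_graph_sec7}, so that $\hat L_n$ agrees almost surely with the closed-graph Laplacian of~\cite{calder_improved_2022}. Their result then says the following. With probability at least $1-Cn\exp(-cn\epsilon_n^{m+4})$, or with a similar exponent, the eigenvalue $\lambda_1(\hat L_n)$ is within $C(\epsilon_n+\ldots)$ of $\sigma_\eta\lambda^{(0)}_1$ for the weighted Laplace--Beltrami operator. Under uniform density this is a positive multiple of $\lambda^{(0)}_1$, where $\sigma_\eta>0$ because $\eta\geq c_\theta>0$ on $[0,1)$.

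The main thing to check is that the bandwidth~\eqref{eq:combined_bandwidth} with $k=0$, namely $\epsilon_n=n^{-a}$ with $a<1/(2m+2)$, satisfies the hypotheses of their theorem. Those hypotheses are of the form $n\epsilon_n^{m+2}\gg\log n$, or possibly $m+4$ depending on the version of the eigenvalue estimate. Since $a(m+2)<(m+2)/(2m+2)<1$, the $m+2$ condition holds. If the cited result needs the stronger $m+4$ condition, then for small $m$ I would instead cite only the eigenvalue-convergence statement, which requires $n\epsilon^{m+2}/\log n\to\infty$, or else use a weaker lower bound. I expect this matching of hypotheses to be the main obstacle, together with checking that their kernel normalization matches $\frac{1}{n\epsilon^{m+2}}$.

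\textbf{Step 3 (conclusion).} Since $M$ is compact and connected, $\ker\Delta_0$ consists of the constants, so $\lambda^{(0)}_1>0$. Taking $c_1=\tfrac12\sigma_\eta\lambda^{(0)}_1$ and noting that the error term tends to zero, we get $\PP[\lambda_1(\hat L_n)\geq c_1]\to1$. Step 1 then yields the claim.

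Orientability is not needed for this step; it only matters later in the argument. Connectedness is what guarantees the spectral gap.
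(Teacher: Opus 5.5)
Your proposal is correct and follows essentially the same route as the paper. You rescale the eigenvalues through \eqref{eq:laplacian_operator_identity}, apply \cite[Theorem~2.4]{calder_improved_2022} to $\hat L_n$ with the endpoint-modified profile of \Cref{rem:open_closed_graph_sec7}, use connectedness to get $\lambda^{(0)}_1>0$, and then take half the limiting gap.

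Your hedge in Step 2 is unnecessary. Under the standing assumption $m\geq2$ of this subsection, $a(m+4)<(m+4)/(2m+2)\leq1$, so even an $n\epsilon_n^{m+4}\gg\log n$ hypothesis holds with polynomial room. Relatedly, your remark that orientability ``only matters later'' has it backwards: $m\geq2$ and orientability are not used anywhere later in the paper. The paper imposes them together with its statement that the graph-Laplacian result is the main tool, which suggests they serve to meet that cited theorem's hypotheses, so this step is where they would enter.
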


    \begin{proof}
    Let $0=\mu_{0,n}\leq\mu_{1,n}\leq\cdots$ denote the eigenvalues of $\hat{L}_n$, repeated with multiplicity. Note that the bandwidth in~\eqref{eq:combined_bandwidth} satisfies the hypotheses of \cite[Theorem~2.4]{calder_improved_2022}. Thus, by \cite[Theorem~2.4]{calder_improved_2022}, using the modified profile from \Cref{rem:open_closed_graph_sec7}, $\mu_{1,n}$ converges in probability to $\mu_1=c\lambda_1^{(0)}$, where $c>0$ is determined by $\theta$. Because $M$ is connected, $\lambda_1^{(0)}>0$, and hence $\mu_1>0$. Therefore
    \[
        \PP\left[\mu_{1,n}\geq \frac{\mu_1}{2}\right]\to 1.
    \]

    The comparison between Laplacians in~\eqref{eq:laplacian_operator_identity} above gives the corresponding identity for the eigenvalues, and thus with probability tending to one, 
    \begin{equation}
        \widehat\lambda^{(0)}_{1,n}= \frac{2n}{(n-1)\rho_0\sigma_1^\theta}\mu_{1,n} \geq \frac{nc}{(n-1)\rho_0\sigma_1^\theta}\lambda_1^{(0)}.
    \end{equation}
    Taking any $c_0<(c\lambda_1^{(0)})/(\rho_0\sigma_1^\theta)$ proves the claim for all sufficiently large $n$.
    \end{proof}

    \begin{corollary} \label{cor:laplacian_implies_diff_convergence_sec7}
    Let $u_n\in C^0(X^{(n)}_{\epsilon_n})$ satisfy $u_n\perp \ker \hd_n$ with respect to the degree-zero inner product.  With probability tending to one,
    \[
        \|u_n\|_n^2 \ls \|\hd_n u_n\|_n^2.
    \]
    Consequently, if $\|\widehat\Delta_{0,n}u_n\|_n\xrightarrow{\PP}0$, then $\|\hd_n u_n\|_n\xrightarrow{\PP}0$.
    \end{corollary}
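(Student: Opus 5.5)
The argument is a Poincaré inequality obtained by combining the spectral gap in \Cref{prop:degree_zero_gap_sec7} with the min-max/spectral-theorem description of $\widehat\Delta_{0,n}$. In degree zero there is no down-Laplacian term, so $\widehat\Delta_{0,n}=\hdelta_n\hd_n$ is self-adjoint and positive semidefinite with respect to $\langle\cdot,\cdot\rangle_n$. Moreover, $\langle \widehat\Delta_{0,n}u,u\rangle_n=\|\hd_n u\|_n^2$, so $\ker\widehat\Delta_{0,n}=\ker\hd_n$.

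First, I would identify $\ker\hd_n$. It consists of the functions that are constant on each connected component of the $1$-skeleton of $X^{(n)}_{\epsilon_n}$. On the event of \Cref{prop:topology_quasi_iso_convergence_scales}, which has probability tending to one under \eqref{eq:combined_bandwidth} since $am<1$, the complex is homotopy equivalent to the connected manifold $M$. Hence the complex is connected and $\ker\hd_n$ consists of the constants, which means $\widehat\lambda^{(0)}_{0,n}=0$ has multiplicity one.

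Since $\widehat\Delta_{0,n}$ is self-adjoint on the finite-dimensional space $C^0$, I would expand $u_n$ in an $\langle\cdot,\cdot\rangle_n$-orthonormal eigenbasis. If $u_n\perp\ker\hd_n$, only eigenvectors with eigenvalues at least $\widehat\lambda^{(0)}_{1,n}$ appear, so
\begin{equation}
\|\hd_n u_n\|_n^2=\langle\widehat\Delta_{0,n}u_n,u_n\rangle_n\geq \widehat\lambda^{(0)}_{1,n}\|u_n\|_n^2 .
\end{equation}
Intersecting with the event $\widehat\lambda^{(0)}_{1,n}\geq c_0$ from \Cref{prop:degree_zero_gap_sec7} gives $\|u_n\|_n^2\leq c_0^{-1}\|\hd_n u_n\|_n^2$ with probability tending to one. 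The implied constant is $c_0^{-1}$, which is independent of $n$.

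The connectivity step is not strictly needed. The gap bound $\widehat\lambda^{(0)}_{1,n}\geq c_0>0$ already forces $\dim\ker\widehat\Delta_{0,n}\leq1$, and the constants always lie in the kernel, so the orthogonal complement of $\ker\hd_n$ is exactly the span of the eigenspaces with eigenvalues at least $\widehat\lambda^{(0)}_{1,n}$.

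For the consequence, positivity and Cauchy--Schwarz give
\begin{equation}
\|\hd_n u_n\|_n^2=\langle\widehat\Delta_{0,n}u_n,u_n\rangle_n\leq\|\widehat\Delta_{0,n}u_n\|_n\|u_n\|_n .
\end{equation}
On the good event, $\|u_n\|_n\leq c_0^{-1/2}\|\hd_n u_n\|_n$, so $\|\hd_n u_n\|_n\leq c_0^{-1/2}\|\widehat\Delta_{0,n}u_n\|_n$, and this tends to zero in probability. Alternatively, the eigenbasis expansion gives $\|\widehat\Delta_{0,n}u_n\|_n\geq\widehat\lambda^{(0)}_{1,n}\|u_n\|_n$ directly.

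The only genuinely probabilistic input is the uniform gap from \Cref{prop:degree_zero_gap_sec7}. The remaining points are bookkeeping: the inner product used in $\widehat\Delta_{0,n}$ must be the same normalized degree-zero inner product as in the statement, and the degree-one inner product must be strict. Strictness holds because $\theta$ is bounded below by $c_\theta>0$ on $[0,1)$, so $\hdelta_n$ is well defined as an adjoint.
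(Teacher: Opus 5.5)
Your proof is correct and follows the paper's argument. Both use the spectral gap $\widehat\lambda^{(0)}_{1,n}\geq c_0$ from \Cref{prop:degree_zero_gap_sec7} to get $c_0\|u_n\|_n^2\leq\langle\widehat\Delta_{0,n}u_n,u_n\rangle_n=\|\hd_n u_n\|_n^2$ on $(\ker\hd_n)^\perp$, and then apply Cauchy--Schwarz to bound $\|\hd_n u_n\|_n$ by $\|\widehat\Delta_{0,n}u_n\|_n$. Your explicit check fills a step the paper asserts without comment: since constants always lie in the kernel, the gap forces $\ker\hd_n$ to be exactly the constants, so $(\ker\hd_n)^\perp$ is spanned by eigenvectors with eigenvalue at least $\widehat\lambda^{(0)}_{1,n}$.
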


    \begin{proof}
    On the event in \Cref{prop:degree_zero_gap_sec7}, the restriction of $\widehat\Delta_{0,n}$ to $(\ker\hd_n)^\perp$ has spectrum contained in $[c_0,\infty)$.  Therefore, we get
    \begin{equation}
        c_0\|u_n\|_n^2
        \leq
        \langle \widehat\Delta_{0,n}u_n,u_n\rangle_n
        = \widehat Q_{0,n}(u_n)
        =
        \|\hd_n u_n\|_n^2.
    \end{equation}
    On the same event, we apply Cauchy-Schwarz to get
    \begin{equation}
        c_0\|u_n\|_n^2 \leq \langle \widehat\Delta_{0,n}u_n,u_n\rangle_n \leq \|\widehat\Delta_{0,n}u_n\|_n \|u_n\|_n \quad \text{so that} \quad \|u_n\|_n\leq c_0^{-1}\|\widehat\Delta_{0,n}u_n\|_n.
    \end{equation}
    Using the identity defining the degree-zero energy,
    \[
        \|\hd_n u_n\|_n^2
        =
        \langle \widehat\Delta_{0,n}u_n,u_n\rangle_n
        \leq
        \|\widehat\Delta_{0,n}u_n\|_n\|u_n\|_n.
    \]
    The convergence assumption on $\widehat\Delta_{0,n}u_n$ gives the desired result.
    \end{proof}

    \begin{proposition} \label{prop:harmonic_one_form_projection_sec7}
    Assume that $M$ is connected and orientable.  Let $\omega\in \Omega^1(M)$ be a smooth harmonic one-form.  For each $n$, decompose $q\omega$ by the finite-dimensional Hodge decomposition of $C^1(X^{(n)}_{\epsilon_n})$,
    \begin{equation} \label{eq:discrete_harmonic_decomposition_sec7}
        q\omega
        =
        \homega_n+
        \hd_n\widehat\beta_n,
    \end{equation}
    where $\homega_n\in \ker \widehat\Delta_{1,n}$ and $\widehat\beta_n\perp \ker\hd_n$.  Then
    \[
        \|q\omega-\homega_n\|_n\xrightarrow{\PP}0.
    \]
    \end{proposition}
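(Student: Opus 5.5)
Since $\homega_n$ is harmonic and $\hd_n\widehat\beta_n$ is exact, we have $q\omega-\homega_n=\hd_n\widehat\beta_n$. It therefore suffices to show $\|\hd_n\widehat\beta_n\|_n\xrightarrow{\PP}0$. We will reduce this to a statement about the degree-zero Laplacian and then apply \Cref{cor:laplacian_implies_diff_convergence_sec7} with $u_n=\widehat\beta_n$, which satisfies $\widehat\beta_n\perp\ker\hd_n$ by construction. The key identity comes from applying $\hdelta_n$ to \eqref{eq:discrete_harmonic_decomposition_sec7}. Harmonic cochains satisfy $\hdelta_n\homega_n=0$, so
\begin{equation}
    \hdelta_n q\omega=\hdelta_n\hd_n\widehat\beta_n=\widehat\Delta_{0,n}\widehat\beta_n,
\end{equation}
using that $\widehat\Delta_{0,n}=\hdelta_n\hd_n$ in degree zero. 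This holds because the degree-one codifferential $\hdelta_n$ is the adjoint of $\hd_n:C^0\to C^1$ for the normalized inner products. Hence the task becomes showing $\|\hdelta_n q\omega\|_n\xrightarrow{\PP}0$ in the degree-zero norm. \Cref{cor:laplacian_implies_diff_convergence_sec7} then yields $\|\hd_n\widehat\beta_n\|_n\xrightarrow{\PP}0$.

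To control $\|\hdelta_n q\omega\|_n$, we use the codifferential convergence in degree $k=1$. Since $\omega$ is harmonic on a compact manifold, $\delta\omega=0$; indeed $0=\langle\Delta_1\omega,\omega\rangle_M=\|d\omega\|_M^2+\|\delta\omega\|_M^2$. Rescaling $\omega$ so that $\|\omega\|_{\cC^2(M)}\leq 1$, \Cref{cor:common_bandwidth_convergence} (with $k=1$) gives $\bar\sigma_{\delta,1}^{-1}\|\hdelta_n q\omega\|_n^2\to\|\delta\omega\|_M^2=0$ in probability. The one subtlety is the bandwidth. The Laplacian in this section is indexed by degree $k$ with the constraint $a<1/(m(k+2)+2)$. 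We need simultaneously the degree-one codifferential pairing (from \Cref{cor:common_bandwidth_convergence}, applied at index $1$, which constrains $a<1/(3m+2)$) and the graph-Laplacian gap of \Cref{prop:degree_zero_gap_sec7}, which was stated under \eqref{eq:combined_bandwidth} for $k=0$, i.e.\ $a<1/(2m+2)$. The stricter condition $a<1/(3m+2)$ is the one appropriate for the degree-one Laplacian $\widehat\Delta_{1,n}$ in the statement. It implies the degree-zero condition, so both results apply at the same $\epsilon_n$. We also need the normalization of the degree-zero inner product used in \Cref{cor:laplacian_implies_diff_convergence_sec7} to agree with the one in which $\|\hdelta_n q\omega\|_n$ is measured; both use the normalized inner products of this section, so this holds.

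The step that needs the most care is this compatibility check, rather than any new estimate. Specifically, we must confirm three things. First, $\hdelta_n$ computed from the normalized inner products coincides with the one appearing in \Cref{thm:main_codiff} up to the constant absorbed into $\bar\sigma_{\delta,1}$, so that convergence of the normalized pairing to $\bar\sigma_{\delta,1}\|\delta\omega\|_M^2$ is legitimate. Second, the ambient extension convention does not alter $q\omega$ at scales $\epsilon_n<\epsilon_0$. Third, the events from \Cref{prop:degree_zero_gap_sec7} and \Cref{cor:common_bandwidth_convergence} can be intersected, which is fine since each has probability tending to one. Orientability and $m\geq2$ enter only through the hypotheses needed for \Cref{prop:degree_zero_gap_sec7} via \cite{calder_improved_2022} and the definition of $\widehat\Delta_{1,n}$. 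Combining everything,
\[
\|q\omega-\homega_n\|_n=\|\hd_n\widehat\beta_n\|_n\leq c_0^{-1/2}\|\widehat\Delta_{0,n}\widehat\beta_n\|_n^{1/2}\|\widehat\beta_n\|_n^{1/2}\cdot c_0^{1/2}\ls\|\hdelta_n q\omega\|_n\xrightarrow{\PP}0.
\]
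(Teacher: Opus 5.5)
Your argument is correct and matches the paper's proof step for step. You apply $\hdelta_n$ to the decomposition, use $\hdelta_n\homega_n=0$ together with $\delta\omega=0$ and the degree-one codifferential convergence to get $\|\widehat\Delta_{0,n}\widehat\beta_n\|_n=\|\hdelta_n q\omega\|_n\xrightarrow{\PP}0$, and finish with \Cref{cor:laplacian_implies_diff_convergence_sec7}. The differences are bookkeeping only: the paper explicitly records $\hd_n(q\omega)=q(d\omega)=0$ via \Cref{lem:cochain_map} to justify that the decomposition of $q\omega$ has no coexact part, which you take from the statement, while you are more careful than the paper about the bandwidth, invoking \Cref{cor:common_bandwidth_convergence} at $k=1$ and checking that $a<1/(3m+2)$ also covers the degree-zero gap.
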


    \begin{proof}
    Since $\omega$ is a harmonic 1-form, $d\omega=0$ and $\delta\omega=0$.  By the cochain property \Cref{lem:cochain_map}, we have $\hd_n(q\omega)=q(d\omega)=0$.
    Since the finite-dimensional Hodge decomposition restricts to $\ker\widehat\Delta_{1,n}\oplus\im\hd_n$ on $\ker\hd_n$, the cochain $q\omega$ has the decomposition in \eqref{eq:discrete_harmonic_decomposition_sec7} and satisfies $\hdelta_n \homega_n = 0$.
    Next, we apply the codifferential pairing convergence \Cref{thm:main_codiff} with $\alpha=\beta=\omega$.  Since $\delta\omega=0$, it gives
    \[
        \|\hdelta_n(q\omega)\|_n\xrightarrow{\PP}0,
    \]
    by the codifferential pairing convergence.  Applying $\hdelta_n$ to \eqref{eq:discrete_harmonic_decomposition_sec7} and using $\hdelta_n\homega_n=0$, we obtain $\hdelta_n\hd_n\widehat\beta_n= \hdelta_n(q\omega)$, so that $\|\widehat\Delta_{0,n}\widehat\beta_n\|_n\to 0$ in probability.  Since $\widehat\beta_n\perp\ker\hd_n$, \Cref{cor:laplacian_implies_diff_convergence_sec7} gives
    \[
        \|\hd_n\widehat\beta_n\|_n\xrightarrow{\PP}0.
    \]
    The result follows from \eqref{eq:discrete_harmonic_decomposition_sec7}.
    \end{proof}

    \subsection{Circular Coordinates}

    We conclude this section by considering an application of our results to the convergence of the circular coordinates construction of~\cite{deSilva2011}. This construction has a topological step using persistent cohomology, which selects an integral cohomology class on the Vietoris-Rips complex of the point cloud, and an analytic smoothing step, which replaces a chosen integer cocycle by its discrete harmonic representative. The result below concerns the smoothing step, assuming that the topological step has selected the integral class corresponding to the smooth harmonic form.

    In our setting, we consider a point cloud $X^{(n)}$ sampled from an embedded compact connected orientable manifold $M \subset \R^N$. Consider a harmonic 1-form $\omega \in \Omega^1(M)$ with integral periods. After choosing a basepoint $p \in M$, this defines the smooth circular coordinate
    \begin{equation} \label{eq:smooth_circular_coordinate_sec7}
        f_\omega : M \to S^1, \qquad f_\omega(y)\coloneqq \exp\left(2\pi i\int_\gamma \omega\right),
    \end{equation}
    where $\gamma$ is any path from the basepoint $p$ to $y$. The integral-period condition makes this independent of the path. On the event $\cT_n$ that $X^{(n)}_{\epsilon_n}$ is homotopy equivalent to $M$ (see~\Cref{prop:topology_quasi_iso_convergence_scales}), the class of the discretization $q\omega$ is represented by an integer-valued cocycle $\hat{\alpha}_n$. Because $q\omega - \hat{\alpha}_n$ is exact, there exists $\zeta_n \in C^0(X^{(n)}_{\epsilon_n})$ such that \label{not:topology_event}
    \begin{align} \label{eq:smooth_circular_coord_cocycle}
        q\omega = \hat{\alpha}_n + \hat{d}_n \zeta_n \quad \text{where} \quad f_\omega(y) = \exp\left(2 \pi i\left(\zeta_n(y)+\varphi_n\right)\right)
    \end{align}
    for $y \in X^{(n)}$ and some additive constant $\varphi_n\in\R$. The integrality of $\hat{\alpha}_n$ is essential here: edge-path sums of $\hat{\alpha}_n$ are integer-valued and therefore disappear after exponentiation. In practice, we do not have access to $q\omega$; instead, we use the selected cocycle $\hat{\alpha}_n$ and $\langle \cdot , \cdot \rangle_n$ to compute the Hodge decomposition and define a discrete approximation to $f_\omega$ by
    \begin{align}
        \homega_n = \hat{\alpha}_n + \hat{d}_n \hat{\zeta}_n  \andd \hat{f}_n(y) = \exp\left( 2 \pi i \hzeta_n(y)  \right),
    \end{align}
    where $\homega_n$ is the discrete harmonic representative of the real cohomology class of $\hat{\alpha}_n$, equivalently of $q\omega$. The additive phase constant in~\eqref{eq:smooth_circular_coord_cocycle} records the choice of basepoint in~\eqref{eq:smooth_circular_coordinate_sec7}.

    \begin{corollary} \label{cor:circular_coordinate_convergence_sec7}
    On the event $\cT_n$ from \Cref{prop:topology_quasi_iso_convergence_scales}, suppose that an integer-valued cocycle $\hat{\alpha}_n$ has been selected whose image in real cohomology is the class of $q\omega$.  Let
    \begin{equation} \label{eq:discrete_harmonic_circular_sec7}
        \homega_n=\hat{\alpha}_n+\hat{d}_n\hat{\zeta}_n
    \end{equation}
    be the discrete harmonic representative of this class, and set $\hat f_n(x_i)=\exp(2\pi i\hat{\zeta}_n(x_i))$.  Then the real lifts satisfy $\inf_{c\in\R}\|\zeta_n-\hat{\zeta}_n-c\|_n\xrightarrow{\PP}0$, and consequently
    \begin{equation} \label{eq:circular_coordinate_convergence_sec7}
        \inf_{\varphi\in \R}
        \left(
            \frac1n\sum_{i=1}^n
            \left|f_\omega(x_i)-e^{2\pi i\varphi}\hat f_n(x_i)\right|^2
        \right)^{1/2}
        \xrightarrow{\PP}0.
    \end{equation}
    \end{corollary}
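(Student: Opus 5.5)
The plan is to reduce the statement to \Cref{prop:harmonic_one_form_projection_sec7} and \Cref{cor:laplacian_implies_diff_convergence_sec7}. On $\cT_n$ we have two decompositions of the same cohomology class:
\[
q\omega=\hat\alpha_n+\hd_n\zeta_n, \qquad \homega_n=\hat\alpha_n+\hd_n\hzeta_n .
\]
Subtracting them gives
\[
q\omega-\homega_n=\hd_n(\zeta_n-\hzeta_n).
\]
By \Cref{prop:harmonic_one_form_projection_sec7}, $\|q\omega-\homega_n\|_n\to0$ in probability. Here $\homega_n$ is the harmonic part of $q\omega$, and this harmonic part depends only on the real class. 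Hence $\|\hd_n u_n\|_n\xrightarrow{\PP}0$ for $u_n\coloneqq\zeta_n-\hzeta_n$.

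Next, $M$ is connected, and on the event of \Cref{prop:degree_zero_gap_sec7} the kernel of $\hd_n$ in degree zero consists of the constants. One way to see this is that $\widehat\lambda^{(0)}_{1,n}>0$ forces the graph to be connected. Let $c_n$ be the $\langle\cdot,\cdot\rangle_n$-orthogonal projection of $u_n$ onto the constants, which is just the sample mean since the degree-zero inner product is $\frac1n\sum$. Then $u_n-c_n\perp\ker\hd_n$ and $\hd_n(u_n-c_n)=\hd_n u_n$. \Cref{cor:laplacian_implies_diff_convergence_sec7} gives, with probability tending to one,
\[
\|u_n-c_n\|_n^2\ls\|\hd_n u_n\|_n^2\xrightarrow{\PP}0 .
\]
This proves the first claim, $\inf_c\|\zeta_n-\hzeta_n-c\|_n\xrightarrow{\PP}0$. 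The fact that $\cT_n$ has probability tending to one comes from \Cref{prop:topology_quasi_iso_convergence_scales}, since $am<1$ for the bandwidth \eqref{eq:combined_bandwidth}.

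For \eqref{eq:circular_coordinate_convergence_sec7}, use \eqref{eq:smooth_circular_coord_cocycle} to write $f_\omega(x_i)=e^{2\pi i(\zeta_n(x_i)+\varphi_n)}$, and choose $\varphi=\varphi_n+c_n$. Since $t\mapsto e^{2\pi it}$ is $2\pi$-Lipschitz,
\[
|f_\omega(x_i)-e^{2\pi i\varphi}\hat f_n(x_i)|\leq 2\pi|\zeta_n(x_i)-\hzeta_n(x_i)-c_n|.
\]
Squaring and averaging bounds the left side of \eqref{eq:circular_coordinate_convergence_sec7} by $2\pi\|u_n-c_n\|_n$. Note that the degree-zero normalized norm is exactly $\frac1n\sum_i|\cdot|^2$, since $\kappa^0=1$ and $\sigma_0^\theta=1$. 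The additional factor $\rho_0^{-1}$ is a fixed constant.

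The main obstacle is making sure the harmonic part used in \Cref{prop:harmonic_one_form_projection_sec7} coincides with the $\homega_n$ computed from $\hat\alpha_n$. This holds because both are the unique element of $\ker\widehat\Delta_{1,n}$ in the same real class: $q\omega-\hat\alpha_n$ is exact. The remaining bookkeeping is to intersect finitely many events whose probabilities tend to one: $\cT_n$, the gap event of \Cref{prop:degree_zero_gap_sec7}, and the convergence event.
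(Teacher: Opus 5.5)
Your proposal is correct and follows essentially the same route as the paper's proof. You subtract the two decompositions to get $q\omega-\homega_n=\hd_n(\zeta_n-\hzeta_n)$, invoke \Cref{prop:harmonic_one_form_projection_sec7}, remove the constant component and apply \Cref{cor:laplacian_implies_diff_convergence_sec7}, then use the $2\pi$-Lipschitz bound with phase $\varphi_n+c_n$. The differences are cosmetic: you deduce that $\ker\hd_n$ consists of the constants from the spectral-gap event rather than from connectivity on $\cT_n$, and your final bound should read $2\pi\rho_0^{1/2}\|u_n-c_n\|_n$, since in degree zero $\|\cdot\|_n^2=\rho_0^{-1}\frac1n\sum_i|\cdot|^2$.
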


    \begin{proof}
    We work on the event $\cT_n$ in the statement.  Since $\hat{\alpha}_n$ and $q\omega$ determine the same real cohomology class, $\homega_n$ is also the harmonic term in the Hodge decomposition of $q\omega$.  We also restrict to the high-probability analytic events underlying \Cref{prop:harmonic_one_form_projection_sec7} and \Cref{cor:laplacian_implies_diff_convergence_sec7}.  Subtracting \eqref{eq:discrete_harmonic_circular_sec7} from \eqref{eq:smooth_circular_coord_cocycle} gives
    \[
        q\omega-\homega_n
        =
        \hat{d}_n(\zeta_n-\hat{\zeta}_n).
    \]
    By \Cref{prop:harmonic_one_form_projection_sec7}, the left hand side tends to zero in the degree-one norm.  Since the topology event makes $X^{(n)}_{\epsilon_n}$ connected, $\ker\hat{d}_n$ consists of constants.  Choose $c_n\in\R$ so that $\zeta_n-\hat{\zeta}_n-c_n\perp \ker\hat{d}_n$.  Then, \Cref{cor:laplacian_implies_diff_convergence_sec7} gives
    \[
        \|\zeta_n-\hat{\zeta}_n-c_n\|_n\xrightarrow{\PP}0,
    \]
    which proves the asserted convergence of the lifts modulo constants.  The vertex norm is equivalent to the empirical $n^{-1}\sum_i$ norm up to fixed constants, and hence, for the phase $\varphi_n+c_n$,
    \[
        \left(
            \frac1n\sum_{i=1}^n
            \left|
            f_\omega(x_i)-e^{2\pi i(\varphi_n+c_n)}\hat f_n(x_i)
            \right|^2
        \right)^{1/2}
        \leq
        2\pi
        \left(
            \frac1n\sum_{i=1}^n
            \left|\zeta_n(x_i)-\hat{\zeta}_n(x_i)-c_n\right|^2
        \right)^{1/2}
        \xrightarrow{\PP}0.
    \]
    Taking the infimum over $\varphi\in\R$ proves \eqref{eq:circular_coordinate_convergence_sec7}.  The objects in the statement are defined on events whose probabilities tend to one; extending $\hat f_n$ arbitrarily to the complements does not change convergence in probability.
    \end{proof}

    \begin{remark}
    In the case where $\dim H^1(M;\R)=1$, there is only a single harmonic $1$-form $\omega$ up to normalization. Thus, on the event $\cT_n$ from \Cref{prop:topology_quasi_iso_convergence_scales}, we get $H^1(X^{(n)}_{\epsilon_n})=H^1(M)$, and we obtain the class required for~\Cref{cor:circular_coordinate_convergence_sec7} by computing the kernel of the discrete Laplacian.
    \end{remark}

    \section{Proof of Uniform Convergence of Inner Products} \label{sec:uc_proof}
    In this section, we will prove~\Cref{thm:main_uc} by considering a uniform bound over the unit ball $\cB_\R^k$ as defined in~\eqref{eq:cB_ball}. 
    We prove this in several steps. For $\alpha, \beta \in \cB_\R^k$, we decompose the error,
    \begin{align} \label{eq:uc_bias_variance_decomposition}
        \Big|\langle q\alpha, q\beta\rangle^{\kappa,w}_{n, \epsilon_n} - \sigma^\Theta_k \langle \alpha, \beta \rangle_M^{P_\Delta}\Big| \leq \Big|\langle q\alpha, q\beta\rangle^{\kappa,w}_{n, \epsilon_n} - \E\langle q\alpha, q\beta\rangle^{\kappa,w}_{n,\epsilon_n}\Big| + \Big| \E\langle q\alpha, q\beta\rangle^{\kappa,w}_{n,\epsilon_n} - \sigma^\Theta_k \langle \alpha, \beta \rangle_M^{P_\Delta}\Big|,
    \end{align}
    where the first term is a probabilistic variance term and the second term is a deterministic bias term.  
    In~\Cref{ssec:uc_bias}, we estimate the bias using a local normal-coordinate expansion of the simplex integrals and the orthogonal invariance of the admissible kernel. For the variance term, the aim is to apply a concentration bound for $U$-statistics along with a covering argument.
    We fix a smooth symmetric weight function $w: M^{k+1} \to (c_w,\infty)$ for some $c_w>0$ and an admissible VR kernel $\kappa_\epsilon$ with defining function $\Theta$, and suppress this from the inner product notation. 

    \subsection{Bias Estimate} \label{ssec:uc_bias}

    We begin with the bias term, where we prove the following estimate.
    \begin{proposition} \label{prop:uc_bias}
    For $0 < \epsilon < \epsilon_0$, we have
    \begin{align}
        \sup_{\alpha, \beta \in \cB_\R^k} \Big| \E\langle q\alpha, q\beta\rangle^{\kappa,w}_{n,\epsilon} - \sigma^\Theta_k \langle \alpha, \beta \rangle_M^{P_\Delta}\Big| \ls \epsilon^2,
    \end{align}
    where $P(\by) = w(\by)\rho(y_0) \cdots \rho(y_k) : M^{k+1} \to \R$ and $P_\Delta(x) = P(x,\ldots, x) : M \to \R$.
    \end{proposition}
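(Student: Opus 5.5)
By \Cref{thm:ustat_slln}, the expectation equals
\[
\E_{\mu^{k+1}}[h_\epsilon]=\frac{(k!)^2}{\epsilon^{k(m+2)}}\int_{D^k_\epsilon(M)} q\alpha(\by)\,q\beta(\by)\,\kappa_\epsilon(\by)\,P(\by)\,d\vol^{k+1}(\by).
\]
The proof therefore reduces to a deterministic Laplace-type expansion of this integral. I fix the base vertex $y_0=x$ and change variables for the remaining vertices via $y_i=\exp_x(v_i)$, which is legitimate by \Cref{cor:log} since $\epsilon<\epsilon_0$. This gives an integral over $x\in M$ and $\bv\in (T_xM)^k$, with Jacobian $\prod_i J(x,v_i)=1+O(\|\bv\|^2)$ by \eqref{eq:jacobian_remainder}. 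Because $\kappa_\epsilon$ vanishes unless all pairwise distances are below $\epsilon$, and intrinsic and Euclidean lengths are comparable by \eqref{eq:euclidean_intrinsic_comparison}, the integrand is supported on $\|\bv\|\ls\epsilon$. The substitution $\bv=\epsilon\bu$ then produces a factor $\epsilon^{km}$, which leaves the prefactor $\epsilon^{-2k}$.

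Next I expand each factor in $\bu$. By \Cref{lem:geometric_simplex_integral},
\[
q\alpha(\by)=\epsilon^k\left(\frac{\alpha_x(\bu)}{k!}+\epsilon L_\alpha(\bu)+O(\epsilon^2)\right),
\]
where $L_\alpha$ comes from the degree-$(k+1)$ homogeneous term and the error is uniform over $\cB^k_\R$ because the remainder is controlled by $\|\alpha\|_{\cC^2}\leq 1$. For the kernel, the pairwise squared distances satisfy
\[
\|y_i-y_j\|^2=\epsilon^2\left(\|u_i-u_j\|^2+O(\epsilon^2)\right),
\]
with $u_0=0$. This follows from the expansion \eqref{eq:exp_taylor}, since the normal quadratic term is orthogonal to the tangent differences up to cubic order; more precisely, the odd-order correction is $\langle u_i-u_j,\cdot\rangle$ paired with a normal vector, which vanishes. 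So the Lipschitz property of $\Theta$ gives $\kappa_\epsilon(\by)=\Theta(p(0,\bu))+O(\epsilon^2)$. One subtlety is that $\Theta$ is only Lipschitz on the open cube and vanishes outside it. I would handle this by noting that $\Theta$ extends continuously by zero at the boundary, so it is globally Lipschitz; alternatively I would bound the symmetric-difference region, which has measure $O(\epsilon^2)$. Finally, $P(\by)=P_\Delta(x)+\epsilon\,\ell_x(\bu)+O(\epsilon^2)$ with $\ell_x$ linear, using smoothness of $w$ and the $\cC^2$ regularity of $\rho$.

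Multiplying out gives the leading term
\[
\int_M P_\Delta(x)\int_{(T_xM)^k}\alpha_x(\bu)\,\beta_x(\bu)\,\Theta(p(0,\bu))\,d\bu\,\dvol(x).
\]
Here the inner integral is the bilinear form on $\Lambda^kT^*_xM$ obtained by polarizing \eqref{eq:inner_product_moment_constant}. By $O(m)$-invariance of $\Theta(p(0,\cdot))$ it is an invariant symmetric bilinear form on $\Lambda^k(\R^m)^*$. Since $\Lambda^k$ is irreducible under $O(m)$, this form is a multiple of the standard inner product; more concretely, in an orthonormal basis $e^I$ the cross terms vanish by reflection symmetries $u\mapsto$ (flip one coordinate) and the diagonal terms are equal by permutations. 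The multiple is $\sigma^\Theta_k$, so the leading term is exactly $\sigma^\Theta_k\langle\alpha,\beta\rangle_M^{P_\Delta}$.

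The main obstacle is the order-$\epsilon$ term, which must vanish. Every $O(\epsilon)$ contribution is a product of the even leading factors with exactly one correction that is odd in $\bu$: $L_\alpha$ is homogeneous of degree $k+1$ while $\alpha_x(\bu)$ has degree $k$, and $\ell_x$ is linear. The Jacobian and kernel corrections are already $O(\epsilon^2)$. Since $\Theta(p(0,-\bu))=\Theta(p(0,\bu))$ and the domain is symmetric under $\bu\mapsto-\bu$, these odd integrands integrate to zero. The remaining terms are $O(\epsilon^2)$, uniformly in $x$ and in $\alpha,\beta\in\cB^k_\R$, because the $\bu$-domain is bounded and all remainder constants depend only on $M,\rho,w,\Theta$. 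Integrating over compact $M$ then yields the bound $\ls\epsilon^2$.
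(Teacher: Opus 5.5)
Your argument is essentially the paper's proof. Both fibre over the base vertex, pass to rescaled normal coordinates, expand $q\alpha$, $q\beta$, $P$ and the Jacobian, kill the order-$\epsilon$ terms by the $\bu\mapsto-\bu$ symmetry, and identify $\sigma^\Theta_k$ by the reflection/permutation argument of \Cref{lem:bilinear_form_trace}.

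One correction concerns your first proposed fix for the cutoff, which is false. An admissible $\Theta$ is only Lipschitz on the open cube and need not tend to $0$ at its boundary. For example, the strict kernels built from $\theta$ with $\theta\geq c_\theta>0$ on $[0,1)$ jump at the cutoff. So $\Theta$ is not globally Lipschitz, and $\kappa_\epsilon(\by)=\Theta(p(0,\bu))+O(\epsilon^2)$ fails pointwise near the shell.

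Your alternative thin-shell bound is the correct one, and it is exactly \Cref{lem:kappa_domain_replacement}. It must be applied to the whole integrand, including the order-$\epsilon$ terms, before the odd-symmetry step. The reason is that the true rescaled domain $\tL^k_\epsilon(M,x)$ is not symmetric under $\bu\mapsto-\bu$; only the tangent domain $\cV_x^k$ is. With that ordering, your proof goes through.
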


    Here, we consider the expectation of the empirical inner product 
    \begin{align}
        \E_{\mu^n}[\langle q\alpha, q\beta\rangle_{n,\epsilon}] = \E_{\mu^{k+1}}[h_\epsilon(\alpha, \beta)] = \frac{(k!)^2}{\epsilon^{k(m+2)}} \int_{D^k_\epsilon(M)} q\alpha(\by) q\beta(\by) \kappa_\epsilon(\by) P(\by) \, \dvol^{k+1}(\by),
    \end{align}
    where $h_\epsilon(\alpha, \beta)$ is the $U$-kernel defined in~\eqref{eq:euclidean_u_kernel}, $D^k_\epsilon(M)$ is the fat diagonal defined in~\eqref{eq:fat_diagonal}.
    Note that we can decompose this domain by
    \begin{align} \label{eq:DM_decomposition}
        D^k_\epsilon(M) = \bigcup_{x \in M} \{(x, \by) \, : \, \by \in D^k_\epsilon(M,x)\} \hspace{6pt} \text{where} \hspace{6pt} D^k_\epsilon(M, x) \coloneqq \{ \by \in M^{k} \, : \, \diam_\R(x, \by) < \epsilon\},
    \end{align}
    so that the expectation can be expressed as
    \begin{align} \label{eq:bias_initial_expectation}
        \E[\langle q\alpha, q\beta\rangle^\kappa_{n,\epsilon}] = \int_M \left(\int_{D^k_\epsilon(M,x)} \frac{(k!)^2}{\epsilon^{k(m+2)}} q\alpha(x,\by) q\beta(x,\by) \kappa_\epsilon(x,\by) P(x,\by) \dvol^k(\by) \right) \dvol(x).
    \end{align}
    Our aim is to bound the error between the pointwise $\langle \alpha_x, \beta_x\rangle_x$ and the inner integral 
    \begin{equation} \label{eq:inner-integral}
        I(\alpha, \beta, x) \coloneq\int_{D^k_\epsilon(M,x)} \frac{(k!)^2}{\epsilon^{k(m+2)}} q\alpha(x,\by) q\beta(x,\by) \kappa_\epsilon(x,\by) P(x,\by) \dvol^k(\by).
    \end{equation}

    Since $\epsilon<\epsilon_0$, we apply~\Cref{lem:geometric_simplex_integral} to approximate the discretizations $q\alpha(x,\by)$ with evaluations $\alpha_x(\bv)$ where $\bv = \log_x^k(\by) \in T^k_x M$.
    Under this change of coordinates, the domain becomes 
    \begin{align} \label{eq:L^k}
        L^k_\epsilon(M,x) = \log^k_x(D^k_\epsilon(M,x)) = \{\bv = (v_1, \ldots, v_k) \in T^k_x M \, : \, \diam_\R(x, \exp_x(\bv)) < \epsilon\}.
    \end{align}
    We perform a second change of variables by rescaling $\bu = \epsilon^{-1} \bv$, where the rescaled domain is
    \begin{equation} \label{eq:tL_domain}
        \tL^k_\epsilon(M,x) = L^k_\epsilon(M,x)/\epsilon = \{\bu = (u_1, \ldots, u_k) \in T^k_x M \, : \diam_\R(x, \exp_x(\epsilon\bu)) < \epsilon\}. 
    \end{equation}
    We estimate $I(\alpha, \beta, x)$ by reducing it to an invariant integral on $T^k_x M$ in three steps.
    \begin{enumerate}
        \item We use orthogonal invariance to identify this integral with a multiple of $\langle \alpha_x, \beta_x \rangle_x$.
        \item We replace the domain $\tL^k_\epsilon(M,x)$ and ambient kernel with tangent-space counterparts.
        \item We expand the joint weight-density factor $P$ about the diagonal.
    \end{enumerate}

    \begin{lemma} \label{lem:bilinear_form_trace}
    Let $\ell,r \in \N$ with $r \leq m$. Let \(V\) be an \(m\)-dimensional Hilbert space,
    \(H\subset V^\ell\) be a linear subspace invariant under the diagonal
    \(O(V)\)-action, and $d\bv$ be the induced Euclidean measure on $H$. Let \(f:H\to\mathbb R\) be invariant under this action, and let $\Phi:H\to \Lambda^r V$
    be an \(O(V)\)-equivariant map such that $|\Phi(\bv)|^2 |f(\bv)|$ is integrable over $H$. Then there exists a constant
    \(\sigma_{\Phi,f}\), depending only on \(\Phi\) and \(f\), such that for all
    \(\alpha,\beta\in \Lambda^r V^*\),
    \[
    \int_H \alpha(\Phi(\bv))\beta(\Phi(\bv))f(\bv)\,d\bv
    =
    \sigma_{\Phi,f}\langle \alpha,\beta\rangle.
    \]
    More explicitly, if \(e^1,\ldots,e^m\) is an orthonormal basis of $V^*$, then
    \[
    \sigma_{\Phi,f}
    =
    \int_H
    \left[
    (e^1\wedge\cdots\wedge e^r)(\Phi(\bv))
    \right]^2
    f(\bv)\,d\bv .
    \]
    \end{lemma}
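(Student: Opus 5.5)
The plan is to reduce the statement to Schur's lemma for the $O(V)$-representation $\Lambda^r V^*$. Define the symmetric bilinear form
\[
B(\alpha,\beta)\coloneqq\int_H \alpha(\Phi(\bv))\beta(\Phi(\bv))f(\bv)\,d\bv,\qquad \alpha,\beta\in\Lambda^rV^*.
\]
It is well defined because $|\alpha(\Phi(\bv))\beta(\Phi(\bv))f(\bv)|\le|\alpha||\beta||\Phi(\bv)|^2|f(\bv)|$, which is integrable by hypothesis. Bilinearity and symmetry are immediate. The first real step is $O(V)$-invariance of $B$. For $A\in O(V)$, let $A$ act on $\Lambda^rV^*$ by pullback, $(A\cdot\alpha)(\xi)=\alpha(A^{-1}\xi)$. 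By equivariance, $(A\cdot\alpha)(\Phi(\bv))=\alpha(\Phi(A^{-1}\bv))$. Then change variables $\bv\mapsto A\bv$ on $H$. This is legitimate because $H$ is invariant and the diagonal action is orthogonal on $V^\ell$, so it is an isometry of $H$ and preserves $d\bv$. Together with invariance of $f$, this gives $B(A\cdot\alpha,A\cdot\beta)=B(\alpha,\beta)$.

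Next, I represent $B$ by a self-adjoint operator $T$ on $\Lambda^rV^*$ with respect to the standard inner product, so that $B(\alpha,\beta)=\langle T\alpha,\beta\rangle$. Invariance of $B$ and of the inner product makes $T$ commute with the $O(V)$-action. The key input is that $\Lambda^rV^*$ is an irreducible real representation of $O(V)$ for $0\le r\le m$. Moreover its commutant consists only of real scalars: $T$ is self-adjoint, so it has a real eigenvalue $\sigma$, and then $\ker(T-\sigma)$ is a nonzero invariant subspace, hence everything. So $T=\sigma\,\mathrm{id}$ and $B=\sigma\langle\cdot,\cdot\rangle$. Evaluating at $\alpha=\beta=e^1\wedge\cdots\wedge e^r$, which has unit norm since $\det(\langle e^i,e^j\rangle)=1$, gives the explicit formula for $\sigma_{\Phi,f}$. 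In particular $\sigma_{\Phi,f}$ is independent of the chosen orthonormal basis.

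The main obstacle is justifying irreducibility of $\Lambda^r V^*$ under $O(V)$ without heavy representation theory. I would give a direct argument. Let $W\ne0$ be an invariant subspace and $0\ne\alpha\in W$, written $\alpha=\sum_I c_I e^I$ over $r$-element index sets $I$. Consider the diagonal reflections $D_j$, which flip the sign of $e^j$. Each $e^I$ is an eigenvector of every $D_j$, and distinct $I$ are separated by their character, the signs $(-1)^{[j\in I]}$. Projecting with $\frac12(1\pm D_j)$ for successive $j$ isolates a single $e^I$ with $c_I\ne0$ inside $W$. Permutation matrices then carry $e^I$ to $\pm e^J$ for every $J$, so $W=\Lambda^rV^*$.

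Note that $O(V)$, not $SO(V)$, is essential here, for instance when $r=m/2$. This is fine because the action in the statement is by the full orthogonal group.
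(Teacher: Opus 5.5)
Your proof is correct, but it is organized differently from the paper's. The paper never uses irreducibility or Schur's lemma. It writes $\alpha=\sum_I\alpha_Ie^I$ and $\beta=\sum_J\beta_Je^J$, and computes the Gram matrix $A_{I,J}=\int_H e^I(\Phi(\bv))e^J(\Phi(\bv))f(\bv)\,d\bv$ directly. For $I\neq J$ it picks an index $a\in I\setminus J$ and changes variables by the single coordinate reflection $R_a$. Under this change $e^I\circ\Phi$ changes sign while $e^J\circ\Phi$ does not, which forces $A_{I,J}=-A_{I,J}=0$. It then shows $A_{I,I}=A_{J,J}$ by changing variables with an orthogonal map sending $e_{j_q}\mapsto e_{i_q}$.

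So the paper uses the same two families of symmetries as your irreducibility argument, namely coordinate reflections and permutations. It applies them to the matrix of the bilinear form rather than to an invariant subspace, so it needs no eigenvalue or commutant step. The paper's route is shorter and purely computational.

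Your route buys a cleaner conceptual statement: every $O(V)$-invariant symmetric bilinear form on $\Lambda^rV^*$ is a multiple of the inner product, however it arises. Your real-eigenvalue step also correctly handles the real-representation form of Schur's lemma. You also supply justifications the paper leaves implicit: integrability of the integrand via $|\alpha(\Phi(\bv))|\le|\alpha|\,|\Phi(\bv)|$, and invariance of $d\bv$ because the action restricts to an isometry of $H$. Your remark that the full group $O(V)$ is needed matches the paper's proof, whose reflection $R_a$ lies outside $SO(V)$.
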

    \begin{proof}
        Let $e^I$ be a basis for $\Lambda^r V^*$, so that $\alpha = \sum_{I} \alpha_I e^I$ and $\beta = \sum_J \beta_J e^J$.
        Let
        \begin{align}
            A_{I,J} = \int_{H} e^I(\Phi(\bv)) e^J(\Phi(\bv)) f(\bv) d\bv.
        \end{align}
        Our aim is to show that $A_{I,J} = 0$ whenever $I \neq J$ and $A_{I,I} = A_{J,J}$ for all $I, J$. First, suppose $I \neq J$ so there exists some index $a \in \{1,\ldots,m\}$ such that $a \in I$ but $a \notin J$. Define the reflection $R_a \in O(V)$ such that $R_a(e_a) = -e_a$ and $R_a(e_s) = e_s$ for all $s \neq a$. By changing variables with $R_a$, and using the invariance of $H$, the invariance of $f$, and the equivariance of $\Phi$, we get
        \begin{align}
            A_{I,J} = \int_{H} e^I(R_a \Phi(\bv)) e^J(R_a \Phi(\bv)) f(\bv) d\bv = - \int_{H} e^I(\Phi(\bv)) e^J(\Phi(\bv)) f(\bv) d \bv = -A_{I,J},
        \end{align}
        so that $A_{I,J} = 0$. Next, let $I = (i_1 < \ldots < i_r)$ and $J = (j_1 < \ldots < j_r)$. Choose $U \in O(V)$ such that $U(e_{j_q}) = e_{i_q}$ for all $q \in \{1,\ldots,r\}$, so that $e^I(U\Phi(\bv)) = e^J(\Phi(\bv))$. Then, by the same invariance and equivariance argument, we get
        \begin{align}
            A_{I,I} = \int_{H} e^I(U\Phi(\bv))^2 f(\bv) d\bv = \int_{H} e^J(\Phi(\bv))^2 f(\bv) d\bv = A_{J,J},
        \end{align}
        and we let $\sigma_{\Phi,f}$ denote this value. Then, we obtain 
        \begin{align}
            \int_{H} \alpha(\Phi(\bv))\beta(\Phi(\bv)) f(\bv) d\bv = \sigma_{\Phi,f} \sum_{I} \alpha_I \beta_I = \sigma_{\Phi,f} \langle \alpha, \beta \rangle.
        \end{align}
    \end{proof}

    To apply this lemma to the current setting, we wish to take $V = T_x M$ and $H = T_x^k M$. However, our domain $\tL^k_\epsilon(M,x)$ under consideration from \eqref{eq:tL_domain} is not $O(T_x M)$-invariant. Thus, we must replace $\tilde{L}_\epsilon^k(M)$ with one that exhibits $O(T_x M)$-invariance, see \Cref{fig:domain-transforms}. 

\begin{figure}
    \centering
    \includegraphics[width=0.8\linewidth]{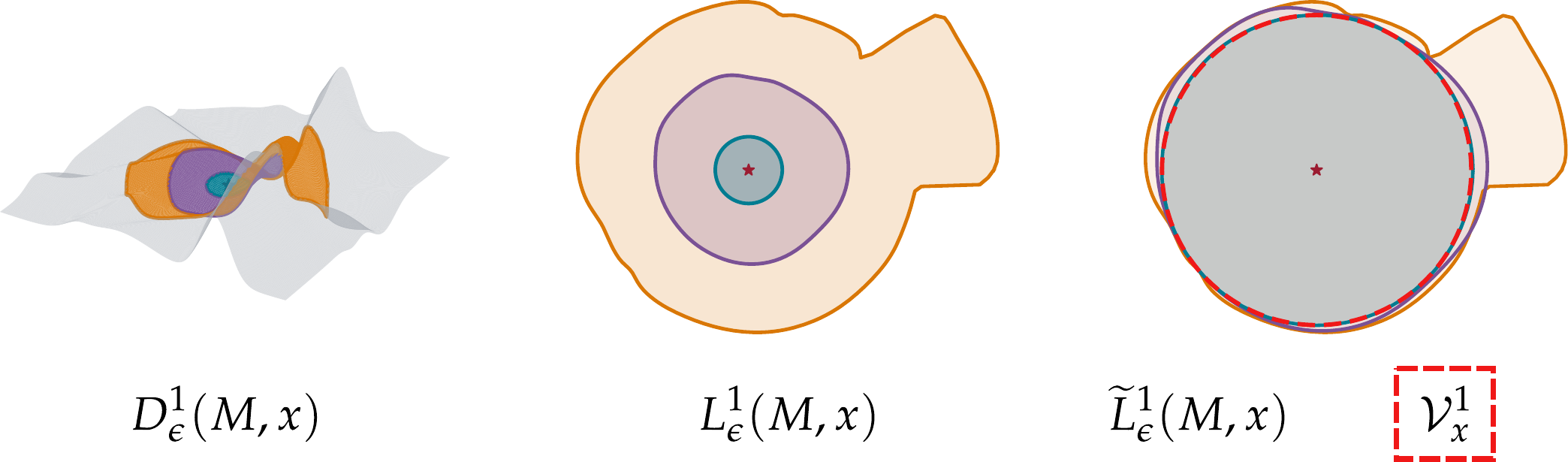}
    \captionsetup{labelfont={bf,up}}
    \caption{Domains for $m=2$ and $k=1$: $D^1_\epsilon(M,x)\subset M$ (left), its normal-coordinate image $L^1_\epsilon(M,x)$ (middle), and $\tL^1_\epsilon(M,x)=\epsilon^{-1}L^1_\epsilon(M,x)$ superimposed on $\cV_x^1$ in dashed red (right). The chosen $\epsilon$ values emphasize finite-scale asymmetry and the regularizing role of the normalization: $\tL^1_\epsilon(M,x)$ becomes closer to $\cV_x^1$ as $\epsilon$ decreases. The figure is schematic, and the displayed values of $\epsilon$ are not assumed to satisfy $\epsilon<\epsilon_0$.}
    \label{fig:domain-transforms}
\end{figure}

    \begin{lemma} \label{lem:kappa_domain_replacement}
        Let $\kappa_\epsilon : (\R^N)^{k+1} \to \R$ be an admissible VR kernel of degree $k$. For $\bu \in T_x^k M$, set $\tp(\bu) = p(0, \bu)$ and
        \begin{align}
            \cV_x^k = \{ \bu \in T^k_x M \, : \, \diam_{T_x M} (0, \bu) < 1\}.
        \end{align}
        If $f:T_x^kM\to\R$ is bounded on $\tL^k_\epsilon(M,x)\cup \cV_x^k$, then, for every $0<\epsilon<\epsilon_0$,
        \begin{align} \label{eq:kappa_domain_replacement}
            \left|\int_{T_x^kM} f(\bu)\left(\kappa_\epsilon(x,\exp_x(\epsilon\bu))\one_{\tL^k_\epsilon(M,x)}(\bu)-\Theta(\tp(\bu))\one_{\cV_x^k}(\bu)\right)d\bu\right|
            \ls \|f\|_{L^\infty(\tL^k_\epsilon(M,x)\cup \cV_x^k)}\epsilon^2,
        \end{align}
        uniformly in $x\in M$.
    \end{lemma}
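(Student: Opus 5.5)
We will compare the two integrands pointwise. Write $\kappa_\epsilon(x,\exp_x(\epsilon\bu))=\Theta(p(x,\texp_x(\epsilon\bu))/\epsilon^2)$. By definition of $\tL^k_\epsilon(M,x)$, the first term is supported on $\tL^k_\epsilon(M,x)$, and the second on $\cV_x^k$. Both sets lie in a fixed ball of $T_x^kM$, uniformly in $x$. Indeed, $\diam_\R<\epsilon$ forces $\|\texp_x(\epsilon u_i)-x\|<\epsilon$, so $\|u_i\|\leq S(\epsilon)/\epsilon\leq 2$ for $\epsilon<\epsilon_0$, using \eqref{eq:geodesic_euclidean_balls}.

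We split the difference as
\[
\Big[\Theta\big(\epsilon^{-2}p(x,\texp_x(\epsilon\bu))\big)-\Theta(\tp(\bu))\Big]\one_{\tL\cap\cV}
\;+\;\Theta\big(\epsilon^{-2}p(x,\texp_x(\epsilon\bu))\big)\one_{\tL\setminus\cV}
\;-\;\Theta(\tp(\bu))\one_{\cV\setminus\tL}.
\]
On the intersection we use the Lipschitz property of $\Theta$ on $[0,1)^{\binom{k+1}{2}}$, where both arguments lie. The coordinates of $\epsilon^{-2}p(x,\texp_x(\epsilon\bu))$ are $\epsilon^{-2}\|\texp_x(\epsilon u_i)-\texp_x(\epsilon u_j)\|^2$ (with $u_0=0$). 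We compare these with $\|u_i-u_j\|^2$ using the Taylor expansion \eqref{eq:exp_taylor}. Write $\texp_x(\epsilon u_i)-\texp_x(\epsilon u_j)=\epsilon(u_i-u_j)+\epsilon^2(Q(u_i,u_i)-Q(u_j,u_j))+O(\epsilon^3)$. The quadratic term is normal, hence orthogonal to the tangent term $u_i-u_j$. Therefore the squared norm is $\epsilon^2\|u_i-u_j\|^2+O(\epsilon^4)$, uniformly for $\|u_i\|\leq 2$. This is the extension of \eqref{eq:euclidean_intrinsic_comparison} to pairs not based at $x$. Dividing by $\epsilon^2$ gives coordinatewise error $O(\epsilon^2)$. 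The Lipschitz bound then gives a pointwise error $O(\epsilon^2)$ on a set of bounded measure, contributing $\ls\|f\|_\infty\epsilon^2$.

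For the symmetric difference we do not expect pointwise smallness of the kernel. The plan is to show that $\vol(\tL\,\triangle\,\cV)\ls\epsilon^2$. By the same comparison, $\tL^k_\epsilon(M,x)$ is the set where $\max_{i<j}\|u_i-u_j\|^2+O(\epsilon^2)<1$. So $\tL\triangle\cV$ is contained in the shell $\{\bu:|\max_{i<j}\|u_i-u_j\|^2-1|\leq C\epsilon^2\}$ in the bounded region. This shell is a union of at most $\binom{k+1}{2}$ sets of the form $\{|\|u_i-u_j\|^2-1|\leq C\epsilon^2\}$, intersected with a ball. Each such set has measure $\ls\epsilon^2$, by a linear change of variables to $u_i-u_j$ and polar coordinates in $T_xM$. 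Since $\Theta$ is bounded on $[0,1)^{\binom{k+1}{2}}$, being Lipschitz there, these two terms contribute $\ls\|f\|_{L^\infty}\epsilon^2$.

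Uniformity in $x$ follows because the constants in \eqref{eq:exp_taylor} (second fundamental form and remainder $C_M$) are uniform over compact $M$. The main obstacle is the careful pairwise distance comparison, which needs the error to be $O(\epsilon^2)$ after rescaling rather than $O(\epsilon)$. The key point is that the cross term $\langle u_i-u_j,\,Q(u_i,u_i)-Q(u_j,u_j)\rangle$ vanishes because $Q$ is normal-valued. The cubic remainder pairs with the tangent term only at order $\epsilon^4$, so it is harmless.
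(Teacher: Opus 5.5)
Your proposal is correct and follows essentially the same route as the paper. Both arguments use the pairwise comparison $\epsilon^{-2}\|\exp_x(\epsilon u_i)-\exp_x(\epsilon u_j)\|^2=\|u_i-u_j\|^2+O(\epsilon^2)$, which relies on the normal-valued second fundamental form. Both then apply a Lipschitz bound for $\Theta$ where both kernels are active, and absorb the support mismatch in an $O(\epsilon^2)$-volume shell around $\{\max_{i<j}\|u_i-u_j\|^2=1\}$. The only cosmetic difference is that you split along $\tL^k_\epsilon(M,x)\cap\cV_x^k$ versus the symmetric difference, whereas the paper splits along the shell $S_\epsilon$ and its complement.
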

    \begin{proof}
        Set $u_0=0$. Both $\tL^k_\epsilon(M,x)$ and $\cV_x^k$ are contained in a uniformly bounded subset $B\subset T_x^kM$. On this bounded set, the Taylor expansion~\eqref{eq:exp_taylor} gives, uniformly in $x$, $\epsilon$, and $\bu$,
        \begin{align} 
            \exp_x(\epsilon u_i)-\exp_x(\epsilon u_j)=\epsilon(u_i-u_j)+\epsilon^2(Q_x(u_i,u_i)-Q_x(u_j,u_j))+O(\epsilon^3).
        \end{align}
        Since $u_i-u_j\in T_xM$ and $Q_x(u_i,u_i)-Q_x(u_j,u_j)\in N_xM$, the quadratic cross term vanishes. Hence
        \begin{align} \label{eq:y_u_comparison}
            \epsilon^{-2}\|\exp_x(\epsilon u_i)-\exp_x(\epsilon u_j)\|^2=\|u_i-u_j\|^2+O(\epsilon^2), \qquad 0\leq i<j\leq k.
        \end{align}
        Consider the set of simplices in the tangent space possessing an edge of squared distance that is order $\epsilon^2$ close to one:
        \begin{align}
            S_\epsilon=\bigcup_{i<j}\left\{\bu\in B:\left|\|u_i-u_j\|^2-1\right|\ls \epsilon^2\right\}.
        \end{align}
        This is a finite union of bounded domains of thickness $O(\epsilon^2)$, so $\vol(S_\epsilon)\ls \epsilon^2$.
        On $B\setminus S_\epsilon$, the comparison~\eqref{eq:y_u_comparison} implies that membership in $\tL^k_\epsilon(M,x)$ and membership in $\cV_x^k$ agree. Moreover, either both arguments of $\Theta$ lie in the interior of the cube $[0,1)^{\binom{k+1}{2}}$, or both terms in~\eqref{eq:kappa_domain_replacement} vanish. Therefore, using the Lipschitz bound for $\Theta$ on the interior of the cube, we get
        \begin{align}
            \left|\kappa_\epsilon(x,\exp_x(\epsilon\bu))\one_{\tL^k_\epsilon(M,x)}(\bu)-\Theta(\tp(\bu))\one_{\cV_x^k}(\bu)\right|\ls\epsilon^2
        \end{align}
        on $B\setminus S_\epsilon$. On $S_\epsilon$, the integrand can be nonzero only on $S_\epsilon\cap(\tL^k_\epsilon(M,x)\cup\cV_x^k)$; there both terms are uniformly bounded, and the shell has volume $O(\epsilon^2)$. Integrating these two estimates proves the result.
    \end{proof}

    \begin{lemma} \label{lem:smooth_localization}
        Let $w: M^{k+1} \to \R$ be a smooth function. Then for any $x \in M$ and $\bv = (v_1, \ldots, v_k) \in T^k_x M$ such that $\|\bv\| < \inj_\R(M)$, 
        \begin{align}
            w(x, \exp_x(\bv)) - w_\Delta(x) = L(x,\bv) + R(x,\bv)
        \end{align}
        where $w_\Delta(x) = w(x, \ldots, x)$. Here, $L(x,\bv)$ is linear in $\bv$ and satisfies $|L(x,\bv)| \leq C_1\|\bv\|$, while $|R(x,\bv)| \leq C_2 \|\bv\|^2$, where $C_1, C_2> 0$ depend on $w$. 
    \end{lemma}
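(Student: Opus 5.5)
The plan is to Taylor expand the smooth function $G_x(\bv) \coloneqq w(x,\exp_x(\bv))$ on $T^k_xM$ to second order at $\bv=0$, with all constants made uniform in $x$ by compactness. Since $\|\bv\|<\inj_\R(M)$, each component satisfies $\|v_i\|<\inj_\R(M)$. By the choice of $\inj_\R(M)$ we have $\inj_\R(M)<S(\inj_\R(M))\le\inj(M)$, so $\exp_x$ is smooth on the relevant ball. Hence $G_x$ is smooth on the ball $\{\bv:\|\bv\|<\inj_\R(M)\}\subset T^k_xM$, and $G_x(0)=w(x,x,\ldots,x)=w_\Delta(x)$.

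Define
\begin{align}
L(x,\bv) \coloneqq dG_x|_0(\bv) = \sum_{i=1}^k d_{i}w_{(x,\ldots,x)}(v_i),
\end{align}
where $d_i w$ denotes the differential of $w$ in its $i$-th slot (indices $1,\ldots,k$, the $0$-th slot being frozen at $x$). This uses $d(\exp_x)_0=\mathrm{id}$. The map $L(x,\cdot)$ is linear, and $|L(x,\bv)|\le \sqrt{k}\,\sup_{M^{k+1}}|dw|\,\|\bv\|$, so we may take $C_1=\sqrt{k}\|dw\|_\infty$. This is finite because $M^{k+1}$ is compact.

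For the remainder, apply Taylor's theorem with integral remainder along the segment $t\mapsto t\bv$, which stays in the ball since it is star-shaped:
\begin{align}
|R(x,\bv)|\le \tfrac12\sup_{\|\bu\|\le\|\bv\|}\|D^2G_x(\bu)\|\,\|\bv\|^2.
\end{align}
By the chain rule, $D^2G_x$ is controlled by $\|dw\|_\infty$, $\|\nabla^2 w\|_\infty$, and the first and second derivatives of $\exp_x$ on $\ball{T_xM}{0}{\inj_\R(M)}$.

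The one genuine point, and the main obstacle, is uniformity of these bounds in $x$. Bounding the derivatives of $\exp_x$ at radius close to $\inj_\R(M)$ requires the closed ball of that radius to stay strictly inside the domain where $\exp$ is well-behaved. I would handle this as follows. The map $(x,v)\mapsto\exp_x(v)$ is smooth on the unit-speed-scaled compact set $\{(x,v)\in TM:\|v\|\le S(\inj_\R(M))\}$, whose radius is below $\inj(M)$ and in any case finite. On a compact manifold $\exp$ is defined and smooth on all of $TM$, and $TM$ restricted to a closed disc bundle is compact. Hence its first two derivatives, measured with respect to the Sasaki/Euclidean structure, are bounded there. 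This gives $C_2$ depending only on $w$ and $M$.

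If one prefers to avoid boundary issues altogether, the same estimate also holds for $\|\bv\|\ge\epsilon_0$ trivially. In that regime, $|R|\le 2\|w\|_\infty+C_1\|\bv\|\le(2\|w\|_\infty\epsilon_0^{-2}+C_1\epsilon_0^{-1})\|\bv\|^2$. So only the small-ball Taylor estimate is really needed, and there compactness of the closed $\epsilon_0$-disc bundle immediately gives uniform constants.
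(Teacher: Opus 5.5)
Your proposal is correct and follows the same route as the paper: Taylor expand $\bv\mapsto w(x,\exp_x(\bv))$ to second order at $\bv=0$, take $L(x,\cdot)$ to be its differential at $0$, and get uniform constants from compactness of $M$. Your extra care about uniformity is sound but more than needed. Since $\exp$ is smooth on all of $TM$ for compact $M$, compactness of the closed disc bundle of radius $\inj_\R(M)$ already gives uniform constants, so the large-$\|\bv\|$ trick is optional.
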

    \begin{proof}
        Let $\tw_x: T^k_x M \to \R$ be defined by $\tw_x(\bv) = w(x, \exp_x(\bv))$. Note that this is a smooth function on a vector space, so by a Taylor expansion about $\bv = 0$, we obtain
        \begin{align}
            \tw_x(\bv) = \tw_x(0) + d\tw_x(0)[\bv] + R(x,\bv),
        \end{align}
        where $|R(x,\bv)| \leq C_2 \|\bv\|^2$ is a uniform bound over $M$ since $M$ is compact. Furthermore, we let
        \begin{align}
            L(x,\bv) = d\tw_x(0)[\bv],
        \end{align}
        which is a linear function of $\bv$, and thus satisfies $|L(x,\bv)| \leq C_1 \|\bv\|$. 
    \end{proof}

    \begin{proof}[Proof of~\Cref{prop:uc_bias}]
        We begin with a change of coordinates by $\bv = \log_x(\by)$ with the Jacobian $\overline{J}(x, \bv) = \prod_{i=1}^k J(x,v_i)$ and the domain $L^k_\epsilon(M,x)$ from \eqref{eq:L^k} to get
        \begin{align}
            I(\alpha,\beta, x) = \int_{L^k_\epsilon(M,x)} \frac{(k!)^2}{\epsilon^{k(m+2)}} q\alpha(x,\by) q\beta(x,\by) \kappa_\epsilon(x,\by) P(x,\by) \overline{J}(x,\bv) d\bv.
        \end{align}
        We further rescale by setting $\bv = \epsilon \bu$. We apply~\Cref{lem:geometric_simplex_integral},~\Cref{lem:smooth_localization}, and the Jacobian remainder estimate~\eqref{eq:jacobian_remainder} to obtain the following uniform bounds (over $\cB_\R^k$ and $M$) on the bounded rescaled domain $\tL^k_\epsilon(M,x) = L^k_\epsilon(M,x)/\epsilon$,
        \begin{align}
            k! q\alpha(x,\by) &= \epsilon^k\alpha_x(\bu) + \epsilon^{k+1}L_\alpha(\bu) + O(\epsilon^{k+2}) \\
            k! q\beta(x,\by) & = \epsilon^k\beta_x(\bu) + \epsilon^{k+1} L_\beta(\bu) + O(\epsilon^{k+2}) \\
            P(x,\by) & = P_\Delta(x) + \epsilon L_P(\bu) + O(\epsilon^2) \\
            \overline{J}(x,\epsilon\bu) &= 1 + O(\epsilon^2).
        \end{align}
        We collect the terms of order $1$ and $\epsilon$ and integrate the rest over $\tL^k_\epsilon(M,x)$ to get
        \begin{align}
            I(\alpha,\beta,x) =  \int_{\tL^k_\epsilon(M,x)} \Big(\alpha_x(\bu) \beta_x(\bu) P_\Delta(x) + \epsilon \cL(\bu) \Big)\kappa_\epsilon(x,\exp_x(\epsilon\bu))d\bu + O(\epsilon^{2}).
        \end{align}
        Here, $\cL(\bu)$ is given by
        \begin{align}
            \cL(\bu) = L_\alpha(\bu) \beta_x(\bu) P_\Delta(x) + \alpha_x(\bu) L_\beta(\bu) P_\Delta(x) + \alpha_x(\bu) \beta_x(\bu) L_P(\bu),
        \end{align}
        which is homogeneous of order $2k+1$, so that $\cL(-\bu) = -\cL(\bu)$.
        Applying~\Cref{lem:kappa_domain_replacement} with $f(\bu)=\alpha_x(\bu)\beta_x(\bu)P_\Delta(x)+\epsilon\cL(\bu)$ gives
        \begin{align} \label{eq:uc_I_expanded_integral}
            I(\alpha,\beta,x) =  \int_{\cV_x^k} \Big(\alpha_x(\bu) \beta_x(\bu) P_\Delta(x) \Theta(\tp(\bu)) + \epsilon \cL(\bu) \Theta(\tp(\bu)) \Big)d\bu + O(\epsilon^{2}).
        \end{align}
        Since $\cL(\bu)$ is odd and $\Theta$ is even, we get
        \begin{align}
            \int_{\cV_x^k} \cL(\bu) \Theta(\tp(\bu)) d\bu  = 0.
        \end{align}
        Then, by applying~\Cref{lem:bilinear_form_trace} (in this case $H = V^k$ and $\Phi(\bv) = v_1 \wedge \ldots \wedge v_k$), using the $O(T_x M)$-invariance of $\Theta(\tp(\bu)) \one_{\cV_x^k}(\bu)$, we get
        \begin{align}
            \int_{\cV_x^k} \alpha_x(\bu) \beta_x(\bu) \Theta(\tp(\bu))  d\bu = \sigma^\Theta_k \langle \alpha_x, \beta_x \rangle_x \hspace{6pt} \text{where} \hspace{6pt} \sigma^\Theta_k = \int_{\cV_x^k} (e^1 \wedge \ldots \wedge  e^k)(\bu)^2 \, \Theta(\tp(\bu)) d\bu.
        \end{align}
        Therefore,
        \begin{align}
            I(\alpha, \beta, x) =  \sigma^\Theta_k \langle \alpha_x, \beta_x \rangle_x  P_\Delta(x) + O(\epsilon^{2}).
        \end{align}
        Integrating this estimate over $M$ and using~\eqref{eq:bias_initial_expectation}, we obtain
        \begin{align}
            \left|\E\langle q\alpha,q\beta\rangle_{n,\epsilon}^{\kappa,w}-\sigma_k^\Theta\langle\alpha,\beta\rangle_M^{P_\Delta}\right|
            \leq \int_M\left|I(\alpha,\beta,x)-\sigma_k^\Theta\langle\alpha_x,\beta_x\rangle_xP_\Delta(x)\right|\dvol(x)\ls\epsilon^2.
        \end{align}
        Taking the supremum over $\alpha,\beta\in\cB_\R^k$ proves the result.
    \end{proof}

    \subsection{A Geometric U-Kernel and Covering Bound} \label{ssec:uc_geometric_kernel}

    Next, we consider the variance term in~\eqref{eq:uc_bias_variance_decomposition}. For fixed $\alpha,\beta\in\Omega^k_c(\R^N)$, the empirical inner product can be written as the $U$-statistic $\langle q\alpha, q\beta \rangle_{n, \epsilon} = \U_n[h_\epsilon(\alpha, \beta)]$ of a $U$-kernel $h_\epsilon$. To prepare the later uniform concentration argument, we replace the Euclidean $U$-kernel $h^e_\epsilon=h_\epsilon$ by a geometric $U$-kernel $h^g_\epsilon$ coming from the normal-coordinate expansion of the simplex integrals. 
    The kernels differ only by a controlled higher-order error, while $h^g_\epsilon$ depends on the forms only through their restricted ambient $1$-jets along $M$. Thus the covering argument is intrinsic to $M$ rather than $\R^N$.\medskip

    Let $\by = (y_0, \ldots, y_k) \in D^k_\epsilon(M)$. For each $\ell \in \{0,\ldots,k\}$, we define
    \begin{align}
        \bv^\ell = \Big( \log_{y_\ell}(y_0), \ldots, \log_{y_{\ell}}(y_{\ell-1}), \log_{y_{\ell}}(y_{\ell+1}), \ldots, \log_{y_\ell}(y_k) \Big) \in T^k_{y_\ell} M.
    \end{align}
    For the Euclidean $U$-kernel $h_\epsilon^e$, we use~\Cref{lem:geometric_simplex_integral}, where the orientation signs cancel, to obtain for any $\ell \in \{0,\ldots,k\}$,
    \begin{align} \label{eq:unif_conv_q_approx}
        (k!)^2 q\alpha(\by)q\beta(\by) = \left(\alpha_{y_\ell}(\bv^\ell)\beta_{y_\ell}(\bv^\ell) + \alpha_{y_\ell}(\bv^\ell) L_{\beta,\ell}(\bv^\ell) + \beta_{y_\ell}(\bv^\ell) L_{\alpha,\ell}(\bv^\ell) \right) + R(\by),
    \end{align}
    where $R(\by) \ls \epsilon^{2k+2}$ uniformly over the unit ball of ambient forms $\cB_\R^k$ from \eqref{eq:cB_ball}. Now, for each $\alpha, \beta \in \Omega^k_c(\R^N)$, we define a \emph{geometric} $U$-kernel $h^g_\epsilon(\alpha,\beta) : M^{k+1} \to \R$ by symmetrizing the first order term in~\eqref{eq:unif_conv_q_approx}
    \begin{equation} \label{eq:geom-U-kernel}
        h^g_\epsilon(\alpha, \beta)(\by) \coloneqq \frac{\epsilon^{-k(m+2)}}{k+1}\sum_{\ell=0}^k \left(\alpha_{y_\ell}(\bv^\ell)\beta_{y_\ell}(\bv^\ell) + \alpha_{y_\ell}(\bv^\ell) L_{\beta,\ell}(\bv^\ell) + \beta_{y_\ell}(\bv^\ell) L_{\alpha,\ell}(\bv^\ell) \right) \kappa_\epsilon(\by) w(\by),
    \end{equation} over the choice of basepoint $y_\ell$. This has the property that
    \begin{align} \label{eq:euclidean_geometric_u_kernel_error}
        \sup_{\alpha, \beta \in\cB_\R^k} |h^e_\epsilon(\alpha, \beta)(\by) - h^g_\epsilon(\alpha, \beta)(\by)| \ls  \epsilon^{-k(m+2)} R(\by) \one_{D^k_\epsilon(M)}(\by) \ls \epsilon^{2-km} \one_{D^k_\epsilon(M)}(\by),
    \end{align}
    where $\one_{D^k_\epsilon(M)}$ denotes the indicator function.

    \subsection{Pointwise Variance Estimate} \label{ssec:uc_pointwise_estimate}

    In order to perform the pointwise concentration, our main tool will be Bernstein inequalities for $U$-statistics~\cite{hoeffding_probability_1963,arcones_bernstein-type_1995}. We cite the statement from~\cite{peel_empirical_2010}.

    \begin{theorem}{\cite{hoeffding_probability_1963,arcones_bernstein-type_1995}} \label{thm:bernstein}
        Given a $U$-kernel $h$, there exists a constant $c> 0$ depending only on $k$ such that 
        \begin{align}
            \PP\big[ |\U_n[h] - \E[h]| > t\big] \leq 4 \exp \left(\frac{-cn t^2}{\Xi + \|h\|_\infty t}\right),
        \end{align}
        where $\Xi = \Var_{x \sim \mu}\Big[\E_{\by \sim \mu^k}[h(x,\by)]\Big]$.
    \end{theorem}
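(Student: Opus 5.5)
The statement above is the Bernstein-type inequality for $U$-statistics. I would not reprove it from scratch; I would reduce it to the two classical ingredients, Hoeffding's decomposition and a Bernstein bound for degenerate parts, as in~\cite{arcones_bernstein-type_1995}. Write $\theta=\E[h]$, $h_1(x)=\E_{\by\sim\mu^k}[h(x,\by)]-\theta$, so that $\Xi=\Var[h_1(x)]$ and $\|h_1\|_\infty\leq 2\|h\|_\infty$. Hoeffding's decomposition gives
\begin{align}
\U_n[h]-\theta=\frac{k+1}{n}\sum_{i=1}^n h_1(x_i)+\sum_{j=2}^{k+1}\binom{k+1}{j}\U_n[\pi_j h],
\end{align}
where each $\pi_j h$ is a canonical (completely degenerate) kernel of order $j$ with $\|\pi_j h\|_\infty\leq 2^j\|h\|_\infty$. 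It then suffices to split $t$ into $k+1$ pieces and bound each term separately, using a union bound. The constant $4$ and the final $c$ are absorbed by adjusting $c$ depending only on $k$.

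\textbf{Linear term.} This is an i.i.d.\ sum of centred variables with variance $\Xi$ and sup-norm at most $2\|h\|_\infty$. The classical Bernstein inequality gives
\begin{align}
\PP\Big[\big|\tfrac{k+1}{n}\textstyle\sum_i h_1(x_i)\big|>t/2\Big]\leq 2\exp\Big(\tfrac{-c\,nt^2}{\Xi+\|h\|_\infty t}\Big),
\end{align}
which is exactly the claimed shape.

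\textbf{Degenerate terms.} For $j\geq2$, I would first decouple, replacing $\U_n[\pi_j h]$ by a version built from $j$ independent copies of the sample; the de la Pe\~na--Montgomery-Smith inequality allows this at the cost of constants depending on $k$. Conditioning on all but one copy, the decoupled statistic is a sum of independent bounded variables, so I would iterate Bernstein/Hoeffding bounds copy by copy. Arcones' argument yields a tail of the form
\begin{align}
\exp\Big(-c\min\Big\{\tfrac{n t^{2/j}}{\|h\|_\infty^{2/j}},\ \tfrac{n t}{\|h\|_\infty}\cdot(\cdots)\Big\}\Big).
\end{align}
The point is that degenerate terms are of order $n^{-j/2}\|h\|_\infty$. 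In the relevant regime $t\lesssim\|h\|_\infty$ their tail is dominated by $\exp(-c\,nt/\|h\|_\infty)$. For $t>\|h\|_\infty 2^{k+1}$ the probability is zero trivially, since $|\U_n[h]-\theta|\leq 2\|h\|_\infty$. Because
\begin{align}
\frac{nt}{\|h\|_\infty}\geq \frac{nt^2}{\Xi+\|h\|_\infty t},
\end{align}
each degenerate contribution is bounded by the same right-hand side.

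\textbf{Main obstacle.} The hard step is the exponential control of the degenerate parts with only $\|h\|_\infty$ appearing, uniformly in the order $j$. This requires the decoupling inequality together with a careful iterated conditioning, or a moment bound $\E|\U_n[\pi_jh]|^p\lesssim (p/n)^{jp/2}\|h\|_\infty^p$ followed by Markov's inequality optimized over $p$. I would try the moment route first, since hypercontractivity-type moment bounds for canonical $U$-statistics are standard, and the conversion to a tail is routine.
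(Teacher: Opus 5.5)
The paper gives no proof of this statement. It quotes the inequality as a black box from Arcones (in the form restated by Peel et al.), and uses only its shape in \Cref{lem:concentration_D_k_support}, \Cref{lem:geokernel-variance-tail} and \Cref{lem:codiff_pointwise_variance}. Your outline is a correct reconstruction of the standard argument behind that citation.

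\begin{itemize}
\item The Hoeffding decomposition isolates the H\'ajek projection $\frac{k+1}{n}\sum_i h_1(x_i)$. The classical Bernstein inequality applied to it is exactly where $\Xi$ enters.
\item For the canonical parts of order $j\geq 2$, decoupling, randomization and hypercontractivity of Rademacher chaos give $\E|\U_n[\pi_j h]|^p\leq (C_j p/n)^{jp/2}\|h\|_\infty^p$. Hence their tails are $C_j\exp(-c_j n(t/\|h\|_\infty)^{2/j})$.
\item On the only nontrivial range $t\leq 2\|h\|_\infty$, these tails are dominated by $\exp(-c\,nt/\|h\|_\infty)\leq\exp(-c\,nt^2/(\Xi+\|h\|_\infty t))$.
\item The extra prefactors can legitimately be folded into the $4$ by shrinking $c$, since probabilities are at most $1$.
\end{itemize}

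What your route adds over the bare citation is an explanation of why the projection variance $\Xi$, rather than $\Var(h)$, appears. Hoeffding's 1963 averaging argument alone gives the same shape with $\Var(h)$ in place of $\Xi$. In the paper's application, $\Var(h)$ is of order $\epsilon^{-km}$ whereas $\Xi\ls 1$, so that weaker inequality would spoil the rates.

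One caution concerns the alternative you mention of iterating Bernstein/Hoeffding bounds one copy at a time. This does not work naively: after conditioning on the other copies, the summands are only bounded by something of order $n^{j-1}\|h\|_\infty$ unless the conditional variance is controlled, and that loses powers of $n$. The moment route you say you would try first is the right one.
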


    To apply this inequality to $h^g_\epsilon(\alpha, \beta)$, we must control its supremum norm and the variance of its first projection. Both estimates depend on the volume of $D^k_\epsilon(M)$. We begin with a high-probability bound for the empirical mass of this support.

    \begin{lemma} \label{lem:concentration_D_k_support}
        Let $\one_{D^k_\epsilon(M)} : M^{k+1} \to \R$ be the indicator function on $D^k_\epsilon(M)$, which is symmetric. For a sufficiently large constant $C$, define
        \begin{align}
            \cE_\epsilon \coloneqq \left\{ \U_n[\one_{D^k_\epsilon(M)}] \leq C\epsilon^{km}\right\},
        \end{align}
        Then,
        \begin{align}
            \PP[\cE_\epsilon^c] \leq 4 \exp \left(-cn\epsilon^{km}\right).
        \end{align}
    \end{lemma}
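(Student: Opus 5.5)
We treat $h=\one_{D^k_\epsilon(M)}$ as a bounded $U$-kernel and apply the Bernstein inequality of \Cref{thm:bernstein} with deviation $t=\epsilon^{km}$. The choice of $C$ then absorbs the mean. The first step is to bound the expectation. By \Cref{thm:ustat_slln}, $\E[\U_n[h]]=\mu^{k+1}(D^k_\epsilon(M))$. Using the decomposition~\eqref{eq:DM_decomposition}, this equals
\[
\int_M \mu^k\big(D^k_\epsilon(M,x)\big)\,\rho(x)\,\dvol(x).
\]
If $\diam_\R(x,\by)<\epsilon$, then each $y_i\in \ball{\R^N}{x}{\epsilon}\cap M\subset \ball{M}{x}{S(\epsilon)}$ by~\eqref{eq:geodesic_euclidean_balls}. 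Hence $\mu^k(D^k_\epsilon(M,x))\leq \big(\rho_+\vol_M(\ball{M}{x}{S(\epsilon)})\big)^k$. Since $S(\epsilon)\ls\epsilon$ for $\epsilon<\epsilon_0$, and geodesic balls below the injectivity radius satisfy $\vol_M(\ball{M}{x}{r})\ls r^m$ (normal coordinates together with the Jacobian bound~\eqref{eq:jacobian_remainder}), we get $\E[h]\leq C_0\epsilon^{km}$ uniformly in $x$.

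The second step is to control the Bernstein parameters. Clearly $\|h\|_\infty\leq 1$. For the first projection, $g(x)=\E_{\by\sim\mu^k}[h(x,\by)]=\mu^k(D^k_\epsilon(M,x))\leq C_0\epsilon^{km}$. Therefore
\[
\Xi=\Var[g(x)]\leq \E[g^2]\leq \|g\|_\infty\E[g]\leq C_0^2\epsilon^{2km}.
\]
This is the point where it matters to use the projection variance rather than the crude bound $\Var(h)\sim\epsilon^{km}$. Only the former gives the exponent $n\epsilon^{km}$ and not something worse.

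The third step applies \Cref{thm:bernstein} with $t=\epsilon^{km}$:
\[
\PP\big[|\U_n[h]-\E h|>\epsilon^{km}\big]\leq 4\exp\left(\frac{-c'n\epsilon^{2km}}{C_0^2\epsilon^{2km}+\epsilon^{km}}\right)\leq 4\exp(-cn\epsilon^{km}),
\]
since $\epsilon^{2km}\leq\epsilon^{km}$ for $\epsilon<\epsilon_0\leq1$. On the complementary event, $\U_n[h]\leq (C_0+1)\epsilon^{km}$, so taking $C=C_0+1$ gives $\cE_\epsilon^c\subset\{|\U_n[h]-\E h|>\epsilon^{km}\}$, which yields the claim.

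The only mildly delicate step is the uniform volume bound in the first step. It needs the Euclidean-to-geodesic ball inclusion together with uniform normal-coordinate volume control, and both are already available from \Cref{sec:geometric_prelim}. The constants $C$ and $c$ depend only on $M$, $k$ and $\rho_+$.
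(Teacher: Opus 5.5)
Your proposal is correct and follows the paper's proof essentially step for step: you use the inclusion $D^k_\epsilon(M,x)\subset B_M(x,S(\epsilon))^k$ to get $\E[\one_{D^k_\epsilon(M)}]\ls\epsilon^{km}$ and a first projection $g\ls\epsilon^{km}$, hence $\Xi\ls\epsilon^{2km}$, and then apply Bernstein with $t=\epsilon^{km}$, choosing $C$ to absorb the mean. Your explicit choice $C=C_0+1$, and your use of $\epsilon<\epsilon_0\leq1$ to simplify the denominator, only make explicit the constant bookkeeping that the paper leaves implicit.
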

    \begin{proof}
        As $\one_{D^k_\epsilon(M)}$ is an indicator function, we have $\|\one_{D^k_\epsilon(M)}\|_\infty \leq 1$. Furthermore, by~\eqref{eq:geodesic_euclidean_balls},
        \begin{align}
            D_\epsilon^k(M,x)
            \subset \big(\ball{\R^N}{x}{\epsilon}\cap M\big)^k
            \subset \big(\ball{M}{x}{S(\epsilon)}\big)^k.
        \end{align}
        Since $S(\epsilon)\ls\epsilon$, uniform volume growth of geodesic balls on the compact manifold $M$ gives, uniformly for $x\in M$,
        \begin{align} \label{eq:fat_diagonal_fiber_volume}
            \vol^k(D_\epsilon^k(M,x))
            &\leq \vol_M\big(\ball{M}{x}{S(\epsilon)}\big)^k
            \ls \epsilon^{km}, \\
            \mu^k(D_\epsilon^k(M,x))
            &\leq \rho_+^k\vol^k(D_\epsilon^k(M,x))
            \ls \epsilon^{km}.
        \end{align}
        Consequently, by Fubini,
        \begin{align}
            \E[\one_{D^k_\epsilon(M)}]
            &=\mu^{k+1}(D^k_\epsilon(M))
            =\int_M\mu^k(D^k_\epsilon(M,x))\,d\mu(x)
            \ls\epsilon^{km}, \\
            g(x)&=\E_{\by}[\one_{D^k_\epsilon(M)}(x,\by)]
            =\mu^k(D^k_\epsilon(M,x))
            \ls\epsilon^{km}.
        \end{align}
        Therefore, we have $\Xi = \Var(g(x)) \ls \epsilon^{2km}$. Choosing $C$ large enough so that $\E[\one_{D^k_\epsilon(M)}] + \epsilon^{km} \leq C\epsilon^{km}$, applying~\Cref{thm:bernstein}, and setting $t = \epsilon^{km}$, we get (when $\epsilon < 1$) that
        \begin{align}
            \PP[\cE_\epsilon^c]  \leq 4 \exp \left( \frac{-cn\epsilon^{km}}{1 + \epsilon^{km}}\right) \leq 4 \exp \left(-cn\epsilon^{km}\right).
        \end{align}
    \end{proof}
    \begin{remark}
        The $U$-statistic $\U_n[\one_{D_\epsilon^k(M)}]$ corresponds to the proportion of possible $k$-simplices that occur in $X_\epsilon$. The preceding lemma gives an upper-tail bound for this simplex density.
    \end{remark}

    \begin{lemma} \label{lem:geokernel-variance-tail}
        For $\alpha, \beta \in \cB_\R^k$ we have 
        \begin{align}
            \PP\left[\left| \U_n[h^g_\epsilon(\alpha, \beta)] - \E[h^g_\epsilon(\alpha, \beta)]\right|  > t\right] \leq 4\exp\left(\frac{-cn t^2}{1 + \epsilon^{-km}t}\right)
        \end{align}
    \end{lemma}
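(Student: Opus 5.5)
The plan is to apply \Cref{thm:bernstein} directly to the symmetric $U$-kernel $h = h^g_\epsilon(\alpha,\beta)$. This requires two estimates, uniform over $\alpha,\beta\in\cB_\R^k$:
\[
\|h^g_\epsilon(\alpha,\beta)\|_\infty \ls \epsilon^{-km} \andd \Xi=\Var_x\big[\E_{\by}[h^g_\epsilon(\alpha,\beta)(x,\by)]\big]\ls 1 .
\]
Once these hold, the denominator $\Xi+\|h\|_\infty t$ is bounded by a constant multiple of $1+\epsilon^{-km}t$. Absorbing that constant into $c$ gives the stated tail.

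\textbf{Supremum bound.} On $D^k_\epsilon(M)$ every intrinsic displacement satisfies $\|v^\ell_i\|\leq S(\epsilon)\ls\epsilon$ by \eqref{eq:geodesic_euclidean_balls}. Since $\|\alpha\|_{\cC^2}\leq1$:
\begin{itemize}
\item the leading term obeys $|\alpha_{y_\ell}(\bv^\ell)\beta_{y_\ell}(\bv^\ell)|\ls\epsilon^{2k}$;
\item the homogeneous correction $L_{\beta,\ell}$ from \Cref{lem:geometric_simplex_integral} has degree $k+1$ with coefficients controlled by $\|\beta\|_{\cC^1}$ and the second fundamental form, so the cross terms are $\ls\epsilon^{2k+1}$;
\item $\kappa_\epsilon$ is bounded, since $\Theta$ is Lipschitz on the cube and vanishes outside it;
\item $w$ is bounded on the compact $M^{k+1}$.
\end{itemize}
The kernel also vanishes off $D^k_\epsilon(M)$. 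Multiplying by the prefactor $\epsilon^{-k(m+2)}$ gives $\|h^g_\epsilon\|_\infty\ls\epsilon^{2k-k(m+2)}=\epsilon^{-km}$.

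\textbf{Variance bound.} I bound the first projection $g(x)=\E_{\by\sim\mu^k}[h^g_\epsilon(x,\by)]$ pointwise. Since $h^g_\epsilon$ is supported on $D^k_\epsilon(M)$, we have
\[
|g(x)|\leq\|h^g_\epsilon\|_\infty\,\mu^k(D^k_\epsilon(M,x))\ls\epsilon^{-km}\cdot\epsilon^{km}=1,
\]
using the fiber volume estimate \eqref{eq:fat_diagonal_fiber_volume} from \Cref{lem:concentration_D_k_support}. Then $\Xi\leq\E[g^2]\ls1$.

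The step needing the most care is checking that the implicit constants are uniform in $x$, $\ell$, and over the $\cC^2$-ball. The concern is that $L_{\alpha,\ell}$ and the normal-coordinate comparisons must be uniform over base points, which I expect follows from compactness of $M$ and the choice $\epsilon<\epsilon_0$.
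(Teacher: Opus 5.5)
Your proposal is correct and follows the paper's proof. Both arguments bound $\|h^g_\epsilon(\alpha,\beta)\|_\infty\ls\epsilon^{-km}$ from the $\epsilon^{k}$ and $\epsilon^{k+1}$ sizes of the leading and correction terms on the support $D^k_\epsilon(M)$, then get $|g(x)|\ls\epsilon^{-km}\mu^k(D^k_\epsilon(M,x))\ls1$ via \eqref{eq:fat_diagonal_fiber_volume} so that $\Xi\ls1$, and conclude with \Cref{thm:bernstein}, absorbing constants into $c$.
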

    \begin{proof}
    We begin by bounding $h^g_\epsilon(\alpha, \beta)$ in~\eqref{eq:geom-U-kernel} itself. The support of $h^g_\epsilon(\alpha, \beta)(\by)$ is $D^k_\epsilon(M)$, where we have the bounds $|\alpha_x(\bv)| \ls \epsilon^k$ and $|L_\alpha(\bv)| \ls \epsilon^{k+1}$, so that
    \begin{align}
        |h^g_\epsilon(\alpha, \beta)(\by)| \ls \epsilon^{-km} \one_{D^k_\epsilon(M)} \andd \|h^g_\epsilon(\alpha, \beta)\|_\infty \ls \epsilon^{-km}.
    \end{align}
    For the first projection $g(x)=\E_{\by\sim\mu^k}[h^g_\epsilon(\alpha,\beta)(x,\by)]$, the same pointwise bound gives
    \begin{align}
        |g(x)| \ls \epsilon^{-km}\mu^k(D^k_\epsilon(M,x)) \ls 1,
    \end{align}
    where the last inequality follows from~\eqref{eq:fat_diagonal_fiber_volume}. Thus, the Bernstein projection variance satisfies $\Xi=\Var(g(x))\ls 1$. Then, we obtain our result by applying~\Cref{thm:bernstein}.

    \end{proof}

    \subsection{Uniform Variance Estimate} \label{ssec:uc_uniform_variance}

    We upgrade the fixed-form concentration estimate to a uniform estimate by covering the restricted ambient $1$-jets of forms in $\cB^k_\R$. We first bound the covering number of this jet class; then we use a union bound over the resulting finite cover to obtain the uniform estimate. For a Hilbert space $E$, let $\Lip(M,E)$ denote the space of Lipschitz maps.

    \begin{definition} \label{def:ambient_one_jet}
        For $\alpha = \sum_I \alpha_I dz^I \in \Omega^k_c(\R^N)$, we define the \emph{ambient 1-jet of $\alpha$ on $M$} to be
        \begin{align}
            j(\alpha) = (\alpha_I|_M, \partial_i \alpha_I|_M)_{i,I} \in \Lip(M, E),
        \end{align}
        where $E = \Lambda^k \R^N \oplus (\Lambda^k \R^N)^N$ contains information about the ambient form along with all ambient derivatives, restricted to $M$. Thus $j: \Omega^k_c(\R^N) \to \Lip(M, E)$. 
    \end{definition}

    The following covering bound can be obtained via a standard sup-norm entropy bound for bounded Lipschitz functions on compact $m$-dimensional domains.

    \begin{lemma} \label{lem:covering_bound}
        Let $0\leq k\leq m$, and let $\cN_\gamma$ be the smallest number of $\cC^0(M, E)$ balls of radius $\gamma > 0$ with centers lying in $j(\cB_\R^k)$ required to cover $j(\cB_\R^k) \subset \Lip(M,E)$. Then,
        \begin{align}
            \log \cN_\gamma \ls \gamma^{-m}.
        \end{align}
    \end{lemma}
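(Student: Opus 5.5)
The plan is to reduce the statement to the classical Kolmogorov--Tikhomirov entropy bound for bounded Lipschitz functions on a compact $m$-dimensional metric space. The key observation is that $j(\cB_\R^k)$ is a uniformly bounded, uniformly Lipschitz family of maps $M\to E$, with $E$ finite dimensional (of dimension depending only on $N,k$). This holds because every component $\alpha_I|_M$ and $\partial_i\alpha_I|_M$ is bounded by $\|\alpha\|_{\cC^2(\R^N)}\leq 1$. Moreover, the ambient Lipschitz constant of each component is bounded by the $\cC^2$ norm: $|\partial_i\alpha_I(x)-\partial_i\alpha_I(y)|\ls \|x-y\|\leq d_M(x,y)$ for $x,y\in M$. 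Hence $j(\cB_\R^k)\subset \{F\in\Lip(M,E): \|F\|_\infty\leq C_0,\ \Lip(F)\leq C_0\}$ for a constant $C_0$ depending only on $N$ and $k$. Here the Lipschitz constant may be taken with respect to either $d_M$ or the ambient metric restricted to $M$.

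Next I would build the cover directly. Fix $\gamma\in(0,1)$ and take a maximal $\gamma/(4C_0)$-separated set $z_1,\ldots,z_L$ in $M$ for the ambient metric. By the uniform volume growth already used in \Cref{lem:concentration_D_k_support} (via \eqref{eq:geodesic_euclidean_balls} and compactness), we have $L\ls\gamma^{-m}$ for small $\gamma$, and $L$ is bounded for $\gamma$ of order one. I would use the standard chaining-along-a-path trick. Since $M$ may be disconnected, I would first observe that it has finitely many components; alternatively, I would avoid chaining and use the cruder but sufficient grid argument. For each $F$ in the class, record the vector of rounded values $(\lfloor F(z_l)/\eta\rfloor)_{l}$ in each coordinate of $E$, with $\eta=\gamma/4$. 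Two functions sharing the same record differ by at most $\eta$ (times a dimension constant) at every $z_l$. By the Lipschitz bound and the net property, they then differ by at most $\gamma/2$ everywhere (after adjusting constants in the choice of $\eta$ and of the net radius relative to $\dim E$). The number of possible records is at most $(2C_0/\eta+1)^{L\dim E}$, giving $\log(\text{number of records})\ls L\log(1/\gamma)\ls \gamma^{-m}\log(1/\gamma)$.

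The main obstacle is the extra $\log(1/\gamma)$ factor, which the statement forbids. To remove it, I would use the Kolmogorov--Tikhomirov chaining refinement. Order the net points so that each $z_l$ ($l\geq2$) lies within distance $3\gamma/(4C_0)$ of some earlier $z_{l'}$. This is possible within each connected component of $M$, and the number of components is finite and independent of $\gamma$. Then encode $F$ by its rounded value at the first point of each component, which contributes $O(\log(1/\gamma))$ choices per component. At each subsequent point, encode the rounded value relative to the previous rounded value; by the Lipschitz bound this increment ranges over only $O(1)$ possibilities per coordinate. This gives $\log\cN'\ls \gamma^{-m}+\log(1/\gamma)\ls\gamma^{-m}$.

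Finally, for each nonempty record class I would choose one representative in $j(\cB_\R^k)$ itself. Every member of that class is within $\gamma$ of the representative in $\cC^0(M,E)$, so the balls of radius $\gamma$ around these representatives cover $j(\cB_\R^k)$ with centers in $j(\cB_\R^k)$, as required. For $\gamma\geq1$ the bound is trivial, since a single ball of radius $2C_0$ suffices after adjusting constants.
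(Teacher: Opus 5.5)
Your proposal is correct and follows the paper's structure: place $j(\cB_\R^k)$ inside a uniformly bounded, uniformly Lipschitz class $\Lip_L(M,E)$, cover that class in $\cC^0(M,E)$, and recenter at points of $j(\cB_\R^k)$. The one difference is that where the paper cites the standard sup-norm entropy bound for bounded Lipschitz functions on compact $m$-dimensional domains, you prove it directly (maximal separated net, coordinatewise rounding, and Kolmogorov--Tikhomirov chaining to remove the $\log(1/\gamma)$ factor), which makes the passage to a possibly disconnected manifold domain and a vector-valued target explicit. One small correction: the net-size bound $L\ls\gamma^{-m}$ needs the lower volume bound $r^m\ls\vol_M(B_M(x,r))$, as in the proof of \Cref{prop:topology_quasi_iso_convergence_scales}, not the upper bound used in \Cref{lem:concentration_D_k_support}.
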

    \begin{proof} For $L>0$, let $\Lip_L(M,E)$ denote the set of maps $F:M\to E$ satisfying 
        \begin{align} 
            \|F\|_{C^0(M,E)} \leq L \andd \Lip(F) \leq L. 
        \end{align} 
        Since $\alpha \in \cB_\R^k$ has uniformly bounded coefficients, first derivatives, and second derivatives, there exists $L>0$ such that $j(\cB_\R^k) \subset \Lip_L(M,E)$.
        By the standard sup-norm entropy bound for bounded Lipschitz functions on compact $m$-dimensional domains, see~\cite[Example 5.10]{wainwright_high-dimensional_2019}, there exists a $C^0(M,E)$ cover of $\Lip_L(M,E)$ by $\cN^{\Lip}_\gamma$ balls of radius $\gamma/2$, such that $\log(\cN^{\Lip}_\gamma) \ls \gamma^{-m}$. We retain only those balls which intersect $j(\cB_\R^k)$ and denote this subcollection by $\{B'_r\}_{r=1}^{\cN'_\gamma}$. 
        For each $B'_r$, choose $\eta^r \in \cB_\R^k$ such that $j(\eta^r) \in B'_r$. Because each $B'_r$ has radius $\gamma/2$, the balls $\{\ball{\cC^0(M,E)}{j(\eta^r)}{\gamma}\}_{r=1}^{\cN'_\gamma}$ cover $j(\cB_\R^k)$ and have centers in $j(\cB_\R^k)$. Therefore, 
        \begin{align} 
            \log \cN_\gamma \leq \log \cN'_\gamma  \leq \log \cN^{\Lip}_\gamma \ls \gamma^{-m}. 
        \end{align} 
    \end{proof}

    \begin{lemma} \label{lem:uc_geometric_variance}
        We have 
        \begin{align}
            \PP \left[ \sup_{\alpha, \beta \in \cB_\R^k} \left| \U_n[h^g_\epsilon(\alpha, \beta)] - \E[h^g_\epsilon(\alpha, \beta)]\right| \gtrsim t + \gamma  \right] \leq 4\cN^2 \exp\left(\frac{-c_1 n t^2}{1 +  \epsilon^{-km}t}\right) + 4\exp( - c_2n \epsilon^{km}).
        \end{align}
    \end{lemma}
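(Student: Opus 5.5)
Here $\cN=\cN_\gamma$ is the covering number from \Cref{lem:covering_bound}, and the argument is the standard finite-cover-plus-union-bound scheme. Take a $\gamma$-cover of $j(\cB_\R^k)$ in $\cC^0(M,E)$ with centers $j(\eta^1),\ldots,j(\eta^{\cN})$, where $\eta^r\in\cB_\R^k$. For each pair $(r,s)$, apply \Cref{lem:geokernel-variance-tail} to $h^g_\epsilon(\eta^r,\eta^s)$. A union bound over the $\cN^2$ pairs then shows that, outside an event of probability at most $4\cN^2\exp(-c_1nt^2/(1+\epsilon^{-km}t))$, we have
$$|\U_n[h^g_\epsilon(\eta^r,\eta^s)]-\E h^g_\epsilon(\eta^r,\eta^s)|\le t\quad\text{for all } r,s.$$
We also intersect with the event $\cE_\epsilon$ of \Cref{lem:concentration_D_k_support}. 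Its complement has probability at most $4\exp(-c_2n\epsilon^{km})$, which accounts for the second term.

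The key step is a deterministic Lipschitz estimate of $h^g_\epsilon$ in the jets. For $\alpha,\alpha'\in\cB_\R^k$ with $\|j(\alpha)-j(\alpha')\|_{\cC^0}\le\gamma$, and similarly for $\beta,\beta'$, we want
$$|h^g_\epsilon(\alpha,\beta)(\by)-h^g_\epsilon(\alpha',\beta')(\by)|\ls \gamma\,\epsilon^{-km}\one_{D^k_\epsilon(M)}(\by).$$
To obtain it, note that $h^g_\epsilon$ depends on $\alpha$ only through the values $\alpha_{y_\ell}$ and the first derivatives $\bar\nabla\alpha$ at points $y_\ell\in M$. The point is that $L_{\alpha,\ell}$ from \Cref{lem:geometric_simplex_integral} is built from $\alpha_{y_\ell}$ composed with the second fundamental form and from $(\bar\nabla_{v_i}\alpha)_{y_\ell}$, so no second derivatives enter. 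Each of these is linear in $j(\alpha)(y_\ell)$, so evaluating on $\bv^\ell$ with $\|\bv^\ell\|\ls\epsilon$ gives
$$|\alpha_{y_\ell}(\bv^\ell)-\alpha'_{y_\ell}(\bv^\ell)|\ls\gamma\epsilon^k,\qquad |L_{\alpha,\ell}-L_{\alpha',\ell}|\ls\gamma\epsilon^{k+1}.$$
Both forms lie in the unit ball, so the other factors are $\ls\epsilon^k$. Since the terms are bilinear, the products differ by $\ls\gamma\epsilon^{2k}$. Multiplying by $\epsilon^{-k(m+2)}$ and the bounded factors $\kappa_\epsilon w$ yields the displayed bound. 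This is the step I expect to need the most care: one must check that the remainder split in \eqref{eq:unif_conv_q_approx} leaves $h^g_\epsilon$ genuinely a function of the 1-jet along $M$, and that the constants are uniform in $x$ and $\epsilon$.

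With this Lipschitz bound, the empirical side is controlled on $\cE_\epsilon$:
$$|\U_n[h^g_\epsilon(\alpha,\beta)-h^g_\epsilon(\eta^r,\eta^s)]|\ls\gamma\epsilon^{-km}\U_n[\one_{D^k_\epsilon(M)}]\ls\gamma.$$
The expectation side is handled the same way, since $\E[\one_{D^k_\epsilon(M)}]\ls\epsilon^{km}$ by \eqref{eq:fat_diagonal_fiber_volume}, giving $|\E[h^g_\epsilon(\alpha,\beta)-h^g_\epsilon(\eta^r,\eta^s)]|\ls\gamma$. For arbitrary $\alpha,\beta\in\cB_\R^k$, choose $r,s$ with $\|j(\alpha)-j(\eta^r)\|_{\cC^0}\le\gamma$ and $\|j(\beta)-j(\eta^s)\|_{\cC^0}\le\gamma$. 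The triangle inequality then gives a deviation $\ls t+\gamma$ on the intersection of the two good events, which proves the lemma.
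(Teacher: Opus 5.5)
Your proposal is correct and follows the paper's proof essentially step by step. The paper likewise takes a union bound over the $\cN^2$ center pairs using the pointwise Bernstein tail, and proves the jet-Lipschitz bound $|h^g_\epsilon(\alpha,\beta)-h^g_\epsilon(\eta^r,\eta^s)|\ls\gamma\epsilon^{-km}\one_{D^k_\epsilon(M)}$ (your remark that $L_{\alpha,\ell}$ involves only the $1$-jet along $M$ is exactly what the paper uses there); it then controls the expected and empirical approximation errors, the latter on the support event $\cE_\epsilon$, before applying the triangle inequality.
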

    \begin{proof}
        Let $\{\ball{\cC^0(M,E)}{j(\eta^r)}{\gamma}\}_{r=1}^{\cN}$, with $\eta^r\in\cB_\R^k$, be the collection of $\cC^0(M,E)$-balls which cover $j(\cB_\R^k)$ from~\Cref{lem:covering_bound}. By applying a union bound to \Cref{lem:geokernel-variance-tail}, we get
        \begin{align} \label{eq:uc_union_variance}
            \PP\left[ \sup_{r,s \in [\cN]} \left| \U_n[h^g_\epsilon(\eta^r, \eta^s)] - \E[h^g_\epsilon(\eta^r, \eta^s)]\right| > t\right] \leq 4\cN^2 \exp\left(\frac{-c_1n t^2}{ 1+  \epsilon^{-km}t}\right)
        \end{align}
        Let $\alpha, \beta \in \cB_\R^k$. Because these balls cover $j(\cB_\R^k)$, there exist $\eta^r, \eta^s$ such that
        \begin{align}
            \|j(\alpha) - j(\eta^r)\|_{\cC^0(M,E)}< \gamma \andd \|j(\beta) - j(\eta^s)\|_{\cC^0(M,E)} < \gamma. 
        \end{align}
        Let $\by = (y_0, \ldots, y_k) \in M^{k+1}$. We note that for any $\ell \in \{0,\ldots,k\}$, we have
        \begin{align}
            |\alpha_{y_\ell}(\bv^\ell) - \eta^r_{y_\ell}(\bv^\ell)| &\leq \|j(\alpha) - j(\eta^r)\|_{\cC^0(M,E)} \|\bv^\ell\|^k\\
            |L_\alpha(\bv^\ell) - L_{\eta^r}(\bv^\ell)| &\leq\|j(\alpha) - j(\eta^r)\|_{\cC^0(M,E)}\|\bv^\ell\|^{k+1},
        \end{align}
        and similarly for $\beta$ and $\eta^s$. 
        Thus, we can bound the difference of the geometric $U$-kernels by
        \begin{align} \label{eq:geometric_kernel_error}
            |h^g_{\epsilon}(\alpha, \beta)(\by) - h^g_{\epsilon}(\eta^r, \eta^s)(\by)|  \ls \frac{\gamma}{\epsilon^{km}} \one_{D^k_\epsilon(M)},
        \end{align}
        and by Fubini and~\eqref{eq:fat_diagonal_fiber_volume}, we get
        \begin{align}
            \sup_{\alpha, \beta \in \cB_\R^k} \left| \E[h^g_\epsilon(\alpha, \beta)-h^g_\epsilon(\eta^r, \eta^s)] \right| \ls \gamma
        \end{align}
        where the choices of $\eta^r$ and $\eta^s$ depend on $\alpha, \beta$. Applying~\Cref{lem:concentration_D_k_support} to~\eqref{eq:geometric_kernel_error} gives
        \begin{align}
            \sup_{\alpha, \beta \in \cB_\R^k} \left| \U_n[h^g_\epsilon(\alpha, \beta)-h^g_\epsilon(\eta^r, \eta^s)] \right| \ls \dfrac{\gamma}{\epsilon^{km}} \U_n[\one_{D_\epsilon^k(M)}] \ls \gamma.
        \end{align}
        on the event $\cE_\epsilon$. Then by the triangle inequality, on the event $\cE_\epsilon$,
        \begin{align}
            \sup_{\alpha, \beta \in \cB_\R^k} \left| \U_n[h^g_\epsilon(\alpha, \beta)] - \E[h^g_\epsilon(\alpha, \beta)]\right| \leq  \sup_{r,s \in [\cN] }\left| \U_n[h^g_\epsilon(\eta^r, \eta^s)] - \E[h^g_\epsilon(\eta^r, \eta^s)]\right| + C_1 \gamma
        \end{align}
        Finally, by applying~\eqref{eq:uc_union_variance} and the probability of $\PP[\cE_\epsilon^c]$ from~\Cref{lem:concentration_D_k_support}, we get
        \begin{align}
            \PP \left[ \sup_{\alpha, \beta \in \cB_\R^k} \left| \U_n[h^g_\epsilon(\alpha, \beta)] - \E[h^g_\epsilon(\alpha, \beta)]\right| > t + C_1\gamma  \right] \leq 4\cN^2 \exp\left(\frac{-c_1 n t^2}{1 +  \epsilon^{-km}t}\right) + 4\exp( - c_2n \epsilon^{km}).
        \end{align}
    \end{proof}

    Our variance results thus far have been in terms of geometric kernels, which contain intrinsic information unavailable to the data science practitioner. We apply~\Cref{lem:covering_bound} to obtain the main variance bound for the Euclidean $U$-kernels as follows. 

    \begin{proposition} \label{prop:uniform_variance_euclidean}
        There is a constant $C_0>0$, depending only on the fixed data, such that the following holds. Let $p \in (0,1)$ and suppose $C_0\log(8/p) \leq n\epsilon^{km}$. With probability at least $1-p$, we have 
        \begin{align}
            \sup_{\alpha, \beta \in \cB_\R^k} \left| \U_n[h_\epsilon(\alpha, \beta)] - \E[h_\epsilon(\alpha, \beta)]\right| \ls  \sqrt{ \frac{\gamma^{-m} + \log(\frac{1}{p})}{n}} + \frac{\gamma^{-m} + \log(\frac{1}{p})}{n\epsilon^{km}} + \gamma + \epsilon^2
        \end{align}
    \end{proposition}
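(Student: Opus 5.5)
The plan is to reduce the Euclidean $U$-kernel to the geometric one and then invoke \Cref{lem:uc_geometric_variance} with a concrete choice of $t$. Write $h_\epsilon=h^e_\epsilon$ and split, for each $\alpha,\beta\in\cB_\R^k$,
\begin{align}
|\U_n[h^e_\epsilon]-\E[h^e_\epsilon]| \leq |\U_n[h^e_\epsilon-h^g_\epsilon]| + |\U_n[h^g_\epsilon]-\E[h^g_\epsilon]| + |\E[h^g_\epsilon-h^e_\epsilon]|.
\end{align}
By \eqref{eq:euclidean_geometric_u_kernel_error}, the difference $h^e_\epsilon-h^g_\epsilon$ is bounded uniformly over $\cB_\R^k$ by $C\epsilon^{2-km}\one_{D^k_\epsilon(M)}$. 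The third term is then $\ls \epsilon^{2-km}\mu^{k+1}(D^k_\epsilon(M))\ls\epsilon^2$ by Fubini and \eqref{eq:fat_diagonal_fiber_volume}. On the event $\cE_\epsilon$ of \Cref{lem:concentration_D_k_support}, the first term is similarly $\ls \epsilon^{2-km}\U_n[\one_{D^k_\epsilon(M)}]\ls\epsilon^2$. This accounts for the $\epsilon^2$ in the statement, and no extra probability is lost since $\cE_\epsilon$ is already used inside \Cref{lem:uc_geometric_variance}.

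For the middle term I apply \Cref{lem:uc_geometric_variance}, using $\log\cN\ls\gamma^{-m}$ from \Cref{lem:covering_bound}. The failure probability is then bounded by
\begin{align}
4\exp\left(C\gamma^{-m}-\frac{c_1nt^2}{1+\epsilon^{-km}t}\right)+4\exp(-c_2n\epsilon^{km}).
\end{align}
I choose $t=A\big(\sqrt{L/n}+L/(n\epsilon^{km})\big)$ with $L=\gamma^{-m}+\log(8/p)$ and $A$ large. The point is that $nt^2/(1+\epsilon^{-km}t)\geq \tfrac12\min\{nt^2,\,n\epsilon^{km}t\}$. The first term in $t$ makes $nt^2\geq A^2L$, and the second makes $n\epsilon^{km}t\geq AL$. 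Hence the exponent is at most $C\gamma^{-m}-c_1AL/2\leq-\log(8/p)$ once $A$ is large, so the first piece is at most $p/2$. The second piece is at most $p/2$ precisely under the hypothesis $C_0\log(8/p)\leq n\epsilon^{km}$, taking $C_0\geq 1/c_2$.

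Combining the pieces, with probability at least $1-p$ the supremum is $\ls t+\gamma+\epsilon^2$, which is the claimed bound after replacing $\log(8/p)$ by $\log(1/p)$ up to constants. I expect the main obstacle to be the bookkeeping in the Bernstein exponent: the two regimes must be handled so that the $\gamma^{-m}$ entropy enters additively in both the square-root and the linear term. The remaining steps follow directly from the stated lemmas.
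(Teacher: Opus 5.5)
Your proposal is correct and follows the paper's proof: you reduce to the geometric kernel via \eqref{eq:euclidean_geometric_u_kernel_error} on the event $\cE_\epsilon$, which is charged only once, and then apply \Cref{lem:uc_geometric_variance} with the covering bound $\log\cN\ls\gamma^{-m}$. Your explicit inversion via $nt^2/(1+\epsilon^{-km}t)\geq\tfrac12\min\{nt^2,\,n\epsilon^{km}t\}$ simply spells out the step the paper calls the ``standard Bernstein inversion.''
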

    \begin{proof}
        We begin by bounding the error between the Euclidean and geometric $U$-kernels. By~\eqref{eq:euclidean_geometric_u_kernel_error}, we note that
        \begin{align}
            \sup_{\alpha, \beta \in \cB_\R^k} |\E[h^e_\epsilon(\alpha, \beta)-h^g_\epsilon(\alpha, \beta)]| \ls \epsilon^2 \andd \sup_{\alpha, \beta \in \cB_\R^k} |\U_n[h^e_\epsilon(\alpha, \beta)-h^g_\epsilon(\alpha, \beta)]| \ls \epsilon^2,
        \end{align}
        where the latter holds on the event $\cE_\epsilon$ defined in~\Cref{lem:concentration_D_k_support}. We note that this is exactly the support event used in~\Cref{lem:uc_geometric_variance}, so its failure probability is charged only once. Using that lemma, along with the triangle inequality, we obtain
        \begin{align}
            \PP \left[ \sup_{\alpha, \beta \in \cB_\R^k} \left| \U_n[h_\epsilon(\alpha, \beta)] - \E[h_\epsilon(\alpha, \beta)]\right| \gtrsim t + \gamma + \epsilon^2 \right] \leq 4\cN^2 \exp\left(\frac{-c_1 n t^2}{1 +  \epsilon^{-km}t}\right) + 4\exp( - c_2n \epsilon^{km}).
        \end{align}
        We consider the two bounds on the probability separately. For the second term, if $\log(1/p) \ls n\epsilon^{km}$, then we have $4 \exp(-c_2 n \epsilon^{km}) \leq p/2$.
        For the first term, the covering bound in ~\Cref{lem:covering_bound} gives $\log(\cN_\gamma) \ls \gamma^{-m}$. Then, we use the standard Bernstein inversion and set
        \begin{align}
        t \gtrsim  \sqrt{ \frac{\gamma^{-m} + \log(\frac{1}{p})}{n}} + \frac{\gamma^{-m} + \log(\frac{1}{p})}{n\epsilon^{km}} \quad \text{to get} \quad 4\cN^2 \exp\left(\frac{-c_1 n t^2}{1 +  \epsilon^{-km}t}\right) \leq p/2.
        \end{align}
        Thus, with probability at least $1 - p$, we have
        \begin{align}
            \sup_{\alpha, \beta \in \cB_\R^k} \left| \U_n[h_\epsilon(\alpha, \beta)] - \E[h_\epsilon(\alpha, \beta)]\right| \ls  \sqrt{ \frac{\gamma^{-m} + \log(\frac{1}{p})}{n}} + \frac{\gamma^{-m} + \log(\frac{1}{p})}{n\epsilon^{km}} + \gamma + \epsilon^2
        \end{align}
    \end{proof}

    \subsection{Uniform Convergence Proof} \label{ssec:uc_proof}

    Finally, we can put our bias and variance estimates together, and balance the parameters $\epsilon, \gamma$ and $n$ to prove the main result of this section.

    \begin{proof}[Proof of~\Cref{thm:main_uc}]
        First, we combine the bias estimate from~\Cref{prop:uc_bias} and the variance estimate from~\Cref{prop:uniform_variance_euclidean}. In particular, assuming the conditions of~\Cref{prop:uniform_variance_euclidean}, with probability at least $1-p$, we have
        \begin{align}
            \xi = \sup_{\alpha, \beta \in \cB_\R^k} \Big|\langle q\alpha, q\beta\rangle^{\kappa,w}_{n, \epsilon} - \sigma^\Theta_k \langle \alpha, \beta \rangle_M^{P_\Delta} \Big| \ls  \sqrt{ \frac{\gamma^{-m} + \log(\frac{1}{p})}{n}} + \frac{\gamma^{-m} + \log(\frac{1}{p})}{n\epsilon^{km}} + \gamma + \epsilon^2,
        \end{align}
        where we use the fact that $\langle q\alpha, q\beta\rangle^{\kappa,w}_{n, \epsilon} = \U_n[h_\epsilon(\alpha,\beta)]$. Now, we wish to balance the parameters $\gamma$, $\epsilon$ and $n$ to obtain the desired rate. Here, we fix some $p \in (0,1)$, and let $\ell = \log(1/p)$, so that
        \begin{align}
            \xi \ls  \sqrt{\frac{\gamma^{-m}}{n}} + \sqrt{\frac{\ell}{n}} + \frac{\gamma^{-m}}{n\epsilon^{km}} + \frac{\ell}{n\epsilon^{km}} + \gamma + \epsilon^2.
        \end{align}
        By balancing the terms which involve $\gamma$, we obtain
        \begin{align}
            \gamma \asymp \max \left\{ n^{-1/(m+2)}, (n\epsilon^{km})^{-1/(m+1)}\right\}.
        \end{align}
        Next, we balance the bias $\epsilon^2$ term with the dominant $\epsilon$-dependent stochastic term $(n\epsilon^{km})^{-1/(m+1)}$, which gives us
        \begin{align}
            \epsilon \asymp n^{-1/(km+2m+2)}
        \end{align}
        and thus 
        \begin{align}
            \gamma \asymp n^{-r_{k,m}} \quad \text{where} \quad r_{k,m} = \min \left\{ \frac{1}{m+2}, \, \frac{2}{km+2m+2}\right\}.
        \end{align}
        Thus, the error is
        \begin{align}
            \xi \ls n^{-r_{k,m}} + \sqrt{\frac{\log(1/p)}{n}} + \frac{\log(1/p)}{n^{(2m+2)/(km+2m+2)}}.
        \end{align}
    \end{proof}

\section{Proof of Codifferential Convergence} \label{sec:codiff_proof}

In this section, we will prove~\Cref{thm:main_codiff} by expanding the discrete codifferential, and considering bias and variance estimates, similar to~\Cref{sec:uc_proof}. Unlike the previous section, we will assume that the sampling distribution $\mu$ is uniform throughout. We begin with an explicit formula for the discrete codifferential. The following computation is standard for the codifferential in a weighted simplicial complex; for instance see~\cite{horak_spectra_2013} or~\cite[Lemma 3.6]{ennaceur_hodge_2025}.

\begin{lemma} \label{lem:discrete_codifferential_formula}
    For $a \in C^k(X_\epsilon)$ and $\by \in S^{k-1}_\epsilon(X)$, we have
    \begin{align} \label{eq:discrete_codifferential_formula}
    \hdelta a(\by) = \frac{k+1}{n-k} \frac{k^2}{\epsilon^{m+2}} \sum_{\substack{x \in X - \by\\ (x,\by)\in S^k_\epsilon(X)}} a(x,\by) \frac{\kappa_{\theta,\epsilon}^k(x,\by)}{\kappa_{\theta,\epsilon}^{k-1}(\by)}.
    \end{align}
\end{lemma}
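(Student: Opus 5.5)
The plan is to compute $\hdelta$ directly from its defining adjoint relation $\langle \hd b, a\rangle_{n,\epsilon} = \langle b, \hdelta a\rangle_{n,\epsilon}$. Here $b \in C^{k-1}(X_\epsilon)$, $a\in C^k(X_\epsilon)$, and the inner products are the unweighted ($w\equiv1$) kernel forms from~\eqref{eq:inner_prod} with kernels $\kappa^{k}_{\theta,\epsilon}$ and $\kappa^{k-1}_{\theta,\epsilon}$. Because these are strict VR kernels, the degree $(k-1)$ form is diagonal with positive entries in the simplex-dual basis. So it suffices to test against indicator cochains $b=\one_{\by}$ for a fixed oriented $(k-1)$-simplex $\by\in S^{k-1}_\epsilon(X)$, and then solve for $\hdelta a(\by)$.

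First I rewrite both sides as sums over unordered simplices. It is cleanest to use the alternating convention~\eqref{eq:alternating_cochain_convention}. Then $\hd b(\bz)a(\bz)\kappa(\bz)$ is invariant under reordering $\bz$, so the sum over $S^k_\epsilon(X)$ equals the sum over any convenient ordering. For $b=\one_\by$, the cochain $\hd b(\bz)$ is nonzero only on $k$-simplices $\bz$ having $\by$ as a face, i.e.\ $\bz=\{x\}\cup\by$ with $x\in X-\by$ and $\diam(\bz)<\epsilon$. Ordering such $\bz$ as $(x,\by)$, the coboundary formula gives $\hd b(x,\by)=b(\by)=1$: only the $i=0$ face survives, with sign $+1$. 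Hence
\begin{align}
\langle \hd b,a\rangle = \binom{n}{k+1}^{-1}\frac{(k!)^2}{\epsilon^{k(m+2)}}\sum_{\substack{x\in X-\by\\ (x,\by)\in S^k_\epsilon(X)}} a(x,\by)\kappa^k_{\theta,\epsilon}(x,\by),
\end{align}
while
\begin{align}
\langle b,\hdelta a\rangle=\binom{n}{k}^{-1}\frac{((k-1)!)^2}{\epsilon^{(k-1)(m+2)}}\hdelta a(\by)\kappa^{k-1}_{\theta,\epsilon}(\by).
\end{align}
Equating the two and dividing by $\kappa^{k-1}_{\theta,\epsilon}(\by)>0$ (strictness) yields the claimed form.

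It remains to simplify the constants. The ratio $\binom{n}{k}/\binom{n}{k+1}=\frac{k+1}{n-k}$, the ratio $(k!)^2/((k-1)!)^2=k^2$, and the power of $\epsilon$ is $\epsilon^{-(m+2)}$. Together these give the prefactor $\frac{k+1}{n-k}\frac{k^2}{\epsilon^{m+2}}$. For $k=1$, the convention $\kappa^0=1$ and the degree-$0$ normalization (with $(0!)^2=1$ and no $\epsilon$ power) fit the same computation.

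The only delicate step is the orientation bookkeeping. I need to check that the term $a(\bz)\hd b(\bz)$ is independent of the chosen ordering of $\bz$, since both factors change by the same sign. I also need to confirm that the position where $x$ is inserted relative to the sample ordering does not introduce a stray sign. Both points follow from the alternating extension, because everything is evaluated at the single ordering $(x,\by)$.
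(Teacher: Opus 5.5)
Your proof is correct and is the standard weighted-adjoint computation that the paper gives only by citation to Horak--Jost and Ennaceur rather than writing out. The constants $\frac{k+1}{n-k}$, $k^2$ and $\epsilon^{-(m+2)}$ are right, as is the orientation bookkeeping via the alternating convention and the $k=1$ case with $\kappa^0=1$.
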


Our aim is to show that the discrete codifferential energy converges to the smooth codifferential energy up to a constant. In particular, we fix a pullback form $\alpha = \pi^*\talpha$ and show that
\begin{align} \label{eq:main_codiff_result}
    \langle \hdelta q \alpha, \hdelta q \alpha \rangle_{n,\epsilon} \to \rho_0^{k+2}\sigma^\theta_{\delta,k}\langle \delta \talpha, \delta \talpha \rangle_M
\end{align}
for some constant $\sigma_{\delta, k}^\theta$.
To obtain~\Cref{thm:main_codiff}, we will conclude at the end using a polarization identity.
We use the intrinsic unit ball $\cB_M^k$ and its pullback class $\pi^*\cB_M^k$ from~\eqref{not:intrinsic_pullback_unit_balls} throughout.
For a fixed pullback form $\alpha = \pi^*\talpha$, we define
\begin{align} \label{eq:A_epsilon}
    A_\epsilon^\alpha(x, \by) = \epsilon^{-(m+2)} q\alpha(x,\by) \frac{\kappa_{\theta, \epsilon}^k(x,\by)}{\kappa_{\theta,\epsilon}^{k-1}(\by)} \quad \text{when} \quad (x,\by) \in D^k_\epsilon(M)
\end{align}
and $A^\alpha_\epsilon(x,\by) = 0$ otherwise. 
Using~\Cref{lem:discrete_codifferential_formula}, we can decompose
\begin{align} \label{eq:codiff_inner_prod}
\langle \hdelta q \alpha, \hdelta q \alpha \rangle_{n,\epsilon} = D_n^\alpha + F_n^\alpha%
\end{align}
where $D_n^\alpha$ and $F_n^\alpha$ denote the \emph{diagonal} and \emph{off-diagonal} sums,
\begin{align}
    D_n^\alpha &= \binom{n}{k}^{-1} k^2((k+1)!)^2 \frac{\epsilon^{-(k-1)(m+2)}}{(n-k)^2}\sum_{\substack{I = (i_1 < \ldots < i_k) \\p \in [n]-I}} A^\alpha_\epsilon(x_{p}, \bx_I)^2  \kappa^{k-1}_{\theta, \epsilon}(\bx_I),  \label{eq:D_n}\\
    F_n^\alpha&= \binom{n}{k}^{-1} k^2((k+1)!)^2 \frac{\epsilon^{-(k-1)(m+2)}}{(n-k)^2} \sum_{\substack{I= (i_1 < \ldots < i_k)\\p_1 \neq p_2 \in [n] - I}} A^\alpha_\epsilon(x_{p_1}, \bx_I) A^\alpha_{\epsilon}(x_{p_2}, \bx_I) \kappa^{k-1}_{\theta, \epsilon}(\bx_I). \label{eq:F_n}
\end{align}
We will often omit the superscript $\alpha$ when the form is fixed. In these expressions, the additional factor of $\kappa^{k-1}_{\theta, \epsilon}$ arises from the degree $k-1$ inner product. The proof is then split into three parts.
\begin{enumerate}
    \item In~\Cref{prop:off_diagonal_bias}, we show that the bias of $F_n^\alpha$ converges to $\rho_0^{k+2}\sigma^\theta_{\delta,k}\langle \delta\alpha,\delta\alpha\rangle_M$.
    \item We show that the diagonal term $D_n$ is of lower order.
    \item We bound the variance $F_n-\E[F_n]$ by applying Bernstein concentration for $U$-statistics.
\end{enumerate}

The following simplex expansion will be used throughout the codifferential proof.

\begin{lemma} \label{lem:codiff_expansion}
For $0<\epsilon<\epsilon_0$, the following holds. Let $(x,\by)\in D^k_\epsilon(M)$, let $v_i=\log_x(y_i)$, and write $\bv=(v_1,\ldots,v_k)$. If $\alpha=\pi^*\talpha$ is a pullback form, then
\begin{align}
    q\alpha(x,\by) = \frac{1}{k!}\alpha_x(\bv) + \frac{1}{(k+1)!} \sum_{i=1}^k (\nabla_{v_i} \alpha)_x(\bv) + R(\bv),
\end{align}
where $|R(\bv)| \ls \|\talpha\|_{\cC^2(M)}\|\bv\|^{k+2}$.
\end{lemma}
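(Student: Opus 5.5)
The plan is to start from the Euclidean expansion in \Cref{lem:euclidean_simplex_integral}, applied with base vertex $x$ and displacements $w_i=y_i-x$. For the pullback form $\alpha=\pi^*\talpha$ this gives
\begin{align}
    q\alpha(x,\by)=\frac{\alpha_x(\bw)}{k!}+\frac{1}{(k+1)!}\sum_{i=1}^k(\bar\nabla_{w_i}\alpha)_x(\bw)+R_0,\qquad |R_0|\ls\|\bar\nabla^2\alpha\|_\infty\|\bw\|^{k+2}.
\end{align}
The first task is to check that $\|\alpha\|_{\cC^2(U_{\epsilon_0})}\ls\|\talpha\|_{\cC^2(M)}$. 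This holds because $\pi$ is smooth with uniformly bounded derivatives on the closed tube of radius $2\epsilon_0<\tau_M$, and the affine simplex lies in that tube. Together with $\|\bw\|\asymp\|\bv\|$ from \eqref{eq:euclidean_intrinsic_comparison}, it turns $R_0$ into an admissible remainder $\ls\|\talpha\|_{\cC^2(M)}\|\bv\|^{k+2}$.

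Next I will treat the leading term using the pullback structure. Since $x\in M$, $d\pi_x$ is the orthogonal projection $P_x$ onto $T_xM$. Hence $\alpha_x(\bw)=\talpha_x(P_xw_1,\ldots,P_xw_k)$. By the Taylor expansion \eqref{eq:exp_taylor}, $w_i=v_i+Q_x(v_i,v_i)+r(v_i)$ with $Q_x$ normal-valued. Therefore $P_xw_i=v_i+O(\|v_i\|^3)$: the quadratic correction is annihilated, which is exactly why no degree-$(k+1)$ term $L$ appears here, unlike in \Cref{lem:geometric_simplex_integral}. Multilinearity then gives $\alpha_x(\bw)=\talpha_x(\bv)+O(\|\talpha\|_{\cC^0}\|\bv\|^{k+2})$. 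In the derivative term I replace each $w_i$, both the direction and the arguments, by $v_i$. The errors are $O(\|\bv\|^2)$ per slot, so this costs $O(\|\talpha\|_{\cC^1}\|\bv\|^{k+2})$.

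The remaining and main step is to identify, for tangent vectors $v,u_1,\ldots,u_k\in T_xM$, the ambient derivative
\begin{align}
    (\bar\nabla_v\pi^*\talpha)_x(u_1,\ldots,u_k)=(\nabla_v\talpha)_x(u_1,\ldots,u_k).
\end{align}
Since $v$ is tangent, I differentiate along a curve $c(t)\subset M$ with $c'(0)=v$, holding the $u_j$ fixed as constant ambient vectors. On $M$ we have $(\pi^*\talpha)_{c(t)}(u_1,\ldots,u_k)=\talpha_{c(t)}(P_{c(t)}u_1,\ldots,P_{c(t)}u_k)$. The fields $U_j(t)=P_{c(t)}u_j$ are tangent along $c$, so the Leibniz rule for the Levi-Civita connection gives
\begin{align}
    \frac{d}{dt}\Big|_{t=0}\talpha(U_1,\ldots,U_k)=(\nabla_v\talpha)(u_1,\ldots,u_k)+\sum_j\talpha(u_1,\ldots,\nabla_vU_j,\ldots,u_k).
\end{align}
The point to verify is that $\nabla_vU_j=P_x\big((D_vP)u_j\big)=0$. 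Differentiating $P^2=P$ gives $(D_vP)P+P(D_vP)=D_vP$, so $P(D_vP)P=0$; hence $(D_vP)u_j$ is normal for tangent $u_j$ (it is the second fundamental form term). This kills the extra terms and proves the identity. Substituting it into the derivative sum yields the stated expansion. All constants are uniform in $x$ by compactness of $M$ and the choice of $\epsilon_0$ in \eqref{eq:global_epsilon_0}.
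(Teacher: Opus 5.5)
Your proposal is correct and follows essentially the same route as the paper, which invokes the geometric simplex expansion (itself built on the Euclidean one) and notes two facts: $\pi^*\talpha$ annihilates the normal-valued second fundamental form term, and tangential ambient derivatives of a pullback form coincide with intrinsic covariant derivatives. You additionally supply details the paper leaves implicit, namely the $\cC^2$ control of $\pi^*\talpha$ on the tube and the identity $P(D_vP)P=0$ giving $\bar\nabla_v\pi^*\talpha=\nabla_v\talpha$ on tangent arguments. One small correction: the definition of $\epsilon_0$ only yields $\epsilon_0<\tau_M$, not $2\epsilon_0<\tau_M$. This is harmless, since the closed tube of radius $\epsilon_0$ already contains every affine simplex in the statement and suffices for the uniform bounds on the derivatives of $\pi$.
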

\begin{proof}
    This follows from~\Cref{lem:geometric_simplex_integral} applied to pullback forms: the second fundamental form term vanishes since $\alpha=\pi^*\talpha$ annihilates normal vectors along $M$, and tangent ambient derivatives $\bar\nabla$ restrict to intrinsic covariant derivatives $\nabla$.
\end{proof}

\subsection{Off-Diagonal Bias Estimate} \label{ssec:off_diagonal}
The expectation of the off-diagonal term $F_n$~\eqref{eq:F_n} is 
\begin{align} \label{eq:EF_n}
    \E[F_n] = \rho_0^{k+2}  \frac{k^2((k+1)!)^2(n-k-1)}{\epsilon^{(k-1)(m+2)}(n-k)} \int_{D^{k-1}_\epsilon(M)} I(\by)^2 \dvol^k(\by),
\end{align}
where we absorb the additional factor of $\kappa_{\theta,\epsilon}^{k-1}(\by)$ from \eqref{eq:F_n} into the normalized, kernel-weighted average of the simplex integral $q\alpha(x,\by)$ over all possible cone points $x$ that form an $\epsilon$-simplex with $\by$:
\begin{align} \label{eq:original_I_by}
    I(\by)
    &= \frac{1}{\epsilon^{m+2}}
    \int_{D_\epsilon(\by)} q\alpha(x,\by) \frac{\kappa_{\theta,\epsilon}^k(x,\by)}{\sqrt{\kappa_{\theta,\epsilon}^{k-1}(\by)}} \dvol(x) \andd D_\epsilon(\by) = \{ x \in M : \diam_\R(x, \by) < \epsilon\}.
\end{align}
We call $\by=(y_1,\ldots,y_k)$ the \emph{base simplex}, the points $y_i$ its \emph{base vertices}, and $x$ the \emph{cone point}. This subsection proves the following deterministic bias estimate for this off-diagonal term.

\begin{proposition} \label{prop:off_diagonal_bias}
    Uniformly for $\alpha\in \pi^*\cB_M^k$, we have
    \begin{align} \label{eq:off_diagonal_bias_goal}
        \E[F_n] = \frac{n-k-1}{n-k}\rho_0^{k+2}k^{(m+4)/2}\sigma_\lambda \langle \delta\alpha,\delta\alpha\rangle_M + O(\epsilon),
    \end{align}
    where $\sigma_\lambda>0$ depends only on $m,k$, and $\theta$.
\end{proposition}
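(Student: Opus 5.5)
We propose to estimate the inner cone integral $I(\by)$ pointwise in normal coordinates centred at a base point of $\by$, and show that
\[
I(\by)=c\,\epsilon^{k}\,(\delta\alpha)_{y_1}(\text{tangent shape of }\by)\cdot\Lambda+O(\epsilon^{k+1}),
\]
where $\Lambda$ is a kernel moment. We then integrate $I(\by)^2$ over the fat diagonal $D^{k-1}_\epsilon(M)$ with the same rescaling and invariance argument used in \Cref{prop:uc_bias}.

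\textbf{Step 1: expansion of the cone integral.} Fix $\by=(y_1,\ldots,y_k)$ and take its barycentre-type base point. It is convenient to centre at $y_1$ and write $y_i=\exp_{y_1}(\epsilon u_i)$. After the change $\bu\mapsto$ centred coordinates, $\bu$ lies in $H_{m,k}$ up to a translation, which accounts for the factor $k^{(m+4)/2}$ via the Jacobian of the centring map. Write the cone point as $x=\exp_{y_1}(\epsilon w)$.

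Apply \Cref{lem:codiff_expansion} based at a base vertex rather than at $x$, using the alternating convention. The expansion is
\[
k!\,q\alpha(x,\by)=\epsilon^k\alpha(w-u_1,u_2-u_1,\ldots)+\epsilon^{k+1}(\text{first-order }\nabla\alpha\text{ term})+O(\epsilon^{k+2}).
\]
Multiply by the kernel ratio and integrate in $w$ against $\epsilon^m\,dw$ times the Jacobian $1+O(\epsilon^2)$. Replace the domain and the kernel $\theta(s(\exp(\epsilon w),\by)/\epsilon^2)$ by their flat counterparts $\theta(s(w,\bu))$ at cost $O(\epsilon^2)$ relative, exactly as in \Cref{lem:kappa_domain_replacement}.

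\textbf{Step 2: cancellations.} The zeroth-order term $\int\alpha(w-u_1,\ldots)\,\theta\,dw$ splits into two pieces. The component of $w$ along $\spann\{u_i\}$ gives a term that is a multiple of $\alpha(\text{shape})$. This does not vanish, but it is cancelled by the total weight identity combined with the reflection $w\mapsto$ reflection across the affine span of $\by$. The $e^\perp$ component of $w$ appears linearly and is odd under that reflection, so it integrates to zero at order $\epsilon^{k}$.

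The expected surviving leading term, of order $\epsilon^{k+1}\cdot\epsilon^{m}/\epsilon^{m+2}=\epsilon^{k-1}$, then comes from the first-order derivative terms paired with the normal component. Only the contraction $\sum_j(\nabla_{e_j}\alpha)(e_j,\ldots)=-\delta\alpha$ survives the $O(m-k+1)$-averaging over directions orthogonal to the base simplex. That averaging produces $\lambda_\theta(\bu)\,(\delta\alpha)_{y}(u_2-u_1,\ldots,u_k-u_1)$.

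I expect this step to be the main obstacle: tracking the signs and the $1/k!$ versus $1/(k+1)!$ coefficients so that the tangential-derivative terms combine exactly into $\delta\alpha$ rather than into a mixture with $d\alpha$ terms. If the constants do not cancel cleanly, I would first try recentring at the barycentre, since there the zeroth-order symmetric contributions vanish by symmetry of $H_{m,k}$. For pullback forms, the absence of second fundamental form terms (\Cref{lem:codiff_expansion}) is what makes this clean.

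\textbf{Step 3: integrate over base simplices.} Substituting into \eqref{eq:EF_n}, we need $\int_{D^{k-1}_\epsilon(M)}I(\by)^2$. Change variables $\by\mapsto(y,\epsilon\bu)$ with $\bu\in H_{m,k}$, which gives volume factor $\epsilon^{(k-1)m}$ times the centring Jacobian. Powers of $\epsilon$ cancel against the prefactor $\epsilon^{-(k-1)(m+2)}$.

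Then apply \Cref{lem:bilinear_form_trace} with $V=T_yM$, $H=H_{m,k}$, $\Phi(\bu)=(u_2-u_1)\wedge\cdots\wedge(u_k-u_1)\in\Lambda^{k-1}V$, and $f=\lambda_\theta^2\one_{s<1}$. This yields $\sigma_\lambda\,|(\delta\alpha)_y|^2$, with the $\binom{m}{k-1}^{-1}$ normalization absorbed into $\sigma_\lambda$. Integrate over $y\in M$ to obtain $\langle\delta\alpha,\delta\alpha\rangle_M$.

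All remainders are $O(\epsilon)$ relative and uniform over $\pi^*\cB^k_M$, because they only involve $\|\talpha\|_{\cC^2(M)}$. Positivity of $\sigma_\lambda$ follows from $\theta\ge c_\theta>0$ on $[0,1)$.
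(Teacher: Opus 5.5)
You have the core mechanism right: the first moment of the normalized kernel lies in the span of the base simplex, the isotropic second moment on its orthogonal complement produces $-\lambda_\theta(\bu)\,\delta\alpha$, and the trace lemma finishes. There is, however, a genuine gap in Step 1. You claim that replacing the ambient domain and normalized kernel by flat counterparts costs $O(\epsilon^2)$ relative ``exactly as in \Cref{lem:kappa_domain_replacement}''. That lemma is an integrated statement, with the thin shell living in the full configuration space. Here you need the replacement for each fixed base simplex inside $I(\by)$, which is then squared, and for $k\geq 2$ this fails on the cutoff layer $|s(\bu)-1|\lesssim\epsilon^2$. Because $\theta$ jumps from at least $c_\theta$ to $0$ at $1$, and the kernel is normalized by $\sqrt{\kappa^{k-1}_{\theta,\epsilon}(\by)}$, one can have $\epsilon^{-2}s(\by)<1\leq s(\bu)$. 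Then $K_\bu\equiv 0$, while the true kernel is at least $c_\theta$ on a set of $w$ of volume $\asymp 1$. There your argument gives only the crude bound $|I(\by)|\lesssim\epsilon^{-(m+2)}\epsilon^{k}\epsilon^{m}=\epsilon^{k-2}$. The layer has measure $O(\epsilon^2)$ in rescaled coordinates, so its contribution to $\int I(\by)^2\,\dvol^k$ is of order $\epsilon^{2k-4}\cdot\epsilon^{2}\cdot\epsilon^{(k-1)m}=\epsilon^{(k-1)(m+2)}$. That is the same order as the main term, so the claimed $O(\epsilon)$ error does not follow. What is missing is a uniform bound $|I(\by)|\lesssim\epsilon^{k-1}$ near the cutoff, i.e.\ that the zeroth-order cancellation survives there. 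The paper gets this in \Cref{lem:codiff_uniform_I_bound} by comparing with $K^\partial_{\epsilon,\by}(w)=\theta(\max\{b_\epsilon(\by),\max_j\|w-u_j\|^2\})/\sqrt{\theta(b_\epsilon(\by))}$, where $b_\epsilon(\by)=\epsilon^{-2}s(\by)$. This kernel uses the same cutoff quantity as the true kernel, so the $L^1$ difference is $O(\epsilon^2)$ without any separation condition. It also keeps the symmetry that kills the first moment. The pointwise expansion is then used only where $s(\bu)\leq 1-C_\partial\epsilon^2$.

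Apart from this, basing at the vertex $y_1$ is a legitimate alternative to the paper's Karcher mean. It lets you expand $q\alpha(x,\by)$ directly with \Cref{lem:codiff_expansion}, with no Stokes reduction (\Cref{lem:codiff_stokes_reduction}) and no Karcher chart. The flat kernel is still invariant under orthogonal maps fixing $\spann\{u_2,\ldots,u_k\}$ pointwise, because $y_1$ lies in the affine hull of the base simplex. The fibrewise coordinates $(y_1,\epsilon u_2,\ldots,\epsilon u_k)$ have Jacobian $\epsilon^{(k-1)m}(1+O(\epsilon^2))$ with no injectivity issues. So keep $y_1$ throughout, rather than switching to a ``barycentre-type'' point in Step 3, which would need the Karcher machinery. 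Passing to $H_{m,k}$ is then a linear change in the fibre with Jacobian $k^{m/2}$; the remaining $k^2$ in $k^{(m+4)/2}$ comes from the prefactor in \eqref{eq:EF_n}. The same cutoff-adapted kernel repairs the gap at $y_1$.

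There are also three smaller corrections.
First, the in-span component of $w$ contributes exactly zero, because $k$ vectors lie in a space of dimension at most $k-1$. It is not ``a multiple of $\alpha(\text{shape})$'', which does not type-check since $\alpha$ is a $k$-form and the shape a $(k-1)$-vector.
Second, the cross terms $(\nabla_{u_j}\alpha)(w,u_2,\ldots,u_k)$ vanish by the same first-moment argument.
Third, the leading order of $I(\by)$ is $\epsilon^{k-1}$, as in your Step 2, not $\epsilon^{k}$ as in your opening display.
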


As in~\Cref{ssec:uc_bias}, we reduce the bias to an invariant tangent-space integral. Here, choosing a canonical center for the base simplex and normalizing by its kernel require additional arguments.
\begin{enumerate}
    \item We center the base simplex at its Riemannian center of mass, called the \emph{Karcher mean}~\cite{karcher_riemannian_1977}, and use the cochain-map identity to establish the residual-symmetry identities that produce $\delta\alpha$.
    \item We replace the ambient cone domain and kernel by tangent-space counterparts and derive the resulting local expansion of $I(\by)$, treating the cutoff layer separately.
    \item We integrate in Karcher-mean coordinates and use orthogonal invariance to identify the result with a multiple of $\langle\delta\alpha,\delta\alpha\rangle_M$.
\end{enumerate}

\subsubsection{Karcher Centering and Residual Symmetry}
The properties of the Karcher mean needed below follow from~\cite{karcher_riemannian_1977}, \cite[Lemma 3]{dyer_riemannian_2015}, and~\cite[Theorems 3.2.1 and 4.3.1]{afsari_means_2009}.
\begin{proposition} \label{prop:karcher}
    For every $0<\epsilon<\epsilon_0$ and every $\by \in D^{k-1}_\epsilon(M)$, the function
    \begin{align}
        E_\by : M \to \R, \qquad E_\by(z) = \frac12 \sum_{i=1}^k d_M(z, y_i)^2,
    \end{align}
    is strictly convex on $\ball{M}{y_1}{S(2\epsilon)}$ and has a unique critical point $z=z(\by)$ in this ball, called the \emph{Karcher mean}. It is the unique global minimizer, varies smoothly on $\by$, and satisfies
    \begin{align} \label{eq:karcher_mean_tangent_zero}
        \sum_{i=1}^k \log_{z(\by)}(y_i) = 0.
    \end{align}
\end{proposition}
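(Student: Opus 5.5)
The plan is to reduce the statement to standard Riemannian center-of-mass theory by placing all base vertices in a single small, strongly convex ball. First, since $\by\in D^{k-1}_\epsilon(M)$ we have $\diam_\R(\by)<\epsilon$. Then~\eqref{eq:geodesic_euclidean_balls} gives $d_M(y_1,y_i)\leq S(\epsilon)$ for every $i$, so all $y_i$ lie in $B:=\ball{M}{y_1}{S(2\epsilon)}$, and in fact in the smaller concentric ball of radius $S(\epsilon)$. By the choice of $\epsilon_0$ in~\eqref{eq:global_epsilon_0}, the radius $S(2\epsilon)<S(2\epsilon_0)$ is below both $\cx(M)$ and $\pi/(4\sqrt{\Lambda_+})$. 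Also $S(2\epsilon)<\inj(M)$, since $\cx(M)\leq\inj(M)/2$. Hence $B$ is strongly geodesically convex, and each $d_M(\cdot,y_i)^2$ is smooth on $B$.

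The next step is strict convexity of $E_\by$ on $B$. For $z\in B$ we have $d_M(z,y_i)<2S(2\epsilon)<\pi/(2\sqrt{\Lambda_+})$. By Hessian comparison under the upper curvature bound $\Lambda_+$ (Karcher's estimate), the Hessian of $\frac12 d_M(\cdot,y_i)^2$ at such $z$ is bounded below by $r\sqrt{\Lambda_+}\cot(r\sqrt{\Lambda_+})\,g>0$, where $r=d_M(z,y_i)$. Summing over $i$ gives strict convexity of $E_\by$ along every geodesic in the convex ball $B$. The gradient is $\nabla E_\by(z)=-\sum_i\log_z(y_i)$, which is well defined because each $y_i$ lies inside the injectivity radius of $z$. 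So critical points in $B$ are exactly the solutions of~\eqref{eq:karcher_mean_tangent_zero}, and strict convexity gives at most one.

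Existence requires showing the minimizer of $E_\by$ over $\overline B$ is interior, and this is the step I expect to be the main obstacle. I will argue as in Karcher and Afsari: at a boundary point $z\in\partial B$, the gradient $-\sum_i\log_z(y_i)$ points strictly inward. The reason is that each $y_i$ lies in the smaller ball of radius $S(\epsilon)$, which sits well inside $B$, and by convexity of $B$ each $\log_z(y_i)$ makes an acute angle with the inward direction toward $y_1$. The Euclidean-to-intrinsic comparison needs some care with constants here. A cleaner alternative, if the radii do not fit directly, is to quote~\cite[Theorems 3.2.1 and 4.3.1]{afsari_means_2009}: for points contained in a ball of radius $\rho<\frac12\min\{\inj(M),\pi/(2\sqrt{\Lambda_+})\}$, the center of mass exists, is unique, lies in the closed convex hull, and is the unique global minimizer on $M$. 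We can apply this with $\rho=S(\epsilon)$. Global minimality over all of $M$ (not only $B$) follows from that theorem, or directly: any $z$ outside $B$ has $d_M(z,y_i)>S(2\epsilon)-S(\epsilon)$, which is at least a constant multiple of $\max_i d_M(z(\by),y_i)$ for small $\epsilon$, so $E_\by(z)>E_\by(z(\by))$. I would use the Afsari reference to avoid tracking these constants. Finally, the minimizer lies in the interior, so it is a critical point, and uniqueness within $B$ follows.

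Smooth dependence on $\by$ comes from the implicit function theorem. Consider $\Phi(\by,z)=\sum_i\log_z(y_i)$, which is smooth on the relevant open set because all points lie within injectivity radius of each other. Its $z$-derivative at $z(\by)$ is $-\mathrm{Hess}\,E_\by$, which is invertible by the positive-definite Hessian bound above. So $z(\by)$ is locally a smooth function of $\by$, and by uniqueness these local solutions agree, giving a globally smooth map on $D^{k-1}_\epsilon(M)$ (this is~\cite[Lemma 3]{dyer_riemannian_2015}).
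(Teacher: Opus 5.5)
Your proposal is correct and is essentially the paper's own argument. The paper proves this proposition purely by appeal to Karcher's Hessian comparison, Lemma 3 of Dyer et al.\ for smooth dependence, and Theorems 3.2.1 and 4.3.1 of Afsari for existence and global uniqueness; you add the check that the radii fixed in the choice of $\epsilon_0$ put $y_1,\ldots,y_k$ in the regime where these results apply. One simplification: the existence step you flag as the main obstacle is elementary, since for $z$ outside the ball of radius $S(2\epsilon)$ about $y_1$, convexity of $S$ gives $d_M(z,y_i)>S(2\epsilon)-S(\epsilon)\geq S(\epsilon)$, hence $E_\by(z)>\tfrac{k}{2}S(\epsilon)^2>E_\by(y_1)$, so every global minimizer is an interior critical point of that ball (this replaces your loose ``constant multiple'' comparison and the inward-pointing argument, where incidentally it is $-\nabla E_\by=\sum_i\log_z(y_i)$, not $\nabla E_\by$, that points inward).
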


For the rest of the proof, fix $\by\in D^{k-1}_\epsilon(M)$ and let $z=z(\by)$ be its Karcher mean. We write
\begin{align}
    u_i = \epsilon^{-1}\log_z(y_i), \qquad \bu=(u_1,\ldots,u_k), \qquad \hat{\bu}_i=(u_1,\ldots,\hat u_i,\ldots,u_k),
\end{align}
so that $\sum_i u_i=0$. Let $H_z \subseteq T_z^kM$ be the subspace of centered $k$-tuples in $T_zM$, and define
\begin{align} \label{eq:Hz_and_cV}
    H_z = \left\{\bu\in T_z^kM : \sum_{i=1}^k u_i=0\right\} \andd \cV_z(\bu)=\{w\in T_zM:s(w,\bu)\leq 1\}.
\end{align}
Consistently with~\eqref{eq:s_squared_diameter_convention}, here $s(\bu)=\diam_{T_zM}(\bu)^2$ and $s(w,\bu)=\diam_{T_zM}(w,\bu)^2$. 
This notation is similar to $\cV_x^k\subset T_x^kM$ used in~\Cref{sec:uc_proof} as it plays a similar role; however, $\cV_z(\bu)\subset T_zM$ is the domain of a single cone vector $w$ with $\bu$ held fixed.
To formulate the residual-symmetry identities, we define a normalized kernel $K_\bu: T_z M \to  \R$ based on tangent distances, where \label{not:normalized_tangent_kernel}
\begin{align}
    K_\bu(w) \coloneqq 0 \quad \text{when}\quad  s(\bu) \geq 1 \andd K_\bu(w) \coloneqq \dfrac{\theta(s(w,\bu))}{\sqrt{\theta(s(\bu))}} \quad \text{when} \quad s(\bu) < 1.
\end{align} with the cases preventing a vanishing denominator. 
We approximate the $q\alpha(x,\by)$ term in $I(\by)$ using the discretizations $q\alpha(z, x, \hat{\by}_i)$ on simplices based at the Karcher mean $z = z(\by)$.

\begin{lemma} \label{lem:codiff_stokes_reduction}
    With the notation above,
    \begin{align} \label{eq:codiff_I_stokes_decomposition}
        I(\by) &= \epsilon^{-(m+2)} \sum_{i=1}^k (-1)^i I_i(\by) + O(\epsilon^k) \quad \text{where} \quad 
        I_i(\by)= 
        \int_{D_\epsilon(\by)} q\alpha(z,x,\hat{\by}_i) \frac{\kappa_{\theta,\epsilon}^k(x,\by)}{\sqrt{\kappa_{\theta,\epsilon}^{k-1}(\by)}} \dvol(x).
    \end{align}
\end{lemma}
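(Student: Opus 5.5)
The plan is to obtain the identity from the cochain property in \Cref{lem:cochain_map}, applied to the $(k+1)$-tuple $(z,x,y_1,\ldots,y_k)$. Two error terms come out of this. One is a degenerate $k$-simplex based at the Karcher mean. The other is a thin $(k+1)$-simplex. I will show that each is $O(\epsilon^{k+2})$ pointwise, which suffices because the cone domain has volume $O(\epsilon^m)$ and the prefactor is $\epsilon^{-(m+2)}$.

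First, the Stokes step. Index the vertices as $0\mapsto z$, $1\mapsto x$, $i+1\mapsto y_i$. Then $\hd q\alpha = q\,d\alpha$ gives
\begin{align}
    q\,d\alpha(z,x,\by) = q\alpha(x,\by) - q\alpha(z,\by) + \sum_{i=1}^k (-1)^{i+1} q\alpha(z,x,\hat\by_i).
\end{align}
The alternating convention \eqref{eq:alternating_cochain_convention} lets me read $q$ on arbitrary ordered tuples. Rearranging,
\begin{align}
    q\alpha(x,\by) = \sum_{i=1}^k(-1)^i q\alpha(z,x,\hat\by_i) + q\alpha(z,\by) + q\,d\alpha(z,x,\by),
\end{align}
up to an overall sign convention that I will fix carefully against \eqref{eq:codiff_I_stokes_decomposition}. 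Substituting into \eqref{eq:original_I_by} produces the main term $\epsilon^{-(m+2)}\sum_i(-1)^iI_i(\by)$. The remainder is
\begin{align}
    \epsilon^{-(m+2)}\int_{D_\epsilon(\by)} \big(q\alpha(z,\by)+q\,d\alpha(z,x,\by)\big)\frac{\kappa^k_{\theta,\epsilon}(x,\by)}{\sqrt{\kappa^{k-1}_{\theta,\epsilon}(\by)}}\dvol(x).
\end{align}
The kernel ratio is bounded by $c_\theta^{-1/2}$, since $\theta\le 1$ and $\theta\ge c_\theta$ on $[0,1)$. The domain $D_\epsilon(\by)\subset \ball{\R^N}{y_1}{\epsilon}\cap M$ has volume $\ls\epsilon^m$, by \eqref{eq:geodesic_euclidean_balls} and volume growth. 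So it suffices to show both integrand terms are $O(\epsilon^{k+2})$ uniformly over $\talpha\in\cB_M^k$.

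For $q\alpha(z,\by)$, I apply \Cref{lem:codiff_expansion} with base point $z$ and $v_i=\log_z(y_i)$. This is legitimate since $z\in\ball{M}{y_1}{S(2\epsilon)}$ by \Cref{prop:karcher}, so all pairwise distances are $\ls\epsilon$ and below $\inj_\R(M)$ by the choice \eqref{eq:global_epsilon_0}. By \eqref{eq:karcher_mean_tangent_zero} we have $\sum_i v_i=0$, so $v_1,\ldots,v_k$ are linearly dependent. Hence both $\alpha_z(\bv)$ and every $(\nabla_{v_i}\alpha)_z(\bv)$ vanish, being alternating multilinear in dependent vectors. Only the remainder survives, so $|q\alpha(z,\by)|\ls\|\bv\|^{k+2}\ls\epsilon^{k+2}$. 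This is exactly where the Karcher centering is used.

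For $q\,d\alpha(z,x,\by)$, I avoid expanding $d\alpha$ to second order, which would need $\cC^3$ control. Instead I use the crude bound $|q\,d\alpha(\cdot)|\ls\|d\alpha\|_\infty\cdot\|w_0\wedge w_1\wedge\cdots\wedge w_k\|$, where the $w$'s are the ambient displacements from $z$. Here $\|d\alpha\|_\infty\ls\|\talpha\|_{\cC^1(M)}$ on $U_{\epsilon_0}$ for pullback forms. The key point is that $w_i=y_i-z=v_i+O(\epsilon^2)$ by \eqref{eq:euclidean_intrinsic_comparison}, so $\sum_i w_i=O(\epsilon^2)$. Replacing $w_k$ by $\sum_i w_i$ in the wedge leaves it unchanged, so
\begin{align}
    \|(x-z)\wedge w_1\wedge\cdots\wedge w_k\|\ls \epsilon\cdot\epsilon^{k-1}\cdot\epsilon^2=\epsilon^{k+2}.
\end{align}
Combining the two bounds, the remainder is $\ls\epsilon^{-(m+2)}\epsilon^m\epsilon^{k+2}=\epsilon^k$, uniformly in $\by$ and in $\talpha$.

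The main obstacle is bookkeeping rather than analysis. I need to pin down the sign and orientation conventions so that the $(-1)^i$ in \eqref{eq:codiff_I_stokes_decomposition} matches. I also need to confirm that the auxiliary simplices $(z,x,\hat\by_i)$ and $(z,\by)$ are covered by the expansions, even though they need not be VR simplices. Since the lemmas only require a diameter bound $\ls\epsilon<\epsilon_0$ within the tubular region, I expect this to go through.
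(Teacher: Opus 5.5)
Your proposal is correct and follows the paper's proof: the same Stokes identity on $(z,x,\by)$, the same Karcher-centering argument giving $|q\alpha(z,\by)|\ls\epsilon^{k+2}$, and the same $\epsilon^{-(m+2)}\cdot\epsilon^m\cdot\epsilon^{k+2}$ count. The one local difference is the term $q\,d\alpha(z,x,\by)$. The paper bounds it by applying \Cref{lem:codiff_expansion} to the $(k+1)$-form $d\alpha$ and using exact linear dependence of $(\epsilon w,\epsilon u_1,\ldots,\epsilon u_k)$, whereas you use the exact affine-simplex bound $\|d\alpha\|_\infty\|w_0\wedge\cdots\wedge w_k\|$ together with the near-dependence $\sum_i w_i=O(\epsilon^2)$. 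Your version needs only $\cC^1$ control of $\talpha$ and is therefore transparently uniform over $\cB_M^k$, while the expansion for $d\alpha$ as literally stated carries a remainder involving $\|d\talpha\|_{\cC^2(M)}$.
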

\begin{proof}
    Applying ~\Cref{lem:cochain_map} to the Euclidean $(k+1)$-simplex $(z,x,\by)$, we get
    \begin{align} \label{eq:first_stokes}
        q(d\alpha)(z, x, \by) = \hd (q\alpha)(z,x,\by) = q\alpha(x, \by) - q\alpha(z, \by) + \sum_{i=1}^k (-1)^{i+1} q\alpha(z, x, \hat{\by}_i).
    \end{align}
    We first bound the two terms not appearing in~\eqref{eq:codiff_I_stokes_decomposition}. Applying~\Cref{lem:codiff_expansion} to $q\alpha(z,\by)$, the leading term and first-order term vanish because $\sum_i u_i=0$. Hence $|q\alpha(z,\by)|\ls \epsilon^{k+2}$. Similarly, applying the same expansion to the $(k+1)$-form $d\alpha$ shows that $|q(d\alpha)(z,x,\by)|\ls \epsilon^{k+2}$: the tangent vectors $(\epsilon w,\epsilon u_1,\ldots,
    \epsilon u_k)$ are linearly dependent because $\sum_i u_i=0$. Therefore,
    \begin{align}
        q\alpha(x,\by)=\sum_{i=1}^k (-1)^i q\alpha(z,x,\hat{\by}_i)+O(\epsilon^{k+2}).
    \end{align}
    Substituting this into~\eqref{eq:original_I_by} and using $\vol(D_\epsilon(\by))\ls \epsilon^m$ gives the result.
\end{proof}

The following lemma is the local trace mechanism which produces the codifferential. It is analogous to the symmetry argument in~\Cref{lem:bilinear_form_trace}, but after fixing the centered base tuple $\bu$, the kernel $K_\bu(w)$ is invariant only under the subgroup of $O(T_zM)$ fixing $W_\bu=\spann\{u_1,\ldots,u_k\}$ pointwise. This residual symmetry kills the leading moment and makes the second moment scalar on $W_\bu^\perp$. By Karcher centering, $\sum_i u_i=0$, so alternation removes the $W_\bu$-directions and leaves the transverse trace in the normal-coordinate formula for $\delta\alpha$.

\begin{lemma} \label{lem:codiff_moment_trace}
    For any $i\in\{1,\ldots,k\}$, we have
    \begin{align} \label{eq:codiff_moment_trace_formulas}
        \int_{T_zM} \alpha_z(w, \hat{\bu}_i)K_\bu(w)dw = 0
    \end{align}
    and
    \begin{align}
        \int_{T_zM} (\nabla_w\alpha)_z(w,\hat{\bu}_i)K_\bu(w)dw
        = -\lambda_\theta(\bu)(\delta\alpha)_z(\hat{\bu}_i),
    \end{align}
    where, for any unit vector $e^\perp\in W_\bu^\perp$ and $W_\bu=\spann\{u_1,\ldots,u_k\}$,
    \begin{align}
        \lambda_\theta(\bu)\coloneqq \int_{T_zM}\langle w,e^\perp\rangle^2K_\bu(w)dw.
    \end{align}
\end{lemma}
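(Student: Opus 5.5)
The plan is to exploit the symmetry group $G_\bu=\{A\in O(T_zM): A|_{W_\bu}=\mathrm{id}\}\cong O(W_\bu^\perp)$. By construction $K_\bu(w)$ depends on $w$ only through the pairwise distances $\|w-u_j\|$ and $s(\bu)$, and for $A\in G_\bu$ we have $\|Aw-u_j\|=\|A(w-u_j)\|=\|w-u_j\|$. Hence $K_\bu(Aw)=K_\bu(w)$, and Lebesgue measure on $T_zM$ is $G_\bu$-invariant. Both identities will be reduced to computing $G_\bu$-averaged first and second moments of $w$ against $K_\bu$. When $s(\bu)\ge1$ both sides vanish trivially since $K_\bu\equiv0$ and we set $\lambda_\theta(\bu)$ accordingly, so assume $s(\bu)<1$. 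Also, $K_\bu$ is bounded and compactly supported, so all integrals are finite.

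\textbf{First moment.} Decompose $w=w^\parallel+w^\perp$ with $w^\parallel\in W_\bu$ and $w^\perp\in W_\bu^\perp$. The map $w\mapsto w^\parallel-w^\perp$ lies in $G_\bu$ (it is $-\mathrm{id}$ on $W_\bu^\perp$), so $\int w^\perp K_\bu\,dw=0$ and $\int wK_\bu\,dw=\int w^\parallel K_\bu\,dw\in W_\bu$. By linearity in the first slot, $\int\alpha_z(w,\hat\bu_i)K_\bu\,dw=\alpha_z(c,\hat\bu_i)$ with $c\in W_\bu=\spann\{u_1,\dots,u_k\}$. Karcher centering gives $u_i=-\sum_{j\neq i}u_j$, so $W_\bu=\spann\hat\bu_i$. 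Hence $c$ is a combination of the entries of $\hat\bu_i$, and the alternating form vanishes. This gives the first identity.

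\textbf{Second moment.} The integrand $(\nabla_w\alpha)_z(w,\hat\bu_i)$ is bilinear in $w$, so the integral equals $\sum_{a,b}M_{ab}(\nabla_{e_a}\alpha)_z(e_b,\hat\bu_i)$, where $M=\int w\otimes w\,K_\bu\,dw$ is taken in an orthonormal basis adapted to $T_zM=W_\bu\oplus W_\bu^\perp$. Reflections in $G_\bu$ kill the mixed blocks $W_\bu\otimes W_\bu^\perp$. The block $W_\bu^\perp\otimes W_\bu^\perp$ is $G_\bu$-invariant, hence by transitivity of $O(W_\bu^\perp)$ on its unit sphere (Schur-type argument, as in \Cref{lem:bilinear_form_trace}) it equals $\lambda_\theta(\bu)\,\mathrm{Id}_{W_\bu^\perp}$, independently of the choice of $e^\perp$. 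The block $W_\bu\otimes W_\bu$ contributes terms $(\nabla_{e_a}\alpha)_z(e_b,\hat\bu_i)$ with $e_b\in W_\bu=\spann\hat\bu_i$, which vanish by alternation exactly as before. This leaves
\begin{align}
\lambda_\theta(\bu)\sum_{e\in\text{ONB}(W_\bu^\perp)}(\nabla_e\alpha)_z(e,\hat\bu_i).
\end{align}
The $W_\bu$-directions can be added back for free, again because $e\in W_\bu$ forces $\alpha$-alternation vanishing, and in fact $(\nabla_e\alpha)_z(e,\hat\bu_i)=0$ for $e\in\spann\hat\bu_i$ since $\nabla_e\alpha$ is still alternating. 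So the sum is the full trace $\sum_{a=1}^m(\nabla_{e_a}\alpha)_z(e_a,\hat\bu_i)=-(\delta\alpha)_z(\hat\bu_i)$, using the standard normal-coordinate formula $\delta\alpha=-\sum_a\iota_{e_a}\nabla_{e_a}\alpha$ on a Riemannian manifold.

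\textbf{Main obstacle.} I expect the main delicacy to be the dimension bookkeeping. When $\hat\bu_i$ has rank smaller than $k-1$, the space $W_\bu$ is smaller, and the vanishing arguments must still only use $W_\bu\subseteq\spann\hat\bu_i$, which holds in all cases. When $W_\bu^\perp=\{0\}$, which requires $k-1\ge m$, there is no unit vector $e^\perp$; but then $\alpha_z(\cdot,\hat\bu_i)$ restricted to vectors in $\spann\hat\bu_i$ vanishes identically, so both sides are zero. One should also confirm the sign convention for $\delta$ matches \eqref{eq:smooth_codifferential}; the formula $\delta=-\sum\iota_{e_a}\nabla_{e_a}$ is the $L^2$-adjoint of $d$ on a closed manifold, so the stated sign follows.
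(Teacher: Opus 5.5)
Your proposal is correct and follows essentially the same route as the paper. Both arguments use the subgroup of $O(T_zM)$ fixing $W_{\bu}$ pointwise to place the first moment in $W_{\bu}$, kill the mixed second-moment blocks by reflection, and identify the $W_{\bu}^\perp$ block with $\lambda_\theta(\bu)\,\mathrm{Id}$; Karcher centering then removes all $W_{\bu}$-directions before the normal-coordinate formula for $\delta$ is applied. The only cosmetic differences are that you use the explicit identity $W_{\bu}=\spann\hat{\bu}_i$ where the paper uses the dimension count $\dim W_{\bu}\leq k-1$, and that your degenerate case $W_{\bu}^\perp=\{0\}$ never occurs, since $k\leq m$ forces $\dim W_{\bu}^\perp\geq 1$.
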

\begin{proof}
    Let $G\subset O(T_zM)$ be the subgroup fixing $W_\bu$ pointwise. Since $K_\bu(w)$ is invariant under the action $K_\bu(w) \mapsto K_\bu(g w)$ for $g \in G$, the vector
    \begin{align}
        w_0=\int_{T_zM} w\,K_\bu(w)dw
    \end{align}
    is fixed by $G$ and hence lies in $W_\bu$. The $k$ vectors $w_0,u_1,\ldots,\hat u_i,\ldots,u_k$ all lie in $W_\bu$, which has dimension at most $k-1$, so they are linearly dependent. This proves~\eqref{eq:codiff_moment_trace_formulas} by 
    \begin{equation}
        \int_{T_zM} \alpha_z(w, \hat{\bu}_i)K_\bu(w)dw = \alpha_z \Big( \int_{T_zM}w K_\bu(w) dw, \hat{\bu}_i \Big) = \alpha_z(w_0,\hat{\bu}_i) = 0.
    \end{equation}
    
    For the second identity, let $B(v_1,v_2)=(\nabla_{v_1}\alpha)_z(v_2,\hat{\bu}_i)$, and let $e_1,\ldots,e_r$ and $e_{r+1},\ldots,e_m$ be orthonormal bases of $W_\bu$ and $W_\bu^\perp$. Then
    \begin{align}
        \int_{T_zM} B(w,w)K_\bu(w)dw = \sum_{p,q} M_{pq}B(e_p,e_q) \quad \text{where} \quad M_{pq}=\int_{T_zM}\langle w,e_p\rangle\langle w,e_q\rangle K_\bu(w)dw.
    \end{align}
    If $e_p,e_q\in W_\bu$, then $B(e_p,e_q)=0$ because $e_q,u_1,\ldots,\hat u_i,\ldots,u_k$ are linearly dependent. Mixed terms vanish by reflecting the $W_\bu^\perp$ component. On $W_\bu^\perp$, the same reflection and rotation argument gives $M_{pq}=\lambda_\theta(\bu)\delta_{pq}$. Therefore,
    \begin{align}
        \int_{T_zM} (\nabla_w\alpha)_z(w,\hat{\bu}_i)K_\bu(w)dw
        = \lambda_\theta(\bu)\sum_{e_p\in W_\bu^\perp}(\nabla_{e_p}\alpha)_z(e_p,\hat{\bu}_i).
    \end{align}
    Finally, the normal-coordinate formula for the codifferential~\cite[Eq. 3.3.47]{jost_riemannian_2013} gives
    \begin{align}
        (\delta\alpha)_z(\hat{\bu}_i)=-\sum_{p=1}^m(\nabla_{e_p}\alpha)_z(e_p,\hat{\bu}_i)
        =-\sum_{e_p\in W_\bu^\perp}(\nabla_{e_p}\alpha)_z(e_p,\hat{\bu}_i),
    \end{align}
    because the summands with $e_p\in W_\bu$ vanish as above.
\end{proof}

\subsubsection{Tangent-Space Reduction} \label{sssec:tangent_space_reduction}
We will now consider the codifferential analogue of the kernel and domain replacement in~\Cref{lem:kappa_domain_replacement} from the inner product convergence proof; however, normalization by the base kernel requires the cutoff layer to be treated separately.
We first treat base simplices lying a fixed multiple of $\epsilon^2$ inside the cutoff, for which we obtain an $O(\epsilon^2)$ tangent-space approximation.

\begin{lemma} \label{lem:codiff_I_support}
    Let
    \begin{align}
        \tL_\epsilon(\by)=\{w\in T_zM: \diam_\R(\exp_z(\epsilon w),\by)<\epsilon\}.
    \end{align}
    There is a constant $C_\partial=C_\partial(M,k)>0$, called the \emph{cutoff-separation constant}, such that, if $s(\bu)\leq 1-C_\partial\epsilon^2$ and $f:T_zM\to \R$ is bounded by $C_f> 0$ on $\cV_z(\bu)\cup \tL_\epsilon(\by)$, then
    \begin{align} \label{eq:codiff_support_integral}
        \left|\int_{T_zM} f(w)(\widetilde{K}_{\epsilon,\by}(w)-K_\bu(w))dw\right| 
        \ls C_f\epsilon^2 \quad \text{where} \quad \widetilde{K}_{\epsilon,\by}(w) = \frac{\kappa^k_{\theta,\epsilon}(\exp_z(\epsilon w),\by)}{\sqrt{\kappa^{k-1}_{\theta,\epsilon}(\by)}}\one_{\tL_\epsilon(\by)}(w).
    \end{align}
\end{lemma}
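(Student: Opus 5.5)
This follows the template of \Cref{lem:kappa_domain_replacement}: compare squared distances in the rescaled picture, discard a thin shell near the cutoff, and use Lipschitz bounds on $\theta$ elsewhere. The new ingredients are the base point at the Karcher mean $z$, and the denominator $\sqrt{\kappa^{k-1}_{\theta,\epsilon}(\by)}$. The hypothesis $s(\bu)\leq 1-C_\partial\epsilon^2$ is what keeps this denominator controlled.

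\textbf{Distance comparison.} Write $y_i=\exp_z(\epsilon u_i)$ and $x=\exp_z(\epsilon w)$. All relevant $w$ and $\bu$ lie in a fixed bounded set, since $\cV_z(\bu)$ and $\tL_\epsilon(\by)$ are contained in balls of radius $\ls 1$ around $0$, using $\|u_i\|\leq\diam(\bu)$ for centered tuples. The Taylor expansion \eqref{eq:exp_taylor} gives
\begin{align}
\epsilon^{-2}\|\exp_z(\epsilon a)-\exp_z(\epsilon b)\|^2=\|a-b\|^2+O(\epsilon^2),
\end{align}
uniformly for $a,b$ in this set, exactly as in \eqref{eq:y_u_comparison}. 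The cross term vanishes because tangential and normal parts are orthogonal. Hence $\epsilon^{-2}s(\by)=s(\bu)+O(\epsilon^2)$ and $\epsilon^{-2}\diam_\R(x,\by)^2=s(w,\bu)+O(\epsilon^2)$.

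\textbf{Choice of $C_\partial$.} Take $C_\partial$ larger than twice the implied constant in the comparison. Then $s(\bu)\leq 1-C_\partial\epsilon^2$ forces $\epsilon^{-2}s(\by)\leq 1-\tfrac{C_\partial}{2}\epsilon^2<1$. So both $\theta(\epsilon^{-2}s(\by))$ and $\theta(s(\bu))$ are at least $c_\theta>0$. Because $\theta$ is Lipschitz on $[0,1)$,
\begin{align}
\left|\kappa^{k-1}_{\theta,\epsilon}(\by)^{-1/2}-\theta(s(\bu))^{-1/2}\right|\ls\epsilon^2,
\end{align}
with constant depending on $c_\theta$. This is the only place where the hypothesis enters. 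Without it, the base simplex could lie in the cutoff layer, where the denominators may be discontinuous; that is why this case is separated.

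\textbf{Numerator.} Split $w$-space into a shell
\begin{align}
S_\epsilon=\{w:|s(w,\bu)-1|\leq C\epsilon^2\}
\end{align}
and its complement. This is a small technical point, since $s(w,\bu)=\max_i\|w-u_i\|^2$ mixed with $s(\bu)$. Because $s(\bu)<1-C_\partial\epsilon^2$, the diameter crossing $1$ can only come from the edges $\|w-u_i\|$. So $S_\epsilon$ is contained in the union over $i$ of the spherical shells $\{w:|\|w-u_i\|^2-1|\ls\epsilon^2\}$, each of which has $m$-dimensional volume $\ls\epsilon^2$.

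Off $S_\epsilon$, the comparison shows that $w\in\tL_\epsilon(\by)$ exactly when $w\in\cV_z(\bu)$ (up to the measure-zero boundary). There either both kernels vanish, or both arguments of $\theta$ lie in $[0,1)$ and differ by $O(\epsilon^2)$, so the Lipschitz bound gives $|\theta(\epsilon^{-2}\diam_\R(x,\by)^2)-\theta(s(w,\bu))|\ls\epsilon^2$. Combining with the denominator estimate, $|\widetilde K_{\epsilon,\by}-K_\bu|\ls\epsilon^2$ off $S_\epsilon$, on a set of bounded volume. On $S_\epsilon$, both kernels are bounded by $c_\theta^{-1/2}$.

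Integrating $|f|\,|\widetilde K_{\epsilon,\by}-K_\bu|$ over the two regions, with $|f|\leq C_f$ on the support $\cV_z(\bu)\cup\tL_\epsilon(\by)$, gives $\ls C_f\epsilon^2$, uniformly in $z$ and $\by$.
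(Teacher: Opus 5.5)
Your proof is correct and follows the paper's argument essentially step for step. You use the rescaled distance comparison and choose $C_\partial$ so that both $\epsilon^{-2}s(\by)$ and $s(\bu)$ stay inside $[0,1)$, which lets the Lipschitz bound and $c_\theta$ control the denominators. You then get an $O(\epsilon^2)$ pointwise error on the common support, plus a thin shell of volume $O(\epsilon^2)$ where the supports may disagree.

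Your reduction of the shell $\{|s(w,\bu)-1|\leq C\epsilon^2\}$ to a union of annuli around the $u_i$ is a useful detail the paper leaves implicit. Note, however, that this step also uses the hypothesis (with $C_\partial>C$), so your remark that the hypothesis enters only in the denominator estimate is slightly off.
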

\begin{proof}
    On a common uniformly bounded set containing $\cV_z(\bu)$ and $\tL_\epsilon(\by)$, ~\eqref{eq:y_u_comparison} gives
    \begin{align}
        \epsilon^{-2}\|\exp_z(\epsilon w)-y_i\|^2 = \|w-u_i\|^2+O(\epsilon^2).
    \end{align}
    The same comparison applied to pairs of base vertices gives $\epsilon^{-2}s(\by)=s(\bu)+O(\epsilon^2)$. Therefore
    \begin{align}
        \left|\epsilon^{-2}s(\exp_z(\epsilon w),\by)-s(w,\bu)\right|\ls \epsilon^2.
    \end{align}
    Since $s(\bu)\leq 1-C_\partial\epsilon^2$ and $C_\partial$ is chosen large enough, the denominator $\sqrt{\kappa^{k-1}_{\theta,\epsilon}(\by)}$ is compared inside $[0,1)$ to $\sqrt{\theta(s(\bu))}$, and the Lipschitz bound for $\theta$ together with the lower bound $c_\theta$ gives an $O(\epsilon^2)$ pointwise error on the common support. The possible disagreement of supports is contained in the shell
    \begin{align}
        \left\{w:\left|s(w,\bu)-1\right|\ls \epsilon^2\right\},
    \end{align}
    which has volume $O(\epsilon^2)$ in the uniformly bounded region under consideration. Since $\theta$ is bounded, the shell contributes the second $O(\epsilon^2)$ error.
\end{proof}

After increasing $C_\partial$ if necessary, the comparison $\epsilon^{-2}s(\by)=s(\bu)+O(\epsilon^2)$ shows that every remaining base simplex lies in the \emph{cutoff layer} $\{|s(\bu)-1|\leq C_\partial\epsilon^2\}$. On this layer, a cutoff-adapted tangent kernel gives the uniform bound $|I(\by)|\ls\epsilon^{k-1}$ needed in later estimates.

\begin{lemma} \label{lem:codiff_uniform_I_bound}
    Uniformly for $\alpha\in\pi^*\cB_M^k$ and $\by\in D^{k-1}_\epsilon(M)$, we have $|I(\by)|\ls\epsilon^{k-1}$.
\end{lemma}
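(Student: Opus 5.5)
We need to show $|I(\by)|\ls\epsilon^{k-1}$, where
\[
I(\by)=\epsilon^{-(m+2)}\int_{D_\epsilon(\by)} q\alpha(x,\by)\,\frac{\kappa^k_{\theta,\epsilon}(x,\by)}{\sqrt{\kappa^{k-1}_{\theta,\epsilon}(\by)}}\,\dvol(x).
\]
A naive bound gives $|q\alpha|\ls\epsilon^k$ and $\vol(D_\epsilon(\by))\ls\epsilon^m$, hence only $|I|\ls\epsilon^{k-2}$. One extra power of $\epsilon$ must therefore come from a cancellation. The plan is to extract it from the vanishing first moment in \Cref{lem:codiff_moment_trace}, after applying the Stokes reduction.

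\textbf{Step 1 (Stokes reduction and expansion).} First I check that \Cref{lem:codiff_stokes_reduction} is uniform over all $\by\in D^{k-1}_\epsilon(M)$, including the cutoff layer. Its proof uses only $\sum_i u_i=0$ and the volume bound, so it is. This gives $I(\by)=\epsilon^{-(m+2)}\sum_i(-1)^iI_i(\by)+O(\epsilon^k)$.

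Next I change variables $x=\exp_z(\epsilon w)$, with Jacobian $\epsilon^m(1+O(\epsilon^2))$. Applying \Cref{lem:codiff_expansion} at base point $z$ to the simplex $(z,x,\hat\by_i)$ gives
\[
q\alpha(z,x,\hat\by_i)=\epsilon^k\Big[\tfrac1{k!}\alpha_z(w,\hat\bu_i)+\tfrac{\epsilon}{(k+1)!}\,\big(\text{first-order terms}\big)\Big]+O(\epsilon^{k+2}).
\]
Hence
\[
\epsilon^{-(m+2)}I_i=\epsilon^{k-2}\int\alpha_z(w,\hat\bu_i)\,\widetilde K_{\epsilon,\by}(w)\,dw\,/k!+O(\epsilon^{k-1}).
\]
Here $\widetilde K_{\epsilon,\by}$ is bounded, because $\kappa^k_{\theta,\epsilon}(x,\by)\le\theta(s(\by)/\epsilon^2)=\kappa^{k-1}_{\theta,\epsilon}(\by)$ by monotonicity of $\theta$. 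Consequently $\widetilde K_{\epsilon,\by}\le\sqrt{\kappa^{k-1}}\le1$, and its support lies in a bounded set. The remaining task is to show that the leading integral $J_i=\int\alpha_z(w,\hat\bu_i)\widetilde K_{\epsilon,\by}(w)\,dw$ is $O(\epsilon)$.

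\textbf{Step 2 (interior base simplices).} If $s(\bu)\le1-C_\partial\epsilon^2$, I apply \Cref{lem:codiff_I_support} with $f(w)=\alpha_z(w,\hat\bu_i)$, which is bounded on a bounded region. This replaces $\widetilde K_{\epsilon,\by}$ by $K_\bu$ at cost $O(\epsilon^2)$. The first identity of \Cref{lem:codiff_moment_trace} then shows that the main term vanishes, so $J_i=O(\epsilon^2)$.

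\textbf{Step 3 (cutoff layer, the main obstacle).} When $|s(\bu)-1|\le C_\partial\epsilon^2$, the normalization $\sqrt{\kappa^{k-1}}$ can no longer be compared with $\sqrt{\theta(s(\bu))}$. To handle this, I introduce the cutoff-adapted tangent kernel
\[
K'(w)=\frac{\theta\big(\epsilon^{-2}s(\exp_z(\epsilon w),\by)\big)}{\sqrt{\theta(\epsilon^{-2}s(\by))}}\;\one\big\{s(w,\bu)<1\big\},
\]
or, more simply, I compare $\widetilde K$ with
\[
\widehat K(w)=\frac{\theta(s(w,\bu'))}{\sqrt{\theta(s(\by)/\epsilon^2)}},
\]
where the tangent tuple is replaced by exact Euclidean data. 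The key properties are the following.

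(a) The comparison kernel is invariant under the subgroup $G$ fixing $W_\bu$ pointwise, because it depends on $w$ only through the distances $\|w-u_j\|$. The denominator is a constant, bounded below by $\sqrt{c_\theta}$, which is where strictness of the kernel is used.

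(b) Its difference from $\widetilde K$ is $O(\epsilon^2)$ pointwise, by the Lipschitz property of $\theta$ and \eqref{eq:y_u_comparison}. The only exception is a shell of volume $O(\epsilon^2)$ where the supports disagree.

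Then the argument of \Cref{lem:codiff_moment_trace} applies verbatim: the first moment lies in $W_\bu$, so the integral against $\alpha_z(\cdot,\hat\bu_i)$ vanishes. Thus $J_i=O(\epsilon^2)$ here as well. All constants depend only on $\|\talpha\|_{\cC^2}\le1$, $\theta$ and $M$, which gives the claimed uniformity.

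If the support shell turns out to be delicate, a cruder estimate suffices, since only $J_i=O(\epsilon)$ is needed. Bounding $\widetilde K-\widehat K$ by $O(\epsilon)$ in $L^1$ via the Lipschitz property alone already closes the bound $|I(\by)|\ls\epsilon^{k-1}$.
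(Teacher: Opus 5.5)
Your outline is the paper's: the Stokes reduction is valid for all base simplices, you reduce to showing $J_i=O(\epsilon)$, and you kill $J_i$ by $L^1$-comparing $\widetilde K_{\epsilon,\by}$ with a kernel invariant under the group $G$ fixing $W_\bu$ pointwise. The gap is in Step 3, the only delicate point. Neither kernel you actually write down has both properties (a) and (b).

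\emph{$K'$ fails (a).} Its numerator uses the ambient distances $\|\exp_z(\epsilon w)-y_j\|$. These are not $G$-invariant, because their second-fundamental-form corrections at relative order $\epsilon^2$ break the symmetry.

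\emph{$K'$ fails (b), and so does $\widehat K$ read with $\bu'=\bu$.} The factor $\one\{s(w,\bu)<1\}$, like $\theta(s(w,\bu))$, vanishes \emph{identically} whenever $s(\bu)\geq1$. This genuinely happens inside the cutoff layer even though $b_\epsilon(\by)\coloneqq\epsilon^{-2}s(\by)<1$. For example, when $k=2$ the Karcher mean is the geodesic midpoint and $s(\bu)=\epsilon^{-2}d_M(y_1,y_2)^2$. This exceeds the chordal quantity $b_\epsilon(\by)=\epsilon^{-2}\|y_1-y_2\|^2$ whenever the geodesic is not a straight segment. In that case $\widetilde K_{\epsilon,\by}\geq c_\theta$ on a support of volume of order one, while the comparison kernel is zero. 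So the comparison fails on a set of order-one volume, not a thin shell, and no cruder $O(\epsilon)$ estimate repairs it.

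\emph{The fallback is also not right.} You suggest using ``the Lipschitz property alone.'' But $\theta$ drops by at least $c_\theta$ at $1$, so any support disagreement must be controlled by a shell-volume bound, never by Lipschitz continuity.

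\emph{The fix.} Your phrase ``exact Euclidean data'' seems to be reaching for this, but it has to be pinned down. Keep the exact rescaled base diameter as a $w$-independent constant inside the maximum, and replace only the cone distances by tangent ones:
\[
K^\partial_{\epsilon,\by}(w)=\frac{\theta\left(\max\left\{b_\epsilon(\by),\max_{1\leq j\leq k}\|w-u_j\|^2\right\}\right)}{\sqrt{\theta(b_\epsilon(\by))}}.
\]
This kernel depends on $w$ only through the $\|w-u_j\|$, so it is $G$-invariant. The first-moment argument of \Cref{lem:codiff_moment_trace} then gives $\int\alpha_z(w,\hat\bu_i)K^\partial_{\epsilon,\by}(w)\,dw=0$.

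It also shares the base cutoff $b_\epsilon(\by)$ and the denominator with $\widetilde K_{\epsilon,\by}$. Hence \eqref{eq:y_u_comparison}, together with the Lipschitz-plus-shell argument, yields $\|\widetilde K_{\epsilon,\by}-K^\partial_{\epsilon,\by}\|_{L^1}\ls\epsilon^2$ for \emph{every} $\by\in D^{k-1}_\epsilon(M)$, with no cutoff-separation condition.

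This is exactly the paper's proof. With this kernel your Step 2 becomes unnecessary, since one comparison handles all base simplices. Your observation that $\widetilde K_{\epsilon,\by}\leq1$ by monotonicity of $\theta$ is a valid substitute for the paper's use of $c_\theta$ in bounding the kernel.
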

\begin{proof}
    Set $b_\epsilon(\by)=\epsilon^{-2}s(\by)<1$ and define the cutoff-adapted tangent kernel
    \begin{align}
        K^\partial_{\epsilon,\by}(w)
        =\frac{\theta\left(\max\left\{b_\epsilon(\by),\max_{1\leq j\leq k}\|w-u_j\|^2\right\}\right)}{\sqrt{\theta(b_\epsilon(\by))}}.
    \end{align}
    Its nonzero set is $\{w\in T_zM:\max_j\|w-u_j\|^2<1\}$. Both $\widetilde K_{\epsilon,\by}$ and $K^\partial_{\epsilon,\by}$ use the same quantity $b_\epsilon(\by)$ in their cutoff and have the same denominator $\sqrt{\theta(b_\epsilon(\by))}\geq\sqrt{c_\theta}$. They differ only in that the ambient distances $\epsilon^{-2}\|\exp_z(\epsilon w)-y_j\|^2$ are replaced by the tangent distances $\|w-u_j\|^2$. By~\eqref{eq:y_u_comparison}, these distances differ by $O(\epsilon^2)$, so the same common-support and thin-shell argument as in~\Cref{lem:codiff_I_support} gives the $L^1$ estimate below without any cutoff-separation condition. Consequently,
    \begin{align} \label{eq:codiff_boundary_kernel_comparison}
        \|\widetilde K_{\epsilon,\by}-K^\partial_{\epsilon,\by}\|_{L^1(T_zM)}\ls\epsilon^2.
    \end{align}

    Since $K^\partial_{\epsilon,\by}$ has the same invariance under orthogonal maps fixing $W_\bu$ pointwise as $K_\bu$, the proof of the first identity in~\Cref{lem:codiff_moment_trace} applies verbatim and gives $\int_{T_zM}\alpha_z(w,\hat\bu_i)K^\partial_{\epsilon,\by}(w)dw=0$. Combining this cancellation with \eqref{eq:codiff_boundary_kernel_comparison}, ~\Cref{lem:codiff_expansion}, and $J(z,\epsilon w)=1+O(\epsilon^2)$, we obtain $|I_i(\by)|\ls\epsilon^{m+k+1}$; the $O(\epsilon^{k+1})$ simplex-expansion remainder is the leading error, while the kernel-comparison and Jacobian errors are $O(\epsilon^{m+k+2})$. The Stokes reduction in~\Cref{lem:codiff_stokes_reduction} therefore gives $|I(\by)|\ls\epsilon^{-(m+2)}\sum_i|I_i(\by)|+\epsilon^k\ls\epsilon^{k-1}$.
\end{proof}

We now compute $I(\by)$ on the region $s(\bu)\leq1-C_\partial\epsilon^2$. Changing variables $x=\exp_z(\epsilon w)$ gives
\begin{align}
    I_i(\by)=\epsilon^m\int_{\tL_\epsilon(\by)}q\alpha(z,x,\hat{\by}_i)\frac{\kappa^k_{\theta,\epsilon}(x,\by)}{\sqrt{\kappa^{k-1}_{\theta,\epsilon}(\by)}}J(z,\epsilon w)dw.
\end{align}
Using~\Cref{lem:codiff_I_support}, the Jacobian estimate $J(z,\epsilon w)=1+O(\epsilon^2)$, and $|q\alpha(z,x,\hat{\by}_i)|\ls \epsilon^k$, we obtain
\begin{align}
    I_i(\by)=\epsilon^m\int_{T_zM}q\alpha(z,x,\hat{\by}_i)K_\bu(w)dw+O(\epsilon^{m+k+2}).
\end{align}
By~\Cref{lem:codiff_expansion},
\begin{align} \label{eq:codiff_stokes_simplex_expansion}
    q\alpha(z,x,\hat{\by}_i) = \frac{\epsilon^k}{k!}\alpha_z(w, \hat{\bu}_i) + \frac{\epsilon^{k+1}}{(k+1)!} \left( (\nabla_w \alpha)_z(w, \hat{\bu}_i) +  \sum_{j \neq i} (\nabla_{u_j} \alpha)_z(w, \hat{\bu}_i) \right) + O(\epsilon^{k+2}).
\end{align}
The first term vanishes by~\Cref{lem:codiff_moment_trace}. The third term also vanishes by applying the first identity in~\Cref{lem:codiff_moment_trace} to the $k$-form $(\nabla_{u_j}\alpha)_z$. The second term gives
\begin{align} \label{eq:I_i_approximation}
    I_i(\by) = -\frac{\epsilon^{m+k+1}}{(k+1)!}\lambda_\theta(\bu)(\delta\alpha)_z(\hat{\bu}_i)+O(\epsilon^{m+k+2}).
\end{align}
Combining~\eqref{eq:I_i_approximation} with~\eqref{eq:codiff_I_stokes_decomposition}, we get
\begin{align} \label{eq:I_by_expansion}
    I(\by) = \frac{\epsilon^{k-1}}{(k+1)!}\lambda_\theta(\bu)\sum_{i=1}^k(-1)^{i+1}(\delta\alpha)_z(\hat{\bu}_i)+O(\epsilon^k) = \frac{\epsilon^{k-1}}{(k+1)!}\lambda_\theta(\bu)(\delta\alpha)_z(\Phi(\bu))+O(\epsilon^k),
\end{align}
where $\Phi(\bu)=(u_2-u_1)\wedge\cdots\wedge(u_k-u_1)$, since by direct computation, we have
\begin{align}
    \sum_{i=1}^k(-1)^{i+1}(\delta\alpha)_z(\hat{\bu}_i)=(\delta\alpha)_z(\Phi(\bu)) = (\delta\alpha)_z(u_2-u_1, \ldots, u_k-u_1).
\end{align}
The expansion above holds uniformly for base tuples $\by\in D^{k-1}_\epsilon(M)$ whose centered Karcher coordinates satisfy $s(\bu)\leq 1-C_\partial\epsilon^2$, where $C_\partial$ is the cutoff-separation constant from~\Cref{lem:codiff_I_support}. \medskip

\subsubsection{Integration in Karcher Coordinates}
It remains to integrate the leading term in~\eqref{eq:I_by_expansion}.

\begin{lemma} \label{lem:karcher_coordinate_bias}
    Uniformly for $\alpha\in\pi^*\cB_M^k$,
    \begin{align}
        \int_{D^{k-1}_\epsilon(M)} I(\by)^2\dvol^k(\by)
        =\frac{k^{m/2}\epsilon^{(k-1)(m+2)}}{((k+1)!)^2}\int_M\int_{H_z}(\delta\alpha)_z(\Phi(\bu))^2\lambda_\theta(\bu)^2d\bu\dvol(z)+O(\epsilon^{(k-1)(m+2)+1}).
    \end{align}
\end{lemma}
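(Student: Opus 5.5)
The approach is to change variables from the base tuple $\by=(y_1,\ldots,y_k)\in D^{k-1}_\epsilon(M)$ to Karcher coordinates $(z,\bu)$, where $z=z(\by)$ is given by \Cref{prop:karcher} and $u_i=\epsilon^{-1}\log_z(y_i)\in T_zM$ with $\sum_i u_i=0$. We then insert the expansion \eqref{eq:I_by_expansion} on the interior region and the uniform bound of \Cref{lem:codiff_uniform_I_bound} on the cutoff layer.

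\textbf{Step 1: the change of variables.} Consider the map $\Psi_\epsilon:(z,\bu)\mapsto(\exp_z(\epsilon u_1),\ldots,\exp_z(\epsilon u_k))$, defined on the bundle $H\to M$ whose fibre over $z$ is $H_z$. By the uniqueness and smoothness in \Cref{prop:karcher}, this map is a diffeomorphism onto a neighbourhood of the small diagonal containing $D^{k-1}_\epsilon(M)$, with inverse $\by\mapsto(z(\by),\epsilon^{-1}\log_{z(\by)}\by)$. We then compute its Jacobian. In the flat model $M=\R^m$, the map $(z,v_1,\ldots,v_k)\mapsto(z+v_i)_i$ restricted to $\sum v_i=0$ is linear, and its determinant is $k^{m/2}$ with respect to the induced Euclidean measure on $H$. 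Indeed, the orthogonal decomposition of $(\R^m)^k$ into diagonal and centered parts gives a diagonal factor $(\sqrt k)^m$. Rescaling $v=\epsilon u$ contributes $\epsilon^{(k-1)m}$, since $\dim H_z=(k-1)m$. On a curved $M$, the Jacobian of $\exp$ in normal coordinates (\eqref{eq:jacobian_remainder}), together with the smooth dependence of $z$ on $\by$, should yield
\begin{align}
\dvol^k(\by)=k^{m/2}\epsilon^{(k-1)m}\big(1+O(\epsilon)\big)\,d\bu\,\dvol(z).
\end{align}
Only $O(\epsilon)$ is needed here, so a crude first-order comparison suffices.

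\textbf{Step 2: the interior region.} On $\{s(\bu)\le 1-C_\partial\epsilon^2\}$, square \eqref{eq:I_by_expansion}:
\begin{align}
I(\by)^2=\frac{\epsilon^{2(k-1)}}{((k+1)!)^2}\lambda_\theta(\bu)^2(\delta\alpha)_z(\Phi(\bu))^2+O(\epsilon^{2k-1}),
\end{align}
where the cross term uses $|\lambda_\theta|,|\Phi|\ls1$ and $\|\delta\alpha\|_\infty\ls1$, uniformly over $\pi^*\cB_M^k$. Integrating against the measure from Step 1 over a fibre domain of bounded volume gives the main term, with exponent $\epsilon^{2(k-1)+(k-1)m}=\epsilon^{(k-1)(m+2)}$, plus an error of order $\epsilon^{(k-1)(m+2)+1}$.

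\textbf{Step 3: the cutoff layer and region mismatch.} The leading integrand is supported on $\{s(\bu)<1\}$, whereas $D^{k-1}_\epsilon(M)$ corresponds to $\epsilon^{-2}s(\by)<1$, i.e.\ to $s(\bu)=1+O(\epsilon^2)$. Both discrepancies therefore lie in the layer $\{|s(\bu)-1|\le C\epsilon^2\}$, whose fibre volume is $O(\epsilon^2)$. On this layer, \Cref{lem:codiff_uniform_I_bound} gives $I^2\ls\epsilon^{2k-2}$, and the leading integrand is bounded by the same quantity. The layer thus contributes $O(\epsilon^{(k-1)(m+2)+2})$.

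\textbf{Main obstacle.} The delicate step is Step 1. We must verify that $\Psi_\epsilon$ is a global diffeomorphism onto a set covering $D^{k-1}_\epsilon(M)$, and we must pin down the exact constant $k^{m/2}$ together with an $O(\epsilon)$ Jacobian error, uniformly in $z$.

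To handle this, I would linearize $\by\mapsto z(\by)$ via the implicit function theorem applied to \eqref{eq:karcher_mean_tangent_zero}. In normal coordinates centered at $z$, this shows $dz=\frac1k\sum_i dy_i+O(\epsilon)$. Combined with $J=1+O(\epsilon^2)$, this reduces the computation to the flat determinant.
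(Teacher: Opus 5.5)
Your proposal is correct and follows the paper's proof essentially step for step. Both arguments use the Karcher-coordinate map $\Psi_\epsilon$ with Jacobian $k^{m/2}\epsilon^{(k-1)m}$, insert the expansion \eqref{eq:I_by_expansion} on $\{s(\bu)\le 1-C_\partial\epsilon^2\}$, and control the $O(\epsilon^2)$-volume cutoff layer with \Cref{lem:codiff_uniform_I_bound}. The one difference is in the Jacobian: the paper linearizes $\Psi_\epsilon$ directly via Jacobi fields and gets relative error $O(\epsilon^2)$, whereas you linearize the inverse map through the implicit function theorem on the Karcher equation and settle for $O(\epsilon)$, which already suffices for the stated error.
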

\begin{proof}
    Define $\Psi_\epsilon(z,\bu)=(\exp_z(\epsilon u_1),\ldots,\exp_z(\epsilon u_k))$ on the region $\bu\in H_z$ with $s(\bu)<2$. By~\eqref{eq:y_u_comparison}, every $\by\in D^{k-1}_\epsilon(M)$ has centered Karcher coordinates in this region. Conversely, if $\Psi_\epsilon(z,\bu)=\by\in D^{k-1}_\epsilon(M)$, then
    \begin{align}
        \nabla E_\by(z)=-\sum_{i=1}^k\log_z(y_i)=-\epsilon\sum_{i=1}^ku_i=0.
    \end{align}
    Centering gives $\|u_i\|\leq\diam_{T_zM}(\bu)<\sqrt{2}$, so $z\in\ball{M}{y_1}{S(2\epsilon)}$. The uniqueness in~\Cref{prop:karcher} therefore gives $z=z(\by)$ and $u_i=\epsilon^{-1}\log_{z(\by)}(y_i)$. Thus $\Psi_\epsilon$ restricts to a smooth one-to-one coordinate map onto $D^{k-1}_\epsilon(M)$ with smooth inverse given by these formulas.

    Using the connection to split tangent vectors to the centered bundle into horizontal and vertical parts $(\xi,\eta)$, the Jacobi equation gives, after parallel transport back to $T_zM$,
    \begin{align}
        D\Psi_\epsilon(\xi,\eta)_i
        =\xi+\epsilon\eta_i+O\left(\epsilon^2\|\xi\|+\epsilon^3\|\eta_i\|\right),
    \end{align}
    uniformly on $s(\bu)<2$. Since $\sum_i\eta_i=0$, the diagonal and centered parts of the leading map are orthogonal. The diagonal part has volume factor $k^{m/2}$, the centered part has volume factor $\epsilon^{(k-1)m}$, and the displayed error changes their product by a relative factor $1+O(\epsilon^2)$. Hence
    \begin{align}
        \dvol^k(\by)=k^{m/2}\epsilon^{(k-1)m}(1+O(\epsilon^2))d\bu\dvol(z).
    \end{align}
    The comparison $\epsilon^{-2}s(\by)=s(\bu)+O(\epsilon^2)$ and the choice of $C_\partial$ show that the image of $D^{k-1}_\epsilon(M)$ is contained in $\{s(\bu)<1+C_\partial\epsilon^2\}$ in these coordinates. On the region $s(\bu)\leq 1-C_\partial\epsilon^2$, with $C_\partial$ as in~\Cref{lem:codiff_I_support}, the expansion~\eqref{eq:I_by_expansion} gives
    \begin{align}
        I(\by)^2=\frac{\epsilon^{2k-2}}{((k+1)!)^2}\lambda_\theta(\bu)^2(\delta\alpha)_z(\Phi(\bu))^2+O(\epsilon^{2k-1}),
    \end{align}
    uniformly for $\alpha\in\pi^*\cB_M^k$. The cutoff layer $\{|s(\bu)-1|\leq C_\partial\epsilon^2\}$ has volume $O(\epsilon^2)$ inside the uniformly bounded coordinate region. By~\Cref{lem:codiff_uniform_I_bound}, $|I(\by)|\ls\epsilon^{k-1}$ on this layer, so its total contribution is $O(\epsilon^{(k-1)(m+2)+2})$. The Jacobian error contributes the same or a smaller order, proving the lemma.
\end{proof}

\begin{proof}[Proof of \Cref{prop:off_diagonal_bias}]

By~\Cref{lem:bilinear_form_trace}, applied with $H=H_z$ and the $O(T_zM)$-equivariant map $\Phi:H_z\to \Lambda^{k-1}T_zM$, there exists a constant $\sigma_\lambda$ such that
\begin{align}
    \int_{H_z}(\delta\alpha)_z(\Phi(\bu))^2\lambda_\theta(\bu)^2d\bu=\sigma_\lambda\langle (\delta\alpha)_z,(\delta\alpha)_z\rangle_z.
\end{align}
Moreover, $\sigma_\lambda>0$ because $\lambda_\theta(\bu)>0$ on a positive-measure subset of $\{s(\bu)<1\}$, and the trace constant is obtained by applying the preceding identity to a nonzero $(k-1)$-form.
Integrating the above identity with respect to the Riemannian volume density gives
\begin{align}
    \int_M\int_{H_z}(\delta\alpha)_z(\Phi(\bu))^2\lambda_\theta(\bu)^2d\bu\dvol(z)=\sigma_\lambda\langle \delta\alpha,\delta\alpha\rangle_M.
\end{align}
Finally, \Cref{lem:karcher_coordinate_bias} gives
\begin{align}
    \int_{D^{k-1}_\epsilon(M)}I(\by)^2\dvol^k(\by)
    =\frac{k^{m/2}\epsilon^{(k-1)(m+2)}}{((k+1)!)^2}\sigma_\lambda\langle\delta\alpha,\delta\alpha\rangle_M+O(\epsilon^{(k-1)(m+2)+1}).
\end{align}
Substituting this into~\eqref{eq:EF_n} proves~\Cref{prop:off_diagonal_bias}:
\begin{align}
    \E[F_n] = \frac{n-k-1}{n-k}\rho_0^{k+2}k^{(m+4)/2}\sigma_\lambda\langle\delta\alpha,\delta\alpha\rangle_M+O(\epsilon).
\end{align}
\end{proof}

\subsection{Diagonal Term}
Now, we will consider the diagonal term~\eqref{eq:D_n}.

\begin{lemma} \label{lem:codiff_diagonal_exp_bound}
    On the event $\cE_\epsilon$ from~\Cref{lem:concentration_D_k_support}, we have, for $n\geq 2k$,
    \begin{align}
        \sup_{\alpha \in \pi^*\cB_M^k} D_n^\alpha \ls \frac{1}{n\epsilon^{m+2}}.
    \end{align}
\end{lemma}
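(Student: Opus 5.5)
We bound $D_n^\alpha$ pointwise in the summands and then identify the remaining sum with the $U$-statistic $\U_n[\one_{D^k_\epsilon(M)}]$, which is controlled on $\cE_\epsilon$ by \Cref{lem:concentration_D_k_support}.

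\textbf{Pointwise bound on the summands.} For $(x_p,\bx_I)\in D^k_\epsilon(M)$ and $\alpha\in\pi^*\cB_M^k$, \Cref{lem:codiff_expansion} gives $|q\alpha(x_p,\bx_I)|\ls\epsilon^k$. The intrinsic displacement vectors have norm $\ls\epsilon$ by \eqref{eq:euclidean_intrinsic_comparison}, and $\|\talpha\|_{\cC^2(M)}\le1$. For the kernels, $\kappa^k_{\theta,\epsilon}\le\theta(0)=1$ and $\kappa^{k-1}_{\theta,\epsilon}(\bx_I)\ge c_\theta$ whenever it is nonzero. Hence
\begin{align}
|A^\alpha_\epsilon(x_p,\bx_I)|\ls\epsilon^{k-m-2}\one_{D^k_\epsilon(M)}(x_p,\bx_I),
\end{align}
uniformly over the ball. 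Squaring and multiplying by $\kappa^{k-1}_{\theta,\epsilon}(\bx_I)\le1$ bounds each summand of \eqref{eq:D_n} by $\epsilon^{2k-2m-4}\one_{D^k_\epsilon(M)}(x_p,\bx_I)$.

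\textbf{Combinatorial reindexing.} Each unordered $(k+1)$-subset $J\subset[n]$ arises from exactly $k+1$ pairs $(I,p)$ with $p\notin I$ and $I\cup\{p\}=J$. Since the indicator is symmetric,
\begin{align}
\sum_{I,\,p\notin I}\one_{D^k_\epsilon(M)}(x_p,\bx_I)=(k+1)\binom{n}{k+1}\U_n[\one_{D^k_\epsilon(M)}].
\end{align}
On $\cE_\epsilon$ this is $\ls\binom{n}{k+1}\epsilon^{km}$.

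\textbf{Collecting the factors.} Substituting into \eqref{eq:D_n},
\begin{align}
\sup_\alpha D_n^\alpha\ls\binom{n}{k}^{-1}\binom{n}{k+1}\frac{\epsilon^{-(k-1)(m+2)}}{(n-k)^2}\,\epsilon^{2k-2m-4}\,\epsilon^{km}.
\end{align}
The binomial ratio is $\tfrac{n-k}{k+1}$, so the $n$-dependence is $\tfrac{1}{n-k}\ls\tfrac1n$ for $n\ge2k$. For the $\epsilon$-exponent,
\begin{align}
-(k-1)(m+2)+2k-2m-4+km=-(m+2).
\end{align}
This gives $D_n^\alpha\ls\frac{1}{n\epsilon^{m+2}}$, uniformly over $\pi^*\cB_M^k$, with the constant depending only on fixed data.

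\textbf{Main point to check.} The only delicate step is the pointwise bound on $A^\alpha_\epsilon$. It requires the lower bound $c_\theta$ for the denominator $\kappa^{k-1}_{\theta,\epsilon}(\bx_I)$, and it must hold uniformly for simplices near the cutoff. This works because $\kappa^{k-1}_{\theta,\epsilon}(\bx_I)\ge c_\theta$ whenever $\diam(\bx_I)<\epsilon$, which is implied by $(x_p,\bx_I)\in D^k_\epsilon(M)$. For $k=1$ the convention $\kappa^0=1$ makes the denominator trivial, and the same computation applies.
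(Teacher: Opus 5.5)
Your proposal is correct and follows the paper's proof essentially step for step. Both arguments use the same three ingredients: the pointwise bound $|A^\alpha_\epsilon|\ls\epsilon^{k-m-2}\one_{D^k_\epsilon(M)}$ obtained from the lower bound $c_\theta$ on the base kernel, the $(k+1)$-fold recounting that turns the sum into $\frac{1}{n-k}\U_n[\one_{D^k_\epsilon(M)}]$, and the same exponent bookkeeping on the event $\cE_\epsilon$.
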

\begin{proof}
    By the elementary bound $|q\alpha(x,\by)|\ls \epsilon^k$ for $\alpha\in\pi^*\cB_M^k$, the support of $\theta$, and the lower bound $\kappa_{\theta,\epsilon}^{k-1}(\by)\geq c_\theta$ whenever $\kappa_{\theta,\epsilon}^k(x,\by)\neq 0$, we have
    \begin{align}
        |A^\alpha_\epsilon(x,\by)| = \big\lvert \epsilon^{-(m+2)} q\alpha(x,\by) \frac{\kappa_{\theta, \epsilon}^k(x,\by)}{\kappa_{\theta,\epsilon}^{k-1}(\by)} \big\rvert \ls \epsilon^{k-m-2}\one_{D^k_\epsilon(M)}(x,\by).
    \end{align}
    Therefore,
    \begin{align}
        D_n^\alpha
        &\ls \binom{n}{k}^{-1}\frac{\epsilon^{-(k-1)(m+2)}}{(n-k)^2}
        \sum_{\substack{I=(i_1<\cdots<i_k)\\ p\in[n]-I}}
        \epsilon^{2k-2m-4}\one_{D^k_\epsilon(M)}(x_p,\bx_I).
    \end{align}
    Each unordered $(k+1)$-tuple is counted $k+1$ times in the last sum, and hence
    \begin{align}
        \binom{n}{k}^{-1}\frac{1}{(n-k)^2}
        \sum_{\substack{I=(i_1<\cdots<i_k)\\ p\in[n]-I}}
        \one_{D^k_\epsilon(M)}(x_p,\bx_I)
        =\frac{1}{n-k}\U_n[\one_{D^k_\epsilon(M)}].
    \end{align}
    On the event $\cE_\epsilon$, this gives
    \begin{align}
        D_n^\alpha\ls \frac{1}{n-k}\epsilon^{-(k-1)(m+2)}\epsilon^{2k-2m-4}\epsilon^{km}
        \ls \frac{1}{n\epsilon^{m+2}},
    \end{align}
    uniformly over $\alpha\in\pi^*\cB_M^k$.
\end{proof}

\subsection{Variance Term}
To control the variance term $F_n-\E[F_n]$, we largely follow the structure of~\Cref{sec:uc_proof}. We first establish $F_n$ as an order $k+2$ $U$-statistic and concentration bounds for fixed forms; then we use a covering argument to transfer this to a uniform estimate. For $\alpha=\pi^*\talpha\in\Omega^k(U_\epsilon)$, define $h^F_\epsilon(\alpha):M^{k+2}\to\R$ by
\begin{align} \label{eq:h_F}
    h^F_\epsilon(\alpha)(y_1,\ldots,y_{k+2})
    =\frac{k^2((k+1)!)^2\epsilon^{-(k-1)(m+2)}}{(k+1)(k+2)}
    \sum_{i\neq j}A^\alpha_\epsilon(y_i,\hat\by_{i,j})A^\alpha_\epsilon(y_j,\hat\by_{i,j})\kappa^{k-1}_{\theta,\epsilon}(\hat\by_{i,j}),
\end{align}
where $\hat\by_{i,j}=(y_1,\ldots,\hat y_i,\ldots,\hat y_j,\ldots,y_{k+2})$. Then,
\begin{align} \label{eq:F_as_U_stat}
    F_n^\alpha=\frac{n-k-1}{n-k}\U_n[h^F_\epsilon(\alpha)].
\end{align}

\begin{lemma} \label{lem:codiff_pointwise_variance}
    For $\alpha\in\pi^*\cB_M^k$, we have
    \begin{align}
        \PP[|F_n^\alpha-\E[F_n^\alpha]|>t]
        \leq 4\exp\left(\frac{-cnt^2}{A\epsilon^{-2}+B\epsilon^{-m(k+1)-2}t}\right),
    \end{align}
    where $A,B>0$ depend only on $M,N,k$, and $\theta$.
\end{lemma}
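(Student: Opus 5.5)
Since $F_n^\alpha=\frac{n-k-1}{n-k}\U_n[h^F_\epsilon(\alpha)]$ by~\eqref{eq:F_as_U_stat} and $\frac{n-k-1}{n-k}\leq 1$, it suffices to apply the Bernstein inequality for $U$-statistics (\Cref{thm:bernstein}), now of order $k+2$, to the symmetric kernel $h^F_\epsilon(\alpha)$. A constant $c$ depending only on $k$ is allowed. The deviation event for $F_n^\alpha$ is contained in the deviation event for $\U_n[h^F_\epsilon(\alpha)]$ at level $t$, so we only need two estimates: $\|h^F_\epsilon(\alpha)\|_\infty\ls \epsilon^{-m(k+1)-2}$ and $\Xi\ls\epsilon^{-2}$, where $\Xi=\Var_x[g(x)]$ and $g(x)=\E_{\bz\sim\mu^{k+1}}[h^F_\epsilon(\alpha)(x,\bz)]$.

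\textbf{Sup-norm bound.} From the proof of \Cref{lem:codiff_diagonal_exp_bound} we have $|A^\alpha_\epsilon(x,\by)|\ls\epsilon^{k-m-2}\one_{D^k_\epsilon(M)}(x,\by)$, and $\kappa^{k-1}_{\theta,\epsilon}\leq 1$. Each summand of~\eqref{eq:h_F} is therefore bounded by
\[
\epsilon^{-(k-1)(m+2)}\epsilon^{2k-2m-4}=\epsilon^{-(k+1)m-2}.
\]
This gives $\|h^F_\epsilon\|_\infty\ls\epsilon^{-m(k+1)-2}$.

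\textbf{Projection variance.} We bound $\Xi\leq\E[g^2]\leq\sup_x|g(x)|^2$ and show $|g(x)|\ls\epsilon^{-1}$. Fix the first coordinate $x$ and split the summands of~\eqref{eq:h_F} according to the role $x$ plays.

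\emph{Case 1: $x$ is a cone point.} Integrate the other cone point first. This produces $\epsilon^{m+2}I(\hat\by_{i,j})\sqrt{\kappa^{k-1}}\rho_0$, with $I$ as in~\eqref{eq:original_I_by}. By \Cref{lem:codiff_uniform_I_bound}, $|I|\ls\epsilon^{k-1}$, so this integral is $\ls\epsilon^{m+k+1}$. Multiplying by $|A(x,\by)|\ls\epsilon^{k-m-2}$ and integrating the base over $\{\by:(x,\by)\in D^k_\epsilon\}$, which has volume $\ls\epsilon^{km}$, gives
\[
\epsilon^{-(k-1)(m+2)}\,\epsilon^{k-m-2}\,\epsilon^{m+k+1}\,\epsilon^{km}=\epsilon^{-1}.
\]

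\emph{Case 2: $x$ is a base vertex.} Now both cone integrals factor, and each equals $\epsilon^{m+2}I(\by)$ up to kernel normalization. The contribution is bounded by
\[
\epsilon^{-(k-1)(m+2)}\int |I(x,\by')|^2\,d\by'\ls \epsilon^{-(k-1)(m+2)}\epsilon^{2k-2}\epsilon^{(k-1)m}=\epsilon^{0}.
\]

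Hence $|g(x)|\ls\epsilon^{-1}$ uniformly in $x$ and over $\pi^*\cB_M^k$, which gives $\Xi\ls\epsilon^{-2}$.

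\textbf{Main obstacle.} The crude bound from the sup-norm alone would be $|g(x)|\ls\epsilon^{-2}$. To improve this we need the $\epsilon^{k-1}$ smallness of the cone average $I$, which comes from the cancellation in \Cref{lem:codiff_uniform_I_bound}. Using it requires care in two places. First, the denominator is $\kappa^{k-1}$ rather than $\sqrt{\kappa^{k-1}}$; the ratio is bounded because $\kappa^{k-1}\geq c_\theta$ on the support. Second, the average over $x_j$ must run over all of $D_\epsilon(\hat\by_{i,j})$, and this is true. Inserting $\Xi\ls\epsilon^{-2}$ and $\|h\|_\infty\ls\epsilon^{-m(k+1)-2}$ into \Cref{thm:bernstein} yields the claimed tail bound.
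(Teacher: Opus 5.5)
Your argument matches the paper's proof step for step: the same sup-norm bound $\epsilon^{-m(k+1)-2}$, and the same split of the first projection according to whether $x$ is a cone point or a base vertex. You also use $|I|\ls\epsilon^{k-1}$ from \Cref{lem:codiff_uniform_I_bound} in both cases, exactly as the paper does, to get $\|g\|_\infty\ls\epsilon^{-1}$ and $\Xi\ls\epsilon^{-2}$.

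There is one bookkeeping slip in Case 1. The other cone variable is integrated against $A^\alpha_\epsilon$, which carries the prefactor $\epsilon^{-(m+2)}$. That integral is therefore $\rho_0\sqrt{\kappa^{k-1}_{\theta,\epsilon}}\,I=O(\epsilon^{k-1})$, not $O(\epsilon^{m+k+1})$. As displayed, your product equals $\epsilon^{m+1}$ rather than $\epsilon^{-1}$; with the factor $\epsilon^{k-1}$ it gives $\epsilon^{-1}$ as you claim. Your Case 2 text has the same mislabel ("each equals $\epsilon^{m+2}I$"). It is harmless there, because your displayed bound already uses the correct normalization.
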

\begin{proof}
    First, because $|A^\alpha_\epsilon(x,\by)|\ls\epsilon^{k-m-2}$, we have
    \begin{align}
        \|h^F_\epsilon(\alpha)\|_\infty\ls \epsilon^{-m(k+1)-2}.
    \end{align}
    Next, we bound the first projection
    \begin{align}
        g(x)=\E_{\by\sim\mu^{k+1}}[h^F_\epsilon(\alpha)(x,\by)].
    \end{align}
    From the definition of $h^F_\epsilon$ in~\eqref{eq:h_F}, there are two cases to consider. First, suppose $x$ is one of the two distinguished variables. Then, after integrating the other distinguished variable, we use~\eqref{eq:original_I_by} to obtain the bound
    \begin{align}
        &\epsilon^{-(k-1)(m+2)}\int A^\alpha_\epsilon(x,\by)A^\alpha_\epsilon(z,\by)\kappa^{k-1}_{\theta,\epsilon}(\by)\,d\vol(z)d\vol^k(\by) \\
        &\qquad\ls \epsilon^{-(k-1)(m+2)}\epsilon^{k-m-2}\epsilon^{k-1}\epsilon^{km}=\epsilon^{-1},
    \end{align}
    where the integration is over the support of the integrand. After integrating in $z$, \eqref{eq:original_I_by} gives the factor $\sqrt{\kappa^{k-1}_{\theta,\epsilon}(\by)}I(\by)$; since the kernel is uniformly bounded, we used $|I(\by)|\ls\epsilon^{k-1}$ and the volume bound in~\eqref{eq:fat_diagonal_fiber_volume}. Second, suppose $x$ is part of the base $k$-tuple. Then
    \begin{align}
        &\epsilon^{-(k-1)(m+2)}\int A^\alpha_\epsilon(z_1,x,\by)A^\alpha_\epsilon(z_2,x,\by)\kappa^{k-1}_{\theta,\epsilon}(x,\by)\,d\vol(z_1)d\vol(z_2)d\vol^{k-1}(\by) \\
        &\qquad\ls \epsilon^{-(k-1)(m+2)}\int_{D^{k-1}_\epsilon(M,x)} I(x,\by)^2\,d\vol^{k-1}(\by)
        \ls \epsilon^{-(k-1)(m+2)}\epsilon^{2k-2}\epsilon^{(k-1)m}=1,
    \end{align}
    Here \eqref{eq:original_I_by}, applied to the base $k$-tuple $(x,\by)$, identifies the $z_1,z_2$-integral with $I(x,\by)^2$, and the same definition together with the lower bound for $\kappa^{k-1}_{\theta,\epsilon}$ on the support of the numerator gives $|I(x,\by)|\ls\epsilon^{k-1}$. We also use~\eqref{eq:fat_diagonal_fiber_volume} with $k-1$ in place of $k$ for the volume of $D^{k-1}_\epsilon(M,x)$. Therefore, $\|g\|_\infty\ls\epsilon^{-1}$, and
    \begin{align}
        \Xi_F=\Var(g(x))\leq \E[g(x)^2]\ls\epsilon^{-2}.
    \end{align}
    Applying~\Cref{thm:bernstein} to $\U_n[h^F_\epsilon(\alpha)]$, and using~\eqref{eq:F_as_U_stat}, proves the stated bound.
\end{proof}

\begin{lemma} \label{lem:codiff_covering_bound}
    Let $\cN_\gamma$ be the smallest number of $\cC^0(M)$-balls of radius $\gamma>0$, with centers in $\cB_M^k$, required to cover $\cB_M^k$. Then
    \begin{align}
        \log \cN_\gamma\ls \gamma^{-m/2}.
    \end{align}
\end{lemma}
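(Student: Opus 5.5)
The covering bound in \Cref{lem:covering_bound} came from a Lipschitz entropy estimate for first-order jets. Here the class is the intrinsic $\cC^2$-ball $\cB_M^k$, measured in $\cC^0(M)$, so one extra order of smoothness is available. The plan is to use the classical Kolmogorov--Tikhomirov entropy bound for $\cC^s$-smooth functions on a compact $m$-dimensional domain: $\log\cN\ls\gamma^{-m/s}$ in sup norm. With $s=2$ this gives exactly the claimed rate $\gamma^{-m/2}$.

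To apply it, I would first reduce forms on $M$ to finitely many scalar functions on Euclidean domains.

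\textbf{Step 1: a uniform atlas.} Fix a finite atlas of normal-coordinate charts $\exp_{x_a}:\ball{T_{x_a}M}{0}{r}\to M$, $a=1,\ldots,A$, with $r<\inj(M)$. Shrink these to closed balls of radius $r/2$ that still cover $M$ by compactness. On each chart, write $\talpha=\sum_{|I|=k}\talpha^a_I\,dv^I$ in the coordinate coframe. Because the metric coefficients and Christoffel symbols, with their derivatives, are bounded on these compact sets, $\|\talpha\|_{\cC^2(M)}\leq1$ implies that every coefficient $\talpha^a_I$ satisfies $\|\talpha^a_I\|_{\cC^2(\overline{B}_{r/2})}\leq L$. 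Here $L$ depends only on $M$ and $k$.

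Conversely, the pointwise norm $|\talpha_x|_x$ is controlled by $\max_{a,I}|\talpha^a_I|$ on the chart containing $x$, up to a constant. So it suffices to cover the product class $\prod_{a,I}\{f:\|f\|_{\cC^2(\overline B_{r/2})}\leq L\}$ in the max-sup norm at radius $c\gamma$.

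\textbf{Step 2: entropy of the scalar class.} Cite the standard bound, e.g.\ the $\cC^s$ smoothness-class entropy in \cite[Example 5.11]{wainwright_high-dimensional_2019} or Kolmogorov--Tikhomirov: the $\cC^2$-ball on a bounded convex domain in $\R^m$ has sup-norm $\gamma$-covering number with logarithm $\ls\gamma^{-m/2}$.

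If a self-contained argument is preferred, I would sketch the proof as follows:
\begin{itemize}
\item Take a grid of mesh $h\asymp\gamma^{1/2}$ in $\overline B_{r/2}$.
\item Record each function's value and gradient at the grid points, discretized to precisions $\gamma$ and $\gamma^{1/2}$ respectively.
\item Second-order Taylor expansion shows that functions with matching records are within $O(\gamma)$ in sup norm.
\item Bound the number of records. There are $h^{-m}$ grid points. The standard chaining trick bounds the choices per point by $O(1)$, since the value and gradient at a point are determined to within $O(\gamma)$ and $O(h)$ by those at a neighbour.
\end{itemize}
This yields $\log\cN\ls h^{-m}=\gamma^{-m/2}$.

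The product over the finitely many indices $(a,I)$ multiplies the covering number by a fixed power. So the log-entropy is still $\ls\gamma^{-m/2}$.

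\textbf{Step 3: centers in the class.} As in \Cref{lem:covering_bound}, first cover at radius $\gamma/2$. Keep only the balls meeting $\cB_M^k$ and recenter each at a point of $\cB_M^k$, which at worst doubles the radius. The mismatch between coefficient-sup-norm and the intrinsic $\cC^0(M)$ norm only changes constants, which are absorbed by rescaling $\gamma$.

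The main obstacle is Step 2, specifically making the value/gradient chaining count rigorous. Citing the known $\cC^s$ entropy theorem avoids this, and I would rely on that citation. Some care is also needed in Step 1 to confirm that the intrinsic $\cC^2$ norm (via $\nabla^j$) controls the coordinate $\cC^2$ norms of the coefficients uniformly. This follows from compactness of the closed chart balls.
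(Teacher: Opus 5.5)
Your proposal is correct and follows essentially the same route as the paper. The paper likewise reduces, via a finite atlas and local trivializations of $\Lambda^kT^*M$, to the classical sup-norm entropy bound $\gamma^{-m/2}$ for $\cC^2$/Hölder balls on bounded convex domains (citing \cite[Theorem 2.7.1]{vaart_weak_2023}), then keeps only the balls meeting $\cB_M^k$ and recenters them as in \Cref{lem:covering_bound}; your Kolmogorov--Tikhomirov grid sketch, with mesh $h\asymp\gamma^{1/2}$ and values and gradients recorded to precisions $\gamma$ and $\gamma^{1/2}$, is a sound proof of that cited estimate.
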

\begin{proof}
        This follows from the classical entropy estimate for Hölder balls~\cite[Theorem 2.7.1]{vaart_weak_2023}. Applying this estimate in a finite atlas and finite local trivializations of $\Lambda^kT^*M$ gives the stated bound for the $\cC^2(M)$ unit ball. As in \Cref{lem:covering_bound}, after forming the cover we retain only balls meeting $\cB_M^k$ and recenter them at points of $\cB_M^k$.
\end{proof}

\begin{proposition} \label{prop:off_diagonal_covering_variance}
    There is a constant $C_0>0$, depending only on the fixed data, such that the following holds. Let $p\in(0,1)$ and suppose $C_0\log(8/p)\leq n\epsilon^{(k+1)m}$. With probability at least $1-p$, we have
    \begin{align}
        \sup_{\alpha\in\pi^*\cB_M^k}|F_n^\alpha-\E[F_n^\alpha]|
        \ls \sqrt{\frac{\gamma^{-m/2}+\log(1/p)}{n\epsilon^2}}
        +\frac{\gamma^{-m/2}+\log(1/p)}{n\epsilon^{m(k+1)+2}}
        +\gamma\epsilon^{-2}.
    \end{align}
\end{proposition}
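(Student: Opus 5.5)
We mirror the proof of \Cref{lem:uc_geometric_variance} and \Cref{prop:uniform_variance_euclidean}, with three inputs: the pointwise Bernstein bound of \Cref{lem:codiff_pointwise_variance}, the covering bound of \Cref{lem:codiff_covering_bound}, and a Lipschitz estimate for the map $\talpha\mapsto h^F_\epsilon(\pi^*\talpha)$ in the $\cC^0(M)$ norm.

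Take a minimal $\gamma$-cover of $\cB_M^k$ by $\cC^0(M)$-balls with centres $\eta^1,\ldots,\eta^{\cN}\in\cB_M^k$. A union bound over the centres in \Cref{lem:codiff_pointwise_variance} controls $\max_r|F_n^{\pi^*\eta^r}-\E F_n^{\pi^*\eta^r}|$, with failure probability at most
\[
4\cN\exp\bigl(-cnt^2/(A\epsilon^{-2}+B\epsilon^{-m(k+1)-2}t)\bigr).
\]
Only the $\cN$ centres are needed, not $\cN^2$ pairs, because $F_n$ is quadratic in a single form. Inverting Bernstein with $\log\cN\ls\gamma^{-m/2}$ then gives the first two terms of the bound, with probability at least $1-p/2$.

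For the approximation step, take $\talpha\in\cB_M^k$ and a centre $\eta$ with $\|\talpha-\eta\|_{\cC^0(M)}<\gamma$. Both forms are pullbacks, so on an affine simplex in $U_\epsilon$ we have $|q(\pi^*\talpha-\pi^*\eta)(x,\by)|\ls\gamma\epsilon^k$. The projection $\pi$ is smooth with bounded differential, and the pulled-back form evaluated on displacement vectors of length $\ls\epsilon$ gives the factor $\epsilon^k$. It follows that
\[
|A^{\alpha}_\epsilon-A^{\eta}_\epsilon|\ls\gamma\,\epsilon^{k-m-2}\one_{D^k_\epsilon(M)} .
\]
Bilinearity, $F_n^\alpha-F_n^\eta=F_n(\alpha-\eta,\alpha)+F_n(\eta,\alpha-\eta)$, together with $|A^\alpha_\epsilon|\ls\epsilon^{k-m-2}$, gives a pointwise kernel bound
\[
|h^F_\epsilon(\alpha)-h^F_\epsilon(\eta)|\ls\gamma\,\epsilon^{-(k-1)(m+2)}\epsilon^{2k-2m-4}\one_{D^{k+1}_\epsilon(M)}=\gamma\,\epsilon^{-m(k+1)-2}\one_{D^{k+1}_\epsilon(M)}.
\]
The support lies in the $(k+2)$-point fat diagonal, since the two cone simplices share a base. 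Taking expectations and using the volume bound $\mu^{k+2}(D^{k+1}_\epsilon(M))\ls\epsilon^{(k+1)m}$ yields $|\E F^\alpha_n-\E F^\eta_n|\ls\gamma\epsilon^{-2}$. For the empirical side, apply \Cref{lem:concentration_D_k_support} in degree $k+1$: on the event $\cE^{(k+1)}_\epsilon=\{\U_n[\one_{D^{k+1}_\epsilon(M)}]\le C\epsilon^{(k+1)m}\}$ we get $|F^\alpha_n-F^\eta_n|\ls\gamma\epsilon^{-2}$ uniformly. This event fails with probability at most $4\exp(-cn\epsilon^{(k+1)m})\le p/2$ under the hypothesis $C_0\log(8/p)\le n\epsilon^{(k+1)m}$. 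The triangle inequality then yields the stated bound.

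The main obstacle is the approximation step. A crude bound via $\|h^F\|_\infty$ would lose a factor $\epsilon^{-m(k+1)}$, so the estimate only works if the kernel difference is controlled by $\gamma$ times the indicator of the fat diagonal and the empirical mass of that diagonal is then bounded through \Cref{lem:concentration_D_k_support}. This reproduces the gain obtained in \Cref{lem:uc_geometric_variance}. A second point to check is that the pullback $\cC^0$-closeness transfers to the affine simplex integrals with the correct power $\epsilon^k$. This holds because $d\pi$ is uniformly bounded on $U_{\epsilon_0}$ and the simplex lies within $O(\epsilon)$ of $M$. Note that the covering here only needs sup-norm closeness of the forms, not of their jets, which is why the cruder entropy $\gamma^{-m/2}$ for $\cC^2$ balls suffices.
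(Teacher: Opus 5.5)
Your proposal follows the paper's proof essentially step for step. It uses a union bound over the $\cN$ centres via \Cref{lem:codiff_pointwise_variance}, and the pullback structure to turn $\cC^0(M)$-closeness into $|A^\alpha_\epsilon-A^\eta_\epsilon|\ls\gamma\epsilon^{k-m-2}$. It then bounds the kernel difference by $\gamma\epsilon^{-m(k+1)-2}$ times an indicator, and uses an empirical-mass event to control the $\U$-statistic side. All of this matches the paper.

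One claim needs correcting: the support of $h^F_\epsilon$ does not lie in $D^{k+1}_\epsilon(M)$. The two cone points $y_i,y_j$ are each within $\epsilon$ of every vertex of the shared base $\hat\by_{i,j}$, but nothing forces $\|y_i-y_j\|<\epsilon$. They can be nearly $2\epsilon$ apart; indeed $F_n$ lets the two cone points range independently, which is exactly what produces $I(\by)^2$ in the bias. By the triangle inequality through a base vertex, the support is contained in $D^{k+1}_{2\epsilon}(M)$. So both your indicator and your event $\cE^{(k+1)}_\epsilon$ must be taken at scale $2\epsilon$; this is the event $\cF_\epsilon$ in the paper. As written, your event at scale $\epsilon$ does not control $\U_n$ of the true support.

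The repair costs only constants. You still have $\mu^{k+2}(D^{k+1}_{2\epsilon}(M))\ls\epsilon^{(k+1)m}$, since $2\epsilon<2\epsilon_0<\inj_\R(M)$ keeps the ball comparison valid. The failure probability $4\exp(-cn\epsilon^{(k+1)m})\leq p/2$ is also unchanged. With this correction, the rest of your argument yields the stated bound.
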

\begin{proof}
    Let $\{\ball{\cC^0(M)}{\teta^r}{\gamma}\}_{r=1}^{\cN}$ be a collection of $\cC^0(M)$-balls which cover $\cB_M^k$, with centers $\teta^r\in\cB_M^k$. Let $\eta^r=\pi^*\teta^r$. By a union bound and~\Cref{lem:codiff_pointwise_variance},
    \begin{align} \label{eq:codiff_union_variance}
        \PP\left[\sup_{r\in[\cN]}|F_n^{\eta^r}-\E[F_n^{\eta^r}]|>t\right]
        \leq 4\cN\exp\left(\frac{-cnt^2}{A\epsilon^{-2}+B\epsilon^{-m(k+1)-2}t}\right).
    \end{align}
    Let $\alpha=\pi^*\talpha\in\pi^*\cB_M^k$. Choose $\eta^r$ such that $\|\talpha-\teta^r\|_{\cC^0(M)}<\gamma$. Then, by the definition of $A_\epsilon$ and the lower bound $\kappa_{\theta,\epsilon}^{k-1}(\by)\geq c_\theta$ on the support of $\kappa_{\theta,\epsilon}^{k}(x,\by)$,
    \begin{align}
        |A^\alpha_\epsilon(x,\by)-A^{\eta^r}_\epsilon(x,\by)|\ls \gamma\epsilon^{k-m-2}\one_{D^k_\epsilon(M)}(x,\by).
    \end{align}
    If $\cD_{F,\epsilon}(M)$ denotes the support of $h^F_\epsilon$, then $\cD_{F,\epsilon}(M)\subset D^{k+1}_{2\epsilon}(M)$. Hence
    \begin{align}
        |h^F_\epsilon(\alpha)-h^F_\epsilon(\eta^r)|\ls \gamma\epsilon^{-m(k+1)-2}\one_{D^{k+1}_{2\epsilon}(M)}.
    \end{align}
    Taking expectations gives
    \begin{align}
        \sup_{\alpha\in\pi^*\cB_M^k}|\E[h^F_\epsilon(\alpha)-h^F_\epsilon(\eta^r)]|\ls \gamma\epsilon^{-2}.
    \end{align}
    By the same argument as~\Cref{lem:concentration_D_k_support}, applied to $D^{k+1}_{2\epsilon}(M)$, there is an event
    \begin{align}
        \cF_\epsilon \coloneqq \left\{\U_n[\one_{D^{k+1}_{2\epsilon}(M)}]\leq C\epsilon^{(k+1)m}\right\}
    \end{align}
    such that $\PP[\cF_\epsilon^c]\leq 4\exp(-cn\epsilon^{(k+1)m})$. This is distinct from $\cE_\epsilon$, which controls the $D^k_\epsilon(M)$ support. On $\cF_\epsilon$,
    \begin{align}
        \sup_{\alpha\in\pi^*\cB_M^k}|\U_n[h^F_\epsilon(\alpha)-h^F_\epsilon(\eta^r)]|\ls \gamma\epsilon^{-2}.
    \end{align}
    Combining this with~\eqref{eq:codiff_union_variance} and the triangle inequality gives
    \begin{align}
        \PP\left[\sup_{\alpha\in\pi^*\cB_M^k}|F_n^\alpha-\E[F_n^\alpha]|>t+C\gamma\epsilon^{-2}\right]
        \leq 4\cN\exp\left(\frac{-cnt^2}{A\epsilon^{-2}+B\epsilon^{-m(k+1)-2}t}\right)+4\exp(-cn\epsilon^{(k+1)m}).
    \end{align}
    Using~\Cref{lem:codiff_covering_bound} and the standard Bernstein inversion with
    \begin{align}
        t\gtrsim \sqrt{\frac{\gamma^{-m/2}+\log(1/p)}{n\epsilon^2}}
        +\frac{\gamma^{-m/2}+\log(1/p)}{n\epsilon^{m(k+1)+2}}
    \end{align}
    proves the result.
\end{proof}

\subsection{Proofs of Main Results}

\begin{proof}[Proof of~\Cref{thm:main_codiff}]
    First, we prove the quadratic estimate uniformly over $\pi^*\cB_M^k$. Let
    \begin{align}
        \xi=\sup_{\alpha\in\pi^*\cB_M^k}\left|\langle\hdelta q\alpha,\hdelta q\alpha\rangle_{n,\epsilon}-\rho_0^{k+2}\sigma^\theta_{\delta,k}\langle\delta\alpha,\delta\alpha\rangle_M\right|,
        \qquad \sigma^\theta_{\delta,k}=k^{(m+4)/2}\sigma_\lambda.
    \end{align}
    Combining~\Cref{prop:off_diagonal_bias},~\Cref{lem:codiff_diagonal_exp_bound}, and~\Cref{prop:off_diagonal_covering_variance}, and using the concentration event for $\U_n[\one_{D^k_\epsilon(M)}]$ from~\Cref{lem:concentration_D_k_support}, gives with probability at least $1-p$,
    \begin{align} \label{eq:codiff_combined_rate_before_balance}
        \xi\ls \epsilon+\frac1n+\frac{1}{n\epsilon^{m+2}}
        +\sqrt{\frac{\gamma^{-m/2}+\log(1/p)}{n\epsilon^2}}
        +\frac{\gamma^{-m/2}+\log(1/p)}{n\epsilon^{m(k+1)+2}}
        +\gamma\epsilon^{-2},
    \end{align}
    provided $C_0\log(12/p)\leq n\epsilon^{(k+1)m}$ for a sufficiently large constant $C_0$ depending only on the fixed data.
    The factor $\log(12/p)$ comes from allocating failure probability $p/3$ to each prefactor-$4$ tail: the finite-cover Bernstein tail, the $D^{k+1}_{2\epsilon}(M)$ support tail, and the $D^k_\epsilon(M)$ diagonal support tail.

    We now balance $\gamma$ and $\epsilon$. Set $a=m/2$. The terms involving $\gamma$ are balanced by taking
    \begin{align}
        \gamma\asymp \max\left\{n^{-1/(a+2)}\epsilon^{2/(a+2)},\, (n\epsilon^{m(k+1)})^{-1/(a+1)}\right\}.
    \end{align}
    With this choice, the dominant $\epsilon$-dependent stochastic term is balanced with the bias term by
    \begin{align}
        \epsilon\asymp n^{-1/(m(k+1)+3a+3)}=n^{-2/(2mk+5m+6)}.
    \end{align}
    Thus, for $\epsilon_n\asymp n^{-r^\delta_{k,m}}$ with $r^\delta_{k,m}=2/(2mk+5m+6)$, the non-logarithmic terms in~\eqref{eq:codiff_combined_rate_before_balance} are bounded by $n^{-r^\delta_{k,m}}$, while
    \begin{align}
        n\epsilon_n^{(k+1)m}=n^{(3m+6)/(2mk+5m+6)}.
    \end{align}
    Therefore, with probability at least $1-p$,
    \begin{align}
        \xi\ls n^{-r^\delta_{k,m}}+\frac{\sqrt{\log(1/p)}}{n^{1/2-r^\delta_{k,m}}}+\frac{\log(1/p)}{n^{(3m+2)/(2mk+5m+6)}}.
    \end{align}

    The stated bilinear estimate follows by polarization. Indeed, the error
    \begin{align}
        B_n(\alpha,\beta)=\langle\hdelta q\alpha,\hdelta q\beta\rangle_{n,\epsilon_n}-\rho_0^{k+2}\sigma^\theta_{\delta,k}\langle\delta\alpha,\delta\beta\rangle_M
    \end{align}
    is a symmetric bilinear form in $\alpha$ and $\beta$, so $B_n(\alpha,\beta)=\frac14\left(B_n(\alpha+\beta,\alpha+\beta)-B_n(\alpha-\beta,\alpha-\beta)\right)$.
    By homogeneity, the quadratic estimate gives $|B_n(\gamma,\gamma)|\leq \xi\|\gamma\|_{\cC^2(M)}^2$
    for every pullback form $\gamma$. Hence, for $\alpha,\beta\in\pi^*\cB_M^k$, polarization and $\|\alpha\pm\beta\|_{\cC^2(M)}\leq 2$ give $|B_n(\alpha,\beta)|\leq 2\xi$, which is the claimed uniform estimate.
\end{proof}

\begin{proof}[Proof of~\Cref{cor:common_bandwidth_convergence}]
    Set $T_n=n\epsilon_n^{m(k+2)+2}=n^{1-a(m(k+2)+2)}\to\infty$ and fix $p\in(0,1)$. For each positive $r\in\{k,k+1\}$, combine~\Cref{prop:uc_bias,prop:uniform_variance_euclidean} with $\gamma_n=T_n^{-1/(2m)}$; since $n\epsilon_n^{rm}\geq T_n$ eventually, every term tends to zero. Degree zero follows from~\Cref{prop:main_uc_degree_zero}. For $k\geq1$, taking $\gamma_n=\epsilon_n^2T_n^{-1/m}$ in~\eqref{eq:codiff_combined_rate_before_balance} bounds its three $\gamma_n$-dependent terms by $T_n^{-1/4}$, $T_n^{-1/2}$, and $T_n^{-1/m}$, while the remaining terms also tend to zero. The concentration hypotheses hold eventually, and a finite union bound proves the simultaneous convergence.
\end{proof}

\appendix
\section{Degree-Zero Inner-Product Convergence} \label{app:degree_zero_inner_product}

\begin{proof}[Proof of~\Cref{prop:main_uc_degree_zero}]
    For a function $f$, the discretization is evaluation, $qf(x)=f(x)$, while $\kappa^0_\epsilon=1$. Hence the degree-zero pairing is
    \begin{align}
        \langle qf,qg\rangle_n^w
        =\frac1n\sum_{i=1}^n f(x_i)g(x_i)w(x_i),
    \end{align}
    and its expectation is exactly
    \begin{align}
        \E[\langle qf,qg\rangle_n^w]
        =\int_M f(x)g(x)w(x)\rho(x)\,\dvol(x).
    \end{align}
    Thus there is no bandwidth-dependent geometric bias.

    We use the same covering method as in the proof of~\Cref{thm:main_uc}, but only for this empirical average. By~\Cref{lem:covering_bound} with $k=0$, for $0<\gamma\leq1$ there are $\eta^1,\ldots,\eta^{\cN_\gamma}\in\cB_\R^0$ such that every $f\in\cB_\R^0$ has an index $r$ with
    \begin{align}
        \|f-\eta^r\|_{\cC^0(M)}<\gamma
        \qquad\text{and}\qquad
        \log\cN_\gamma\ls\gamma^{-m}.
    \end{align}
    The random variables $\eta^r(x_i)\eta^s(x_i)w(x_i)$ are uniformly bounded. Applying the ordinary order-one Hoeffding inequality to every pair $(r,s)$ and taking a union bound gives, with probability at least $1-p$,
    \begin{align}
        \max_{r,s\leq\cN_\gamma}
        \left|
        \frac1n\sum_{i=1}^n\eta^r(x_i)\eta^s(x_i)w(x_i)
        -\int_M\eta^r\eta^s w\rho\,\dvol
        \right|
        \ls
        \sqrt{\frac{\gamma^{-m}+\log(2/p)}{n}}.
    \end{align}
    If $\eta^r$ and $\eta^s$ approximate $f$ and $g$, respectively, the uniform bounds on $\cB_\R^0$ and $w$ give
    \begin{align}
        \|fgw-\eta^r\eta^s w\|_{\cC^0(M)}\ls\gamma.
    \end{align}
    This controls both the empirical and expected approximation errors. Therefore,
    \begin{align}
        \sup_{f,g\in\cB_\R^0}
        \left|
        \langle qf,qg\rangle_n^w-\langle f,g\rangle_M^{w\rho}
        \right|
        \ls
        \sqrt{\frac{\gamma^{-m}+\log(2/p)}{n}}+\gamma.
    \end{align}
    Taking $\gamma=n^{-1/(m+2)}$ proves the result, after absorbing the fixed $\log 2$ term into $n^{-1/(m+2)}$.
\end{proof}

\section{Further Details on AI Use} \label{apxsec:ai_use}

\textbf{Models Used.} Starting June 2025, we used the latest  reasoning models from OpenAI's ChatGPT (non-Pro) and very briefly used Anthropic's Claude. Starting February 2026, we started using the latest ChatGPT Pro models. The majority of our AI use was through interactive chat sessions. Starting May 2026, we started to use OpenAI's Codex. \medskip

\textbf{General Workflow.} We  used AI tools in an interative manner, providing drafts of the article along with notes written by the authors as context in each prompt. For the development of proofs, we would often begin with fairly specific questions\footnote{Much of the work on this project was done between June 2025 - June 2026, and the reasoning capabilities of AI tools have changed quite a lot during this period.} about how to prove a certain lemma / result. We would then use this as an outline to write a proof on our own, and then iterate back and forth with the AI tool to refine the result. We also used AI tools to help with finding stronger existing results to apply, simplification of proofs/exposition, and verification of results. \medskip

\textbf{Details on AI Use in Mathematical Content.}
\begin{itemize}
    \item The authors began the project by using ambient Gaussian kernels, for instance in~\cite{belkin_towards_2008}. AI tools suggested the use of more general kernels, by generalizing those of~\cite{garcia_trillos_error_2020,calder_improved_2022}, which resolved some issues in the codifferential convergence.
    \item The authors proved a preliminary version of~\Cref{thm:main_uc}, and AI tools were used to refine the statement by suggesting a stronger concentration inequality (the Bernstein bound used), and the use of the geometric $U$-kernel (\Cref{ssec:uc_geometric_kernel}) which allowed us to apply an additional symmetry argument to improve the rates.
    \item The general outline of the proof of~\Cref{thm:main_codiff} was developed in collaboration with AI tools, by adapting the proof of~\Cref{thm:main_uc}. In particular, AI tools suggested the variation of the trace formula in~\Cref{lem:codiff_moment_trace}.
    \item In order to deal with the discontinuity of the VR kernels, AI tools were used to refine the domain / kernel replacement lemmas in~\Cref{lem:kappa_domain_replacement} and \Cref{sssec:tangent_space_reduction}. In particular, it identified a previous gap, and suggested~\Cref{lem:codiff_uniform_I_bound}.
    \item For the codifferential proof, the authors began by using an orthogonal projection of the Euclidean center of mass as a center $z(\by)$ for the base simplex in~\Cref{ssec:off_diagonal}. AI tools suggested the use of Karcher means~\cite{karcher_riemannian_1977}, which simplified some of the proofs.
    \item AI tools were used to aid in the development of the proofs in~\Cref{sec:rips_laplacian,ssec:rips_topology}.
\end{itemize}

\section{Notation and Conventions} \label{apxsec:notation}

{\small
\begin{longtable}
    {@{}p{0.25\textwidth}p{0.64\textwidth}r@{}}
    \toprule
    Symbol & Description & Page\\
    \midrule
    \endfirsthead
    \toprule
    Symbol & Description & Page\\
    \midrule
    \endhead
    \midrule
    \multicolumn{3}{r}{\emph{Continued on next page}}\\
    \endfoot
    \bottomrule
    \endlastfoot
    \multicolumn{3}{c}{Global Parameters and Conventions} \\ \midrule
    $m$ & Dimension of the manifold $M$. & \pageref{sec:geometric_prelim}\\
    $N$ & Dimension of the ambient Euclidean space. & \pageref{sec:geometric_prelim}\\
    $n$ & Number of points in the sample $X^{(n)}$. & \pageref{sec:discretization}\\
    $k$ & Differential-form or simplicial-cochain degree. & \pageref{ssec:differential_forms}\\
    $\epsilon$, $\epsilon_n$ & Scale parameter and sample-size-dependent bandwidth. & \pageref{sec:discretization}, \pageref{thm:main_uc}\\
    $\N$, $\N_0$ & Positive and nonnegative integers, respectively. & \pageref{not:global_conventions}\\
    $M\subset\R^N$ & Compact embedded $m$-manifold, possibly nonorientable. & \pageref{sec:geometric_prelim}\\
    $\dvol$, $\vol_M$ & Riemannian volume density and its induced Borel measure. & \pageref{sec:geometric_prelim}\\
    $X^{(n)}=(x_1,\ldots,x_n)$ & Point cloud, usually an i.i.d.\ sample from $M$. & \pageref{sec:discretization}\\
    $\by=(y_0,\ldots,y_k)$ & Boldface notation for an ordered point tuple. & \pageref{ssec:euclidean_derham}\\
    $\tau_M$, $U_\epsilon$, $\pi$ & Reach, tubular neighbourhood, and nearest-point projection. & \pageref{ssec:reach_tubular}\\
    $\bar\nabla$, $\nabla$ & Ambient flat and intrinsic Levi--Civita connections. & \pageref{ssec:ambient_forms}\\
    $\|\cdot\|_{\cC^r(U)}$, $\|\cdot\|_{\cC^r(M)}$ & Ambient coefficientwise and intrinsic covariant $\cC^r$-norms. & \pageref{ssec:ambient_forms}\\ \midrule
    \multicolumn{3}{c}{Differential Forms, Cochains and Discretization} \\ \midrule
    $\Omega^k(M)$, $\Omega_c^k(\R^N)$ & Smooth intrinsic forms and compactly supported ambient forms. & \pageref{ssec:differential_forms}, \pageref{ssec:ambient_forms}\\
    $\langle\cdot,\cdot\rangle_M$, $\langle\cdot,\cdot\rangle_M^f$ & Smooth and $f$-weighted $L^2$ inner products of forms. & \pageref{ssec:differential_forms}\\
    $d$, $\delta$ & Exterior derivative and its smooth $L^2$-adjoint. & \pageref{eq:smooth_codifferential}\\
    $\Delta_k$ & Smooth Hodge Laplacian on $k$-forms. & \pageref{eq:smooth_laplacian}\\
    $X_\epsilon$ & Ambient Vietoris--Rips complex of $X$ at scale $\epsilon$. & \pageref{sec:discretization}\\
    $S_\epsilon^k(X)$ & Canonically oriented $k$-simplices of $X_\epsilon$; cochains extend alternately. & \pageref{sec:discretization}\\
    $C_k(X_\epsilon)$, $C^k(X_\epsilon)$ & Real simplicial chains and cochains. & \pageref{sec:discretization}\\
    $\hd$, $\hdelta$ & Simplicial coboundary and its adjoint for the chosen inner products. & \pageref{sec:discretization}, \pageref{not:discrete_codifferential}\\
    $\Delta^k$, $\sigma_{\by}$ & Standard oriented simplex and its affine parametrization by $\by$. & \pageref{ssec:euclidean_derham}\\
    $q$, $q_{X,\epsilon}$ & Parametric Euclidean de Rham map and its restriction to $X_\epsilon$. & \pageref{ssec:euclidean_derham}\\
    $q_n^\pi$ & Pullback discretization of intrinsic forms on $X^{(n)}_{\epsilon_n}$. & \pageref{prop:topology_quasi_iso_convergence_scales}\\ \midrule
    \multicolumn{3}{c}{Kernels, Weights and Convergence Constants} \\ \midrule
    $p(\by)$ & Vector of pairwise squared distances of $\by$. & \pageref{ssec:bilinear_forms}\\
    $\Theta$ & Defining function of an admissible VR kernel. & \pageref{def:admissible_VR_kernel}\\
    $\kappa_\epsilon$ & General admissible VR kernel. & \pageref{def:admissible_VR_kernel}\\
    $\theta$ & Squared-diameter profile, positive on $[0,1)$ and zero on $[1,\infty)$. & \pageref{not:codiff_kernel_table}\\
    $s(\bz)$ & Squared diameter of a finite tuple $\bz$. & \pageref{not:codiff_kernel_table}\\
    $\kappa^r_{\theta,\epsilon}$ & Degree-$r$ squared-diameter VR kernel. & \pageref{not:codiff_kernel_table}\\
    $w$ & Positive symmetric simplex weight in the cochain bilinear form. & \pageref{eq:inner_prod}\\
    $\langle\cdot,\cdot\rangle_{n,\epsilon}^{\kappa,w}$ & Kernel-weighted bilinear form on cochains. & \pageref{eq:inner_prod}\\
    $\mu$, $\rho$ & Sampling measure on $M$ and its volume density. & \pageref{ssec:probabilistic_inner_products}\\
    $\rho_-$, $\rho_+$, $\rho_0$ & Density bounds and the uniform density $\vol_M(M)^{-1}$. & \pageref{eq:density_bound}, \pageref{not:uniform_pullback_setting}\\
    $\U_n[h]$, $h_\epsilon(\alpha,\beta)$ & $U$-statistic and kernel representing a cochain pairing. & \pageref{def:u_statistic}\\
    $D_\epsilon^k(M)$ & Ambient $(k+1,\epsilon)$-fat diagonal in $M^{k+1}$. & \pageref{eq:fat_diagonal}\\
    $P$, $P_\Delta$ & Joint sampling weight and its diagonal restriction. & \pageref{prop:uc_bias}, \pageref{thm:main_uc}\\
    $\sigma_k^\Theta$ & Deterministic kernel moment in the inner-product limit. & \pageref{eq:inner_product_moment_constant}\\
    $\sigma^\theta_{\delta,k}$ & Deterministic constant in the codifferential limit. & \pageref{thm:main_codiff}\\

    \midrule
    \multicolumn{3}{c}{Normalized VR Laplacian} \\ \midrule
    $\langle\cdot,\cdot\rangle_{n,\epsilon}$, $\|\cdot\|_n$ & Normalized cochain inner product and its bandwidth-$\epsilon_n$ norm. & \pageref{not:normalized_inner_products}\\
    $\sigma_k^\theta$, $\bar{\sigma}_{\delta,k}$ & Kernel normalization and residual down-Laplacian constant. & \pageref{not:normalized_inner_products}\\
    $\hd_n$, $\hdelta_n$ & Sampled coboundary and its normalized adjoint. & \pageref{eq:rips_laplacian}\\
    $\widehat\Delta_{k,n}$ & Normalized Rips Hodge Laplacian. & \pageref{eq:rips_laplacian}\\
    $\lambda_j^{(k)}$, $\widehat\lambda_{j,n}^{(k)}$ & Smooth and normalized discrete Hodge eigenvalues. & \pageref{not:smooth_rayleigh}, \pageref{eq:rips_laplacian}\\
    $Q_k$, $R_k$ & Smooth Hodge energy and Rayleigh quotient. & \pageref{not:smooth_rayleigh}\\
    $\widehat Q_{k,n}$, $\widehat R_{k,n}$ & Discrete Hodge energy and Rayleigh quotient. & \pageref{not:discrete_hodge_energy}\\

\end{longtable}
}

\bibliographystyle{plain}
\bibliography{discrete_approx_merged}

\end{document}